\newcommand{\Hilb}{\mathcal{H}}
\newcommand{\eps}{\text{$\varepsilon$}}
\newcommand{\oneop}{\mathds{1}}
\newcommand{\Rot}{\mathop{\mathsf{Rot}}}
\newcommand{\Mob}{\mathsf{M\ddot ob}}
\newcommand{\Sone}[1][]{\mathbb{S}^{1#1}}
\newcommand{\C}{\mathcal{C}}
\newcommand{\Z}{\mathcal{Z}}
\renewcommand{\O}{\mathcal{O}}
\newcommand{\cU}{\mathcal{U}}
\newcommand{\cV}{\mathcal{V}}
\newcommand{\cW}{\mathcal{W}}
\newcommand{\K}{\mathcal{K}}
\newcommand{\cD}{\mathcal{D}}
\newcommand{\cP}{\mathcal{P}}
\newcommand{\cK}{\mathcal{K}}
\newcommand{\A}{\mathcal{A}}
\newcommand{\B}{\mathcal{B}}
\newcommand{\M}{\mathcal{M}}
\newcommand{\N}{\mathcal{N}}
\newcommand{\RR}{\mathbb{R}}
\newcommand{\RRbar}{\overline{\mathbb{R}}}
\newcommand{\CC}{\mathbb{C}}
\newcommand{\ZZ}{\mathbb{Z}}
\DeclareMathOperator{\End}{End}
\DeclareMathOperator{\Ind}{Ind}
\DeclareMathOperator{\Hom}{Hom}
\DeclareMathOperator{\DHR}{DHR}
\DeclareMathOperator{\Vir}{Vir}
\DeclareMathOperator{\Ad}{Ad}
\DeclareMathOperator{\id}{id}
\def\III{{I\!I\!I}}
\newcommand{\op}{\mathrm{op}}
\newcommand{\lqq}{\lq\lq}
\newcommand{\e}{\mathrm{e}}
\DeclareRobustCommand{\eg}{e.g.\@\xspace}
\DeclareRobustCommand{\cf}{cf.\@\xspace}
\DeclareRobustCommand{\ie}{i.e.\@\xspace}
\DeclareRobustCommand{\Sec}{Sec.\@\xspace}
\DeclareRobustCommand{\Prop}{Prop.\@\xspace}
\DeclareRobustCommand{\Lem}{Lem.\@\xspace}
\DeclareRobustCommand{\Cor}{Cor.\@\xspace}
\DeclareRobustCommand{\Thm}{Thm.\@\xspace}
\DeclareRobustCommand{\Ch}{Ch.\@\xspace}
\DeclareRobustCommand{\App}{App.\@\xspace}
\DeclareRobustCommand{\Def}{Def.\@\xspace}
\DeclareRobustCommand{\Rmk}{Rmk.\@\xspace}
\DeclareRobustCommand{\Eq}{Eq.\@\xspace}
\DeclareRobustCommand{\rhs}{r.h.s.\@\xspace}
\DeclareRobustCommand{\lhs}{l.h.s.\@\xspace}
\DeclareRobustCommand{\resp}{resp.\@\xspace}
\DeclareRobustCommand{\etc}{%
    \@ifnextchar{.}%
        {etc}%
        {etc.\@\xspace}%
}
\newcommand{\Cstar}{$C^\ast$\@\xspace}
\def\u1net{{\A_\RR}}
\theoremstyle{plain}
\newtheorem{theorem}{Theorem}[section]
\newtheorem{corollary}[theorem]{Corollary}
\newtheorem{lemma}[theorem]{Lemma}
\newtheorem{proposition}[theorem]{Proposition}
\theoremstyle{definition}
\newtheorem{definition}[theorem]{Definition}
\theoremstyle{remark}
\newtheorem{example}[theorem]{Example}
\newtheorem{remark}[theorem]{Remark}
 \newcommand{\tikzmath}[2][\squarescale]
	{\vcenter{\hbox{
	\begin{tikzpicture}
	[scale=#1]#2
	\end{tikzpicture}}}}
\begin{document}

\title{\huge Infinite index extensions of local nets and defects}

\author[1]{\Large Simone Del Vecchio}
\author[1]{\Large Luca Giorgetti}
\affil[1]{\normalsize Dipartimento di Matematica, Universit\`a di Roma Tor Vergata\\

Via della Ricerca Scientifica, 1, I-00133 Roma, Italy\\

{\tt delvecch@mat.uniroma2.it}\\ 
{\tt giorgett@mat.uniroma2.it}}

\date{}

\maketitle

\begin{abstract}
Subfactor theory provides a tool to analyze and construct extensions of Quantum Field Theories, once the latter are formulated as local nets of von Neumann algebras. We generalize some of the results of \cite{LoRe95} to the case of extensions with infinite Jones index. This case naturally arises in physics, the canonical examples are given by global gauge theories with respect to a compact (non-finite) group of internal symmetries. Building on the works of Izumi, Longo, Popa \cite{ILP98} and Fidaleo, Isola \cite{FiIs99}, we consider generalized Q-systems (of intertwiners) for a semidiscrete inclusion of properly infinite von Neumann algebras, which generalize ordinary Q-systems introduced by Longo \cite{Lon94} to the infinite index case.
We characterize inclusions which admit generalized Q-systems of intertwiners and define a braided product among the latter, hence we construct examples of QFTs with defects (phase boundaries) of infinite index, extending the family of boundaries in the grasp of \cite{BKLR16}.
\end{abstract}

\section{Introduction}

The study of \emph{extensions} in relativistic Quantum Field Theory (QFT) is well-motivated in several respects. Gauge theory, for instance, provides examples of extensions where a theory of (anti)commuting fields obeying Bose/Fermi statistics, equipped with a gauge group symmetry, contains a subtheory generated by gauge invariant (observable) fields. The former can be viewed as an extension of the latter, and similarly any intermediate theory gives rise to a smaller extension of the observable theory. Defects and boundaries can also be described by extensions, where different types of bulk fields (depending on their relative spacetime localization with respect to a certain \lqq defect" line or hypersurface) generate extensions of a common subtheory which contains, for example, the components of the stress-energy tensor that are conserved across the boundary. Extensions also appear in classification instances of QFTs, where all the theories belonging to a certain family share a common subtheory (dictated, \eg, by spacetime symmetry), hence the classification problem can be turned into a classification of extensions. This is the case, for example, in chiral Conformal Field Theory (CFT) where the Fourier modes of the conformal stress-energy tensor necessarily obey the commutation relations of the Virasoro algebra at a fixed value of the central charge parameter. Lastly, the analysis of extensions can be used to construct new examples of QFTs. Starting from some theory, if one can write it as a non-trivial subtheory extended by a certain family of generators, then new theories can be constructed by suitably manipulating the generators and their commutation relations, compatibly with locality, and leaving the subtheory untouched.

All of these different situations and problematics have a model-independent and mathematically rigorous formulation in the Algebraic approach to QFT (\emph{AQFT}) due to Haag and Kastler, see \cite{HaagBook}. 

Global gauge theories have been tackled since the early works of \cite{DHR69I}, \cite{DHR69II}, \cite{DoRo72}, culminating in \cite{DHR71}, \cite{DHR74} and \cite{DoRo90} in particular, where it is shown that every theory of local observables arises as gauge group fixed points of a bigger field theory, obeying (anti)commutation relations and equipped with a (global) gauge group symmetry. Both the gauge group and the field extension are intrinsically determined by the local observables (hence dictated by locality, \ie, Einstein's causality). Intermediate extensions of gauge group fixed points (in 3+1 spacetime dimensions) have been studied in \cite{CDR01}. In the chiral CFT setting (1 spacetime dimension) gauge group fixed points (also called orbifold CFTs) appear in \cite{Xu00orb}, \cite{Xu05}, \cite{Mue05}, and have been generalized to finite hypergroup fixed points by \cite{Bis17} (generalized orbifold CFTs), where the information about gauge invariance contained in the conditional expectation is expressed by an hypergroup action via completely positive (CP) maps. Intermediate chiral extensions have been analysed by \cite{Lon03} and \cite{Xu14}.
Defects and boundaries have been studied with AQFT methods in recent works by \cite{BKL15}, \cite{BKLR16}, \cite{BiRe16}, see also \cite[\Ch 5]{BKLR15} where the main mathematical tools to construct and classify boundary conditions are developed. This analysis of defects and boundaries in QFT has been our initial motivation for the work presented in this article. Lastly, again using extensions, the classification of all chiral CFTs with central charge $c<1$ has been achieved in \cite{KaLo04}.

In the Haag-Kastler formalism, a (local) quantum field theory is described by a net of local algebras $\{\O\mapsto\A(\O)\}$, see \cite{Reh15}, \cite{HaMue06} for self-contained introductions. Local algebras $\A(\O)$ are assumed to be von Neumann algebras on the vacuum Hilbert space, and they typically turn out to be factors (hyperfinite and of type $\III_1$ in the classification of Connes \cite{Con73}, \cite{Haa87}). Hence an extension of QFTs $\{\A\subset\B\}$ is naturally described by a family of \emph{subfactors} indexed by spacetime regions $\O$ (\eg, double cones in Minkowski space or bounded intervals on the line). It was in the work of Longo and Rehren \cite{LoRe95} (indeed titled \lqq Nets of subfactors") that it became clear how to use subfactor theory as a tool to classify and construct extensions in QFT. Their main idea is to exploit the notion of Q-system, due to \cite{Lon94}, for nets of subfactors, in order to relate coherent families of conditional expectations (which respect the net structure) to coherent families of (dual) canonical endomorphisms \cite{Lon87} (which turn out to be the restrictions to different spacetime regions $\O$ of a unique global DHR endomorphism $\theta$ of $\{\A\}$). The family of conditional expectations generalizes the notion of global gauge symmetry, while the DHR endomorphism $\theta$ represents the (reducible) vacuum representation of the bigger theory $\{\B\}$ once restricted to $\{\A\}$. 

Mathematically speaking, the theory of subfactors plays a prominent role in the panorama of Operator Algebras since the work of Jones \cite{Jon83}. He established a notion of index for subfactors, which is an invariant (hence opened the way to classification questions) and surprisingly quantized for values between 1 and 4 (Jones' rigidity theorem). Since then, the major efforts have been devoted to the study of finite index (finite depth) subfactors and a complete classification has been achieved for subfactors with index at most $5+\frac{1}{4}$ \cite{JMS14}, \cite{AMP15}, using techniques of \cite{Pop95-1} and \cite{Jon99}. At the same time, the analyses of QFT extensions \cite{LoRe95} and of theories with defects and boundaries \cite{BKLR16} cover the \emph{finite} index case only, both being based on the notion of Q-system (which is tightly connected to the existence of conjugate morphisms $\bar\iota$ of the inclusion morphism $\iota:\N\hookrightarrow\M$ for a subfactor $\N\subset\M$, hence to the finiteness of the dimension of $\iota$ in the sense of \cite{LoRo97}).

In this article, building on the notion of Pimsner-Popa basis for an inclusion of von Neumann algebras, see \cite{PiPo86}, \cite{Pop95}, and on the characterization of the canonical endomorphisms given by Fidaleo and Isola in \cite{FiIs99}, we reformulate the results on QFT extensions of \cite[\Sec 4]{LoRe95} in the finite index case, and generalize them to \emph{infinite} index extensions, see Section \ref{sec:gennetQsysofinterts}. This case naturally appears in physical situations, \eg, if we consider global gauge theories with respect to a compact \emph{non-finite} group of internal symmetries.

In order to do so, we first adopt the notion of \emph{generalized Q-system}, due to \cite{FiIs99}, see Definition \ref{def:genQsys}, and then consider more special \emph{generalized Q-systems of intertwiners}, see Definition \ref{def:genQsysofinterts} and \ref{def:unitalqsys}. The latter can be thought, roughly speaking, as \Cstar Frobenius algebra-like objects with possibly infinitely many comultiplications, see Remark \ref{rmk:cfqsys}, \ref{rmk:discretespecialpipo} and \cf \cite[\Sec 3.1]{BKLR16}.

Any \emph{semidiscrete} inclusion of (properly infinite, with separable predual) von Neumann algebras $\N\subset\M$, \ie, an inclusion endowed with a faithful normal conditional expectation $E:\M\rightarrow\N$, admits a generalized Q-system. Vice versa, from any generalized Q-system one can (re)construct the bigger algebra $\M$ and the conditional expectation $E$, see \cite[\Thm 4.1]{FiIs99}. An advantage of using generalized Q-systems (in the finite index case as well) is that no factoriality or irreducibility assumption on the inclusion is needed along the way. This enhanced flexibility is particularly desirable in the study of boundary conditions, see comments after \cite[\Thm 4.4]{BKLR16}, where non-irreducible, non-factorial extensions necessarily appear.
On the other hand, generalized Q-systems (in the infinite index case) dwell a bit further away from the purely categorical setting of their finite index counterpart.

Given a semidiscrete inclusion of von Neumann algebras $\N\subset\M$ (where $\N$ is an infinite factor), we show that the existence of a generalized Q-system with the additional intertwining property is actually equivalent to the \emph{discreteness} of the inclusion in the sense of \cite{ILP98} (but admitting non-irreducible extensions), see Section \ref{sec:discretecase}. This characterization relies on strong results of \cite{ILP98} and \cite{FiIs99}, and can be physically interpreted by saying that a semidiscrete extension is discrete if and only if it is generated by \emph{charged fields}, in the sense of \cite{DoRo72}. These are elements $\psi\in\M$ which generate from the vacuum a non-trivial (irreducible) subsector $\rho\prec\theta$ of the dual canonical endomorphism $\theta$, \ie, $\psi n = \rho(n) \psi$ for every $n\in\N$.

In Section \ref{sec:gennetQsysofinterts} it is shown that generalized $Q$-systems of intertwiners indeed induce discrete (finite or infinite index) extensions of QFTs in the sense of \cite{LoRe95}. Two different ways to obtain the construction are provided: one is a direct generalization of \cite[\Thm 4.9]{LoRe95}, the other one exploits an inductive procedure which is somewhat more suitable to be used for the analysis of braided products and boundary conditions in the subsequent sections.

In Section \ref{sec:covarext}, we give a general proof of \emph{covariance} of QFT extensions constructed from covariant nets of local observables. This fact is apparently well known to experts, and clear in many examples, but we could not find a general statement in the literature (on finite index extensions). The key ingredient in our proof is the \emph{equivariance} of the action of the spacetime symmetry group on the DHR category. More precisely, the mere existence of covariance cocycles, see Definition \ref{def:equivaraction}, is not sufficient to guarantee covariance. One needs in addition naturality and tensoriality properties of the cocycles.

Given two generalized Q-systems of intertwiners (in a \Cstar \emph{braided} tensor category), one can easily define their \emph{braided product} in analogy with the case of ordinary Q-systems, see Definition \ref{def:brprodgenQsysofinterts} and \cf \cite[\Sec 3]{EvPi03}, \cite[\Sec 4.9]{BKLR16}. In Section \ref{sec:brproducts}, we prove the non-trivial statement that the braided product of two generalized Q-systems of intertwiners is again a generalized Q-system of intertwiners, \ie, that the analytical properties defining a Pimsner-Popa basis (as a part of the definition of a generalized Q-system) behave well with respect to the categorical notions of naturality and tensoriality of a braiding in a \Cstar tensor category. Thus, in the QFT setting, we can define the braided product of nets of local observables and construct new examples of irreducible phase boundary conditions with infinite index (infinitely many bulk fields) by taking the direct integral decomposition of the braided product net with respect to its center, see Section \ref{sec:brproductnets} and \ref{sec:apptodefectsinQFT}. On the other hand, we leave open the questions about universality of the braided product construction and the classification of boundary conditions in the infinite index case, \cf \cite[\Sec 5]{BKLR16}.

In Section \ref{sec:exampleU(1)}, we work out examples of infinite index (discrete) extensions of the chiral $U(1)$-current algebra \cite{BMT88} and  explicitly compute their braided products. These examples show an important difference with the analysis of boundary conditions in the finite index case, namely the center of the braided product may be a continuous algebra, \ie, with no non-trivial minimal projections, hence the irreducible boundary conditions constructed by direct integral decomposition need not be representations of the braided product itself. 

Notation-wise we work with nets of local algebras $\{\O\mapsto\A(\O)\}$ indexed by partially ordered and directed sets of spacetime regions $\K$, in order to formulate our results, when possible, for arbitrary spacetime dimensions, \eg, in 1D theories on the line, 1+1D or 3+1D theories in Minkowski space.

\section{Pimsner-Popa bases}\label{sec:pipo}

Let $\N\stackrel{E}{\subset}\M$ be a unital inclusion of von Neumann algebras with a normal faithful conditional expectation $E:\M\rightarrow\N$. Assume that $\M$ acts standardly on a separable Hilbert space $\Hilb$ and let $\N \subset \M \subset \M_1 := \left\langle\M,e_\N \right\rangle$ \footnote{Here $\left\langle S \right\rangle$ denotes the von Neumann algebra generated by a subset $S\subset\B(\Hilb)$. For a pair of subsets $S_1, S_2\subset \B(\Hilb)$ we also denote $\left\langle S_1, S_2\right\rangle$ by $S_1 \vee S_2$.} be the Jones basic construction \cite{Jon83}, see also \cite[\Sec 1.1.3]{Pop95}, \cite[\Sec 2.2]{LoRe95}. 
Up to spatial isomorphism it can be characterized as follows. Let $\Omega\in\Hilb$ be a cyclic and separating vector for $\M$ such that the induced (normal faithful) state $\omega$ of $\M$ is invariant under $E$, \ie, $\omega \circ E = \omega$, and set $e_\N := [\N\Omega]$, the orthogonal projection on the subspace $\Hilb_0 := \overline{\N\Omega} \subset \Hilb$. The projection $e_\N\in\N' \cap \M_1$ is the Jones projection of $\N\subset\M$ with respect to $E$, and implements $E$ in the sense that $E(m) e_\N = e_\N m e_\N$, $m\in \M$. Moreover, it is uniquely determined up to conjugation with unitaries in $\M'$ \cite[\App I]{Kos89}.

\begin{definition}\label{def:pi-po}\cite{PiPo86}, \cite{Pop95}.
A \textbf{Pimsner-Popa basis} for $\N\stackrel{E}{\subset}\M$ is a family of elements $\{M_i\} \subset \M$, where $i$ runs in some set of indices $I$, such that
\begin{itemize}
\item[$(i)$] $P_i := M_i^* e_\N M_i$ are projections in $\M_1$ which are mutually orthogonal, \ie, $P_i P_i^* = P_i$ and $P_i P_j = \delta_{i,j} P_i$ for every $i,j\in I$.
\item[$(ii)$] $\sum_i P_i = \oneop$, where the sum converges (unconditionally) in the strong operator topology.
\end{itemize}
\end{definition}

For future reference, we mention the following equivalent characterization of the algebraic properties of Pimsner-Popa bases, see \cite[\Sec 1.1.4]{Pop95}.

\begin{lemma}\label{lem:pi-poequiv}
In the notation of Definition \ref{def:pi-po}, the conditions $(i)$ and $(ii)$ are respectively equivalent to
\begin{itemize}
\item[$(i)'$] $q_i := E(M_iM_i^*)$ are projections in $\N$ (not necessarily mutually orthogonal) and $E(M_iM_j^*) = 0$ for every $i \neq j$, $i,j\in I$.
\item[$(ii)'$] $\overline{\sum_i M_i^* e_\N \Hilb} = \Hilb$ in the Hilbert space topology.
\end{itemize}
\end{lemma}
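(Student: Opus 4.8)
The plan is to reduce both equivalences to elementary operator-theoretic facts about partial isometries by introducing the auxiliary operators $V_i := e_\N M_i$, which repackage all four relevant quantities ($P_i$, $q_i$, $E(M_iM_j^*)$, and the subspaces $M_i^* e_\N\Hilb$) in a uniform way. Concretely, using $V_i^* = M_i^* e_\N$ together with $e_\N^2 = e_\N$ and the defining relation $e_\N m e_\N = E(m)e_\N$ for $m\in\M$, I would first record the identities
\[
V_i^* V_i = M_i^* e_\N M_i = P_i, \qquad V_i V_i^* = e_\N M_i M_i^* e_\N = q_i\, e_\N, \qquad V_i V_j^* = e_\N M_i M_j^* e_\N = E(M_i M_j^*)\, e_\N,
\]
which hold without any hypothesis and turn the problem into statements about the single operator $V_i$ and the products $V_iV_j^*$.

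First I would establish $(i)\Leftrightarrow(i)'$. Since $P_i = V_i^* V_i$ is automatically self-adjoint, ``$P_i$ is a projection'' is equivalent to ``$V_i$ is a partial isometry'', which in turn is equivalent to ``$V_iV_i^* = q_i e_\N$ is a projection''. To pass from $q_i e_\N$ back to $q_i$ I would invoke that the map $\varphi\colon\N \to e_\N\M_1 e_\N$, $\varphi(n) = n e_\N$, is a normal, injective $*$-homomorphism (injectivity because $e_\N\Omega = \Omega$ and $\Omega$ is separating for $\N\subset\M$), so it reflects projections: $q_i e_\N = \varphi(q_i)$ is a projection iff $q_i$ is. For the orthogonality clause, assuming the $P_i$ are projections (so that $V_i = V_iP_i$ and $V_j^* = P_jV_j^*$), one direction is $P_iP_j=0 \Rightarrow V_iV_j^* = V_i(P_iP_j)V_j^* = 0$, and the reverse is $V_iV_j^*=0 \Rightarrow P_iP_j = V_i^*(V_iV_j^*)V_j = 0$; combined with injectivity of $\varphi$ this yields $P_iP_j=0 \iff E(M_iM_j^*)e_\N = 0 \iff E(M_iM_j^*)=0$.

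For $(ii)\Leftrightarrow(ii)'$ I would work under $(i)$, so that each $V_i$ is a partial isometry and $P_i = V_i^* V_i$ is precisely its initial projection, i.e. the orthogonal projection onto $\overline{\operatorname{ran}V_i^*} = \overline{M_i^* e_\N\Hilb}$. Because $(i)$ makes the $P_i$ mutually orthogonal, the strong sum $\sum_i P_i$ converges to the projection onto the closed linear span $\overline{\sum_i M_i^* e_\N\Hilb}$; hence $\sum_i P_i = \oneop$ holds iff this span is all of $\Hilb$, which is exactly $(ii)'$.

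The only genuinely delicate point is the passage between properties of $q_i$ (resp.\ $E(M_iM_j^*)$) inside $\N$ and of $q_i e_\N$ (resp.\ $E(M_iM_j^*)e_\N$) inside the corner $e_\N\M_1 e_\N$: everything hinges on $\varphi$ being an isomorphism onto its image, i.e.\ on the separating property of $\Omega$. The remainder is bookkeeping with the relations $V_i^*V_i = P_i$, $V_iV_i^* = q_ie_\N$ and the standard equivalence ``$V^*V$ projection $\iff$ $V$ partial isometry $\iff$ $VV^*$ projection''.
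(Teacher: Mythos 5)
Your argument is correct. Note that the paper does not prove this lemma at all --- it is stated for reference and attributed to \cite[\Sec 1.1.4]{Pop95} --- so there is no in-text proof to compare against; your reduction via $V_i := e_\N M_i$ (so that $P_i = V_i^*V_i$, $q_i e_\N = V_iV_i^*$, $E(M_iM_j^*)e_\N = V_iV_j^*$), together with the injectivity of $n\mapsto ne_\N$ on $\N$ coming from the separating property of $\Omega$, is exactly the standard argument, and the paper itself uses the same isomorphism $\N\cong\N e_\N$ elsewhere (in the remark after Theorem \ref{thm:FiIs}). Your explicit observation that $(ii)\Leftrightarrow(ii)'$ is proved under the standing hypothesis $(i)$, so that the $P_i$ are the mutually orthogonal range projections of the $V_i^*$ and the strong sum converges to the projection onto the closed span, is the right reading of ``respectively equivalent'' and closes the only point one could quibble about.
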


\begin{proposition}\label{prop:pipoexpansion}\emph{\cite{Pop95}}.
If $\{M_{i}\}$ is a Pimsner-Popa basis for $\N\stackrel{E}{\subset} \M$ then every $m\in M$ has the following expansion 
$$m=\sum_i M_i^* E(M_i m)$$
unconditionally convergent in the topology generated by the family of seminorms $\{\|\cdot\|_{\varphi}:\varphi\in (\M_{*})_{+},\, \varphi = \varphi \circ E\}$, with $\|m\|_{\varphi}:=\varphi(m^{*}m)^{1/2}$. 

The expansion is unique if and only if $E(M_i M_i^*)=\oneop$ for every $i\in I$. 
\end{proposition}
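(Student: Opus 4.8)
My plan is to prove both the expansion and the uniqueness criterion by working first in the standard representation attached to the cyclic separating vector $\Omega$, and then transporting the resulting information to every $E$-invariant seminorm. The first observation is that, since $\varphi=\varphi\circ E$ and $E(\M)\subseteq\N$, every such functional factors as $\varphi=\psi\circ E$ with $\psi:=\varphi|_\N\in(\N_*)_+$; hence it suffices to control $\|m-s_F\|_{\psi\circ E}$ for the partial sums $s_F:=\sum_{i\in F}M_i^*E(M_im)$ over finite $F\subseteq I$. Writing $a_i:=E(M_im)\in\N$, I would record the pointwise expansion at $\Omega$: from the implementing property $e_\N x\Omega=E(x)\Omega$ together with $(ii)$, applying $\sum_iP_i=\oneop$ to the vector $m\Omega$ gives $m\Omega=\sum_iM_i^*a_i\Omega$ in $\Hilb$, which already settles convergence in the single seminorm $\|\cdot\|_\omega=\|\cdot\,\Omega\|$.

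The core is an exact computation of the remainder. Using only the finite-sum algebra and the relations $(i)'$, namely $E(M_iM_j^*)=\delta_{ij}q_i$ with $q_i:=E(M_iM_i^*)$ a projection, one obtains the operator identity in $\N$
\[
E\big((m-s_F)^*(m-s_F)\big)=E(m^*m)-\sum_{i\in F}a_i^*a_i .
\]
The clean cancellation here rests on the auxiliary identity $q_ia_i=a_i$, which I would derive from faithfulness of $E$: setting $y:=M_i^*(\oneop-q_i)$ one computes $E(y^*y)=(\oneop-q_i)E(M_iM_i^*)(\oneop-q_i)=(\oneop-q_i)q_i(\oneop-q_i)=0$, so $y=0$, i.e.\ $(\oneop-q_i)M_i=0$; applying $E$ to $(\oneop-q_i)M_im=0$ then yields $(\oneop-q_i)a_i=0$. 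Feeding the displayed identity into $\psi$ gives $\|m-s_F\|_{\psi\circ E}^2=\psi(E(m^*m))-\sum_{i\in F}\psi(a_i^*a_i)\ge0$, a Bessel inequality $\sum_i\psi(a_i^*a_i)\le\psi(E(m^*m))$.

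The main obstacle is upgrading this inequality to the Parseval equality $\sum_i\psi(a_i^*a_i)=\psi(E(m^*m))$ simultaneously for all $\psi$, since $(ii)$ is a strong-operator statement tied to the single representation given by $\Omega$. I would argue as follows. The net $\sum_{i\in F}a_i^*a_i$ is increasing and, by the inequality above tested against all normal states, dominated by $E(m^*m)$; hence it converges $\sigma$-strongly to some $b\in\N$ with $0\le b\le E(m^*m)$. Evaluating in the distinguished faithful normal $E$-invariant state $\omega$, the mutual orthogonality of the projections $P_i$ with $\sum_iP_i=\oneop$ yields the genuine Parseval identity $\sum_i\omega(a_i^*a_i)=\|m\Omega\|^2=\omega(m^*m)$ (here $q_ia_i=a_i$ is used once more to rewrite $\|M_i^*a_i\Omega\|^2=\omega(a_i^*E(M_iM_i^*)a_i)=\omega(a_i^*a_i)$). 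Thus $\omega(E(m^*m)-b)=0$ with $E(m^*m)-b\ge0$, and faithfulness of $\omega$ forces $b=E(m^*m)$. Since this is now an operator identity in $\N$, normality of $\psi$ gives $\sum_i\psi(a_i^*a_i)=\psi(b)=\psi(E(m^*m))$ for every $\psi$, whence $\|m-s_F\|_{\psi\circ E}\to0$. As this holds for each $E$-invariant seminorm and the limit runs over the directed set of finite subsets, the convergence is unconditional.

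For the uniqueness statement I would compare an arbitrary convergent expansion $m=\sum_iM_i^*n_i$ (with $n_i\in\N$) to the canonical one. Since $\|\cdot\|_\omega$ is among the seminorms, the partial sums converge to $m\Omega$ in $\Hilb$; applying the bounded operator $e_\N M_j$ and using $e_\N M_j M_i^* n_i\Omega=E(M_jM_i^*)n_i\Omega$ gives $a_j\Omega=q_jn_j\Omega$, hence $a_j=q_jn_j$ because $\Omega$ separates $\N$. If every $q_i=\oneop$ this reads $n_j=a_j=E(M_jm)$, so the coefficients are forced and the expansion is unique. Conversely, if $q_{j_0}\neq\oneop$ for some $j_0$, the already established vanishing $M_{j_0}^*(\oneop-q_{j_0})=0$ shows that replacing $n_{j_0}$ by $n_{j_0}+(\oneop-q_{j_0})c$, for any $c$ with $(\oneop-q_{j_0})c\neq0$, leaves $m$ unchanged while altering the coefficient family, so the expansion is not unique. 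This proves the equivalence.
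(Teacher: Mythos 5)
Your argument is correct. Note that the paper itself gives no proof of this proposition — it is quoted from \cite{Pop95} — so there is nothing internal to compare against; what you have written is a complete, self-contained proof along the standard lines of the cited source. The two points that carry the whole argument are both handled properly: the identity $q_i a_i=a_i$ (equivalently $M_i=q_iM_i$), obtained from faithfulness of $E$ applied to $E\bigl((\oneop-q_i)M_iM_i^*(\oneop-q_i)\bigr)=0$, which is what makes the remainder formula $E\bigl((m-s_F)^*(m-s_F)\bigr)=E(m^*m)-\sum_{i\in F}a_i^*a_i$ exact; and the upgrade from the Bessel inequality to the operator-level Parseval identity $b=E(m^*m)$, where you correctly exploit that the increasing bounded net $\sum_{i\in F}a_i^*a_i$ has a $\sigma$-strong limit in $\N$, that the distinguished faithful state $\omega$ sees the genuine Parseval identity via the orthogonal decomposition $m\Omega=\sum_iP_im\Omega$, and that faithfulness of $\omega$ then forces the operator identity, which normality transports to every $E$-invariant functional. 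The uniqueness argument, comparing coefficients by applying $e_\N M_j$ and using that $\Omega$ separates $\N$, and the explicit non-uniqueness when some $q_{j_0}\neq\oneop$ via $M_{j_0}^*(\oneop-q_{j_0})=0$, are both sound. The only external input you use is the equivalence $(i)\Leftrightarrow(i)'$ of Lemma \ref{lem:pi-poequiv}, which the paper likewise states without proof; it would cost you two lines to derive $E(M_iM_j^*)=\delta_{ij}q_i$ directly from $P_iP_j=M_i^*E(M_iM_j^*)e_\N M_j=\delta_{ij}P_i$ if you wanted the proof to rest only on Definition \ref{def:pi-po}.
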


\begin{remark}
In view of the proposition above, Pimsner-Popa bases $\{M_i\}$, or better their adjoints $\{M_i^*\}$ can be seen as bases for $\M$ as a right pre-Hilbert $\N$-module with the $\N$-valued inner product $(M_i^* | M_j^*) := E(M_iM_j^*)$.
\end{remark}

The cardinality of a Pimsner-Popa basis $\{M_{i}\}$ is not a invariant for $\N\stackrel{E}{\subset}\M$.
Indeed, by the following \emph{cutting} and \emph{gluing} procedures \cite[\Sec 1.1.4]{Pop95} we obtain other Pimsner-Popa bases:
\begin{itemize}
\item[$(1)$] If, for each $i$, we take a set of partial isometries $a^{j}_{i}\in\N$ such that $\sum_{j}a_{i}^{j}a^{j *}_{i}=E(M_{i}M_{i}^{*})$, then $\{a_{i}^{j *}M_{i}\}$ is also a basis.
\item[$(2)$] If $E(M_{j}M_{j}^{*})$ and $E(M_{k}M_{k}^{*})$ are orthogonal, then we can replace the pair $M_{j}, M_{k}$ in $\{M_i\}$ by $M_{j}+M_{k}$ and we still get a basis.
\end{itemize}

The good notion of dimension of $\M$ as an $\N$-module is given by the Jones index of the inclusion $\N\subset\M$ with respect to $E$, \cite{Jon83}, \cite{Kos86}. This guiding idea is supported by the following theorem due to \cite[\Prop 1.3]{PiPo86}, \cite[\Thm 3.5]{BDH88}, \cite[\Thm 1.1.5, 1.1.6]{Pop95}, which characterizes the finiteness of the index (and computes its value) by means of Pimsner-Popa bases.

\begin{theorem}\label{thm:havet-popa}\emph{\cite{Pop95}.}
$\N\stackrel{E}{\subset} \M$ has finite Jones index
if and only if it has a Pimsner-Popa basis $\{M_i\}$ such that $\sum_i M_i^* M_i$ is ultraweakly convergent in $\M$. In this case, $\sum_{i}M^{*}_{i}M_{i}$ belongs to the center of $\M$, it holds
$$\sum_{i}M^{*}_{i}M_{i}=\Ind(\N\stackrel{E}{\subset}\M)$$
where $\Ind(\N\stackrel{E}{\subset}\M)$ denotes the Jones index of $E$, and the same is true for any other Pimsner-Popa basis. 

If in addition $\N$ is properly infinite, then $\N\stackrel{E}{\subset}\M$ has finite Jones index if and only if it has a Pimsner-Popa basis made of one element $\{M\}$. Moreover, $M$ can be chosen such that $E(MM^*) = \oneop$.
\end{theorem}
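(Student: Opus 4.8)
The plan is to prove the two assertions separately, using the dual operator-valued weight of the basic construction as the central tool for the first and proper infiniteness of $\N$ for the second. For the first assertion I would bring in the canonical normal faithful semifinite operator-valued weight $\hat E\colon \M_1^+\to\widehat{\M}^+$ dual to $E$ in the sense of Haagerup--Kosaki \cite{Kos86}, characterized by being $\M$-bimodular and normalized by $\hat E(e_\N)=\oneop$; by definition $\Ind(\N\stackrel{E}{\subset}\M)=\hat E(\oneop)$, an element affiliated with the centre $Z(\M)$ that is bounded exactly when the index is finite. The key computation is then immediate: condition $(ii)$ says the projections $P_i=M_i^* e_\N M_i$ are mutually orthogonal with $\sum_i P_i=\oneop$, so the finite partial sums increase to $\oneop$; applying the normal, $\M$-bimodular map $\hat E$ and using $\hat E(e_\N)=\oneop$ gives
\[ \hat E(\oneop)=\sup_F\,\hat E\Bigl(\sum_{i\in F}M_i^* e_\N M_i\Bigr)=\sup_F\sum_{i\in F}M_i^*\,\hat E(e_\N)\,M_i=\sum_i M_i^* M_i \]
as an increasing limit in the extended positive part.

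Hence $\sum_i M_i^* M_i=\hat E(\oneop)$ for \emph{every} basis, so the sum is basis-independent and converges ultraweakly in $\M$ if and only if $\hat E(\oneop)$ is bounded, i.e. if and only if the index is finite, in which case the limit is $\Ind(\N\stackrel{E}{\subset}\M)$. Centrality follows from bimodularity: since $\oneop=m\,\oneop\,\oneop=\oneop\,\oneop\,m$, one gets $m\,\hat E(\oneop)=\hat E(m)=\hat E(\oneop)\,m$ for all $m\in\M$, so $\hat E(\oneop)\in Z(\M)$. The existence of at least one Pimsner-Popa basis, needed for the forward direction, is obtained by the usual Zorn exhaustion against the equivalent condition $(i)'$ of Lemma \ref{lem:pi-poequiv}. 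Here the only non-elementary input is the operator-valued weight machinery and the legitimacy of interchanging the normal $\hat E$ with the (unconditionally convergent) sum, both of which are standard.

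For the second assertion assume in addition that $\N$ is properly infinite; since $\Hilb$ is separable the index set may be taken countable. The implication ``single-element basis $\Rightarrow$ finite index'' is immediate from the first part, as $\sum_i M_i^* M_i=M^*M$ trivially converges. For the converse I would merge a given basis $\{M_i\}$ into one element: using proper infiniteness choose orthogonal projections $g_i\in\N$ with $g_i\sim\oneop$ and $\sum_i g_i=\oneop$, and since $q_i:=E(M_iM_i^*)\le\oneop\sim g_i$, partial isometries $w_i\in\N$ with $w_i^* w_i=q_i$ and $w_iw_i^*\le g_i$, so that $w_i^* w_j=\delta_{i,j}q_i$. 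Using $q_iM_i=M_i$ (which follows from faithfulness of $E$ and $q_i=E(M_iM_i^*)$), the sum $M:=\sum_i w_iM_i$ converges strongly because $\sum_i\|w_iM_i\xi\|^2=\langle\xi,(\sum_i M_i^*M_i)\xi\rangle=\langle\xi,\Ind(E)\,\xi\rangle<\infty$, hence $M\in\M$; and the expansion of Proposition \ref{prop:pipoexpansion} together with $w_i^* w_j=\delta_{i,j}q_i$ gives $M^*E(Mm)=\sum_i M_i^*E(M_im)=m$, so $\{M\}$ is a single-element basis.

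The hard part will be arranging $E(MM^*)=\oneop$ rather than merely $E(MM^*)=\sum_i w_iw_i^*=:q$, a projection a priori distinct from $\oneop$. The observation that rescues the normalization is that the unit $\oneop\in\M$ is a vector of full $\N$-norm. Indeed, since $\{M\}$ is a single generator, $\M=M^*\N$ and the map $n\mapsto M^*n$ identifies $\M$ with the right Hilbert $\N$-module $q\N$ (the inner products match: $E(n^*MM^*m)=n^*qm$); the unit then corresponds to some $a\in q\N$ with $a^*a=E(\oneop)=\oneop$, so $aa^*$ is a projection with $\oneop\sim aa^*\le q\le\oneop$, forcing $q\sim\oneop$. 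Choosing a partial isometry $v\in\N$ with $v^*v=q$, $vv^*=\oneop$ and replacing $M$ by $vM$ preserves the single-basis property, because $e_\N$ commutes with $\N$ and $qM=M$, while yielding $E(vMM^*v^*)=vqv^*=\oneop$. This completes the plan; the delicate equivalence $q\sim\oneop$ is exactly where proper infiniteness, through the comparison theory of projections, is essential.
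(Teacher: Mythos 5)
The paper does not actually prove this theorem: it is stated as a quoted result, with attribution to \cite[Prop.\ 1.3]{PiPo86}, \cite[Thm.\ 3.5]{BDH88} and \cite[Thm.\ 1.1.5, 1.1.6]{Pop95}, so there is no internal proof to compare against. Your argument is, in substance, the standard one from those references, and it is correct: the identity $\sum_i M_i^*M_i=\hat E(\oneop)$ obtained by applying the normal dual weight to $\sum_i M_i^*e_\N M_i=\oneop$ and using $\hat E(e_\N)=\oneop$ gives basis-independence, the equivalence with finiteness of $\hat E(\oneop)=\Ind(E)$, and (via bimodularity on $m_{\hat E}$) centrality in one stroke. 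The gluing of a countable basis into a single element via partial isometries $w_i$ with $w_i^*w_i=q_i$ and mutually orthogonal ranges, using $q_iM_i=M_i$ (faithfulness of $E$) and the orthogonality of the vectors $w_iM_i\xi$ for strong convergence, is exactly the cutting/gluing mechanism the paper recalls after Proposition \ref{prop:pipoexpansion}; and your normalization step, extracting the isometry $a=E(M)$ from $\oneop=M^*E(M)$ so that $\oneop\sim aa^*\le q\le\oneop$ forces $q\sim\oneop$ by Schr\"oder--Bernstein, is a clean way to get $E(MM^*)=\oneop$.

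Two points deserve tightening. First, the forward direction needs the \emph{existence} of a Pimsner-Popa basis in the finite index case, and "the usual Zorn exhaustion" hides the real content: to show that a maximal orthogonal family $\{P_i\}$ sums to $\oneop$ one must produce, below a nonzero defect projection $Q=\oneop-\sum_iP_i$, a new element of the form $M^*e_\N M$; this uses that $\M e_\N\M$ is ultraweakly dense in $\M_1$ together with the push-down lemma, which is where finiteness of the index (or proper infiniteness, as in \cite{FiIs99}) genuinely enters. Second, in the normalization step you should note $a=qa$ (which follows from $qM=M$ applied inside $E$), since otherwise $a^*qa=\oneop$ does not immediately read as $a^*a=\oneop$; with that observation the comparison argument goes through as you state.
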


We are mainly interested in inclusions of properly infinite von Neumann algebras (with separable predual), due to their appearance in QFT, see, \eg, \cite{Kad63}, \cite{Lon79}. In this setting, with no finite index or factoriality assumptions, it was shown by Fidaleo and Isola \cite[\Thm 3.5]{FiIs99} that Pimsner-Popa bases made of elements of $\M$ always exist.  

\begin{proposition}\emph{\cite{FiIs99}}.
Every inclusion $\N\stackrel{E}{\subset} \M$ of properly infinite von Neumann algebras with a normal faithful conditional expectation $E: \M \rightarrow \N$ admits a Pimsner-Popa basis $\{M_i\}\subset\M$ in the sense of Definition \ref{def:pi-po}.
\end{proposition}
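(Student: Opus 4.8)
The plan is to recast the statement, via the equivalent conditions $(i)'$ and $(ii)'$ of Lemma \ref{lem:pi-poequiv}, as the construction of a family $\{M_i\}\subset\M$ with $E(M_iM_j^*)=\delta_{i,j}q_i$ for projections $q_i\in\N$, and with $\overline{\sum_i M_i^* e_\N\Hilb}=\Hilb$. Writing $b_i:=M_i^*$, this is exactly an orthonormal-up-to-projections basis of $\M$ regarded as a right pre-Hilbert $\N$-module with $(b_i\mid b_j)=E(b_i^*b_j)$, as in the Remark after Proposition \ref{prop:pipoexpansion}; the completeness $(ii)'$ says that the final subspaces $M_i^* e_\N M_i\,\Hilb=\overline{M_i^*\Hilb_0}$ exhaust $\Hilb$. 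Since $\Hilb$ is separable and $\M\Omega$ is dense, I would fix a sequence $\{m_k\}\subset\M$ with $\{m_k\Omega\}$ total in $\Hilb$ and run a module Gram--Schmidt procedure against it.

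The algebraic skeleton is as follows. Given a finite orthonormal subfamily already built, the projection $\sum_i P_i$ (with $P_i=M_i^* e_\N M_i$) sends the $\M$-bounded vector $m_k\Omega$ to $\big(\sum_i M_i^* E(M_i m_k)\big)\Omega$, again $\M$-bounded, so the residual $\eta:=\big(\oneop-\sum_i P_i\big)m_k\Omega$ equals $m'\Omega$ for some $m'\in\M$; moreover $\eta\perp P_i\Hilb$ forces, via faithfulness of $\omega$ on $\N$ (evaluate $\omega(n^* E(M_i m'))$ and take $n=E(M_im')$), the genuine module orthogonality $E(M_i m')=0$. Each residual $m'$ is then normalized: with $c:=E(m'^*m')\in\N_+$ and the spectral projections $f_n$ of $c$ on dyadic bands bounded away from $0$, the elements $\hat m_n:=m' c^{-1/2}f_n\in\M$ satisfy $E(\hat m_n^*\hat m_{n'})=\delta_{n,n'}f_n$ and $\sum_n\overline{\hat m_n\Hilb_0}=\overline{m'\Hilb_0}$. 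This is precisely the use of the cutting and gluing procedures $(1)$ and $(2)$: converting a non-projection square norm into a countable orthogonal system with projection norms while staying inside $\M$. Adjoining the $\hat m_n$ and iterating over $k$ yields a countable family with property $(i)'$; since after processing $m_k$ the vector $m_k\Omega$ lies in the closed span of the $\overline{M_i^*\Hilb_0}$, totality of $\{m_k\Omega\}$ delivers $(ii)'$, i.e.\ $\sum_i P_i=\oneop$ strongly.

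The genuinely delicate point, and the main obstacle, is analytic rather than algebraic. The orthogonal projection onto a residual submodule $\overline{m'\Hilb_0}$ is $m'\,c^{\dagger}e_\N\,m'^*$ with $c^{\dagger}$ the (generally unbounded) generalized inverse of $c=E(m'^*m')$, so it need \emph{not} map $\M$-bounded vectors to $\M$-bounded vectors when $c$ fails to be invertible; equivalently, naive orthogonalization drifts out of $\M$ into the self-dual completion of the module. One therefore cannot simply take a maximal orthogonal family and hope it is complete (maximality does not force $\sum_iP_i=\oneop$), and the boundedness/convergence bookkeeping must be handled with care: the standing hypotheses (properly infinite algebras with separable predual, hence a cyclic separating $\Omega$ with $\omega\circ E=\omega$ and a workable spectral calculus in $\N$) are what make the truncated construction above stay inside $\M$ and converge unconditionally in the strong topology. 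This is exactly the careful estimate carried out by Fidaleo and Isola in \cite[\Thm 3.5]{FiIs99}; it specializes, when the index is finite and $\N$ is properly infinite, to the single-element basis with $E(MM^*)=\oneop$ of Theorem \ref{thm:havet-popa}.
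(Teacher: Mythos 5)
The paper does not actually prove this proposition --- it is quoted from \cite[\Thm 3.5]{FiIs99} --- but the relevant technique is spelled out later, in the proof of Proposition \ref{prop:discreteiffintert}, and it is not the one you propose. Your Gram--Schmidt scheme has a genuine gap exactly at the point you yourself flag, and it already appears at the \emph{second} step of the induction, not merely ``in the limit''. The first step is fine: for a finite orthogonal family the residual $m'=m_k-\sum_i M_i^*E(M_im_k)$ lies in $\M$, and your spectral cutting of $c=E(m'^*m')$ into dyadic bands legitimately produces countably many elements of $\M$ with $E(\hat m_n^*\hat m_{n'})=\delta_{n,n'}f_n$. But precisely because of that cutting, the family is already countably infinite after processing $m_1$. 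When you then form the residual of $m_2\Omega$, the sum $\sum_i M_i^*E(M_im_2)$ converges only in the topology of Proposition \ref{prop:pipoexpansion}, i.e.\ $\bigl(\sum_iP_i\bigr)m_2\Omega$ converges as a vector of $\Hilb$; since $\sum_iP_i$ is a projection of $\M_1$ and not of $\M$, it does not commute with $\M'$ and there is no reason the limit is again of the form $m'\Omega$ with $m'\in\M$. This is the same unboundedness phenomenon you correctly diagnose for $c^{\dagger}$, and deferring its resolution to ``the careful estimate carried out by Fidaleo and Isola'' is not a proof --- all the more so because that is not what they actually do. Note also that proper infiniteness, which is an essential hypothesis, is never used anywhere concrete in your argument.

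The actual argument avoids orthogonalization inside $\M$ altogether and works upstairs in the basic construction $\M_1=\left\langle\M,e_\N\right\rangle$ with the dual operator-valued weight $\hat E$. Since $\hat E$ is normal semifinite faithful and $\M_1$ has separable predual, one can write $\oneop=\sum_iP_i$ with $P_i$ mutually orthogonal projections in $n_{\hat E}$ (finite-weight decomposition, \cite[\Lem 2.2]{HKZ91}, \cite[\Lem 3.2]{FiIs99}); because $\N$ is properly infinite, $e_\N\M_1e_\N=\N e_\N$ is properly infinite and $e_\N$ has central support $\oneop$ in the $\sigma$-finite algebra $\M_1$, so each $P_i$ is equivalent to a subprojection of $e_\N$ via a partial isometry $W_i$ with $W_i^*W_i=P_i$ and $W_iW_i^*\leq e_\N$. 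Then $W_i=e_\N W_i$ lies in $m_{\hat E}$ (as $P_i\in n_{\hat E}$ and $n_{\hat E}$ is a left ideal), and the push-down lemma \cite[\Lem 2.2]{ILP98}, \cite[\Lem 3.3]{FiIs99} yields $W_i=e_\N M_i$ with $M_i:=\hat E(W_i)\in\M$, whence $P_i=W_i^*W_i=M_i^*e_\N M_i$ and $\{M_i\}$ is the desired basis. This is where proper infiniteness enters (it makes every $P_i$ subordinate to $e_\N$, hence makes the push-down applicable), and it is the same mechanism the paper reuses for Proposition \ref{prop:discreteiffintert}. If you wish to salvage the module-theoretic picture, the moral is that the orthogonal complement must be taken in $\M_1$, where it is harmless, and only the resulting partial isometries are pushed down into $\M$ at the very end.
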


\section{Infinite index and generalized Q-systems (of intertwiners)}\label{sec:infindex}

\emph{Q-systems} were introduced by R. Longo in \cite[\Sec 6]{Lon94}. They provide a way to algebraically characterize infinite subfactors $\N\subset\M$ with \emph{finite index} together with a normal faithful conditional expectation $E:\M\rightarrow\N$ by means of data pertaining to the smaller factor $\N$. 
The main technical tool to achieve this characterization is the notion of \emph{canonical endomorphism} \cite{Lon87} for the inclusion $\N\subset\M$, 
namely the homomorphism $\gamma:\M\rightarrow\N$ defined by $\gamma := (j_\N j_\M)_{\restriction \M}$, where $j_\N := \Ad_{J_{\N,\Phi}}$, $j_\M := \Ad_{J_{\M,\Phi}}$ and $J_{\N,\Phi}$, $J_{\M,\Phi}$ are respectively the modular conjugations of $\N$, $\M$ with respect to a cyclic and separating vector $\Phi$ for $\N$ and $\M$. From a categorical perspective, a Q-system is a special \Cstar Frobenius algebra in a strict \Cstar tensor category $\C$ with simple unit, \cf \cite[\Def 3.8]{BKLR15}. 
In the more concrete case of subfactors, the category is $\C = \End_0(\N)$, whose objects are the endomorphisms of the factor $\N$ with finite dimension in the sense of \cite{LoRo97}.

Here we recall and analyze the more general notion of \emph{generalized Q-system}, introduced by F. Fidaleo and T. Isola in \cite[\Sec 5]{FiIs99} for a possibly infinite index (semidiscrete or semicompact) inclusion of properly infinite von Neumann algebras. We then introduce the more special notion of generalized Q-system \emph{of intertwiners} that will be the fundamental object in the subsequent sections, in particular for the applications to QFT.

Let $\N\subset\M$ be a unital inclusion of properly infinite von Neumann algebras on a separable Hilbert space $\Hilb$. Denote by $C(\M,\N)$ and $E(\M,\N)$ respectively the set of all normal and normal faithful conditional expectations of $\M$ onto $\N$. We call the inclusion $\N\subset \M$ \textbf{semidiscrete} if $E(\M,\N)\neq\emptyset$, and \textbf{semicompact} if $E(\N',\M')\neq\emptyset$, or equivalently if $E(\M_1,\M)\neq\emptyset$ or $E(\N,\N_1)\neq\emptyset$, where $\N_1\subset\N\subset\M\subset\M_1$ denotes the tower of von Neumann algebras obtained by canonical extension and restriction of the original inclusion \cite[\Sec 2.5 A]{LoRe95}. The terminology is adopted from \cite{FiIs99}, \cite{ILP98}, \cite{FiIs95}, \cite{HeOc89}. Recall that a finite index inclusion is both semidiscrete and semicompact, see \eg \cite[\Prop 4.4]{Lon90}. 

Let $\End(\N)$ be the collection of normal faithful unital *-endomorphisms of $\N$. The following notion is tailored to describe \emph{semidiscrete} inclusions of von Neumann algebras $\N\subset\M$ with $E\in E(\M,\N)$, possibly of \emph{infinite index}.

\begin{definition}\label{def:genQsys}\cite{FiIs99}.
Let $\N$ be a properly infinite von Neumann algebra. A \textbf{generalized Q-system} in $\C = \End(\N)$ is a triple $(\theta, w, \{m_i\})$ consisting of an endomorphism $\theta\in\End(\N)$, an isometry $w\in\Hom_{\End(\N)}(\id,\theta)$ (\ie, $wn = \theta(n)w$, $n\in\N$), and a family $\{m_i\}\subset\N$ indexed by $i$ in some set $I$, such that
\begin{itemize}
\item[$(i)$] $p_i := m_i^* ww^* m_i$ are mutually orthogonal projections in $\N$, \ie, $p_i p_j^* = \delta_{i,j} p_i$, such that $\sum_i p_i = \oneop$. (\lqq Pimsner-Popa condition") 
\item[$(ii)$] $n w = 0 \Rightarrow n = 0$ if $n\in \N_1 := \left\langle\theta(\N), \{m_i\}\right\rangle$. (\lqq faithfulness condition") 
\end{itemize}
\end{definition}

\begin{remark}
An analogous definition of generalized Q-system in $\End(\N)$, involving an isometry $x\in\Hom_{\End(\N)}(\theta,\theta^2)$ instead of $w\in\Hom_{\End(\N)}(\id,\theta)$, can be given in the \emph{semicompact} case, see \cite[\Sec 5]{FiIs99}. We shall however be interested in extensions $\N\subset\M$ with a (normal faithful) conditional expectation $E\in E(\M,\N)$ as they arise in QFT when $\N=\A(\O)$, $\M=\B(\O)$ are local algebras (relative to some spacetime region $\O$) and $\{\A\subset \B\}$ is an extension of a net of local observables $\{\A\}$ by means of a \lqq field net" $\{\B\}$. Here $E$ generalizes the notion of an average over a global gauge group action on fields, giving the observables as the gauge invariant part.
\end{remark}

\begin{theorem}\label{thm:FiIs}\emph{\cite{FiIs99}}.
Let $\N$ be a properly infinite von Neumann algebra with separable predual and $\theta\in \End(\N)$. Then the following are equivalent
\begin{itemize}
\item[$(1)$] There is a von Neumann algebra $\N_1$ such that $\N_1\subset\N$ with $E' \in E(\N_1, \N_2)\neq \emptyset$, where $\N_2 := \theta(\N)\subset\N_1$, and $\theta$ is a canonical endomorphism for $\N_1\subset\N$.
\item[$(2)$] There is a von Neumann algebra $\M$
such that $\N\subset\M$ with $E\in E(\M, \N)\neq \emptyset$, and $\theta$ is a dual canonical endomorphism for $\N\subset\M$, \ie, $\theta = \gamma_{\restriction\N}$ where $\gamma\in\End(\M)$ is a canonical endomorphism for $\N\subset\M$.
\item[$(3)$] The endomorphism $\theta$ is part of a generalized Q-system in $\End(\N)$, $(\theta, w, \{m_i\})$, see Definition \ref{def:genQsys}.
\end{itemize}
\end{theorem}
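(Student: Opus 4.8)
The plan is to run the cycle $(1)\Rightarrow(2)\Rightarrow(3)\Rightarrow(1)$, treating the two ``geometric'' implications by the modular-theoretic duality between canonical and dual canonical endomorphisms and the middle one by an analysis of Pimsner–Popa bases. Throughout I use that, by definition of the arrows in $\End(\N)$, the isometry $w$ lies in $\N$ and satisfies $wn=\theta(n)w$ (equivalently $w^*\theta(n)=nw^*$) together with $w^*w=\oneop$.

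For $(1)\Rightarrow(2)$ I would reconstruct $\M$ by modular reflection. As $\theta$ is canonical for $\N_1\subset\N$, one has $\theta=(j_{\N_1}j_\N)_{\restriction\N}$ for the modular conjugations $j_\N,j_{\N_1}$ of a common cyclic and separating vector, and the reflected algebra $\M:=j_{\N_1}(\N')$ gives an inclusion $\N\subset\M$ whose canonical endomorphism $\gamma$ restricts on $\N$ to $\theta$; reflecting $E'\in E(\N_1,\N_2)$ through $j_{\N_1}$ produces the desired $E\in E(\M,\N)$. This is the usual passage from a dual canonical endomorphism back to its extension, see \cite{Lon87}, \cite{LoRe95}.

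For $(2)\Rightarrow(3)$ I would extract the triple from the concrete inclusion. The isometry $w$ is the canonical implementation of $E$, determined by $E(m)=w^*\gamma(m)w$ for $m\in\M$; one then fixes $w$ so that its support $ww^*\in\N$ is the image of the Jones projection $e_\N$ under the normal injective extension $\gamma_1:\M_1\to\M$ of $\gamma$ to the basic construction. Starting from a Pimsner–Popa basis $\{M_i\}\subset\M$ for $\N\stackrel{E}{\subset}\M$ — which exists by the existence theorem of \cite{FiIs99} recalled above — I would set $m_i:=\gamma(M_i)\in\N$; applying the homomorphism $\gamma_1$ to the relations $P_i=M_i^*e_\N M_i$ and $\sum_iP_i=\oneop$ of Definition \ref{def:pi-po} turns them, by normality and injectivity, into the statements that $p_i=m_i^*ww^*m_i$ are mutually orthogonal projections with $\sum_ip_i=\oneop$, which is condition $(i)$. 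Finally $\left\langle\theta(\N),\{m_i\}\right\rangle=\gamma(\M)$ because $\{M_i\}$ generates $\M$ over $\N$ (Proposition \ref{prop:pipoexpansion}) and $\gamma$ is an isomorphism onto $\gamma(\M)$, so the faithfulness condition $(ii)$ is just the injectivity of $\gamma$ on $\M$ transported by $w$.

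The step I expect to be the main obstacle is $(3)\Rightarrow(1)$. Most of the data come for free: put $\N_2:=\theta(\N)$ and $\N_1:=\left\langle\theta(\N),\{m_i\}\right\rangle$, so that $\N_2\subset\N_1\subset\N$, and define $E'(x):=\theta(w^*xw)$. The intertwining relations for $w$ show immediately that $E'$ is a normal, positive, $\N_2$-bimodular projection of $\N_1$ onto $\N_2$, with $E'(x^*x)=\theta\bigl((xw)^*(xw)\bigr)$; hence $E'(x^*x)=0$ forces $xw=0$ and then, by condition $(ii)$, $x=0$, so $E'\in E(\N_1,\N_2)$. What genuinely remains is to prove that $\theta$ is a \emph{canonical} endomorphism for $\N_1\subset\N$, i.e.\ that $\theta=(j_{\N_1}j_\N)_{\restriction\N}$ for suitable modular conjugations. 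In the finite-index case this is read off from the finite-dimensional intertwiner calculus of the associated Q-system; here that calculus is unavailable, and one must instead build the standard form of $\N_1\subset\N$ spatially out of $\theta$, $w$ and the basis $\{m_i\}$, controlling the defining series in the strong operator topology through the Pimsner–Popa condition $(i)$ and the expansion of Proposition \ref{prop:pipoexpansion}. This spatial identification of $\theta$ with the canonical endomorphism is precisely the characterisation of canonical endomorphisms of \cite{FiIs99}, and it is the analytic heart on which the entire equivalence rests.
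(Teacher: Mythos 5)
Your proposal follows essentially the same route as the paper: the equivalence of $(1)$ and $(2)$ by canonical extension/restriction (modular reflection), the transport of Pimsner--Popa bases along the spatial isomorphism $\gamma$ (the paper carries out exactly your $(2)\Rightarrow(3)$ computation, in reverse, immediately after the theorem when it constructs the dual generalized Q-system), and the deferral of the genuinely hard spatial identification of $\theta$ as a canonical endomorphism for $\N_1\subset\N$ to \cite[\Thm 4.1]{FiIs99} --- which is also all the paper does, noting only in addition that $ww^*$ is the Jones projection for $\N_2\subset\N_1$ and $\N$ its Jones extension. One small correction: in your $(2)\Rightarrow(3)$ step the faithfulness condition $(ii)$ rests on the faithfulness of $E$ (via $E(m^*m)=(\gamma(m)w)^*(\gamma(m)w)$) \emph{together with} injectivity of $\gamma$, not on injectivity of $\gamma$ alone.
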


\begin{proof}
We may assume that $\N$ is in its standard representation on $\Hilb$. The equivalence of $(1)$ and $(2)$ is then obtained by canonical extension and restriction \cite[\Sec 2.5 A]{LoRe95}. The tower of von Neumann algebras reads
\begin{equation}\label{eq:vNtower}\ldots\subset\N_2 = \theta(\N) \stackrel{E'}{\subset} \N_1 = \left\langle\theta(\N), \{m_i\}\right\rangle \stackrel{\theta}{\subset} \N \stackrel{E, \gamma}{\subset} \M\subset\ldots\end{equation}
where $\N_2 \subset \N_1 = \gamma (\N \subset \M)$ is a spatial isomorphism of inclusions and the relation $ E' \circ \gamma = \gamma \circ E$ on $\M$ gives a bijection between $E(\N_1,\N_2)$ and $E(\M,\N)$. 

The equivalence of $(1)$ and $(3)$ is due to \cite[\Thm 4.1]{FiIs99}. In particular, they show that $e_{\N_2} := ww^*$ is a Jones projection for the inclusion $\N_2\subset\N_1$ with respect to $E' := \theta(w^* \cdot w)$ and that $\N = \left\langle\N_1, e_{\N_2}\right\rangle$ is the associated Jones extension. Hence the condition $(i)$ in Definition \ref{def:genQsys} says that $\{m_i\}\subset\N_1$ is a Pimsner-Popa basis for $\N_2\subset\N_1$ with respect to $E'$.
The condition $(ii)$ in Definition \ref{def:genQsys} is nothing but faithfulness of $E'$.
\end{proof}

\begin{remark}
The condition that the $p_i$ in Definition \ref{def:genQsys} are (mutually orthogonal) projections in $\N$, \ie, $p_ip_j^* = \delta_{i,j}p_i$, does not enter in the proof of $(3) \Rightarrow (1)$ of Theorem \ref{thm:FiIs}, only $\sum_i p_i = \oneop$ is relevant there. We can however always assume it because 
$m_i^* ww^* m_i$ is a projection if and only if $w^*m_i m_i^* w$ is a projection, which is equivalent to $ww^*m_i m_i^* ww^* = E'(m_i m_i^*) ww^*$ is a projection, \ie, $E'(m_i m_i^*)$ is a projection, because $ww^* = e_{\N_2}$ and $n\mapsto n e_{\N_2}$ is an isomorphism of $\N_2$ onto $\N_2 e_{\N_2}$. Hence we can apply a Gram-Schmidt orthogonalization procedure to the $\{m_i\}$ with respect to the operator-valued inner product $(m_i | m_j) := E'(m_j m_i^*)$ and choose another basis $\{\tilde m_i\}$ such that $E'(\tilde m_j \tilde m_i^*) = \delta_{i,j}\oneop$. 
\end{remark}

\begin{remark}\label{rmk:genQsysmoreflex}
Notice that no \emph{factoriality} $\Z(\N) = \CC\oneop$, $\Z(\M) = \CC\oneop$, nor \emph{irreducibility} $\N'\cap\M = \CC\oneop$ assumptions enter in the proof of Theorem \ref{thm:FiIs}, \cite{FiIs99}. In the case of non-irreducible finite index subfactors, $E$ is not necessarily the \emph{minimal} conditional expectation, see \cite{Hia88}, \cite[\Sec 5]{Lon89}. 
\end{remark}

\begin{proposition}\emph{\cite{FiIs99}}.
Let $\N\subset\M$ a semidiscrete inclusion of properly infinite von Neumann algebras and let $\gamma$ be a canonical endomorphism. The following are equivalent
\begin{itemize}
\item[$(1)$] $\N\subset\M$ is irreducible in the sense that $\N'\cap\M = \Z(\N)$.
\item[$(2)$] $E(\M,\N)$ contains only one element.
\item[$(3)$] $\Hom_{\End(\N)}(\id,\theta)$ is cyclic as a $\Z(\N)$-module, where $\theta = \gamma_{\restriction \N}$.
\end{itemize}
\end{proposition}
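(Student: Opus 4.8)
The plan is to fix once and for all a canonical endomorphism $\gamma$ and a reference expectation $E_0\in E(\M,\N)$, and to read off from the two descriptions of irreducibility—via conditional expectations and via intertwiners—a pair of parametrizations whose triviality is precisely condition $(1)$. Write $\theta=\gamma_{\restriction\N}$ and let $(\theta,w_0,\{m_i\})$ be the generalized Q-system attached to $E_0$ by Theorem \ref{thm:FiIs}, so that $w_0\in\Hom_{\End(\N)}(\id,\theta)$ is an isometry with $w_0w_0^*=e_{\N_2}$. From $E'\circ\gamma=\gamma\circ E_0$ and $E'=\theta(w_0^*\slot\,w_0)$ one reads off the reconstruction formula $E_0(m)=w_0^*\gamma(m)w_0$, $m\in\M$. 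I record two structures: first, $\Z(\N)=\N\cap\N'\subseteq\N'\cap\M$ lies in the centre of $\N'\cap\M$; second, since $s^*t\in\N'\cap\N=\Z(\N)$ for all $s,t\in\Hom_{\End(\N)}(\id,\theta)$, the intertwiner space is a right Hilbert $\Z(\N)$-module with inner product $(s\,|\,t)=s^*t$, of which $w_0$ is a unit vector.

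For $(1)\Leftrightarrow(2)$ I use the standard parametrization of the normal conditional expectations onto $\N$ by the positive part of the relative commutant: every $E\in C(\M,\N)$ is of the form $E(\slot)=E_0(h^{1/2}\slot\,h^{1/2})$ for a unique $h\geq 0$ in $\N'\cap\M$ with $E_0(h)=\oneop$ (the $\N$-bimodularity of $E$ forcing $h\in\N'\cap\M$), faithful precisely when $h$ is non-singular. If $\N'\cap\M=\Z(\N)$ then any such $h$ lies in $\N$, so $E_0(h)=h=\oneop$ and $E=E_0$ is the only element of $E(\M,\N)$, giving $(1)\Rightarrow(2)$. Conversely, if $\N'\cap\M\supsetneq\Z(\N)$, I pick a self-adjoint $k\in(\N'\cap\M)\setminus\Z(\N)$ and set $h=\oneop+\varepsilon\,(k-E_0(k))$; then $E_0(h)=\oneop$, $h$ is positive and non-singular for small $\varepsilon>0$, and $h\neq\oneop$ because $k=E_0(k)$ would place $k\in\N\cap(\N'\cap\M)=\Z(\N)$. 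Thus $E_h\neq E_0$ are two faithful expectations and $(2)\Rightarrow(1)$ follows by contraposition.

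For $(1)\Leftrightarrow(3)$ I introduce the $\Z(\N)$-linear map
$$\Psi\colon \N'\cap\M\longrightarrow \Hom_{\End(\N)}(\id,\theta),\qquad \Psi(x)=\gamma(x)\,w_0.$$
It is well defined, since $\gamma(x)\in\gamma(\M)=\N_1$ commutes with $\gamma(\N)=\theta(\N)$, whence $\Psi(x)n=\gamma(x)\theta(n)w_0=\theta(n)\Psi(x)$; it is injective, since $\gamma(x)w_0=0$ gives $E_0(x^*x)=w_0^*\gamma(x^*x)w_0=0$ and $E_0$ is faithful; and using $\gamma_{\restriction\N}=\theta$ with the intertwining relation one checks $\Psi(zx)=z\Psi(x)$ for $z\in\Z(\N)$, so that $\Psi(\Z(\N))=\Z(\N)w_0$. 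Granting that $\Psi$ is onto, the equivalence is immediate: injectivity turns $\N'\cap\M=\Z(\N)$ into $\Psi(\N'\cap\M)=\Z(\N)w_0$, while surjectivity identifies the left-hand side with all of $\Hom_{\End(\N)}(\id,\theta)$, so $(1)$ holds iff $\Hom_{\End(\N)}(\id,\theta)=\Z(\N)w_0$, which is exactly $(3)$. As a consistency check one sees directly that $(3)\Rightarrow(2)$: if the module equals $\Z(\N)w_0$, any isometry $w$ in it is $w=zw_0$ with $z^*z=\oneop$, hence $z$ unitary in the abelian algebra $\Z(\N)$, and since $z$ commutes with $\gamma(\M)\subseteq\N$ we get $w^*\gamma(\slot)w=w_0^*\gamma(\slot)w_0=E_0$, so the expectation attached to every Q-system isometry is $E_0$.

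The crux—and the step I expect to be the main obstacle—is the surjectivity of $\Psi$, equivalently the assertion $t\,w_0^*\in\N_1=\gamma(\M)$ for every $t\in\Hom_{\End(\N)}(\id,\theta)$. In finite index this is the familiar identification $\N'\cap\M\cong\Hom(\id,\theta)$, but here $\N=\left\langle\N_1,e_{\N_2}\right\rangle$ is an infinite-index Jones extension with no expectation $\N\to\N_1$ available a priori (semidiscreteness yields only $E'\colon\N_1\to\N_2$), so the pull-down of $t\,w_0^*$ into $\N_1$ is not formal. I would obtain it by invoking the reconstruction of the extension in \cite[\Thm 4.1]{FiIs99} together with the description of the intertwiners of $\theta$ in the discrete setting of \cite{ILP98}, which jointly realize every element of $\Hom_{\End(\N)}(\id,\theta)$ as $\gamma(x)w_0$ for a charged field $x\in\N'\cap\M$. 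Once surjectivity is in hand, all three equivalences close up.
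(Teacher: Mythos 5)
The paper itself gives no proof of this proposition (it is quoted from \cite{FiIs99}), so your attempt can only be measured against what a correct argument must contain. Your overall architecture is sensible, and the implication $(2)\Rightarrow(1)$ is correct: the perturbation $h=\oneop+\varepsilon(k-E_0(k))$ does yield a second normal faithful expectation, since $E_h(k-E_0(k))=\varepsilon E_0\bigl((k-E_0(k))^2\bigr)\neq 0$. But the two remaining legs each contain a genuine gap. For $(1)\Rightarrow(2)$ you rely on the assertion that \emph{every} $E\in E(\M,\N)$ has a bounded density, $E=E_0(h^{1/2}\slot h^{1/2})$ with $h\in(\N'\cap\M)_+$, $E_0(h)=\oneop$. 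This Radon--Nikodym statement is false for infinite index: take $\M=\N\otimes\B(\Hilb)$ with $\N$ a properly infinite factor and $\N\otimes\CC\oneop\subset\M$; then $E(\M,\N)$ is the set of maps $\id\otimes\omega$ with $\omega$ a normal faithful state on $\B(\Hilb)$, and $\omega=\omega_0(h^{1/2}\slot h^{1/2})$ forces $\omega\leq\|h\|\,\omega_0$, which excludes most states (compare density matrices with eigenvalue sequences $c_1 n^{-2}$ and $c_2 2^{-n}$). The standard repair is via Connes cocycles: for a normal faithful state $\varphi$ on $\N$, the cocycle $u_t=(D(\varphi\circ E):D(\varphi\circ E_0))_t$ lies in $\N'\cap\M$ because both modular groups restrict to $\sigma^\varphi$ on $\N$; under $(1)$ it lies in $\Z(\N)\subset\N$, hence equals $(D\varphi:D\varphi)_t=\oneop$, so $\varphi\circ E=\varphi\circ E_0$ and $E=E_0$ by Takesaki's theorem.

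The second gap you flag yourself, but it is not an auxiliary technicality: the surjectivity of $\Psi(x)=\gamma(x)w_0$, i.e.\ the identification $\N'\cap\M\cong\Hom_{\End(\N)}(\id,\theta)$, is essentially the whole content of the proposition in the infinite index case (in finite index it is Frobenius reciprocity, which is unavailable here), so deferring it to ``\cite{ILP98} in the discrete setting'' both leaves the main point unproved and invokes a hypothesis (discreteness) strictly stronger than the semidiscreteness assumed in the statement. The tool actually available at this level of generality is \cite[\Prop 5.1]{Lon89} --- which this paper itself invokes in the proof of Proposition \ref{prop:discreteiffintert} --- stating that \emph{every} $E\in E(\M,\N)$ is of the form $w^*\gamma(\slot)w$ for an isometry $w\in\Hom_{\End(\N)}(\id,\theta)$. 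Combined with your (correct) observation that under $(3)$ any two isometries in a cyclic $\Z(\N)$-module differ by a central unitary, this gives $(3)\Rightarrow(2)$ honestly; note that your ``consistency check'' of this implication also silently uses this surjectivity of $E\mapsto w$. Two small further points: your map $\Psi$ satisfies $\Psi(z)=\theta(z)w_0=w_0z$ for $z\in\Z(\N)$, not $zw_0$, so the module action should be written on the right; and the cyclic generator of $\Hom_{\End(\N)}(\id,\theta)$ in $(3)$ need not be $w_0$ itself, which costs you an extra (easy, but missing) functional-calculus step in $\Z(\N)$ to reduce to generators of the form $w_0u$ with $u$ unitary.
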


We now specialize the notion of generalized Q-system (Definition \ref{def:genQsys}) by requiring an additional intertwining property of the Pimsner-Popa elements.

\begin{definition}\label{def:genQsysofinterts}
Let $\N$ be a properly infinite von Neumann algebra. We call $(\theta, w, \{m_i\})$ a \textbf{generalized Q-system of intertwiners} in $\C = \End(\N)$ 
if, in addition to the properties of Definition \ref{def:genQsys}, it satisfies $m_i \in\Hom_{\End(\N)}(\theta,\theta^2)$ (\ie, $m_i\theta(n)=\theta^2(n)m_i$, $n\in\N$) for every $i\in I$. 

In this case we can use string diagrams to denote $w$ and $m_i$ as follows 
$$
w \,= 
	\hspace{-2mm}
	\tikzmath{
	\draw[dashed]
	(5,3) node [above] {$\id$} -- (5,-5);
	\draw[ultra thick]
	(5,-5) node [] {\textbullet} -- (5,-17) node [below] {$\theta$};
	}
	\,=
	\hspace{-0.5mm}
	\tikzmath{
	\draw (5,7) node [above] {};
	\draw[ultra thick]
	(5,-5) node [] {\textbullet} -- (5,-17) node [below] {$\theta$};
	},
\qquad m_i \,=
	\hspace{-2mm}
	\tikzmath{
	\draw[ultra thick]
	(0,-14) node [below] {$\theta$}--(0,-10)..
	controls (0,0) and (10,0)
	..(10,-10)--(10,-14) node [below] {$\theta$};
	\draw[ultra thick]
	(5,7) node [above] {$\theta$}--(5,-2.5) node [] {\textbullet};
	\draw (5,0.5) node [right] {$i$};
	}, \; i\in I.
$$
\end{definition}

At this point, a comparison between the notions of generalized Q-system and \lqq ordinary" Q-system in the finite index setting is due.

\begin{remark}\label{rmk:cfqsys}(\textbf{The finite index case}).
An infinite subfactor $\N\subset\M$ with $E\in E(\M,\N)$ can be characterized by an \lqq ordinary" Q-system $(\theta,w,x)$ if and only if the Jones index of $E$ is finite, see \cite{Lon94}, \cite[\Sec 2.7]{LoRe95}. The algebraic relations defining a Q-system in $\End_0(\N)$ read as follows: $\theta\in\End_0(\N)$, $w\in\Hom_{\End_0(\N)}(\id,\theta)$, $x \in\Hom_{\End_0(\N)}(\theta,\theta^2)$ and
$$w^* x = \theta(w^*) x = \oneop,\quad x^2 = \theta(x) x, \quad xx^* = x^* \theta(x) = \theta(x^*) x, \quad x^*x \in \CC\oneop.$$
The conditions in the line above are called respectively unit property, associativity, Frobenius property and specialness, see \cite[\Def 3.8]{BKLR15}. It is known that the Frobenius property is a consequence of the other properties \cite{LoRo97}, \cite[\Lem 3.7]{BKLR15} and that specialness is not needed to construct the extension $\N_2 = \theta(\N)\subset\N_1$, \ie, $\N\subset\M$ \cite[\Rmk 3.18]{BKLR15}.

Moreover, it is an easy exercise to check that ordinary Q-systems are also generalized Q-system of intertwiners with $\{m_i\} = \{x\}$ (up to a normalization of $w$ and $x$), in the sense of Definition \ref{def:genQsysofinterts}. Indeed the Pimsner-Popa condition $x^*ww^*x = \oneop$ follows by $w^*x = \oneop$, and the faithfulness condition $n w = 0 \Rightarrow n = 0$, $n\in \N_1 = \left\langle\theta(\N), x\right\rangle$ follows because $\left\langle\theta(\N), x\right\rangle = \theta(\N)x = x^*\theta(\N)$ hold, due to $\theta(w^*)x = \oneop$, associativity and Frobenius property.
 
On the other hand, a finite index inclusion of infinite factors $\N_2\subset \N_1$ with normal faithful conditional expectation $E^{\prime}(\cdot)=\theta(w^{*}\cdot w)$, always has a Pimsner-Popa basis of one element, $m\in\N_1$, such that $E'(mm^*) = \oneop$ by Theorem \ref{thm:havet-popa}. The triple $(\theta,w,m)$ is a generalized Q-system in the sense of Definition \ref{def:genQsys}. The characterizing properties 
$$m^{*}ww^{*}m=\oneop,\quad w^{*}mm^{*}w=\oneop$$ 
are a weaker form of the unit property for ordinary Q-systems, and the Pimsner-Popa expansion of Proposition \ref{prop:pipoexpansion} gives in particular
$$m^2=E'(m^2m^*)m,\quad mm^*=E'(m(m^*)^2)m.$$
If we assume the unit property $w^*m=\oneop$ to hold, we get back the associativity $m^2=\theta(m)m$ and the Frobenius property $mm^* = \theta(m^*)m$.
\end{remark}

If $(\theta, w, \{m_i\})$ is a generalized Q-system (of intertwiners) in $\C = \End(\N)$, consider the tower of von Neumann algebras 
$$\ldots\subset\N_2 \stackrel{E'}{\subset} \N_1 \stackrel{\theta}{\subset} \N \stackrel{E, \gamma}{\subset} \M \stackrel{\gamma_1}{\subset} \M_1\subset\ldots$$
as in equation (\ref{eq:vNtower}), where the Jones extension $\M_1 = \left\langle \M,e_\N \right\rangle$ of $\N\subset\M$ with respect to $E$ coincides with the canonical extensions, namely $\left\langle \M,e_\N\right\rangle = j_{\M}(\N')$, see \cite[\Sec 2.5 D]{LoRe95}, \cite[\Sec 3]{Lon89}. Here $\Omega$ is a cyclic and separating vector for $\M$ as in Section \ref{sec:pipo} and $j_{\M} = \Ad_{J_{\M,\Omega}}$ is the associated modular conjugation. Moreover, $\theta$ and $\gamma_1$ are canonical endomorphisms dual to $\gamma$, hence $\theta = \gamma_{\restriction \N}$, $\gamma = {\gamma_1}_{\restriction \M}$. Then ${\gamma_1}^{-1}(ww^*) = {\gamma_1}^{-1}(e_{\N_2}) = e_{\N}$ and $\{M_i := {\gamma}^{-1}(m_i)\} \subset \M$ clearly forms a Pimsner-Popa basis for $\N\subset\M$ with respect to $E$. 

\begin{definition}\label{def:dualgenQsysofinterts}
We call $(\gamma, w, \{M_i\})$ a generalized Q-system (of intertwiners) \textbf{dual} to $(\theta, w, \{m_i\})$. The intertwining relation $m_i \in \Hom_{\End(\N)}(\theta,\theta^2)$ is equivalent to $M_i n = \theta(n) M_i$, $n\in\N$.
\end{definition}

\section{Braided products}\label{sec:brproducts}

Suppose additionally that two generalized Q-systems of intertwiners are composed of data belonging to a certain \emph{braided} tensor subcategory of $\End(\N)$, we can consider their \emph{braided product} as follows

\begin{definition}\label{def:brprodgenQsysofinterts} 
Let $\N$ be a properly infinite von Neumann algebra and $\C\subset\End(\N)$ a \Cstar \emph{braided} tensor subcategory of $\End(\N)$. Let $(\theta^A, w^A, \{m^A_{i_1}\})$ and $(\theta^B, w^B, \{m^B_j\})$ two generalized Q-systems of intertwiners in $\C$ (Definition \ref{def:genQsysofinterts}), indexed respectively by $i\in I$ and $j\in J$. We call
$$(\theta^A\theta^B, w^Aw^B, \{m^A_i \times_\eps^{\pm} m^B_j\})$$
the \textbf{braided product} of $(\theta^A, w^A, \{m^A_i\})$ and $(\theta^B, w^B, \{m^B_j\})$, indexed by $(i,j)\in I\times J$, where
$$m^A_i \times_\eps^{\pm} m^B_j := \theta^A(\eps^{\pm}_{\theta^A,\theta^B}) m^A_i \theta^A(m^B_j)$$
depending on the $\pm$ choice. Here $\eps^+ = \eps$ and $\eps^- = \eps^\op$ denote respectively the braiding of $\C$ and its opposite. Equivalently
$$
w^Aw^B \,= 
	\hspace{-2mm}
	\tikzmath{
	\draw (5,7) node [above] {};
	\draw[ultra thick]
	(5,-5) node [] {\textbullet} -- (5,-17) node [below] {$\theta^A$};
	\draw (10,7) node [above] {};
	\draw[ultra thick]
	(10,-5) node [] {\textbullet} -- (10,-17) node [below] {$\;\;\theta^B$};
	},
\qquad m^A_i \times_\eps^{+} m^B_j \,=
	\hspace{-2mm}
	\tikzmath{
	\draw[ultra thick]
	(15,7) node [above] {$\theta^B$}--(15,-3.2) node [] {\textbullet};
	\draw (15,0.5) node [right] {$j$};
	\draw[ultra thick]
	(7,-17) node [below] {$\theta^B$}--(7,-13)..
	controls (7,0) and (23,0)
	..(23,-13)--(23,-17) node [below] {$\theta^B$};
	\fill[color=white] (10.2,-5.5) circle (2);
	\draw[ultra thick]
	(5,7) node [above] {$\theta^A$}--(5,-3.2) node [] {\textbullet};
	\draw (5,0.5) node [right] {$i$};
	\draw[ultra thick]
	(-3,-17) node [below] {$\theta^A$}--(-3,-13)..
	controls (-3,0) and (13,0)
	..(13,-13)--(13,-17) node [below] {$\,\theta^A$};
	}, \; (i,j) \in I \times J
$$
and similarly for $m^A_i \times_\eps^{-} m^B_j$.
\end{definition}

Surprisingly, the analytic conditions dictated on generalized Q-systems by subfactor theory (\eg the property characterizing a Pimsner-Popa basis) turn out to be naturally compatible with the categorical notion of braiding in a tensor category of endomorphisms. Indeed we have the following proposition which extends the braided product construction, see \cite[\Sec 4.9]{BKLR15}, to the infinite index case.

\begin{proposition}\label{prop:brprodisQsys}
The braided product of two generalized Q-systems of intertwiners is again a generalized Q-system of intertwiners.
\end{proposition}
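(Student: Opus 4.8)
The plan is to check the three defining conditions of a generalized Q-system of intertwiners (Definition \ref{def:genQsysofinterts}) for the triple $(\theta^A\theta^B, w^Aw^B, \{M_{ij}\})$, where I abbreviate $M_{ij}:=m^A_i\times_\eps^{\pm}m^B_j=\theta^A(\eps)\,m^A_i\,\theta^A(m^B_j)$ and $\eps:=\eps^{\pm}_{\theta^A,\theta^B}$ is the (unitary) braiding or its opposite. The intertwining property is the purely categorical step: since $\theta^A(m^B_j)\in\Hom(\theta^A\theta^B,\theta^A\theta^B\theta^B)$ and $m^A_i\in\Hom(\theta^A,(\theta^A)^2)$, the product $m^A_i\,\theta^A(m^B_j)=m^A_i\times m^B_j$ lies in $\Hom(\theta^A\theta^B,(\theta^A)^2(\theta^B)^2)$; prepending $\theta^A(\eps)\in\Hom((\theta^A)^2\theta^B,\theta^A\theta^B\theta^A)$ then lands $M_{ij}$ in $\Hom(\theta^A\theta^B,(\theta^A\theta^B)^2)$. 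Only composition of intertwiners is used here; no hexagon is needed.

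The heart is the Pimsner–Popa condition, and the key point is that the braiding is invisible to the unit isometries. Writing $e^A:=w^A(w^A)^*$, $e^B:=w^B(w^B)^*$ and $W:=w^Aw^B$, I would first record two relations: $(w^A)^*\theta^A(\eps)=\eps\,(w^A)^*$, which is immediate from $w^A\in\Hom(\id,\theta^A)$; and $\eps\,\theta^A(w^B)=w^B$, equivalently $\eps^*w^B=\theta^A(w^B)$, which is the naturality of $\eps$ applied to the unit intertwiner $w^B$ together with $\eps_{\theta^A,\id}=\id$. The second relation gives $\eps^*e^B\eps=\theta^A(e^B)$, while $c_i:=(w^A)^*m^A_i$ lies in $\Hom(\theta^A,\theta^A)$ with $c_i^*c_i=p^A_i$. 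Feeding these into $W^*M_{ij}=(w^B)^*\eps\,\theta^A(m^B_j)\,c_i$ collapses the projection to
$$P_{(i,j)} := M_{ij}^*\,WW^*\,M_{ij}=c_i^*\,\theta^A(p^B_j)\,c_i=p^A_i\,\theta^A(p^B_j),$$
a product of two commuting projections (they commute since $p^A_i\in\Hom(\theta^A,\theta^A)$). Mutual orthogonality then follows from $p^A_ip^A_k=\delta_{ik}p^A_i$ and $\theta^A(p^B_jp^B_l)=\delta_{jl}\theta^A(p^B_j)$, and $\sum_{i,j}P_{(i,j)}=\sum_i p^A_i\,\theta^A\big(\sum_j p^B_j\big)=\oneop$, using normality of $\theta^A$ to pull it through the strongly convergent sum over $j$. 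Notably, only naturality against the unit enters, not the hexagon, since the definition carries no associativity axiom.

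For faithfulness I would realize $WW^*$ as the Jones projection of a composite inclusion. From $w^Ay=\theta^A(y)w^A$ one gets $WW^*=w^Ae^B(w^A)^*=\theta^A(e^B)\,e^A$, a product of commuting Jones projections matching the tower $\theta^A\theta^B(\N)\subset\theta^A(\N_1^B)\subset\N_1^{AB}$. Since $\theta^A$ transports the faithful expectation $E'_B$ of the $B$-system to a faithful expectation of $\theta^A(\N_1^B)$ onto $\theta^A\theta^B(\N)$, and the $A$-step contributes the faithful $E'_A$, the reconstructed expectation $E'_{AB}(\cdot)=\theta^A\theta^B(W^*\cdot\,W)$ is a composite of two faithful normal conditional expectations, hence faithful; equivalently $nW=0$ forces $n=0$ on $\N_1^{AB}=\langle\theta^A\theta^B(\N),\{M_{ij}\}\rangle$. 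Both signs are treated identically, since naturality holds for a braiding and for its opposite.

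The main obstacle, and what the statement is really asserting, is that these finite categorical moves must survive the passage to genuinely analytic objects. The family $\{P_{(i,j)}\}$ is infinite, so $\sum_{i,j}P_{(i,j)}=\oneop$ has to be controlled as an unconditional strong limit; the delicate steps are the interchange of $\theta^A$ with the strong sum $\sum_j p^B_j=\oneop$ and the fact that a single categorical identity $\eps^*e^B\eps=\theta^A(e^B)$ suffices to factor every $P_{(i,j)}$ with no appeal to finiteness of the index. Faithfulness is the other sensitive point: since one works without factoriality or irreducibility, the tower/composite-expectation argument must be carried out directly at the level of the von Neumann algebras $\N_1^{AB}$ and $\theta^A(\N_1^B)$, where identifying $\N_1^{AB}$ with the Jones extension $\langle\theta^A(\N_1^B),e^A\rangle$ is the step requiring genuine care rather than a categorical shortcut.
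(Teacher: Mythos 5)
Your treatment of the intertwining property and of the Pimsner--Popa condition is correct and is essentially the paper's own argument: the only categorical input is naturality of the braiding against the unit, $(\eps^{\pm}_{\theta^A,\theta^B})^*w^B=\theta^A(w^B)$, which yields the factorization $p^{AB,\pm}_{i,j}=p^A_i\,\theta^A(p^B_j)$ into commuting mutually orthogonal projections summing to $\oneop$; your normality argument for the double sum is also fine.

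The faithfulness condition is where there is a genuine gap. Your tower $\theta^A\theta^B(\N)\subset\theta^A(\N_1^B)\subset\N_1^{AB}$ and the identification of $\N_1^{AB}$ with the Jones extension $\langle\theta^A(\N_1^B),e^A\rangle$ are asserted rather than proved, and the inclusion $\theta^A(\N_1^B)\subset\N_1^{AB}$ is exactly the point where the paper has to invoke the \emph{additional} unitality hypothesis $\theta^A(w^{A*})m^A_0=\oneop$ (see the proof of Proposition \ref{prop:brprodembeddings}, where $\theta^A(m^B_j)=\theta^A\theta^B(w^{A*})\,m^A_0\times_\eps^{\pm}m^B_j$ is derived precisely from unitality). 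That hypothesis is not part of Definition \ref{def:genQsysofinterts} and is not assumed in the present proposition, so your intermediate subalgebra is not available here. The inclusions that do hold without extra assumptions go the \emph{opposite} way: $\N_1^{AB}\subset\N_1^A$ and $\Ad(\eps^{\pm}_{\theta^A,\theta^B})(\N_1^{AB})\subset\N_1^B$ (Lemma \ref{lem:embedbrprod}, the second via naturality and tensoriality of the braiding). With these, faithfulness is a two-step kill: for $n\in\N_1^{AB}$, $nw^Aw^B=n\,\theta^A(w^B)\,w^A=0$ forces $n\,\theta^A(w^B)=0$ by faithfulness of the $A$-system, since $n\,\theta^A(w^B)\in\N_1^A$; then $0=\eps^{\pm}_{\theta^A,\theta^B}\,n\,(\eps^{\pm}_{\theta^A,\theta^B})^*w^B$ forces $\eps^{\pm}_{\theta^A,\theta^B}\,n\,(\eps^{\pm}_{\theta^A,\theta^B})^*=0$, hence $n=0$, by faithfulness of the $B$-system. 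If you wish to keep your composite-expectation picture, you must replace the unproved tower by the verifiable statement $w^{A*}\,\N_1^{AB}\,w^A\subset\N_1^B$ (which follows from $\eps^{\pm}_{\theta^A,\theta^B}w^A=\theta^B(w^A)$ together with the second inclusion of Lemma \ref{lem:embedbrprod}); as written, this step of your argument does not go through.
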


\begin{proof}
The intertwining properties appearing in Definition \ref{def:genQsysofinterts} are easily checked once we write the operators $w^Aw^B$ and $m^A_i \times_\eps^{\pm} m^B_j$, $(i,j)\in I\times J$ as tensor products and compositions of arrows in the braided tensor category of endomorphisms $\C\subset\End(\N)$, as in the case of ordinary Q-systems \cite[\Def 4.30]{BKLR15}.

The Pimsner-Popa condition $(i)$ in Definition \ref{def:genQsys} is more lengthy to check. For each $i\in I$ and $j\in J$, let 
$$p^{AB,\pm}_{i,j} := (m^A_i \times_\eps^{\pm} m^B_j)^* w^Aw^B w^{B*}w^{A*}(m^A_i \times_\eps^{\pm} m^B_j)$$
$$= \theta^A(m^{B*}_j)m^{A*}_i\theta^A((\eps^{\pm}_{\theta^A,\theta^B})^*) \theta^A(w^B)w^Aw^{A*}\theta^A(w^{B*})\theta^A(\eps^{\pm}_{\theta^A,\theta^B}) m^A_i \theta^A(m^B_j)$$
$$= \theta^A(m^{B*}_j)m^{A*}_i\theta^A(\theta^A(w^B))w^Aw^{A*}\theta^A( \theta^A(w^{B*})) m^A_i \theta^A(m^B_j)$$
because $(\eps^{\pm}_{\theta^A,\theta^B})^* w^B = \theta^A(w^B) (\eps^{\pm}_{\theta^A,\id})^*$ by \emph{naturality} of the braiding $\eps^+ = \eps$ in the braided tensor category $\C$, or of its opposite $\eps^- = \eps^\op$, and $\eps^{\pm}_{\theta^A,\id} = \oneop$ by convention. Moreover
$$= \theta^A(m^{B*}_j)\theta^A(w^B)m^{A*}_iw^Aw^{A*}m^A_i\theta^A(w^{B*}) \theta^A(m^B_j)$$
$$= m^{A*}_iw^Aw^{A*}m^A_i \theta^A(m^{B*}_jw^Bw^{B*} m^B_j)$$
hence we have shown 
\begin{equation}\label{eq:brprodprojs}p^{AB,\pm}_{i,j} = p^{A}_{i} \theta^A(p^{B}_{j}) = \theta^A(p^{B}_{j}) p^{A}_{i}\end{equation} 
where $p^{A}_{i}$, $i\in I$ and $p^{B}_{j}$, $j\in J$ are the projections appearing in Definition \ref{def:genQsys} respectively for the two generalized Q-systems. Equation (\ref{eq:brprodprojs}) is much more effectively expressed using graphical calculus
$$
\tikzmath{
	\draw[ultra thick]
	(15,7) node [above] {$\theta^B$}--(15,-3.2) node [] {\textbullet};
	\draw (15,0.5) node [right] {$j$};
	\draw[ultra thick]
	(7,-13) node [] {\textbullet}--(7,-13)..
	controls (7,0) and (23,0)
	..(23,-13)--(23,-17) node [below] {};
	\fill[color=white] (10.2,-5.3) circle (2);
	\draw[ultra thick]
	(5,7) node [above] {$\theta^A$}--(5,-3.2) node [] {\textbullet};
	\draw (5,0.5) node [right] {$i$};
	\draw[ultra thick]
	(-3,-13) node [] {\textbullet}--(-3,-13)..
	controls (-3,0) and (13,0)
	..(13,-13)--(13,-17) node [below] {};

	\draw[ultra thick]
	(7,-21) node [] {\textbullet}--(7,-21)..
	controls (7,-34) and (23,-34)
	..(23,-21)--(23,-17) node [below] {};
	\draw[ultra thick]
	(15,-41) node [below] {$\theta^B$}--(15,-30.8) node [] {\textbullet};
	\draw (15,-34.5) node [right] {$j$};
	\fill[color=white] (10.2,-28.8) circle (2);
	\draw[ultra thick]
	(-3,-21) node [] {\textbullet}--(-3,-21)..
	controls (-3,-34) and (13,-34)
	..(13,-21)--(13,-17) node [below] {};
	\draw[ultra thick]
	(5,-41) node [below] {$\theta^A$}--(5,-30.8) node [] {\textbullet};
	\draw (5,-34.5) node [right] {$i$};
	}\,= 
	\hspace{-1mm}
\tikzmath{
	\draw[ultra thick]
	(25,7) node [above] {$\theta^B$}--(25,-3.2) node [] {\textbullet};
	\draw (25,0.5) node [right] {$j$};
	\draw[ultra thick]
	(19,-13) node [] {\textbullet}--(19,-13)..
	controls (19,0) and (31,0)
	..(31,-13)--(31,-17) node [below] {};
	\draw[ultra thick]
	(5,7) node [above] {$\theta^A$}--(5,-3.2) node [] {\textbullet};
	\draw (5,0.5) node [right] {$i$};
	\draw[ultra thick]
	(-1,-13) node [] {\textbullet}--(-1,-13)..
	controls (-1,0) and (11,0)
	..(11,-13)--(11,-17) node [below] {};

	\draw[ultra thick]
	(19,-21) node [] {\textbullet}--(19,-21)..
	controls (19,-34) and (31,-34)
	..(31,-21)--(31,-17) node [below] {};
	\draw[ultra thick]
	(25,-41) node [below] {$\theta^B$}--(25,-30.8) node [] {\textbullet};
	\draw (25,-34.5) node [right] {$j$};
	\draw[ultra thick]
	(-1,-21) node [] {\textbullet}--(-1,-21)..
	controls (-1,-34) and (11,-34)
	..(11,-21)--(11,-17) node [below] {};
	\draw[ultra thick]
	(5,-41) node [below] {$\theta^A$}--(5,-30.8) node [] {\textbullet};
	\draw (5,-34.5) node [right] {$i$};
	}.
$$
Now one can easily check that $p^{AB,\pm}_{i,j}$ are mutually orthogonal projections which sum up to $\oneop$.

The faithfulness condition $(ii)$ in Definition \ref{def:genQsys} follows from Lemma \ref{lem:embedbrprod} below. Indeed, let $n\in\N_1^A\times_\eps^\pm\N_1^B$ (see below), then $n w^Aw^B = n \theta^A(w^B)w^A = 0$ if and only if $n \theta^A(w^B) = 0$ since $\N_1^A\times_\eps^\pm\N_1^B \subset \N_1^A$. Now $n \theta^A(w^B) = 0$ if and only if $\eps^{\pm}_{\theta^A,\theta^B} n \theta^A(w^B) = \eps^{\pm}_{\theta^A,\theta^B} n (\eps^{\pm}_{\theta^A,\theta^B})^* w^B = 0$ by naturality of the braiding. Since $\Ad(\eps^{\pm}_{\theta^A,\theta^B})(\N_1^A \times_\eps^\pm \N_1^B)\subset \N_1^B$ we have that $n=0$ and the proof is complete.
\end{proof}

\begin{lemma}\label{lem:embedbrprod}
In the notation of Definition \ref{def:brprodgenQsysofinterts}, consider the two towers of von Neumann algebras $\N_2^A\subset\N_1^A\subset\N\subset\M^A$ and $\N_2^B\subset\N_1^B\subset\N\subset\M^B$ respectively associated to the two generalized Q-systems (of intertwiners) as in Theorem \ref{thm:FiIs}. Let 
$$\N_1^A \times_\eps^\pm \N_1^B := \left\langle\theta^A\theta^B(\N), \{m^A_i \times_\eps^{\pm} m^B_j\}\right\rangle$$
then 
$$\N_1^A \times_\eps^\pm \N_1^B \subset \N_1^A, \quad \Ad(\eps^{\pm}_{\theta^A,\theta^B})(\N_1^A \times_\eps^\pm \N_1^B)\subset \N_1^B.$$
\end{lemma}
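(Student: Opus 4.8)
The plan is to verify both claimed inclusions by checking the two families of generators of $\N_1^A\times_\eps^\pm\N_1^B$, and in each case to invoke normality. Since $\N_1^A\times_\eps^\pm\N_1^B=\langle\theta^A\theta^B(\N),\{m^A_i\times_\eps^\pm m^B_j\}\rangle$ is generated by $\theta^A\theta^B(\N)$ together with the braided-product intertwiners, and since $\Ad(\eps^{\pm}_{\theta^A,\theta^B})$ is a normal $*$-automorphism of $\N$, it suffices to treat the endomorphism-image generators and the $m^A_i\times_\eps^\pm m^B_j$ separately. Throughout I abbreviate $b:=\eps^{\pm}_{\theta^A,\theta^B}\in\N$, a unitary in $\Hom(\theta^A\theta^B,\theta^B\theta^A)$.

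The inclusion $\N_1^A\times_\eps^\pm\N_1^B\subset\N_1^A$ is immediate. Each generator of the first type satisfies $\theta^A\theta^B(n)\in\theta^A(\N)=\N_2^A\subset\N_1^A$ because $\theta^B(n)\in\N$; and each generator of the second type, $m^A_i\times_\eps^\pm m^B_j=\theta^A(b)\,m^A_i\,\theta^A(m^B_j)$, is a product of $\theta^A(b)\in\theta^A(\N)\subset\N_1^A$, of $m^A_i\in\N_1^A$, and of $\theta^A(m^B_j)\in\theta^A(\N)\subset\N_1^A$, hence lies in $\N_1^A$.

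The substance is the second inclusion $\Ad(b)(\N_1^A\times_\eps^\pm\N_1^B)\subset\N_1^B$. For the first type, the intertwining property of the braiding, $b\,\theta^A\theta^B(n)=\theta^B\theta^A(n)\,b$, gives at once $\Ad(b)(\theta^A\theta^B(n))=\theta^B\theta^A(n)\in\theta^B(\N)=\N_2^B\subset\N_1^B$. For the second type I would conjugate $\theta^A(b)\,m^A_i\,\theta^A(m^B_j)$ by $b$ and push $\Ad(b)$ through using the braiding axioms in $\C$. The two key moves are: the hexagon relation $b\,\theta^A(b)=\eps^{\pm}_{(\theta^A)^2,\theta^B}$ combined with naturality of the braiding in its first variable applied to $m^A_i\in\Hom(\theta^A,(\theta^A)^2)$, which together yield $b\,\theta^A(b)\,m^A_i=\theta^B(m^A_i)\,b$; and the hexagon relation $\eps^{\pm}_{\theta^A,(\theta^B)^2}=\theta^B(b)\,b$ combined with naturality in the second variable applied to $m^B_j\in\Hom(\theta^B,(\theta^B)^2)$, which give $b\,\theta^A(m^B_j)=\theta^B(b)^*\,m^B_j\,b$, hence $b\,\theta^A(m^B_j)\,b^*=\theta^B(b^*)\,m^B_j$. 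Combining these,
\[
\Ad(b)\bigl(\theta^A(b)\,m^A_i\,\theta^A(m^B_j)\bigr)=\theta^B(m^A_i)\,\theta^B(b^*)\,m^B_j=\theta^B(m^A_i b^*)\,m^B_j\in\N_1^B,
\]
since $\theta^B(m^A_i b^*)\in\theta^B(\N)\subset\N_1^B$ and $m^B_j\in\N_1^B$.

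I expect the main obstacle to be organising these braiding manipulations correctly — in particular keeping track of which hexagon identity and which naturality square to apply, and making sure that it is $b=\eps^{\pm}_{\theta^A,\theta^B}$ being conjugated by, rather than $\theta^A(b)$, which is the factor appearing inside the definition of the braided product. The cleanest way to carry this out, and to keep the bookkeeping transparent, is to redo the two displayed identities above in the graphical calculus of Definition \ref{def:genQsysofinterts}: each application of naturality is a strand sliding past a trivalent vertex and each hexagon is the resolution of a crossing. This also makes manifest that the argument is insensitive to the $\pm$ choice, since $\eps^\op$ is again a braiding obeying the same hexagon and naturality axioms.
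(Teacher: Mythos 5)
Your proof is correct and follows essentially the same route as the paper's: both arguments reduce to the generators, dispose of $\theta^A\theta^B(\N)$ by the intertwining property of the braiding, and handle $m^A_i\times_\eps^\pm m^B_j$ by combining naturality with the hexagon identities. The only cosmetic difference is that the paper multiplies the generator by the extra unitary $\theta^A\theta^B(\eps^{\pm}_{\theta^A,\theta^B})\in\theta^A\theta^B(\N)$ so that its conjugate comes out exactly as $m^B_j\times_\eps^\mp m^A_i$, whereas you land on the equivalent expression $\theta^B(m^A_i(\eps^{\pm}_{\theta^A,\theta^B})^*)\,m^B_j$; both visibly lie in $\N_1^B$.
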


\begin{proof}
The first inclusion follows from the very definitions. To show the second observe that $\Ad(\eps^{\pm}_{\theta^A,\theta^B})(\theta^A\theta^B(\N))= \theta^B\theta^A(\N) \subset \theta^B(\N)$. Hence it is enough to check that
$$\eps^{\pm}_{\theta^A,\theta^B}\theta^A\theta^B(\eps^{\pm}_{\theta^A,\theta^B})\theta^A(\eps^{\pm}_{\theta^A,\theta^B})m^A_i\theta^A(m^B_j)(\eps^{\pm}_{\theta^A,\theta^B})^{*} = \theta^B(\eps^{\mp}_{\theta^B,\theta^A})m^B_j\theta^B(m^A_i),$$
but this follows from repeated application of naturality and tensoriality of the braiding
$$m^A_i\theta^A(m^B_j)(\eps^{\pm}_{\theta^A,\theta^B})^{*}=\theta^A\theta^A(m^B_j)m^A_i\eps^{\mp}_{\theta^B,\theta^A}$$
$$= \theta^A\theta^A(m^B_j)\eps^{\mp}_{\theta^B,\theta^A\theta^A}\theta^B(m^A_i) = \eps^{\mp}_{\theta^B\theta^B,\theta^A\theta^A}m^B_j\theta^B(m^A_i)$$
where $\eps^{\mp}_{\theta^B\theta^B,\theta^A\theta^A} = \theta^A(\eps^{\mp}_{\theta^B,\theta^A})\theta^A\theta^B(\eps^{\mp}_{\theta^B,\theta^A})\eps^{\mp}_{\theta^B,\theta^A}\theta^B(\eps^{\mp}_{\theta^B,\theta^A})$. 
\end{proof}

\begin{corollary}\label{cor:brexp}\emph{(of Proposition \ref{prop:brprodisQsys})}.
$\theta^A\theta^B\in\End(\N)$ is a canonical endomorphism for the inclusion $\N_1^A \times_\eps^\pm \N_1^B \subset \N$. Moreover, the inclusion \begin{equation}\label{eq:brprunder}
\theta^A\theta^B(\N) \subset \N_1^A \times_\eps^\pm \N_1^B
\end{equation} 
is semidiscrete \footnote{By the results of the next section, the inclusion (\ref{eq:brprunder}) is also discrete in the sense of Definition \ref{def:discreteinclusion}.} with (normal faithful) conditional expectation given by
$${E^{AB}}' := \theta^A\theta^B(w^{B*}w^{A*}\cdot w^Aw^B).$$
\end{corollary}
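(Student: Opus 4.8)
The plan is to read off both assertions directly from Theorem \ref{thm:FiIs}, once the braided product is recognized as a generalized Q-system. By Proposition \ref{prop:brprodisQsys}, the triple $(\theta^A\theta^B, w^Aw^B, \{m^A_i \times_\eps^{\pm} m^B_j\})$ is a generalized Q-system of intertwiners in $\C\subset\End(\N)$, hence in particular a generalized Q-system in the sense of Definition \ref{def:genQsys}. Thus $\theta^A\theta^B$ satisfies condition $(3)$ of Theorem \ref{thm:FiIs}, and I can feed it into the machinery established there.

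For the canonical endomorphism claim, I would invoke the implication $(3)\Rightarrow(1)$ of Theorem \ref{thm:FiIs}. The tower attached to the braided product has bottom algebra $\N_2 = \theta^A\theta^B(\N)$ and middle algebra $\N_1 = \left\langle\theta^A\theta^B(\N), \{m^A_i \times_\eps^{\pm} m^B_j\}\right\rangle$, which is precisely $\N_1^A \times_\eps^\pm \N_1^B$ by the definition introduced in Lemma \ref{lem:embedbrprod}. Condition $(1)$ then states exactly that $\theta^A\theta^B$ is a canonical endomorphism for $\N_1^A \times_\eps^\pm \N_1^B\subset\N$.

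For the semidiscreteness claim, the proof of Theorem \ref{thm:FiIs} (via \cite[\Thm 4.1]{FiIs99}) identifies the conditional expectation witnessing semidiscreteness of the bottom inclusion $\theta(\N)\subset\N_1$ as $E' = \theta(w^*\cdot w)$, which is normal and faithful by condition $(ii)$ of Definition \ref{def:genQsys}. Substituting $\theta = \theta^A\theta^B$ and $w = w^Aw^B$, so that $w^* = w^{B*}w^{A*}$, yields ${E^{AB}}' = \theta^A\theta^B(w^{B*}w^{A*}\cdot w^Aw^B)$, exactly the stated formula, and exhibits $\theta^A\theta^B(\N)\subset\N_1^A \times_\eps^\pm \N_1^B$ as semidiscrete. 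Since the genuinely delicate analytic and categorical work, namely the verification of the Pimsner-Popa and faithfulness conditions under the braiding, has already been discharged in Proposition \ref{prop:brprodisQsys}, I expect no real obstacle here; the only point demanding care is correctly tracking the two composition orders and the order-reversal in the adjoint $(w^Aw^B)^* = w^{B*}w^{A*}$ when specializing the general formulas.
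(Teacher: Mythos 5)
Your proposal is correct and matches the paper's intent exactly: the corollary is stated without proof precisely because, once Proposition \ref{prop:brprodisQsys} certifies the braided product as a generalized Q-system, everything follows by specializing Theorem \ref{thm:FiIs} (and the formula $E'=\theta(w^*\cdot w)$ from its proof) to $\theta=\theta^A\theta^B$, $w=w^Aw^B$, with $\N_1^A\times_\eps^\pm\N_1^B$ being by definition the algebra $\left\langle\theta^A\theta^B(\N),\{m_i^A\times_\eps^\pm m_j^B\}\right\rangle$. Your care about the order reversal in $(w^Aw^B)^*=w^{B*}w^{A*}$ is the only bookkeeping point, and you handle it correctly.
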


Denote by
$$\M^A\times_\eps^\pm\M^B$$
the von Neumann algebra appearing in the tower
$$\ldots \subset \theta^A\theta^B(\N) \subset \N_1^A \times_\eps^\pm \N_1^B \stackrel{\theta^A\theta^B}{\subset} \N \stackrel{\gamma^{AB}}{\subset} \M^A\times_\eps^\pm\M^B \subset \ldots$$
obtained as in Theorem \ref{thm:FiIs} from the braided product Q-system. We call it the \textbf{braided product of $\M^A$ and $\M^B$}.
Here $\gamma^{AB}$ denotes a canonical endomorphism for the inclusion $\N \subset \M^A\times_\eps^\pm\M^B$ dual to $\theta^A\theta^B$. By definition, ${\gamma^{AB}}_{\restriction\N} = \theta^A\theta^B$ and $\gamma^{AB}(\M^A\times_\eps^\pm\M^B) = \N_1^A \times_\eps^\pm \N_1^B$. Similarly, $\gamma^A$ and $\gamma^B$ are respectively canonical endomorphisms dual to $\theta^A$ and $\theta^B$.

In order to show that the braided product $\M^A\times_\eps^\pm\M^B$ actually contains $\M^A$ and $\M^B$ as subalgebras (see Proposition \ref{prop:brprodembeddings} below) we need to consider generalized Q-systems of intertwiners with an additional property, which is a weaker version of the unit property in ordinary Q-systems, namely $\theta(w^*)x = \oneop$, \cf \cite[\Prop 4.12]{BKLR16}. We shall come back to this property in the next section, see Proposition \ref{prop:discreteminimal} and Definition \ref{def:unitalqsys}.

\begin{proposition}\label{prop:brprodembeddings}
In the notation of Definition \ref{def:brprodgenQsysofinterts}, let $(\theta^A, w^A, \{m^A_{i}\})$ and $(\theta^B, w^B, \{m^B_j\})$ fulfill in addition $\theta^A(w^{A*})m_0^A = \oneop$ and $\theta^B(w^{B*})m_0^B = \oneop$ for two distinguished labels $0\in I$ and $0 \in J$. Then the maps 
$$\jmath^A : \M^A \rightarrow \M^A\times_\eps^\pm\M^B ,\quad \jmath^A := (\gamma^{AB})^{-1}\circ \Ad_{(\eps_{\theta^A,\theta^B}^{\pm})^*} \circ\, \theta^B \circ \gamma^A$$
$$\jmath^B : \M^B \rightarrow \M^A\times_\eps^\pm\M^B ,\quad \jmath^B := (\gamma^{AB})^{-1} \circ \theta^A \circ \gamma^B$$

are embeddings respectively of $\M^A$ and $\M^B$ into $\M^A\times_\eps^\pm\M^B$. Call $\iota^A : \N \rightarrow \M^A$ and $\iota^B : \N \rightarrow \M^B$ the embeddings of $\N$ into $\M^A$ and $\M^B$ respectively. Then the two embeddings of $\N$ into the braided product coincide, \ie
\begin{equation}\label{eq:embABonN}\jmath^A\circ\iota^A = \jmath^B\circ\iota^B,\end{equation}
the commutation relations among $M_i^A$ and $M_j^B$, as in Definition \ref{def:dualgenQsysofinterts}, in the braided product $\M^A\times_\eps^\pm\M^B$ are given by
\begin{equation}\label{eq:commrelbrprod}\jmath^B(M_j^B) \jmath^A(M_i^A) = \eps_{\theta^A,\theta^B}^{\pm} \jmath^A(M_i^A) \jmath^B(M_j^B),\quad i\in I,j\in J.\end{equation}
Moreover, $\M^A$ and $\M^B$ generate the braided product, \ie
\begin{equation}\label{eq:genbrprod}\M^A\times_\eps^\pm\M^B = \left\langle\jmath^A(\M^A), \jmath^B(\M^B)\right\rangle.\end{equation}
\end{proposition}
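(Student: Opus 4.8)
The plan is to reduce every assertion to an identity inside $\N$ by applying the canonical endomorphism $\gamma^{AB}$, which is an injective normal $*$-homomorphism carrying $\M^A\times_\eps^\pm\M^B$ onto $\N_1^A \times_\eps^\pm \N_1^B$. Since $\gamma^A$, $\gamma^B$, $\theta^A$, $\theta^B$, $\Ad_{(\eps^{\pm}_{\theta^A,\theta^B})^*}$ and $(\gamma^{AB})^{-1}$ are all injective normal unital $*$-homomorphisms, the maps $\jmath^A,\jmath^B$ are automatically injective normal unital $*$-homomorphisms once one knows they are \emph{well defined}, i.e.\ that the images of $\Ad_{(\eps^{\pm}_{\theta^A,\theta^B})^*}\circ\theta^B\circ\gamma^A$ and $\theta^A\circ\gamma^B$ land in $\N_1^A \times_\eps^\pm \N_1^B=\gamma^{AB}(\M^A\times_\eps^\pm\M^B)$ before $(\gamma^{AB})^{-1}$ is applied. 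Using naturality of the braiding and the hexagon I first record the normal forms $\gamma^{AB}(\jmath^A(M^A_i))=\Ad_{(\eps^{\pm}_{\theta^A,\theta^B})^*}(\theta^B(m^A_i))=\theta^A(\eps^{\pm}_{\theta^A,\theta^B})m^A_i$ and $\gamma^{AB}(\jmath^B(M^B_j))=\theta^A(m^B_j)$, where $m^A_i=\gamma^A(M^A_i)$, $m^B_j=\gamma^B(M^B_j)$. Well-definedness thus amounts to showing $\theta^A(\eps^{\pm}_{\theta^A,\theta^B})m^A_i$ and $\theta^A(m^B_j)$ lie in $\N_1^A \times_\eps^\pm \N_1^B$ for all $i,j$.

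This well-definedness is the main obstacle. For $\jmath^B$ I would prove the identity $\theta^A(m^B_j)=\theta^A\theta^B(w^{A*})(m^A_0 \times_\eps^{\pm} m^B_j)$, which follows from $\theta^B(w^{A*})\eps^{\pm}_{\theta^A,\theta^B}=w^{A*}$ (naturality, using $\eps^{\pm}_{\id,\theta^B}=\oneop$) together with the unit property $\theta^A(w^{A*})m_0^A=\oneop$; since the right-hand side is a product of a generator and an element of $\theta^A\theta^B(\N)$, this gives $\theta^A(\N_1^B)\subset\N_1^A \times_\eps^\pm \N_1^B$. For $\jmath^A$ the corresponding fact is more delicate and is where the second unit property is crucial: I claim $\theta^A(\eps^{\pm}_{\theta^A,\theta^B})m^A_i=\theta^A\theta^B\theta^A(w^{B*})\,(m^A_i \times_\eps^{\pm} m^B_0)$. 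The first factor equals $\theta^A\theta^B(\theta^A(w^{B*}))\in\theta^A\theta^B(\N)$ and the second is a generator, so both lie in $\N_1^A \times_\eps^\pm \N_1^B$. The identity itself is obtained by rewriting $\theta^A\theta^B\theta^A(w^{B*})\theta^A(\eps^{\pm}_{\theta^A,\theta^B})m^A_i$ and pushing $\theta^A\theta^B\theta^A(w^{B*})$ to the right: across $\theta^A(\eps^{\pm}_{\theta^A,\theta^B})$ via the braiding intertwiner applied to $w^{B*}\in\N$, namely $\eps^{\pm}_{\theta^A,\theta^B}\theta^A\theta^B(w^{B*})=\theta^B\theta^A(w^{B*})\eps^{\pm}_{\theta^A,\theta^B}$, and across $m^A_i$ via its intertwining relation, producing $\theta^A(\eps^{\pm}_{\theta^A,\theta^B})m^A_i\theta^A\theta^B(w^{B*})$; multiplying by $\theta^A(m^B_0)$ then collapses $\theta^A(\theta^B(w^{B*})m^B_0)=\oneop$ by the unit property $\theta^B(w^{B*})m_0^B=\oneop$. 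This yields $\Ad_{(\eps^{\pm}_{\theta^A,\theta^B})^*}(\theta^B(\N_1^A))\subset\N_1^A \times_\eps^\pm \N_1^B$, completing the proof that $\jmath^A,\jmath^B$ are embeddings; the sign $\eps^-=\eps^\op$ is handled identically, $\eps^\op$ being again a braiding.

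For (\ref{eq:embABonN}) I would evaluate both sides on $n\in\N$: using ${\gamma^A}_{\restriction\N}=\theta^A$, ${\gamma^B}_{\restriction\N}=\theta^B$, the relation $\Ad_{(\eps^{\pm}_{\theta^A,\theta^B})^*}(\theta^B\theta^A(n))=\theta^A\theta^B(n)$ (the braiding intertwines $\theta^A\theta^B$ and $\theta^B\theta^A$), and ${\gamma^{AB}}_{\restriction\N}=\theta^A\theta^B$, both $\jmath^A\circ\iota^A$ and $\jmath^B\circ\iota^B$ collapse to the canonical inclusion $n\mapsto n$ of $\N$ into $\M^A\times_\eps^\pm\M^B$. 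For the commutation relations (\ref{eq:commrelbrprod}) I would apply the injective homomorphism $\gamma^{AB}$ and insert the normal forms above together with $\gamma^{AB}(\eps^{\pm}_{\theta^A,\theta^B})=\theta^A\theta^B(\eps^{\pm}_{\theta^A,\theta^B})$; the relation reduces to $\theta^A(m^B_j)\theta^A(\eps^{\pm}_{\theta^A,\theta^B})m^A_i=\theta^A\theta^B(\eps^{\pm}_{\theta^A,\theta^B})\theta^A(\eps^{\pm}_{\theta^A,\theta^B})m^A_i\theta^A(m^B_j)$, which follows from naturality of the braiding in the second variable (to pull $m^B_j$ past $\eps^{\pm}_{\theta^A,\theta^B}$), the intertwining relation $(\theta^A)^2(m^B_j)m^A_i=m^A_i\theta^A(m^B_j)$, and the hexagon $\eps^{\pm}_{\theta^A,(\theta^B)^2}=\theta^B(\eps^{\pm}_{\theta^A,\theta^B})\eps^{\pm}_{\theta^A,\theta^B}$.

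Finally, for (\ref{eq:genbrprod}) I would again apply $\gamma^{AB}$, reducing the claim to $\N_1^A \times_\eps^\pm \N_1^B=\langle\theta^A\theta^B(\N),\{\theta^A(\eps^{\pm}_{\theta^A,\theta^B})m^A_i\},\{\theta^A(m^B_j)\}\rangle$. The inclusion $\supseteq$ is immediate because each defining generator factors as $m^A_i \times_\eps^{\pm} m^B_j=\big(\theta^A(\eps^{\pm}_{\theta^A,\theta^B})m^A_i\big)\theta^A(m^B_j)$, while $\subseteq$ is exactly the two membership statements established above. Applying $(\gamma^{AB})^{-1}$ and recalling $\gamma^{AB}\jmath^A(\M^A)=\Ad_{(\eps^{\pm}_{\theta^A,\theta^B})^*}(\theta^B(\N_1^A))$ and $\gamma^{AB}\jmath^B(\M^B)=\theta^A(\N_1^B)$ then gives $\M^A\times_\eps^\pm\M^B=\langle\jmath^A(\M^A),\jmath^B(\M^B)\rangle$. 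The one genuinely non-formal point throughout is the factorization establishing $\theta^A(\eps^{\pm}_{\theta^A,\theta^B})m^A_i\in\N_1^A \times_\eps^\pm \N_1^B$, where the analytic datum (the distinguished Pimsner--Popa element $m_0^B$ and its unit property) must be fed into the purely categorical braiding identities.
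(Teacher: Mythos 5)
Your argument is correct and, for the most part, reproduces the proof in the paper: the treatment of $\theta^A(\N_1^B)$ via the unit property of $m_0^A$, the verification of (\ref{eq:embABonN}), the reduction of (\ref{eq:commrelbrprod}) under $\gamma^{AB}$ to $\theta^A(\eps^{\pm}_{\theta^A,\theta^B\theta^B})\,m_i^A\,\theta^A(m_j^B)$ on both sides, and the derivation of (\ref{eq:genbrprod}) from $\jmath^A(M_i^A)\jmath^B(M_j^B)=(\gamma^{AB})^{-1}(m_i^A\times_\eps^\pm m_j^B)$ all coincide with the paper's computations. The one step where you take a genuinely different route is the containment (\ref{eq:embA}): the paper deduces it from the already-proved case (\ref{eq:embB}) by interchanging $A\leftrightarrow B$ and $\pm\leftrightarrow\mp$ and invoking the isomorphism $\Ad_{\eps^{\mp}_{\theta^A,\theta^B}}$ of $\N_1^A\times_\eps^\pm\N_1^B$ onto $\N_1^B\times_\eps^\mp\N_1^A$ (a by-product of Lemma \ref{lem:embedbrprod}), whereas you prove directly the factorization $\theta^A(\eps^{\pm}_{\theta^A,\theta^B})m_i^A=\theta^A\theta^B(\theta^A(w^{B*}))\,(m_i^A\times_\eps^{\pm}m_0^B)$, which is correct and feeds in the unit property of $m_0^B$ exactly where the paper's symmetric argument uses it implicitly. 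Your version is more self-contained, since it does not rely on identifying the two opposite braided products, at the price of one additional explicit braiding computation; the paper's version is shorter but less symmetric in presentation. One cosmetic slip: in your final paragraph the labels $\subseteq$ and $\supseteq$ are interchanged --- the two membership statements give $\N_1^A\times_\eps^\pm\N_1^B\supseteq\langle\theta^A\theta^B(\N),\{\theta^A(\eps^{\pm}_{\theta^A,\theta^B})m_i^A\},\{\theta^A(m_j^B)\}\rangle$, while the factorization of the generators $m_i^A\times_\eps^\pm m_j^B$ gives $\subseteq$; the mathematics is unaffected.
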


\begin{proof}
We show first that
\begin{equation}\label{eq:embA}\Ad_{(\eps_{\theta^A,\theta^B}^{\pm})^*} \circ\, \theta^B (\N_1^A) \subset \N_1^A \times_\eps^\pm \N_1^B\end{equation}
\begin{equation}\label{eq:embB}\theta^A (\N_1^B) \subset \N_1^A \times_\eps^\pm \N_1^B\end{equation}
from which it is clear that $\jmath^A$ and $\jmath^B$ are embeddings into $\M^A\times_\eps^\pm\M^B$. For the inclusion (\ref{eq:embB}) it is enough to show that $\theta^A(m_j^B) \in \N_1^A \times_\eps^\pm \N_1^B$. By assumption $\theta^A(w^{A*}) m_0^A = \oneop$, hence
$$\theta^A(m_j^B) = \theta^A(w^{A*}) m_0^A \theta^A(m_j^B) = \theta^A\theta^B(w^{A*})\theta^A(\eps^\pm_{\theta^A,\theta^B}) m_0^A \theta^A(m_j^B)$$
$$=\theta^A\theta^B(w^{A*}) m_0^A \times_\eps^\pm m_j^B\in\N_1^A \times_\eps^\pm \N_1^B$$
using naturality of the braiding and $\eps^\pm_{\id,\theta^B}=\oneop$.
For the inclusion (\ref{eq:embA}), it is enough to observe that $\Ad_{\eps_{\theta^A,\theta^B}^\mp}$ is an isomorphism between $\N_1^A \times_\eps^\pm \N_1^B$ and $\N_1^B \times_\eps^\mp \N_1^A$, \cf \cite[\Sec 4.9]{BKLR15}, and consider the previous case interchanging $A$ with $B$ and $\pm$ with $\mp$. 
Now, (\ref{eq:embABonN}) is clear.
To show the commutation relations among $M_i^A$ and $M_j^B$ apply first $\gamma^{AB}$ to equation (\ref{eq:commrelbrprod}). The \rhs then reads 
$$\theta^A\theta^B(\eps_{\theta^A,\theta^B}^\pm)(\eps_{\theta^A,\theta^B}^{\pm})^*\theta^B(m_i^A) \eps_{\theta^A,\theta^B}^{\pm}\theta^A(m_j^B) $$
$$= \theta^A\theta^B(\eps_{\theta^A,\theta^B}^\pm)(\eps_{\theta^A,\theta^B}^{\pm})^*\eps_{\theta^A\theta^A,\theta^B}^{\pm}m_i^A\theta^A(m_j^B) $$
$$=  \theta^A\theta^B(\eps_{\theta^A,\theta^B}^\pm)\theta^A(\eps_{\theta^A,\theta^B}^{\pm})m_i^A\theta^A(m_j^B).$$
Similarly, one can compute the \lhs, namely 
$$\theta^A(m_j^B) (\eps^{\pm}_{\theta^A,\theta^B})^*\theta^B(m_i^A)(\eps^{\pm}_{\theta^A,\theta^B}) = \theta^A(\eps_{\theta^A,\theta^B\theta^B}^\pm) \theta^A\theta^A(m_j^B) m_i^A.$$
By the intertwining property $\theta^A\theta^A(m_j^B) m_i^A = m_i^A\theta^A(m_j^B)$ and by tensoriality of the braiding we have equation (\ref{eq:commrelbrprod}).
In the previous computations we have shown in particular that
$$\jmath^A(M_i^A) \jmath^B(M_j^B) = (\gamma^{AB})^{-1}(m_i^A\times_\eps^\pm m_j^B)$$
from which equation (\ref{eq:genbrprod}) follows.
\end{proof}

\section{The case of discrete inclusions}\label{sec:discretecase}

Generalized Q-systems with the additional intertwining property $m_i \in\Hom_{\End(\N)}(\theta,\theta^2)$ as in Definition \ref{def:genQsysofinterts} can be constructed whenever the inclusion $\N\subset\M$ is \emph{discrete} (see Definition \ref{def:discreteinclusion} below, \cf \cite[\Def 3.7]{ILP98}). The main idea is to look first at elements $\psi_\rho\in\M$ which generate on $\N$ subendomorphisms $\rho\prec\theta$ of the dual canonical endomorphism $\theta\in\End(\N)$ of $\N\subset\M$ from the vacuum (identity representation), namely such that
$$\psi_\rho n = \rho(n) \psi_\rho, \quad n\in\N.$$ 
Such elements are called \textbf{charged fields} after the work of \cite{DoRo72} in QFT. In the subfactor setting they can be constructed as in \cite[\Prop 3.2]{ILP98}. We generalize the latter construction to the case of non-irreducible, non-factorial extensions (as one needs in the study of defects in QFT, see \cite[\Thm 4.4]{BKLR16}), and we show how charged fields can be used, in the discrete case, to define generalized Q-systems of intertwiners. Moreover, we show that a semidiscrete inclusion admitting a generalized Q-system of intertwiners is necessarily discrete.

Consider an inclusion $\N\subset\M$, where $\N$ is an infinite factor and $\M$ is a properly infinite von Neumann algebra on a separable Hilbert space $\Hilb$. If $E(\M,\N)\neq \emptyset$ denote by $\hat E \in P(\M_1,\M)$ the normal semifinite faithful operator-valued weight dual to $E\in E(\M,\N)$, see \cite{Kos86}, \cite{ILP98}, \cite{FiIs99}. 

\begin{definition}\label{def:discreteinclusion}\cite{ILP98}.
In the above notation, the inclusion $\N\subset\M$ is called \textbf{discrete} if $E(\M,\N)\neq \emptyset$ (semidiscreteness) and $\hat E_{\restriction \N' \cap \M_1}$ is semifinite for some (hence for all) $E\in E(\M,\N)$. 
\end{definition}

\begin{proposition}\label{prop:discreteiffintert}
Let $\N$ be an infinite factor with separable predual. Then a semidiscrete extension $\N\subset\M$ can be characterized as in Theorem \ref{thm:FiIs} by a generalized Q-system of intertwiners in $\C=\End(\N)$ (Definition \ref{def:genQsysofinterts}) if and only if it is discrete.
\end{proposition}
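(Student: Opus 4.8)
The statement is an equivalence, which I would prove by treating the two implications separately, in each case working with the tower $\N_2\subset\N_1\subset\N\subset\M\subset\M_1$ and the dual operator-valued weight $\hat E\in P(\M_1,\M)$ of $\N\stackrel{E}{\subset}\M$. For the direction ``admits a generalized Q-system of intertwiners $\Rightarrow$ discrete'', I would pass to the dual data $(\gamma,w,\{M_i\})$ of Definition \ref{def:dualgenQsysofinterts}, so that each $M_i\in\M$ obeys the charge relation $M_i n=\theta(n)M_i$ ($n\in\N$) and $\{M_i\}$ is a Pimsner-Popa basis of $\N\stackrel{E}{\subset}\M$ with associated projections $P_i:=M_i^*e_\N M_i$. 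The key point is that the intertwining property pushes these projections into the relative commutant $\N'\cap\M_1$: taking adjoints of the charge relation gives $nM_i^*=M_i^*\theta(n)$, while $e_\N\in\N'\cap\M_1$ gives $e_\N\theta(n)=\theta(n)e_\N$, so that for $n\in\N$ one has $n\,M_i^*e_\N M_j=M_i^*\theta(n)e_\N M_j=M_i^*e_\N\theta(n)M_j=M_i^*e_\N M_j\,n$, whence $P_i\in\N'\cap\M_1$. By Definition \ref{def:pi-po} the $P_i$ are mutually orthogonal projections with $\sum_i P_i=\oneop$, and the $\M$-bimodule property of the dual weight together with $\hat E(e_\N)=\oneop$ yields $\hat E(P_i)=M_i^*M_i$, which is bounded. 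Thus $\{P_i\}\subset\N'\cap\M_1$ is an orthogonal family of finite projections summing strongly to $\oneop$, so $\hat E_{\restriction\N'\cap\M_1}$ is semifinite and the inclusion is discrete in the sense of Definition \ref{def:discreteinclusion}.

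For the converse I would run this computation backwards. Semifiniteness of $\hat E_{\restriction\N'\cap\M_1}$ lets me choose mutually orthogonal projections $P_i\in\N'\cap\M_1$ with $\hat E(P_i)<\infty$ and $\sum_i P_i=\oneop$; the task is then to realize each $P_i$ as $M_i^*e_\N M_i$ for a charged field $M_i\in\M$, i.e. an element with $M_i n=\theta(n)M_i$. Setting $m_i:=\gamma(M_i)\in\Hom_{\End(\N)}(\theta,\theta^2)$ and keeping the isometry $w\in\Hom_{\End(\N)}(\id,\theta)$ supplied by Theorem \ref{thm:FiIs}, the triple $(\theta,w,\{m_i\})$ would then be a generalized Q-system of intertwiners: condition $(i)$ of Definition \ref{def:genQsys} follows by transporting orthogonality and completeness of the $P_i$ through the tower isomorphism $\gamma_1$ (note $\gamma_1(P_i)=m_i^*ww^*m_i$), condition $(ii)$ is inherited from faithfulness of $E$ (equivalently of $E'$), and the intertwining property of Definition \ref{def:genQsysofinterts} holds by construction.

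The hard part, and the main obstacle, is the extraction of a charged field $M$ with the correct charge from a finite projection $P\in\N'\cap\M_1$. In the irreducible, factorial case this is the content of \cite[\Prop 3.2]{ILP98}; here I must generalize it to a not necessarily irreducible, not necessarily factorial inclusion, where $\N'\cap\M$ (and $\Z(\M)$) may be nontrivial and $\theta$ decomposes into irreducible subsectors $\rho\prec\theta$ carrying multiplicities. Concretely I would first build, for each such subsector, a charged field $\psi_\rho$ with $\psi_\rho n=\rho(n)\psi_\rho$, and then assemble charge-$\theta$ fields $M=V\psi_\rho$ with $V\in\Hom_{\End(\N)}(\rho,\theta)$ an isometry, since then $Mn=V\rho(n)\psi_\rho=\theta(n)M$. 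Boundedness of each $M$ is precisely what $\hat E(P)<\infty$ guarantees, while completeness of the resulting family (its supports summing to $\oneop$) is exactly the semifiniteness of $\hat E_{\restriction\N'\cap\M_1}$; verifying that this generalized charged-field construction survives the loss of irreducibility and factoriality is the crux of the argument.
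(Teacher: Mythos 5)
Your necessity direction (generalized Q-system of intertwiners $\Rightarrow$ discrete) is complete and is essentially the paper's argument: the intertwining relation places $P_i=M_i^*e_\N M_i$ in $\N'\cap\M_1$, the bimodule property and $\hat E(e_\N)=\oneop$ give $\hat E(P_i)=M_i^*M_i\in\M$, and an orthogonal family of projections in $m_{\hat E}$ summing to $\oneop$ yields semifiniteness of $\hat E_{\restriction\N'\cap\M_1}$ (the paper cites \cite[Lem.\ 3.2]{FiIs99}, \cite[Lem.\ 2.2]{HKZ91} for this last equivalence, which you assert without reference but which is standard).

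The converse, however, is only a plan: you correctly identify that everything hinges on producing, from a projection $P\in\N'\cap\M_1$ with $\hat E(P)<\infty$, a charged field $\psi\in\M$ with $P=\psi^*e_\N\psi$, and you explicitly defer this (``the crux of the argument'') rather than prove it. This is a genuine gap, and the ingredients that close it are worth naming. First, although $\M$ need not be a factor, $\M_1=\left\langle\M,e_\N\right\rangle=j_\M(\N')$ \emph{is} a factor because $\N$ is; hence the structure theory of \cite[Prop.\ 2.8, Lem.\ 2.7]{ILP98} applies verbatim to the subfactor $\N\subset\M_1$ and, combined with discreteness and $\Ad_{J_{\M,\Omega}}$-invariance of $\N'\cap\M_1$, shows that $\N'\cap\M_1$ is a direct sum of type $I$ factors with the $P_i$ choosable in $m_{\hat E}$. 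Second, since $\N$ is an \emph{infinite} factor, both $P_i$ and $e_\N$ are infinite projections in the factor $\M_1$, hence Murray--von Neumann equivalent: one picks partial isometries $W_i$ with $W_i^*W_i=P_i$, $W_iW_i^*=e_\N$, and $e_\N\M_1e_\N=\N e_\N$ then defines $\rho_i\prec\theta$ via $W_inW_i^*=\rho_i(n)e_\N$. Third, the passage from $W_i\in\M_1$ to $\psi_i\in\M$ is the push-down lemma in its non-factorial form \cite[Lem.\ 3.3]{FiIs99}: $W_i\in m_{\hat E}$ (which is where $\hat E(P_i)<\infty$ enters) gives $\psi_i:=\hat E(W_i)\in\M$ with $W_i=e_\N\psi_i$, and one checks $\psi_in=\rho_i(n)\psi_i$ and $E(\psi_i\psi_i^*)=\oneop$. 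Only then does your final assembly $M_i:=w_i\psi_i$ with $w_i:=\gamma(\psi_i^*)w\in\Hom_{\End(\N)}(\rho_i,\theta)$, $m_i:=\gamma(M_i)$, go through as you describe. Without these three steps the converse is a statement of intent, not a proof.
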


\begin{proof}
We begin with necessity. Let $(\theta, w, \{m_i\})$ be a generalized Q-system of intertwiners in $\C=\End(\N)$ and consider the dual generalized Q-system of intertwiners $(\gamma, w, \{M_i\})$ as in Definition \ref{def:dualgenQsysofinterts}. By definition $M_i^* e_\N M_i$ are mutually orthogonal projections in $\M_1 = \left\langle \M, e_\N\right\rangle$ which $\sum_i M_i^* e_\N M_i = \oneop$. On one hand, $\M e_\N \M \subset m_{\hat E}$, where $m_{\hat E}$ denotes the domain of $\hat E$, because $\hat E(e_\N) = \oneop$ by \cite[\Lem 3.1]{Kos86}. On the other hand, $M_i^* e_\N M_i \in \N' \cap \M_1$ by the intertwining property of the $M_i$ on $\N$. Hence $M_i^* e_\N M_i$ are mutually orthogonal projections which sum up to $\oneop$ in the domain of 
$\hat E_{\restriction \N' \cap \M_1} \in P(\N' \cap \M_1, \N' \cap \M)$. This is equivalent to semifiniteness of $\hat E_{\restriction \N' \cap \M_1}$ by \cite[\Lem 3.2]{FiIs99}, see also \cite[\Lem 2.2]{HKZ91}, hence to discreteness of $\N\subset\M$. The same is true if $\N\subset\M$ is an arbitrary semidiscrete inclusion of von Neumann algebras with separable predual. 

The converse implication relies on deep results on the structure of $\N'\cap\M_1$ due to \cite{ILP98}. Consider a discrete inclusion $\N\subset\M$ where $\N$ is a factor, $\M$ a von Neumann algebra, and choose $E\in E(\M,\N)$. Then $\M_1 = \left\langle \M, e_\N\right\rangle$ is a factor and $\N\subset\M_1$ a subfactor. By the same argument leading to \cite[\Prop 2.8]{ILP98} we get a decomposition of $\N'\cap\M_1$ as a direct sum of four algebras, where only the first survives by discreteness assumption and because $\Ad_{J_{\M,\Omega}}(\N'\cap\M_1) = \N'\cap\M_1$, \cf comments after \cite[\Def 3.7]{ILP98}. In particular $\N'\cap\M_1$ is a direct sum of type $I$ factors and $P\N \subset P \M_1 P$ has finite index for every finite rank projection $P\in\N' \cap\M_1$ by \cite[\Lem 2.7 $(ii)$]{ILP98}. Now, arguing as in the proof of \cite[\Thm 3.5]{FiIs99} and using \cite[\Lem 3.2]{FiIs99}, see also \cite[\Prop 3.2 $(ii) \Rightarrow (i)$]{ILP98}, by discreteness we can write $\oneop = \sum_i P_i$, $i\in I$, where $P_i\in\N' \cap \M_1$ are non-trivial mutually orthogonal projections such that $P_i\in m_{\hat E}$.
Each $P_i$ gives rise to a subendomorphism of the dual canonical endomorphism $\theta\in\End(\N)$ of $\N\subset\M$. Indeed, $P_i$ and $e_\N$ are infinite projections in $\M_1$ because $\N$ is an infinite factor, \cf \cite[\Lem 3.1]{FiIs99}, hence we can choose partial isometries $W_i \in \M_1$ such that $W_i^*W_i = P_i$, $W_iW_i^* = e_\N$. Then 
$$W_i n W_i^* = \rho_i(n)e_\N,\quad n\in\N$$ 
defines $\rho_i\in\End(\N)$, $\rho_i\prec\theta$, because $e_\N\M_1e_\N\ = \N e_\N$, \cf \cite[\Lem 3.1]{ILP98}. The endomorphism $\rho_i$ has finite index, \ie, finite dimension \cite{LoRo97}, whenever $P_i$ has finite rank in $\N'\cap\M_1$, indeed the inclusion $P_i\N \subset P_i \M_1 P_i$ is isomorphic to $\rho_i(\N)\subset\N$.
Moreover, $\theta = \oplus_i \rho_i$. From $W_i = e_\N W_i P_i$ we get $W_iP_i\in n_{\hat E}$ because $n_{\hat E}$ is a left ideal, and $W_i \in m_{\hat E} = n_{\hat E}^* n_{\hat E}$. By the push-down lemma \cite[\Lem 2.2]{ILP98} generalized to non-factorial inclusions \cite[\Lem 3.3]{FiIs99} we can write $W_i = e_\N \psi_i$, where $\psi_i\in\M$, $\psi_i := \hat E(W_i)$. One can check that $\psi_i$ is a \emph{charged field} for $\rho_i$, indeed
$$\psi_i n = \hat E(W_i P_i n) = \hat E(W_i n P_i) = \hat E(\rho_i(n) e_\N W_i) = \rho_i(n) \psi_i,\quad n\in\N$$
and that $e_\N E(\psi_i\psi_i^*) = e_\N \psi_i \psi_i^* e_\N = W_iW_i^* = e_\N$, hence $E(\psi_i\psi_i^*) = \oneop$ because $n \mapsto n e_\N$ is an isomorphism of $\N$ onto $\N e_\N$. Moreover $P_i = \psi_i^* e_\N \psi_i$, hence $\{\psi_i\}$ is a Pimsner-Popa basis for $\N\subset\M$ with respect to $E$. In particular, $\M = \left\langle \N, \{\psi_i\} \right\rangle$ as in the proof of \cite[\Thm 4.1]{FiIs99}, see also \cite[\Sec 1.1.4]{Pop95}, \cite[\Lem 3.8]{ILP98}.

Now, chosen a canonical endomorphism $\gamma$ for $\N\subset\M$, thanks to \cite[\Prop 5.1]{Lon89} there is an isometry $w\in\N$ such that $w\in\Hom_{\End(\N)}(\id,\theta)$, where $\theta := \gamma_{\restriction \N}$, $E(m) = w^*\gamma(m)w$ for every $m\in\M$, and $e_\N = \gamma_1^{-1}(ww^*)$. Define
$$w_i := \gamma(\psi_i^*)w,\quad M_i := w_i \psi_i,\quad i\in I$$
where $w_i\in\N$ are such that $w_i\in\Hom_{\End(\N)}(\rho_i,\theta)$, \cf \cite[\Sec 5]{LoRe95}, and $M_i\in\M$ have the desired intertwining property with $\theta$, namely $M_i n = \theta(n) M_i$, $n\in\N$, \cf Definition \ref{def:dualgenQsysofinterts}. Observe that $w_i$ are non-trivial isometries $w_i^*w_i = E(\psi_i \psi_i^*) = \oneop$ and that $P_i = \psi_i^* e_\N \psi_i = M_i^* e_\N M_i$ because $e_\N\in\N'$. As a consequence $\{M_i\}$ is another Pimsner-Popa basis for $\N\subset\M$ with respect to $E$ and $E(M_iM_i^*) = w_i E(\psi_i \psi_i^*) w_i^* = w_iw_i^*$.
Setting
$$m_i := \gamma(M_i),\quad i\in I$$
we have that $m_i\in\N_1 := \gamma(\M) = \left\langle \theta(\N), \{m_i\} \right\rangle$ fulfill $m_i\in\Hom_{\End(\N)}(\theta,\theta^2)$, 
$$p_i := \gamma_1(P_i) = m_i^*ww^*m_i$$ 
are mutually orthogonal projections in $\N$ which $\sum_i p_i = \oneop$, and $n w = 0 \Rightarrow n = 0$ for $n\in \N_1$ follows immediately from faithfulness of $E':= \gamma \circ E \circ \gamma^{-1}$.
Hence $(\theta,w,\{m_i\})$ is a generalized Q-system of intertwiners in $\C=\End(\N)$ associated, in the sense of Theorem \ref{thm:FiIs}, to the discrete inclusion $\N\subset\M$.
\end{proof}

\begin{remark}
With these normalizations for $w$ and $\psi_i$, $i\in I$, we have that $\rho_i(n) = E(\psi_i n \psi_i^*)$, $n\in\N$, \ie, $\rho_i$ is implemented by a single charged field $\psi_i$ via the conditional expectation $E$, \cf \cite[\Sec 5]{LoRe95}, \cite[\Sec 4.4]{BKLR16}.
\end{remark}

\begin{proposition}\label{prop:uniquepipoexpansion}
Let $\N\subset \M$ be a discrete inclusion as in Definition \ref{def:discreteinclusion} and $\{\psi_i\}\subset\M$ a Pimsner-Popa bases of charged fields as in the proof of Proposition \ref{prop:discreteiffintert}. Then for every $m\in\M$, the coefficients $E(\psi_i m)\in\N$ in the Pimsner-Popa expansion (Proposition \ref{prop:pipoexpansion})
$$m=\sum_i \psi_i^* E(\psi_i m)$$
are uniquely determined.
\end{proposition}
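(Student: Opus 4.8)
The plan is to reduce everything to the single special property of the charged fields established in the proof of Proposition \ref{prop:discreteiffintert}, namely $E(\psi_i\psi_i^*) = \oneop$ for every $i\in I$. This is exactly the hypothesis under which Proposition \ref{prop:pipoexpansion} asserts uniqueness of the Pimsner-Popa expansion, so in principle the statement follows by direct citation; I would nonetheless spell out the short argument, since it also pins down what \emph{uniquely determined} means.

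First I would fix the meaning of uniqueness: it suffices to show that whenever $m = \sum_i \psi_i^* n_i$ with $n_i\in\N$, the sum converging in the topology of the seminorms $\|\cdot\|_\varphi$, $\varphi = \varphi\circ E$, one necessarily has $n_j = E(\psi_j m)$ for each $j$. The strategy is to \emph{extract the $j$-th coefficient} by applying the Jones projection $P_j = \psi_j^* e_\N\psi_j$ in the GNS picture. I would work in the standard representation with cyclic and separating vector $\Omega$ as in Section \ref{sec:pipo}, noting that the vacuum state $\omega = \langle\Omega,\,\cdot\,\Omega\rangle$ satisfies $\omega = \omega\circ E$, so that $\|x\|_\omega = \|x\Omega\|$ is one of the admissible seminorms. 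Hence convergence of the expansion implies $m\Omega = \sum_i\psi_i^* n_i\Omega$ in $\Hilb$.

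The core computation is then the orthogonality relation $E(\psi_j\psi_i^*) = \delta_{i,j}\oneop$, which combines $E(\psi_i\psi_i^*) = \oneop$ with the vanishing of off-diagonal terms from Lemma \ref{lem:pi-poequiv}$(i)'$. Since $P_j$ is bounded I may apply it term by term: using $e_\N\psi_j\psi_i^* n_i\Omega = E(\psi_j\psi_i^*)n_i\Omega$ (as $e_\N$ implements $E$ on $\Omega$ and $E$ is an $\N$-bimodule map with $n_i\in\N$), all terms with $i\neq j$ drop out and I obtain $P_j m\Omega = \psi_j^* n_j\Omega$. On the other hand $P_j m\Omega = \psi_j^* E(\psi_j m)\Omega$ directly, so $\psi_j^*(n_j - E(\psi_j m))\Omega = 0$.

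The last step, and the only genuinely non-formal point, is to conclude $n_j = E(\psi_j m)$ from $\psi_j^* k\Omega = 0$ with $k := n_j - E(\psi_j m)\in\N$. Here I would again exploit $E(\psi_j\psi_j^*) = \oneop$ together with $E$-invariance of $\omega$: since $k\in\N$, $\omega(k^*\psi_j\psi_j^* k) = \omega(E(k^*\psi_j\psi_j^* k)) = \omega(k^* E(\psi_j\psi_j^*) k) = \omega(k^* k) = \|k\Omega\|^2$, while the left-hand side equals $\|\psi_j^* k\Omega\|^2 = 0$. Thus $k\Omega = 0$, and since $\Omega$ is separating for $\M\supseteq\N$, $k = 0$. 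The main obstacle is purely the interchange of the (possibly infinite) sum with $P_j$ and with $E$; working with $\Omega$ and the boundedness of $P_j$ sidesteps it cleanly, and it is precisely the normalization $E(\psi_j\psi_j^*) = \oneop$ that makes the extraction map $k\Omega\mapsto\psi_j^* k\Omega$ isometric, hence injective.
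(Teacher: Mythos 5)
Your proposal is correct and follows the same route as the paper: the paper's entire proof consists of observing that $E(\psi_i\psi_i^*)=\oneop$ was established in the proof of Proposition \ref{prop:discreteiffintert} and then citing the uniqueness clause of Proposition \ref{prop:pipoexpansion}. Your additional GNS-based coefficient-extraction argument is a sound unpacking of why that normalization yields uniqueness, but it is material the paper delegates entirely to the cited proposition.
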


\begin{proof}
We have already checked in the proof of Proposition \ref{prop:discreteiffintert} that $E(\psi_i\psi_i^*) = \oneop$ for every $i\in I$, hence we can apply Proposition \ref{prop:pipoexpansion}.
\end{proof}

\begin{proposition}\label{prop:discreteminimal}
Let $\N$ be an infinite factor with separable predual and $\N\subset\M$ a discrete extension as in Proposition \ref{prop:discreteiffintert}. Fix a conditional expectation $E\in E(\M,\N)$ and a canonical endomorphism $\gamma$ with dual canonical endomorphism $\theta = \gamma_{\restriction \N}$. Then a generalized Q-system of intertwiners $(\theta,w,\{m_i\})$ can be chosen such that the set of indices $I$ labels the irreducible subsectors $[\rho_i]$ (necessarily with finite dimension) of $[\theta]$, counted with (arbitrary) multiplicity. There is a distinguished label $0\in I$, corresponding to one occurrence of the identity sector $[\id]$, such that 
\begin{equation}\label{eq:mzero} m_0 = \theta(w) \end{equation} 
\ie
$$
\tikzmath{
	\draw[ultra thick]
	(0,-14) node [below] {$\theta$}--(0,-10)..
	controls (0,0) and (10,0)
	..(10,-10)--(10,-14) node [below] {$\theta$};
	\draw[ultra thick]
	(5,7) node [above] {$\theta$}--(5,-2.5) node [] {\textbullet};
	\draw (5,0.5) node [right] {$0$};
	}\,= 
	\hspace{-1mm}
\tikzmath{
	\draw[ultra thick]
	(0,-14) node [below] {$\theta$}--(0,7) node [above] {$\theta$};
	\draw[ultra thick]
	(9,-14) node [below] {$\theta$}--(9,-2.5) node [] {\textbullet};
	}.
$$
\end{proposition}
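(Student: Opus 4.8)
The plan is to obtain the desired generalized Q-system of intertwiners by \emph{refining the choice of projections} in the construction carried out in the proof of Proposition \ref{prop:discreteiffintert}, and to single out the Jones projection as the distinguished label. Recall from that proof that, by discreteness, $\N'\cap\M_1$ is a direct sum of type $I$ factors, and that \emph{any} decomposition $\oneop=\sum_i P_i$ into mutually orthogonal projections $P_i\in\N'\cap\M_1$ lying in the domain $m_{\hat E}$ produces, via partial isometries $W_i$ with $W_i^*W_i=P_i$, $W_iW_i^*=e_\N$, charged fields $\psi_i=\hat E(W_i)$ and hence a generalized Q-system of intertwiners $(\theta,w,\{m_i\})$ with $w_i=\gamma(\psi_i^*)w$, $M_i=w_i\psi_i$ and $m_i=\gamma(M_i)$, where $\rho_i(n)e_\N=W_i n W_i^*$ satisfies $\rho_i\prec\theta$ and $\theta=\oplus_i\rho_i$. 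First I would take the $P_i$ to be a maximal family of \emph{minimal} projections of $\N'\cap\M_1$. By the structural description of $\N'\cap\M_1$ used in Proposition \ref{prop:discreteiffintert} (the results of \cite{ILP98}), minimal projections correspond to irreducible subsectors, and a minimal $P_i$ gives an irreducible $\rho_i$ of finite dimension, since finite rank of $P_i$ forces $P_i\N\subset P_i\M_1P_i\cong\rho_i(\N)\subset\N$ to have finite index. The number of minimal projections inside each type $I$ summand equals the multiplicity of the corresponding sector, so the index set $I$ indeed labels the irreducibles $[\rho_i]\prec[\theta]$ counted with multiplicity.

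For the distinguished label I would simply include $P_0:=e_\N$ among the chosen minimal projections. It lies in $m_{\hat E}$ because $\hat E(e_\N)=\oneop$, and it is minimal in $\N'\cap\M_1$: using $e_\N\in\N'$, the Jones relation $e_\N\M_1e_\N=\N e_\N$, and $\N e_\N\cap\N'=\Z(\N)e_\N=\CC e_\N$ (here $\N$ is a factor), one gets $e_\N(\N'\cap\M_1)e_\N=\CC e_\N$. Taking the obvious choice $W_0=e_\N$ yields $\rho_0(n)e_\N=e_\N n e_\N=n e_\N$, that is $\rho_0=\id$, so $P_0=e_\N$ accounts for exactly one occurrence of the identity sector. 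The associated charged field is then $\psi_0=\hat E(e_\N)=\oneop$, whence $w_0=\gamma(\psi_0^*)w=w$, $M_0=w_0\psi_0=w$, and finally
$$m_0=\gamma(M_0)=\gamma(w)=\theta(w),$$
which is precisely equation (\ref{eq:mzero}). Since the family $\{P_i\}$ so chosen (minimal, containing $e_\N$, summing to $\oneop$, all in $m_{\hat E}$) fulfills the hypotheses of Proposition \ref{prop:discreteiffintert}, the resulting triple $(\theta,w,\{m_i\})$ is again a generalized Q-system of intertwiners, now with the claimed labeling.

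As an independent consistency check one notes that $m_0=\theta(w)$ manifestly satisfies $m_0\theta(n)=\theta^2(n)m_0$, obeys the unit-like relation $\theta(w^*)m_0=\theta(w^*w)=\oneop$ required in Proposition \ref{prop:brprodembeddings}, and gives a genuine projection $p_0=m_0^*ww^*m_0$ because $E'(m_0m_0^*)=\theta(ww^*)$ is one (by the criterion in the Remark after Theorem \ref{thm:FiIs}). I expect the only real obstacle to lie in the bookkeeping of the first step: in the present non-irreducible, non-factorial setting (where $\M$ need not be a factor and $[\id]$ may occur in $[\theta]$ with multiplicity greater than one) one must be certain that the Jones projection $e_\N$ is \emph{minimal} in $\N'\cap\M_1$, hence represents a \emph{single} copy of $[\id]$ rather than the whole identity-isotypic part. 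This is exactly what the factoriality of $\N$ and the identification of $\N'\cap\M_1$ with the intertwiner algebra of $\theta$ secure, and it is the precise point at which the deep structural input of \cite{ILP98} invoked in Proposition \ref{prop:discreteiffintert} is indispensable.
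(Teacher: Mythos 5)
Your proposal is correct and follows essentially the same route as the paper's proof: refine the $P_i$ to minimal projections of $\N'\cap\M_1$ (still in the domain of $\hat E$, since that domain is hereditary), observe that $e_\N$ is itself minimal because $\N$ is a factor, and take $P_0=e_\N$, $W_0=e_\N$, so that $\psi_0=\oneop$, $M_0=w$ and $m_0=\gamma(w)=\theta(w)$. The only difference is cosmetic: you spell out the minimality of $e_\N$ via $e_\N(\N'\cap\M_1)e_\N=\N e_\N\cap\N'=\CC e_\N$, where the paper instead states the equivalence and cites \cite[Lem.\ 2.2, Prop.\ 2.4]{HKZ91} for the ability to include $e_\N$ in the chosen family.
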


\begin{proof}
$\N'\cap\M_1$ is a direct sum of type $I$ factors by discreteness assumption. Hence we can refine the family of orthogonal projections $P_i$ encountered in the proof of the previous proposition such that each $P_i$ is minimal in $\N'\cap\M_1$ and again in the domain of $\hat E$, thus each $\rho_i \prec \theta$, $i\in I$, is irreducible (with finite index). Every subsector of $[\theta]$ arises in this way and $\theta = \oplus_i \rho_i$.

The second statement follows by observing that the Jones projection $e_\N$ is minimal in $\N'\cap\M_1$ if and only if $\N$ is a factor, if and only if $\id$ is irreducible as an object (tensor unit) of $\End(\N)$. Now, by \cite[\Lem 2.2, \Prop 2.4]{HKZ91} we can assume that $P_0 = e_\N$ and choose $W_0 = e_\N$, hence $\psi_0 = \hat E(e_\N) = \oneop$ and $w_0 = M_0 = w$, \ie, $m_0 = \theta(w)$.
\end{proof}

\begin{remark}\label{rmk:discreteandirred}
In the assumptions of Proposition \ref{prop:discreteiffintert}, discreteness of the inclusion $\N\subset\M$ implies 
$$[\theta] = \oplus_i [\rho_i]$$ 
where $[\rho_i]$ are irreducible subsectors with \emph{finite dimension} and counted with (arbitrary) multiplicity in the 
set of indices $I$, \cf comments after \cite[\Def 3.7]{ILP98}. 

If in addition $\N\subset\M$ is \emph{irreducible}, \ie, $\N'\cap\M=\CC\oneop$, then the multiplicity of each $[\rho_i]$ in $[\theta]$ is \emph{finite} and bounded above by the square of the dimension of $[\rho_i]$, see \cite[\Thm 3.3, \App]{ILP98}.
\end{remark}

\begin{remark}\label{rmk:discretespecialpipo}
The Pimsner-Popa elements $\{m_i\}\subset\N$, or equivalently $\{M_i\}\subset\M$ (Definition \ref{def:dualgenQsysofinterts}), constructed from discrete inclusions via charged fields as in Proposition \ref{prop:discreteiffintert} have the following additional properties. Compute 
$ w^* m_i = w^* \gamma(w_i \psi_i) = w_i w^* \gamma(\psi_i) = w_i w_i^*$, hence
\begin{equation}\label{eq:pallinoasx} w^* m_i = m_i^*w = w_i w_i^* \end{equation}
\ie
$$
\tikzmath{
	\draw[ultra thick]
	(0,-10) node [] {\textbullet}--(0,-10)..
	controls (0,0) and (10,0)
	..(10,-10)--(10,-17) node [below] {$\theta$};
	\draw[ultra thick]
	(5,7) node [above] {$\theta$}--(5,-2.5) node [] {\textbullet};
	\draw (5,0.5) node [right] {$i$};
	}\,= 
	\hspace{-1mm}
\tikzmath{
	\draw[ultra thick]
	(0,10) node [] {\textbullet}--(0,10)..
	controls (0,0) and (10,0)
	..(10,10)--(10,17) node [above] {$\theta$};
	\draw[ultra thick]
	(5,-7) node [below] {$\theta$}--(5,2.5) node [] {\textbullet};
	\draw (5,-0.5) node [right] {$i$};
	}\,= 
\tikzmath{
	\draw[ultra thick] (0,19) node [above] {$\theta$}--(0,-19) node [below] {$\theta$};
	\fill[color=white] (-4,4) rectangle (4,12);
    	\draw (-4,4) rectangle node {$w_i^*$} (4,12);
	\fill[color=white] (-4,-4) rectangle (4,-12);
    	\draw (-4,-4) rectangle node {$w_i$} (4,-12);
	\draw (3.7,0) node {$\rho_i$};
	}\,,\quad\qquad
\tikzmath{
	\draw[ultra thick]
	(0,-10) node [] {\textbullet}--(0,-10)..
	controls (0,0) and (10,0)
	..(10,-10)--(10,-17) node [below] {$\theta$};
	\draw[ultra thick]
	(5,7) node [above] {$\theta$}--(5,-2.5) node [] {\textbullet};
	\draw (5,0.5) node [right] {$i$};
	}\,=:
\tikzmath{
	\draw[ultra thick] (5,12) node [above] {$\theta$}--(5,-12) node [below] {$\theta$};
	\draw[ultra thick] (-3,0) node [] {\textbullet} -- (5,0) node [] {\textbullet};
	\draw (5,0.5) node [right] {$i$};
	}\,=
\tikzmath{
	\draw[ultra thick] (5,12) node [above] {$\theta$}--(5,-12) node [below] {$\theta$};
	\draw[ultra thick] (-3,3) node [] {\textbullet} -- (5,3) node [] {\textbullet};
	\draw (5,3.5) node [right] {$i$};
	\draw[ultra thick] (-3,-3) node [] {\textbullet} -- (5,-3) node [] {\textbullet};
	\draw (5,-2.5) node [right] {$i$};
	}
$$
for every $i\in I$. Consider the spatial isomorphism $\theta(\N) \subset \N = \gamma_1(\N \subset \M_1)$
such that $\theta(\N)' \cap \N = \gamma_1(\N' \cap \M_1)$, where $\gamma_1$ is the canonical endomorphism for $\M\subset\M_1$ dual to $\gamma$. 
From $\gamma_1(P_i) = w_iw_i^* = E(M_iM_i^*)$ we conclude that 
$$p_i = q_i$$
where $p_i =\gamma_1(P_i)$, and $q_i = E(M_iM_i^*)$ are defined in Lemma \ref{lem:pi-poequiv}. In particular $q_i$, $i\in I$, are mutually orthogonal projections in $\N$ such that $\sum_i q_i = \oneop$ as well. 

If we consider $\{M_i\}$ constructed as in Proposition \ref{prop:discreteminimal} we have in addition $w^* m_0 = w_0w_0^* = ww^*$ and
\begin{equation}\label{eq:pallinoadx} \theta(w^*) m_i = \delta_{i,0} \oneop \end{equation}
\ie
$$
\tikzmath{
	\draw[ultra thick]
	(0,-17) node [below] {$\theta$} --(0,-10)..
	controls (0,0) and (10,0)
	..(10,-10)--(10,-10) node [] {\textbullet};
	\draw[ultra thick]
	(5,7) node [above] {$\theta$}--(5,-2.5) node [] {\textbullet};
	\draw (5,0.5) node [right] {$i$};
	}\,=\;
\delta_{i,0}\hspace{-1mm}
\tikzmath{
	\draw[ultra thick] (0,12) node [above] {$\theta$} -- (0,-12) node [below] {$\theta$};	
}
$$
for every $i\in I$. Moreover
$$E(M_i) = E(M_iM_0^*)M_0 = \delta_{i,0} M_0.$$ 
\end{remark}

\begin{definition}\label{def:unitalqsys}
We say that a generalized Q-system of intertwiners (Definition \ref{def:genQsysofinterts}) is \textbf{unital}, if it satisfies in addition the analogue of equations (\ref{eq:mzero}), (\ref{eq:pallinoasx}), (\ref{eq:pallinoadx}), namely 
$$m_0 = \theta(w), \quad w^*m_i = m_i^* w = (w^* m_i)^2, \quad \theta(w^*)m_i = \delta_{i,0} \oneop$$
for every $i\in I$, and for a distinguished label $0\in I$.
\end{definition}

One can easily check that the braided product (Definition \ref{def:brprodgenQsysofinterts}) of two unital generalized Q-systems of intertwiners is again unital.

\section{Generalized Q-systems of intertwiners for local nets}\label{sec:gennetQsysofinterts}

Let $\{\A\} = \{\O\in\K \mapsto \A(\O)\}$ be a \emph{net} of infinite von Neumann factors (typically of type $\III_1$) over a partially ordered by inclusion and \emph{directed} set $\K$ of open bounded regions $\O$ of spacetime (\eg, the set of open proper bounded intervals $\O = I\subset\RR$, or double cones in Minkowski space $\O\subset \RR^{n+1}$, $n\geq 1$). A net is called \emph{isotonous} if $\O\subset\tilde\O$ implies $\A(\O)\subset\A(\tilde\O)$, and \emph{local} if $\A(\O)$ and $\A(\tilde\O)$ commute elementwise whenever $\O \subset \tilde\O '$, where $\tilde\O'$ denotes the space-like complement of $\tilde\O$ in $\RR^{n+1}$, $n\geq 1$, or the interior of the complement of the interval $\tilde\O = \tilde I$ in $\RR$. 

\begin{definition}\label{def:localnet}
A net $\{\A\}$ as above fulfilling isotony and locality is called a \textbf{net of local observables}, also abbreviated as \textbf{local net}. 
\end{definition}

We refer to \cite{HaagBook}, \cite[\Sec 3]{LoRe95}, \cite[\Ch 5]{NCGlec04} for more explanations and for the physical motivations behind this notion.

Now, let $\{\A\}$ be realized on a separable Hilbert space $\Hilb_0$ (vacuum space) and assume the existence of a unit vector $\Omega_0\in\Hilb_0$ (vacuum vector) which is cyclic and separating for each local algebra $\A(\O)$. In this case, we say that $\{\A\}$ is a \emph{standard} net on $\Hilb_0$ with respect to $\Omega_0$ and denote by $\omega_0 := (\Omega_0 | \cdot\Omega_0)$ the vacuum state of the net. We say that \emph{Haag duality} holds for $\{\A\}$ in the vacuum space if
$$\A(\O')' = \A(\O)$$
for every $\O\in\K$, where $\A(\O')$ is the \Cstar-algebra generated by all $\A(\tilde\O)$, $\tilde\O\in\cK$, $\tilde\O\subset\O'$. 

Denote by $\DHR\{\A\}\subset\End(\A)$ the category of \textbf{DHR endomorphisms} of the net, see \cite{DHR71}, \cite{DHR74}, \cite{FRS92}, and by $\A$ the \textbf{quasilocal algebra}, \ie, the \Cstar-algebra generated by $\{\A\}$. In the following we shall be interested in two distinguished subcategories of the DHR category, namely

\begin{definition}\label{def:DHRf-d}
Denote by $\DHR_f\{\A\}$ and $\DHR_d\{A\}$ the full subcategories of $\DHR\{\A\}$ whose objects are, respectively, \emph{finite-dimensional} DHR endomorphisms and (possibly infinite, countable) \emph{direct sums} of those.  
\end{definition}

More precisely the most general object $\rho$ in $\DHR_d\{\A\}$ arises as follows. Let $\rho_i$ be a family of at most countably many irreducible finite-dimensional DHR endomorphisms which can be localized in $\O\in\cK$. Let $\{w_i\}$ be a (possibly infinite) Cuntz family of isometries in $\A(\O)$ such that $w_i w_i^*\in\rho(\A)'\cap\A(\O)$ are mutually orthogonal projections and $\sum_i w_i w_i^* = \oneop$. Then $\rho = \sum_i \Ad_{w_i} \rho_i$, where $\rho_i =: \Ad_{w^*_i} \rho$ and the sum converges elementwise in the strong operator topology because $\rho = \sum_i w_iw_i^* \rho(\cdot) \sum_j w_jw_j^* = \sum_i w_i \rho_i(\cdot) w_i^*.$
Similarly, the most general arrow $t$ between objects $\rho,\sigma$ in $\DHR_d\{\A\}$ can be written as $t = \sum_j v_jv_j^* t \sum_i w_iw_i^* = \sum_{i,j} v_j t_{j,i} w_i^*$ where $\{w_i\}$, $\{v_j\}$ are Cuntz families, respectively, for $\rho$, $\sigma$ and $t_{j,i} := v^*_j t w_i$ are arrows from $\rho_i$ to $\sigma_j$.

\begin{remark}
Observe that $\DHR_f\{\A\}\subset\DHR_d\{\A\}\subset\DHR\{\A\}$ and each inclusion is full, replete and stable under (finite) direct sums and subobjects. The first two categories are \emph{semisimple} in the sense that every object can be written as a (possibly infinite) direct sum of irreducible finite-dimensional objects.
\end{remark}

The following is the net-theoretic version of Definition \ref{def:genQsysofinterts}, and generalizes the notion of Q-system for local nets given in \cite[\Sec 4]{LoRe95}.

\begin{definition}\label{def:netgenQsysofinterts}
Let $\{\A\}$ be a local net fulfilling Haag duality as above. A \textbf{generalized net Q-system of intertwiners} in $\C = \DHR\{\A\}$ 
is a triple $(\theta, w, \{m_i\})$ consisting of a DHR endomorphism $\theta$ in $\DHR\{\A\}$, an isometry $w\in\Hom_{\DHR\{\A\}}(\id,\theta)$, and a family $\{m_i\}\subset\A$ indexed by $i$ in some set $I$, such that
\begin{itemize}
\item[$(i)$] $p_i := m_i^* ww^* m_i$ are mutually orthogonal projections in $\A$, \ie, $p_i p_j^* = \delta_{i,j} p_i$, such that $\sum_i p_i = \oneop$.
\item[$(ii)$] $a w = 0 \Rightarrow a = 0$ if $a\in \A_1(\O) := \left\langle\theta(\A(\O)), \{m_i\}\right\rangle$ for some localization region $\O\in\K$ of $\theta$ and for any other $\tilde\O\in\K$ such that $\O \subset \tilde\O$.
\item[$(iii)$] $m_i \in\Hom_{\DHR\{\A\}}(\theta,\theta^2)$, $i\in I$.
\end{itemize}
\end{definition}

\begin{remark}\label{rmk:netQsysislocalQsys}
By the localization property of $\theta$ and by Haag duality, $(\theta, w, \{m_i\})$ is a generalized Q-system of intertwiners in $\End(\A(\tilde\O))$ (Definition \ref{def:genQsysofinterts}) for every $\tilde\O$ as above. Indeed, $\DHR\{\A\}$ sits into $\End(\A(\tilde\O))$ via the restriction functor as a (full if local intertwiners are global), replete and \emph{braided} tensor subcategory for every such $\tilde\O$, \cf \cite[\Sec 3]{GiRe15}. 
\end{remark}

\begin{remark}
Condition $(iii)$ in Definition \ref{def:netgenQsysofinterts}, in view of Proposition \ref{prop:discreteiffintert}, excludes many interesting infinite index extensions of local nets. Notably the Virasoro net $\{\Vir_c\}$ in one spacetime dimension, which sits in every conformal (diffeomorphism covariant) net, gives often rise to infinite index \emph{semidiscrete} but \emph{non-discrete} extensions if $c > 1$, see \cite{Reh94vir}, \cite{Car04}, \cite{Xu05}.
It is however fulfilled in many examples of chiral conformal embeddings with infinite index, see Section \ref{sec:exampleU(1)}, as in compact orbifold theories in low and higher dimensions, see \cite{Xu00orb} and \cite{DoRo90}, and of course in every finite index extension. 
\end{remark}

\begin{definition}\cite{LoRe95}\label{def:inclnets}.
An \textbf{inclusion of nets} is defined by two isotonous nets of von Neumann algebras $\{\A\}$, $\{\B\}$ over the same partially ordered set of spacetime regions $\K$ and realized on the same separable Hilbert space $\Hilb$ such that $\A(\O) \subset \B(\O)$ for every $\O\in\K$. In this case, we write 
$$\{\A \subset \B\}$$
and call $\{\B\}$ an \textbf{extension} of $\{\A\}$. The inclusion is called \emph{irreducible} if $\A(\O)'\cap\B(\O) = \CC\oneop$ for every $\O\in\K$. The net $\{\B\}$ is \emph{relatively local} with respect to $\{\A\}$ if $\B(\O)\subset\A(\O')'$ for every $\O\in\K$. If $\{\A\}$ is local, $\{\B\}$ will be always implicitly assumed to be relatively local with respect to $\{\A\}$.

The inclusion of nets is called \emph{standard} if there is a vector $\Omega\in\Hilb$ which is standard for $\{\B\}$ on $\Hilb$ and for $\{\A\}$ on a subspace $\Hilb_0\subset\Hilb$. A normal faithful \emph{conditional expectation} $E$ of $\{\B\}$ onto $\{\A\}$ is a family indexed by $\O\in\K$ of normal faithful conditional expectations $E_\O \in E(\B(\O),\A(\O))$ which respect inclusions, namely such that ${E_{\tilde\O}}_{\restriction \B(\O)} = E_\O$ if $\O\subset\tilde\O$. A normal faithful \emph{state} $\omega$ of $\{\B\}$ is a conditional expectation of $\{\B\}$ onto the trivial net $\{\CC\}$ and $E$ as above is called \emph{standard} if it preserves the standard vector state $\omega:=(\Omega|\cdot \Omega)$ of the net, namely $\omega_\O \circ E_\O = \omega_\O$ for every $\O$. We say that the extension $\{\A\subset\B\}$ is \emph{discrete} if $\A(\O)\subset\B(\O)$ is discrete (Definition \ref{def:discreteinclusion}) for every $\O\in\K$.
\end{definition}

The following theorem extends the results of \cite[\Thm 4.9]{LoRe95} to the case of infinite index discrete inclusions of nets of von Neumann algebras.

\begin{theorem}\label{thm:discreteextnets}
Let $\{\A\}$ be a local net fulfilling Haag duality and standardly realized on $\Hilb_0$ as in the beginning of this section. Then a generalized net Q-system of intertwiners $(\theta, w, \{m_i\})$ in $\C=\DHR\{\A\}$ (Definition \ref{def:netgenQsysofinterts}) which is also unital (Definition \ref{def:unitalqsys}) gives an isotonous net of von Neumann algebras $\{\B\}$ such that $\{\A\subset \B\}$ is a discrete standard inclusion of nets with a normal faithful standard conditional expectation. The net $\{\B\}$ is always relatively local with respect to $\{\A\}$, and it is itself local if and only if $\theta(\eps_{\theta,\theta}) m_i m_j = m_j m_i$ for every $i,j\in I$, where $\eps$ denotes the DHR braiding.   
\end{theorem}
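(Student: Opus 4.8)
The plan is to build the extension region by region from the local restrictions of the net Q-system, and then glue. Fix a region $\O_0\in\K$ in which $\theta$ can be localized; since $\theta\in\DHR_d\{\A\}$ this can be arranged simultaneously for all its irreducible components. For every $\tilde\O\in\K$ with $\O_0\subset\tilde\O$, Remark \ref{rmk:netQsysislocalQsys} says that $(\theta,w,\{m_i\})$ restricts to a generalized Q-system of intertwiners in $\End(\A(\tilde\O))$, so Theorem \ref{thm:FiIs} produces a von Neumann algebra $\B(\tilde\O)\supset\A(\tilde\O)$, a canonical endomorphism $\gamma$ dual to $\theta$, and a normal faithful conditional expectation $E_{\tilde\O}\in E(\B(\tilde\O),\A(\tilde\O))$; by Proposition \ref{prop:discreteiffintert} (using that $\A(\tilde\O)$ is an infinite factor) each such inclusion is discrete. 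Concretely I would realise $\B(\tilde\O)=\langle\A(\tilde\O),\{\psi_i\}\rangle$ using the charged fields $\psi_i$ of Proposition \ref{prop:discreteiffintert}, which satisfy $\psi_i a=\rho_i(a)\psi_i$ with $\rho_i\prec\theta$ localized in $\O_0$. The unitality hypothesis (Definition \ref{def:unitalqsys}) is what guarantees $\psi_0=\oneop$, so that $\A(\tilde\O)\subset\B(\tilde\O)$ is a genuine unital inclusion carrying the vacuum and $E_{\tilde\O}$ is the projection onto the identity-charge component. For general $\O$ one uses DHR transportability: localize $\theta$ in $\O$ and transport the fields by charge transporters in $\A$, or simply work with the cofinal family $\{\tilde\O\supset\O_0\}$ which by directedness of $\K$ fixes the net.

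The crucial point for isotony is that the generating fields $\psi_i$ are localized in the fixed region $\O_0$ and do not depend on $\tilde\O$; only the base algebra grows. Hence $\O\subset\tilde\O$ immediately gives $\B(\O)=\langle\A(\O),\{\psi_i\}\rangle\subset\langle\A(\tilde\O),\{\psi_i\}\rangle=\B(\tilde\O)$, so $\{\B\}$ is isotonous. The conditional expectations are compatible, ${E_{\tilde\O}}_{\restriction\B(\O)}=E_\O$, because all of them are given by the same region-independent formula $b\mapsto w^*\gamma(b)w$ (equivalently, by the unique Pimsner-Popa expansion of Proposition \ref{prop:uniquepipoexpansion}, with $w\in\A(\O_0)$), so they assemble into a net conditional expectation $E$. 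Standardness follows by taking $\Omega\in\Hilb$ to be the GNS vector of the state $\omega_0\circ E$, which restricts to the vacuum $\Omega_0$ on $\Hilb_0$ and is standard for $\{\B\}$; discreteness of $\{\A\subset\B\}$ is the local discreteness already obtained from Proposition \ref{prop:discreteiffintert}.

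Relative locality is automatic and I would check it directly: $\B(\O)$ is generated by $\A(\O)$ together with fields $\psi_i$ localized in $\O_0\subset\O$, and for $\tilde\O\subset\O'$ the endomorphisms $\rho_i\prec\theta$ act trivially on $\A(\tilde\O)$ by DHR localization, whence $\psi_i a=\rho_i(a)\psi_i=a\psi_i$ for $a\in\A(\tilde\O)$; combined with locality of $\{\A\}$ this gives $\B(\O)\subset\A(\tilde\O)'$ for all $\tilde\O\subset\O'$, \ie $\B(\O)\subset\A(\O')'$.

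The genuinely non-trivial part, and the step I expect to be the main obstacle, is the locality criterion. Using the above one reduces locality of $\{\B\}$ to the mutual commutation of two multiplets of generating fields localized in spacelike separated regions, one at $\O_0$ and one at a spacelike $\O_1$ obtained by a DHR charge transporter $U\in\A$ with $U\in\Hom(\theta,\theta')$, $\theta'$ localized in $\O_1$. Writing the dual fields $M_i\in\B(\O_0)$ with $M_i a=\theta(a)M_i$ and $\gamma(M_i)=m_i$, the transported field is $UM_j$, and the computation
$$M_i\,(U M_j)=\theta(U)\,M_i M_j,\qquad (U M_j)\,M_i = U\,M_j M_i$$
shows that spacelike commutation forces $\theta(U)\,M_iM_j=U\,M_jM_i$, \ie $M_jM_i=U^*\theta(U)\,M_iM_j=\eps_{\theta,\theta}\,M_iM_j$, where $\eps_{\theta,\theta}=U^*\theta(U)$ is the charge-transporter expression of the DHR braiding. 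Applying the homomorphism $\gamma$, with $\gamma_{\restriction\A}=\theta$ so that $\gamma(\eps_{\theta,\theta})=\theta(\eps_{\theta,\theta})$, turns this identity into $m_jm_i=\theta(\eps_{\theta,\theta})\,m_im_j$, which is exactly the stated condition. The delicate points are to verify that checking the generating fields suffices (by relative locality, locality of $\{\A\}$, and transportability), to keep track of the correct chirality of the braiding so that the condition comes out in the form $\theta(\eps_{\theta,\theta})m_im_j=m_jm_i$ for all $i,j\in I$, and to confirm that the convergence of the Pimsner-Popa sums is respected when passing between the $M_i$ and the $m_i$.
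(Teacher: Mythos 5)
Your overall strategy --- build the local extensions from the restricted Q-systems via Theorem \ref{thm:FiIs} and glue, then check relative locality on the generators and reduce locality to a braiding identity --- is the same as the paper's (in fact closest to the alternative construction of Section \ref{sec:altext}), and your derivation of the locality criterion $\theta(\eps_{\theta,\theta})m_im_j=m_jm_i$ via charge transporters and $\gamma$ is essentially the computation the paper performs. However, there is a genuine gap at the heart of the argument: the isotony step. You write $\B(\O)=\langle\A(\O),\{\psi_i\}\rangle\subset\langle\A(\tilde\O),\{\psi_i\}\rangle=\B(\tilde\O)$ on the grounds that ``the generating fields $\psi_i$ do not depend on $\tilde\O$.'' But the fields $\psi_i$ (equivalently the $M_i=\gamma^{-1}(m_i)$) only exist once the extension algebra has been constructed, and Theorem \ref{thm:FiIs} produces, for each region $\tilde\O$ separately, an abstract canonical extension of $\A(\tilde\O)$ depending on a non-canonical choice of unitary implementing $\theta$ on $\A(\tilde\O)$. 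There is no a priori identification of these extensions, of their generating fields, or even of the Hilbert spaces they act on, for different regions; Proposition \ref{prop:netqsys}, which does show region-independence of the charged fields via the push-down lemma, presupposes that a globally defined net extension $\{\A\subset\B\}$ is already given, so invoking it here would be circular. Establishing this coherence is precisely the main technical content of the paper's proof: one fixes the $M_i$ once in the canonical extension $\M$ of $\A(\O)$, passes to the GNS representation of $\varphi=\omega_0\circ E$, extends the representation of $\A(\O)$ to a locally normal representation of the whole quasilocal algebra $\A$ on $\Hilb_\varphi$ using the Pimsner--Popa decomposition of the Hilbert space, and only then defines $\B(\tilde\O):=\langle\pi_\varphi(\A(\tilde\O)),\{\pi_\varphi(M_i)\}\rangle$ (with charge transporters inserted for regions not containing $\O$). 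The same gap propagates to your claim that the expectations are ``given by the same region-independent formula $w^*\gamma(\cdot)w$'' (there is a priori a different $\gamma$ for each region) and to the standardness of $\Omega$.

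Two further points you use without proof are in fact nontrivial consequences of this construction. First, the \emph{global} intertwining relation $\psi_i a=\rho_i(a)\psi_i$ for all $a\in\A$ (which you need both for relative locality and for the locality computation) holds by fiat only on $\A(\O)$; its extension to all of $\A$ is proved in the paper via the expansion $M_iM_j^*=\sum_k M_k^* l_j^{ki}$ with $l_j^{ki}\in\Hom_{\DHR\{\A\}}(\theta,\theta^2)$ together with the identity $e_{\N}=e_{\tilde\N}$ of the Jones projections over all regions. Second, that last identity --- equivalently, that the vacuum subspace $\overline{\pi_\varphi(\A(\tilde\O))\Omega_\varphi}$ is region-independent --- is exactly where unitality enters ($M_0^*w=\oneop$ gives $\Omega_\varphi=\pi_\varphi(M_0^*)\pi_\varphi(w)\Omega_\varphi$), whereas you only use unitality to normalize $\psi_0=\oneop$. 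Without these ingredients the compatibility of the conditional expectations and the cyclic/separating property of the vacuum for all $\B(\tilde\O)$ do not follow.
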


\begin{proof}
Let $\O\in\K$ be a localization region of the DHR endomorphism $\theta$, call $\N:=\A(\O)$ and $\theta \equiv \theta_{\restriction\N}\in\End(\N)$ the restriction of $\theta$ to $\N$, and observe that $w\in\N$, $m_i\in\N$ for every $i\in I$ by Haag duality. From Theorem \ref{thm:FiIs} we get $\N_2 \subset \N_1 \subset\N$ with a normal faithful conditional expectation $E':=\theta(w^*\cdot w)\in E(\N_1,\N_2)$ and such that $\theta$ is a canonical endomorphism for $\N_1\subset\N$. Now $\N$ acts standardly on $\Hilb_0$ by assumption hence $\theta = \Ad_\Gamma$ on $\N$, where $\Gamma := J_{\N_1,\Phi}J_{\N,\Phi}$ and $\Phi\in\Hilb_0$ is cyclic and separating for $\N_1$ and $\N$. Let $\M:= \Ad_{\Gamma^*}(\N_1)$ be the corresponding canonical extension of $\N_1\subset\N$ with canonical endomorphism $\gamma := {\Ad_{\Gamma}}_{\restriction \M}$. Lift accordingly the conditional expectation $E:=w^*\gamma(\cdot)w\in E(\M,\N)$ and consider the normal faithful $E$-invariant state $\varphi := \omega_0 \circ E$ of $\M$, where $\omega_0 = (\Omega_0|\cdot\Omega_0)$ is the vacuum state of $\{\A\}$. 
The operators $M_i := \gamma^{-1}(m_i)\in\M$ as in Definition \ref{def:dualgenQsysofinterts} form a Pimsner-Popa basis for $\N\subset\M$ with respect to $E$ and fulfill 
\begin{equation}\label{eq:localintertMi}M_i n = \theta(n) M_i,\quad i\in I,\quad n\in\N.\end{equation}
Now consider the (normal faithful) GNS representation $(\Hilb_\varphi, \pi_\varphi, \Omega_\varphi)$ of $\M$ with respect to $\varphi = \varphi\circ E$. 
The inclusion $\pi_\varphi(\N)\subset\pi_\varphi(\M)$ on $\Hilb_\varphi$ with conditional expectation $E_\varphi$ given by $E_\varphi(\pi_\varphi(m)) := \pi_\varphi(E(m))$, $m\in \M$, is spatially isomorphic to $\N\subset\M$ on $\Hilb_0$ with respect to $E$. Moreover, $(\Omega_\varphi|E_\varphi(\pi_{\varphi}(m)) \Omega_\varphi) = (\Omega_\varphi|\pi_{\varphi}(m) \Omega_\varphi)$, $m\in\M$ and $e_\N := [\pi_\varphi(\N)\Omega_\varphi]$ is the associated Jones projection. 
By spatial isomorphism we have that $\{\pi_\varphi(M_i)\}$ is a Pimsner-Popa basis for $\pi_\varphi(\N)\subset\pi_\varphi(\M)$ with respect to $E_\varphi$, and $\gamma_\varphi$ given by $\gamma_\varphi(\pi_\varphi(n)) := \pi_\varphi(\gamma(n))$, $n\in \N$, is a canonical endomorphism with dual canonical $\theta_\varphi := {\gamma_\varphi}_{\restriction\pi_\varphi(\N)}$.
In particular, we have a direct sum decomposition
$$\Hilb_\varphi = \overline{\sum_i \pi_\varphi(M_i^*) e_\N \Hilb_\varphi}$$
where $\pi_\varphi(M_i^*)e_\N$, $i\in I$, are partial isometries with mutually orthogonal range and domain projections and we let
$$\Hilb_{0,\N,\varphi} := e_\N \Hilb_\varphi.$$
Every $n\in \N\subset\M$ acts by left multiplication on $\Hilb_\varphi$, then
\begin{equation}\label{eq:GNSofN}\pi_\varphi(n)\sum_i \pi_\varphi(M_i^*)\psi_i = \sum_i \pi_\varphi(M_i^*)\pi_\varphi(\theta(n))\psi_i\end{equation}
where $\psi = \sum_i \pi_\varphi(M_i^*)\psi_i$, with $\psi_i :=  e_\N \pi_\varphi(M_i) \psi \in \pi_\varphi(q_i) e_\N\Hilb_\varphi$, is the generic vector of $\Hilb_\varphi$. As in the proof of \cite[\Thm 4.9]{LoRe95}, this representation of $\N=\A(\O)$ extends to the whole net. Indeed, the linear map
$$U_0 : n\Omega_0\mapsto\pi_\varphi(n)\Omega_\varphi, \quad n\in\N$$
extends to a unitary operator from $\Hilb_0$ onto $\Hilb_{0,\N,\varphi}$, 
due to $\varphi \equiv \omega_0 \circ E$ and $E(n)=n$, $n\in\N$, which implements ${\pi_\varphi}_{\restriction\N}$ on the subspace $\Hilb_{0,\N,\varphi}$ via adjoint action. 
For every quasilocal observable $a\in\A$ and $\psi_i$ as above, define
$$\pi_\varphi(a)\sum_i \pi_\varphi(M_i^*)\psi_i := \sum_i \pi_\varphi(M_i^*) U_0 \theta(a) U_0^*\psi_i.$$
One can check that $\pi_\varphi$ is a well-defined bounded and locally normal representation of $\A$ on $\Hilb_\varphi$ which extends the GNS representation restricted to $\N$ due to equation (\ref{eq:GNSofN}). In this representation, the intertwining relation (\ref{eq:localintertMi}) extends to the net, namely
\begin{equation}\label{eq:globalintertMi}\pi_\varphi(M_i)\pi_\varphi(a) = \pi_\varphi(\theta(a)) \pi_\varphi(M_i), \quad i\in I,\quad a\in\A.\end{equation}
To show this, we first check that in the representation on $\Hilb_\varphi$ we have that $e_{\tilde\N}:=[\pi_\varphi(\tilde\N)\Omega_\varphi]$, where $\tilde\N := \A(\tilde\O)$, fulfills
$$e_\N = e_{\tilde\N}$$
for every $\tilde\O\in\K$ (not necessarily $\O\subset\tilde\O$). Indeed, $\Omega_\varphi=\pi_\varphi(M_0^*)\pi_\varphi(w)\Omega_\varphi$ because $M_0^* w = w^*w=\oneop$ by unitality assumption, and the closed linear span in $\Hilb_\varphi$ of vectors of the form
$$\pi_\varphi(a)\Omega_\varphi = \pi_\varphi(a) \pi_\varphi(M_0^*) \pi_\varphi(w)\Omega_\varphi = \pi_\varphi(M_0^*) U_0 \theta(a)wU_0^*\Omega_\varphi$$ 
$$= \pi_\varphi(M_0^*) U_0 w a\Omega_0 = U_0 a \Omega_0$$
does not depend on whether $a\in \N$ or $a\in\tilde\N$ by the intertwining property of $w$ on $\A$ and because $\Omega_0$ is cyclic for every local algebra on $\Hilb_0$ by standardness assumption. Hence $e_{\tilde\N}\Hilb_\varphi = \Hilb_{0,\N,\varphi}\subset\Hilb_\varphi$ for every $\tilde\O\in\K$.

Now let $\psi$ and $\psi_i$ be as in equation (\ref{eq:GNSofN}), and assume that $a\in\A(\tilde\O)$ for some $\O\subset\tilde\O$.
From the \lhs of (\ref{eq:globalintertMi}) we get
$$\pi_\varphi(M_i)\pi_\varphi(a)\psi = \sum_j \pi_\varphi(M_i M_j^*) U_0 \theta(a) U_0^*\psi_j$$ 
because left multiplication is continuous in the GNS representation. 

By Proposition \ref{prop:pipoexpansion} (valid for arbitrary semidiscrete inclusions) we can write
\begin{equation}\label{eq:pipoalgebra}\pi_\varphi(M_i) \pi_\varphi(M_j)^* = \sum_k \pi_\varphi(M_k)^* \pi_\varphi(l_j^{ki}), \quad l_j^{ki} := E(M_kM_iM_j^*)\in\N\end{equation}
where $l_j^{ki} = w^*\gamma(M_kM_iM_j^*)w = w^*m_km_im_j^*w$ and intertwines $\theta$ with $\theta^2$ on the whole net by assumption, \ie 
$$l_j^{ki}\in\Hom_{\DHR\{\A\}}(\theta,\theta^2).$$ 
Recall that the convergence in the \rhs of equation (\ref{eq:pipoalgebra}) is given by the topology induced by the seminorms $\|m\|_\eta^2 = \eta(m^*m)$, $m\in\pi_\varphi(\M)$, with $\eta$ any normal state on $\pi_\varphi(\M)$ such that $\eta = \eta\circ E_\varphi$. Thus
$$\sum_j \pi_\varphi(M_i M_j^*) U_0 \theta(a) U_0^*\psi_j = \sum_{j} \big(\sum_{k}\pi_\varphi(M_k^*) \pi_\varphi(l_j^{ki}) \big) U_0 \theta(a) U_0^*\psi_j$$
and since the vector $U_0 \theta(a) U_0^*\psi_j \in e_{\tilde\N} \Hilb_\varphi = e_{\N} \Hilb_\varphi$ induces a normal $E_\varphi$-invariant state on $\pi_\varphi(\M)$, we get
$$=\sum_{j,k} \pi_\varphi(M_k^*) U_0 \theta^2(a)l_j^{ki} U_0^*\psi_j= \pi_\varphi(\theta(a)) \pi_\varphi(M_i) \psi$$
which is the \rhs of (\ref{eq:globalintertMi}), for every $\psi\in\Hilb_\varphi$, thus the equation is proven. 

We define
$$\B(\O) := \pi_\varphi(\M) \equiv \langle \pi_\varphi(\N), \{\pi_\varphi(M_i)\} \rangle,$$
the crucial point is to extend the construction to bigger regions and define accordingly a coherent family of normal faithful standard conditional expectations with respect to a common cyclic and separating vector. Let $\tilde\O\in\K$ be such that $\O\subset\tilde\O$ and set
$$\B(\tilde\O) := \langle \pi_\varphi(\tilde\N), \{\pi_\varphi(M_i)\} \rangle,\quad \tilde\N := \A(\tilde\O),$$
clearly $\B(\O)\subset\B(\tilde\O)$ holds by isotony of $\{\A\}$. 
 
Now, $\Omega_0$ is separating for every $\tilde \N$, thus $\pi_\varphi(a)\mapsto \pi_\varphi(a) e_\N$ is an isomorphism of $\pi_\varphi(\tilde\N)$ onto $\pi_\varphi(\tilde\N) e_\N$, and because of
$$e_\N \B(\tilde\O) e_\N \subset e_\N \langle\B(\tilde\O),e_\N\rangle e_\N = \e_\N \langle \pi_\varphi(\tilde\N), \pi_\varphi(\M), e_\N\rangle e_\N = \pi_\varphi(\tilde\N) e_\N$$ 
provided $\O\subset\tilde\O$, we can define by
$$\label{eq:condexpfromjonesproj}e_\N b\hspace{0.3mm} e_\N = E_\varphi(b) e_\N, \quad b\in\B(\tilde\O)$$
a conditional expectation of $\B(\tilde\O)$ onto $\pi_\varphi(\tilde\N)$ (for arbitrarily big region $\tilde\O$) which extends the one previously given on $\B(\O) \equiv \pi_\varphi(\M)$. $E_\varphi$ is clearly normal and fulfills $(\Omega_\varphi | E_\varphi (\cdot) \Omega_\varphi) = (\Omega_\varphi | \cdot \Omega_\varphi)$, while faithfulness remains to be checked, together with the separating property of $\Omega_\varphi$ for $\B(\tilde\O)$ if $\O\subset\tilde\O$ and cyclicity for $\B(\tilde\O)$ if $\tilde\O\subset\O$, where $\B(\tilde\O)$ in this second case is defined below. For an arbitrary region $\tilde\O\in\K$, set
\begin{equation}\label{eq:extlocalalgebras}\B(\tilde\O) := \langle \pi_\varphi(\tilde\N), \{\pi_\varphi(u)\pi_\varphi(M_i)\} \rangle,\quad \tilde\N := \A(\tilde\O)\end{equation}
where $u\in\Hom_{\DHR\{\A\}}(\theta,\tilde\theta)$ is a unitary charge transporter (in $\A$) and $\tilde\theta$ is DHR localizable in $\tilde\O$, \footnote{Notice that we assume DHR sectors to be localizable in \emph{every} region in $\cK$.}. In order to show the desired properties of $\Omega_\varphi$ with respect to these new local algebras we need to introduce more GNS representations. Namely, let
$$(\tilde\theta \equiv \tilde\theta_{\restriction\tilde\N}, \tilde w := uw, \{\tilde m_i := u\theta(u)m_i u^*\})$$
be a generalized Q-system of intertwiners in $\End(\tilde\N)$, see Remark \ref{rmk:netQsysislocalQsys}, and perform the same construction as above on the GNS Hilbert space $\Hilb_{\tilde\varphi}$ of some canonical extension $\tilde\N\subset\tilde\M$ with $\tilde E\in E(\tilde\M,\tilde\N)$ and state $\tilde\varphi := \omega_0 \circ \tilde E$ of $\tilde\M$. Consider
$$\Hilb_{\tilde\varphi} = \overline{\sum_i \pi_{\tilde\varphi}(\tilde{M_i}^*) e_{\tilde\N} \Hilb_{\tilde\varphi}}$$
and
$$\tilde U_0 : \tilde n\Omega_0\mapsto\pi_{\tilde\varphi}(\tilde n)\Omega_{\tilde\varphi},\quad \Hilb_0 \rightarrow \Hilb_{0,\tilde\N,\tilde\varphi} \equiv e_{\tilde\N} \Hilb_{\tilde\varphi},\quad \tilde n\in\tilde\N$$
extended as before to an isometric operator into $\Hilb_{\tilde\varphi}$. Then the linear map defined by
$$U \sum_i \pi_{\tilde\varphi}(\tilde{M_i}^*) \tilde U_0 \phi_i := \sum_i \pi_{\varphi}(M_i^*) U_0 u^*\phi_i, \quad \phi_i\in\Hilb_0$$
sends
$$U \Omega_{\tilde\varphi} = U \pi_{\tilde\varphi}(\tilde{M_0}^*) \tilde U_0 \tilde w \Omega_0 = \pi_{\varphi}(M_0^*) U_0 u^*\tilde w \Omega_0 = \Omega_\varphi$$
because $u^*\tilde w = w$, it extends to a unitary operator from $\Hilb_{\tilde\varphi}$ onto $\Hilb_\varphi$, and fulfills
\begin{equation}\label{eq:intertGNSs}U \pi_{\tilde\varphi}(\tilde\M) U^* = \langle \pi_\varphi(\tilde\N), \{\pi_\varphi(u)\pi_\varphi(M_i)\} \rangle \equiv \B(\tilde\O).\end{equation}
Indeed, if $\tilde n \in\tilde\N$ then
$$U \pi_{\tilde\varphi}(\tilde n) U^*\sum_i \pi_{\varphi}(M_i^*) U_0 \phi_i = U \pi_{\tilde\varphi}(\tilde n) \sum_i \pi_{\tilde\varphi}(\tilde{M_i}^*) \tilde U_0 u\phi_i$$
$$= \sum_i \pi_{\varphi}(M_i^*) U_0 u^* \tilde\theta(\tilde n) u\phi_i = \pi_\varphi(\tilde n) \sum_i \pi_{\varphi}(M_i^*) U_0 \phi_i$$
because $u^* \tilde\theta(\tilde n) u = \theta(\tilde n)$, and
$$U \pi_{\tilde\varphi}(\tilde M_i) U^*\sum_j \pi_{\varphi}(M_j^*) U_0 \phi_j = \sum_{j,k} \pi_{\varphi}(M_k^*) U_0 u^* \tilde{l}_{j}^{ki} u \phi_j$$
where the coefficients $\tilde{l}_{j}^{ki} := \tilde E(\tilde M_k \tilde M_i \tilde M_j^*) = \tilde w \tilde m_k \tilde m_i \tilde m_j^* \tilde w\in\tilde\N$ are analogous to those in (\ref{eq:pipoalgebra}). One can compute $\theta(u^*)u^*\tilde{l}_{j}^{ki}u = l_{j}^{ki}$, hence
$$= \sum_{j,k} \pi_{\varphi}(M_k^*) U_0 \theta(u) l_{j}^{ki} \phi_j = \pi_\varphi(u)\pi_\varphi(M_i)\sum_j \pi_{\varphi}(M_j^*) U_0 \phi_j$$
which proves (\ref{eq:intertGNSs}). By considering this unitary intertwiner for every region $\tilde\O$ we obtain that $\Omega_\varphi$ is cyclic and separating for every $\B(\tilde\O)$ on $\Hilb_\varphi$, in particular $E_\varphi$ is faithful over every $\tilde\O$.

The extension $\{\B\}$ does not depend on the specific choice of unitary charge transporter $u$ made in equation (\ref{eq:extlocalalgebras}). Indeed, by Haag duality any two of them $u$, $v$ differ by $uv^*\in\A(\tilde\O)$. Also, it depends on the choice of the initial localization region $\O$ for $\theta$ and of the extended vacuum state $\varphi$ only up to unitary isomorphism. 

Relative locality of $\{\B\}$ with respect to $\{\A\}$ is always guaranteed by the localization properties of $\theta$ while the statement about locality of $\{\B\}$ follows by the very definition of the DHR braiding. Indeed, $uM_i vM_j = vM_j uM_i$ where $u$ and $v$ are unitaries in $\A$ which transport the localization region of $\theta$ to two mutually space-like regions (respectively left and right localized in low dimensions) if and only if $\eps_{\theta,\theta} M_iM_j = \theta(u^*)v^*u\theta(v) M_iM_j = M_j M_i$ for every $i,j\in I$, and the proof is complete.
\end{proof}

\begin{remark}\label{rmk:*algchargedintert}
If the Pimsner-Popa expansion appearing in equation (\ref{eq:pipoalgebra}) comes from an \emph{irreducible} subfactor, or from a \emph{finite index} inclusion, and if the unital generalized Q-system of intertwiners is defined from charged fields $M_i = w_i \psi_i$, $i\in I$, see Proposition \ref{prop:discreteminimal} and Remark \ref{rmk:discreteandirred}, then the sum over $k$ (a priori convergent in the GNS topology) is \emph{finite} by Frobenius reciprocity among finite-dimensional endomorphisms of $\N$, \cite[\Lem 2.1]{LoRo97}. Indeed, in this case one has $l_j^{ki} = w_k \rho_k(w_i) E(\psi_k\psi_i\psi_j^*)w_j^*$ and $E(\psi_k\psi_i\psi_j^*)\in\Hom_{\End(\N)}(\rho_j,\rho_k\rho_i)$.

In other words, we have a unital \emph{*-algebra of charged intertwiners} with possibly infinitely many generators $\{M_i\}$ but finite (\lqq discrete") fusion rules, \cf \cite[\App A]{LoRe04}. 
\end{remark}

In this section we assumed that a generalized net Q-system of intertwiners (Definition \ref{def:netgenQsysofinterts}) was given and we have shown how to associate to it a relatively local net extension. In Section \ref{sec:discretecase}, given a discrete inclusion of von Neumann algebras, we have seen how to construct a generalized Q-system of intertwiners (Definition \ref{def:genQsysofinterts}). But when do generalized Q-systems of intertwiners exist for discrete relatively local net extensions?

In the finite index setting, in \cite{GuLo92} it is proven that the DHR category restricted to finite index endomorphisms of a chiral CFT is a full and replete subcategory of the category of endomorphisms of a local algebra (\lqq local intertwiners are global"). This of course single-handedly carries over the theory of ordinary Q-systems to such nets. In \cite{LoRe95} it is shown that actually less is needed; in the presence of a coherent conditional expectation, with finite index arguments it can be shown that a Q-system in the DHR category does exist, see \cite[\Cor 3.8, \Cor 3.7 and \Lem 4.1]{LoRe95}. On the other hand, when we consider also infinite-dimensional irreducible sectors, in general it is not true that the category of DHR endomorphisms is a \emph{full} subcategory of the category of endomorphisms of a local algebra, see \cite{Wei08} for a counter-example using $\{\Vir_c\}$, $c>2$. In the absence of these features, we can make some additional assumptions. 

\begin{proposition}\label{prop:netqsys}
Let $\{\mathcal{A}\subset\mathcal{B}\}$ be a discrete relatively local inclusion of nets with standard conditional expectation $E$ (Definition \ref{def:inclnets}) and let $\{\mathcal{A}\}$ be a Haag dual net of type $\III$ factors. Let $\gamma$ be a canonical endomorphism for $\{\A\subset\B\}$ as in \emph{\cite[\Cor 3.3]{LoRe95}} and $\theta\in\DHR\{\A\}$ be its dual canonical endomorphism as in \emph{\cite[\Cor 3.8]{LoRe95}} (localized in $\O_0\in\K$). Moreover, let $w\in\A(\O_0)$ as in \emph{\cite[\Cor 3.7]{LoRe95}} such that $E=w^{*}\gamma(\cdot)w$.

Assume that one of the following two conditions is fulfilled

\begin{itemize}
\item[$(i)$] $\{\A\subset\B\}$ is irreducible and $\theta=\oplus_i \rho_i$ in $\DHR\{\A\}$ where each $\rho_i$ is an irreducible DHR subendomorphism (localized in $\O_0$) and $[\rho_i]$ has finite dimension in $\DHR\{\A\}$. 
\item[$(ii)$] $\Hom_{\DHR\{\A\}} (\theta,\theta) = \Hom_{\End(\A(\O))} (\theta,\theta)$ for every $\O\in\K$, $\O_0\subset\O$, or equivalently $\theta=\oplus_i \rho_i$ in $\DHR\{\A\}$ and $\Hom_{\DHR\{\A\}}(\rho_i,\rho_j) = \Hom_{\End(\A(\O))}(\rho_i,\rho_j)$ for every $i,j\in I$.
\end{itemize}

Then there is a Pimsner-Popa basis $\{\psi_{i}\}$ for the inclusion $\A(\mathcal{O}_{0})\stackrel{E}{\subset}\B(\mathcal{O}_{0})$ consisting of global charged fields, \ie
$$\psi_{i} a= \rho_{i}(a) \psi_{i},\quad a\in\A.$$
Setting $M_{i}:=\gamma(\psi_{i}^{*})w\psi_{i}$ we have another Pimsner-Popa basis for $\A(\mathcal{O}_{0})\stackrel{E}{\subset}\B(\mathcal{O}_{0})$ (\cf Proposition \ref{prop:discreteiffintert}), and $(\theta,w,\{m_i\})$, where $m_i := \gamma(M_i)\in\A(\O_0)$, is a unital generalized net Q-system of intertwiners  in $\DHR\{\A\}$ (Definition \ref{def:netgenQsysofinterts} and \ref{def:unitalqsys}). 
\end{proposition}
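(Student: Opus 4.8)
The plan is to reduce everything to the local discrete inclusion $\A(\O_0)\stackrel{E}{\subset}\B(\O_0)$ and then to \emph{globalize} the data produced by the von Neumann algebraic theory. First I would apply the construction in the proof of Proposition \ref{prop:discreteiffintert} (refined as in Proposition \ref{prop:discreteminimal}) to this inclusion: it yields a Pimsner--Popa basis $\{\psi_i\}\subset\B(\O_0)$ of charged fields with $\psi_i n=\rho_i(n)\psi_i$ for $n\in\A(\O_0)$, together with $E(\psi_i\psi_i^*)=\oneop$, a distinguished $\psi_0=\oneop$ for $\rho_0=\id$, and $\rho_i\prec\theta$ finite-dimensional subendomorphisms. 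The role of hypotheses $(i)$ and $(ii)$ is exactly to guarantee that the $\rho_i$ are \emph{DHR} endomorphisms localized in $\O_0$ and that the decomposition $\theta=\oplus_i\rho_i$, together with the isometries $w_i\in\Hom_{\DHR\{\A\}}(\rho_i,\theta)$ realizing it, lives in $\DHR\{\A\}$: under $(ii)$ this is the assumed fullness $\Hom_{\DHR\{\A\}}(\theta,\theta)=\Hom_{\End(\A(\O))}(\theta,\theta)$, which identifies the minimal projections of the relative commutant $\N'\cap\M_1$ (with $\N=\A(\O_0)$, $\M=\B(\O_0)$) with DHR subobjects of $\theta$; under $(i)$ it follows from irreducibility (finite multiplicities by \cite[\Thm 3.3]{ILP98}, \cf Remark \ref{rmk:discreteandirred}) together with the finiteness of the dimensions of the $\rho_i$.

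The key technical step is to upgrade the \emph{local} charge relation to a \emph{global} one, i.e.\ to prove $\psi_i a=\rho_i(a)\psi_i$ for every $a\in\A$. For this set $X_i(a):=\psi_i a-\rho_i(a)\psi_i$ with $a\in\A(\O_1)$ and choose $\hat\O\in\K$ with $\O_0\cup\O_1\subset\hat\O$. Since $\rho_i$ is DHR localized in $\O_0$ and $\{\B\}$ is relatively local with respect to $\{\A\}$, for every $b\in\A(\O_2)$ with $\O_2$ spacelike to $\hat\O$ one has $\psi_i b=b\psi_i$, $\rho_i(b)=b$ and $\rho_i(a)b=b\rho_i(a)$, whence $X_i(a)b=bX_i(a)$. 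By Haag duality for $\{\A\}$ this forces $X_i(a)\in\A(\hat\O)$, so $E$ fixes it and, using bimodularity of $E$ over $\A(\hat\O)$,
$$X_i(a)=E\big(X_i(a)\big)=E(\psi_i)\,a-\rho_i(a)\,E(\psi_i).$$
Now $E(\psi_i)\in\Hom_{\End(\A(\O_0))}(\id,\rho_i)$ is, by irreducibility of $\id$ and $\rho_i$, either $0$ (if $\rho_i\not\cong\id$) or a scalar (if $\rho_i\cong\id$), so in either case $E(\psi_i)a=\rho_i(a)E(\psi_i)$ holds globally and $X_i(a)=0$. This gives the global charged fields, and the same commutator-plus-expectation argument applied to $M_i:=\gamma(\psi_i^*)w\psi_i=w_i\psi_i\in\B(\O_0)$ (now using $E(M_i)=\delta_{i,0}w$ from Remark \ref{rmk:discretespecialpipo} and globality of $w\in\Hom_{\DHR\{\A\}}(\id,\theta)$) yields $M_i a=\theta(a)M_i$ for all $a\in\A$; that $\{M_i\}$ is again a Pimsner--Popa basis is as in Proposition \ref{prop:discreteiffintert}.

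Finally I would set $m_i:=\gamma(M_i)\in\A(\O_0)$ and verify that $(\theta,w,\{m_i\})$ satisfies Definition \ref{def:netgenQsysofinterts} and the unitality of Definition \ref{def:unitalqsys}. Conditions $(i)$ (the projections $p_i=m_i^*ww^*m_i=\gamma_1(P_i)$ being mutually orthogonal with $\sum_i p_i=\oneop$), $(ii)$ (faithfulness, from faithfulness of $E'=\gamma\circ E\circ\gamma^{-1}$), and the unitality relations $m_0=\theta(w)$, $w^*m_i=m_i^*w=(w^*m_i)^2$, $\theta(w^*)m_i=\delta_{i,0}\oneop$ transcribe directly from the analysis in the proof of Proposition \ref{prop:discreteiffintert} and Remark \ref{rmk:discretespecialpipo}, since those computations take place inside $\A(\O_0)$. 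I expect the genuine obstacle to be condition $(iii)$, namely that $m_i$ is a \emph{DHR} (global) intertwiner in $\Hom_{\DHR\{\A\}}(\theta,\theta^2)$ and not merely a local one in $\Hom_{\End(\A(\O_0))}(\theta,\theta^2)$: as the discussion preceding the statement and the counterexample of \cite{Wei08} show, this globality can fail in the presence of infinite-dimensional sectors, and here it must be forced by the hypotheses. The strategy is to write $m_i=\theta(w_i)\gamma(\psi_i)$ and observe that $w_i\in\Hom_{\DHR\{\A\}}(\rho_i,\theta)$ and $\gamma(\psi_i)\in\Hom_{\DHR\{\A\}}(\theta,\theta\rho_i)$ become global once all intertwiner spaces between the finite-dimensional constituents $\rho_i,\rho_j$ (and their finite fusion products appearing in $\theta^2$) coincide locally and globally; this is precisely the content of $(ii)$ in its equivalent formulation, while under $(i)$ it follows from finite-dimensionality of the $\rho_i$ and fullness of the DHR subcategory of finite-dimensional endomorphisms. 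Composing these global intertwiners then gives $m_i\in\Hom_{\DHR\{\A\}}(\theta,\theta^2)$, completing the verification.
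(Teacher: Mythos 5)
Your overall architecture (local construction at $\O_0$, then globalization of the charge relation, then verification of the Q-system axioms) matches the paper's, and your observation that the globality of $m_i\in\Hom_{\DHR\{\A\}}(\theta,\theta^2)$ follows once the charged fields intertwine globally is essentially right. But the crucial globalization step is broken. You deduce $X_i(a)=\psi_i a-\rho_i(a)\psi_i\in\A(\hat\O)$ from the fact that $X_i(a)$ commutes with $\A(\hat\O')$ ``by Haag duality.'' Haag duality for $\{\A\}$ is a statement in the vacuum representation on $\Hilb_0$, whereas $X_i(a)$ lives in $\B(\hat\O)\subset\B(\Hilb)$ on the larger Hilbert space of the extension; there the commutant of $\A(\hat\O')$ contains all of $\B(\hat\O)$ by relative locality, so commutation with $\A(\hat\O')$ gives nothing beyond $X_i(a)\in\B(\hat\O)$. (The same reasoning applied to $\psi_i$ itself would ``prove'' $\psi_i\in\A(\O_0)$, i.e.\ that the extension is trivial.) Consequently $E$ does not fix $X_i(a)$, and $E(X_i(a))=0$ does not force $X_i(a)=0$. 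If your argument were valid it would show that every local charged field is automatically global with no use of hypotheses $(i)$ or $(ii)$, which is precisely the local-versus-global subtlety these hypotheses are there to control.

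The paper closes this gap differently: it extends the dual canonical endomorphism $\gamma_1$ to a map between the quasilocal algebras coherently over all $\O\supset\O_0$ (via \cite[\Thm 3.2, \Cor 3.3]{LoRe95}), uses hypotheses $(i)$ or $(ii)$ to ensure $P_i=\gamma_1^{-1}(p_i)$ lies in $n_{\hat E_\O}$ and in $\B_1(\O)\cap\A(\O)'$ for \emph{every} such $\O$, and then defines $\psi_{i,\O}:=\hat E_\O(W_i)$ with $W_i=\gamma_1^{-1}(ww_i^*)$. Since the Jones projection $e_\A=[\A(\O)\Omega]$ is the same for all $\O$ by standardness of $E$, the push-down lemma gives $e_\A\psi_{i,\O}=W_i=e_\A\psi_{i,\O_0}$, and applying $\hat E_\O$ yields $\psi_{i,\O}=\psi_{i,\O_0}$; the single element $\psi_{i,\O_0}$ therefore intertwines $\rho_i$ on every $\A(\O)$, hence on $\A$. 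You would need to replace your Haag-duality step with an argument of this kind (or some other mechanism that genuinely uses $(i)$/$(ii)$) for the proof to go through.
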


\begin{proof}
Assume $(i)$ and let $p_{i}\in\Hom_{\DHR\{\A\}}(\theta,\theta)$ be the projections which determine the decomposition $\theta=\oplus_i \rho_i$, together with orthogonal isometries $w_{i}\in\Hom_{\DHR\{\A\}}(\rho_i,\theta)\cap\A(\O_0)$ such that $w_{i}w_{i}^{*}=p_{i}$.

For $\mathcal{O}\in\mathcal{K}$, let $e_{\mathcal{A}}=[\mathcal{A}(\mathcal{O})\Omega]$ be the Jones projection, and $\B_{1}(\mathcal{O})=\left\langle \mathcal{B}(\mathcal{O}),e_{\mathcal{A}}\right\rangle$ the Jones extension for the inclusion $\mathcal{A}(\O)\subset\mathcal{B}(\mathcal{O})$. By standardness assumption the Jones projections agree for every $\mathcal{O}\in\mathcal{K}$. Denote by $\hat{E}_{\mathcal{O}}\in P(\mathcal{B}_{1}(\mathcal{O}),\mathcal{B}(\mathcal{O}))$ the operator-valued weight dual to $E_{\restriction\mathcal{B}(\mathcal{O})}$ and let $m_{\hat{E}_{\mathcal{O}}}$ and $n_{\hat{E}_{\mathcal{O}}}$ be respectively its domain and definition ideal.
By the same arguments leading to \cite[\Thm 3.2, \Cor 3.3]{LoRe95}, the formula
$$\gamma_{1}(x)e_{\mathcal{A}}=v_{1}xv_{1}^{*},\quad x \in \B_1(\O)$$
where $v_1\in \B_1(\O)$ is an isometry such that $v_1v_1^*= e_\A$, $\gamma_1(v_1) = w$, allows to extend the dual canonical endomorphism $\gamma_{1}:\mathcal{B}_{1}(\mathcal{O}_{0})\rightarrow \mathcal{A}(\mathcal{O}_{0})$ to a map between the quasilocal algebras $\mathcal{B}_{1}$ and $\mathcal{A}$, such that $\gamma_{1 \restriction \mathcal{B}_{1}(\mathcal{O})}$ is the dual canonical endomorphism for the inclusion $\mathcal{A}(\mathcal{O})\subset\mathcal{B}(\mathcal{O})$ for every $\mathcal{O}\in\mathcal{K}$, $\O_0\subset\O$ (\ie, $\gamma_{1\restriction \mathcal{B}_{1}(\mathcal{O})}$ is a canonical endomorphism for $\mathcal{B}(\mathcal{O})\subset\mathcal{B}_{1}(\mathcal{O})$, $\gamma_{1\restriction \mathcal{B}(\mathcal{O})}=\gamma_{\restriction \mathcal{B}(\mathcal{O})}$ and $\gamma_{1}(\B_{1}(\mathcal{O}))=\mathcal{A}(\mathcal{O})$).

By the discreteness assumption and \cite[\Prop 2.8]{ILP98} we have that, for every $\mathcal{O}\in\mathcal{K}$, $\mathcal{B}_{1}(\mathcal{O})\cap\mathcal{A}(\mathcal{O})^{\prime}$ is a direct sum of type $I$ factors, which by irreducibility of $\mathcal{A}(\mathcal{O})\subset\mathcal{B}(\mathcal{O})$ and \cite[\Thm 3.3]{ILP98} are finite dimensional. 

$P_{i}:=\gamma_{1}^{-1}(p_{i})$ is a finite sum of minimal projections in $\B_1(\O)\cap\A(\O)'$, $\O_0\subset\O$, since $\rho_{i}{_{\restriction\A(\O)}}$ has finite dimension by hypothesis, and thus $P_{i}\in n_{\hat{E}_{\mathcal{O}}}$. Indeed, let $z_{P_{i}}$ be the central support of $P_{i}$ in $\mathcal{B}_{1}(\mathcal{O})\cap\mathcal{A}(\mathcal{O})^{\prime}$. $\hat{E}_{\mathcal{O}\restriction z_{P_{i}}(\mathcal{B}_{1}(\mathcal{O})\cap\mathcal{A}(\mathcal{O})^{\prime})z_{P_{i}}}$ is semifinite since $z_{P_{i}}$ can be written as a sum of minimal projections in $n_{\hat{E}_{\mathcal{O}}}$, and thus finite since $z_{P_{i}}(\mathcal{B}_{1}(\mathcal{O})\cap\mathcal{A}(\mathcal{O})^{\prime})z_{P_{i}}$ is finite dimensional. Now, let
$$W_{i}:=\gamma_{1}^{-1}(ww_{i}^{*})\in \mathcal{B}_{1}(\mathcal{O}_{0}).$$
Since $W_{i}=e_{\mathcal{A}}W_{i}P_{i}$ and $P_{i}\in n_{\hat{E}_{\mathcal{O}}}$, we have $W_{i}\in m_{\hat{E}_{\mathcal{O}}}$.
Let $\psi_{i,\mathcal{O}}:=\hat{E}_{\mathcal{O}}(W_{i})$. Exactly as in the proof of Proposition \ref{prop:discreteiffintert}, and using $w_i = \gamma(\psi_{i,\O}^*)w$, we get that $\psi_{i,\mathcal{O}}$ is a charged field for $\rho_{i}$ on $\A(\O)$, \ie, $\psi_{i,\mathcal{O}}a=\rho_{i}(a)\psi_{i,\mathcal{O}}$, $a\in\mathcal{A}(\mathcal{O})$, and the collection $\{\psi_{i,\mathcal{O}}\}$ is a Pimsner-Popa basis for the inclusion $\mathcal{A}(\mathcal{O})\subset\mathcal{B}(\mathcal{O})$. For any $\O\in\K$, $\O_0\subset\O$, by the push-down lemma we have
$$e_{\mathcal{A}}\psi_{i,\mathcal{O}}=W_{i}=e_{\mathcal{A}}\psi_{i,\mathcal{O}_{0}}.$$
Thus applying $\hat{E}_{\mathcal{O}}$ to the above formula, we obtain $\psi_{i,\mathcal{O}}=\psi_{i,\mathcal{O}_{0}}$.
The rest of the proof follows exactly as in Proposition \ref{prop:discreteiffintert}.

Assuming $(ii)$, the proof proceeds along similar lines. By discreteness assumption, one can take projections $p_i\in \theta(\A(\O_0))'\cap\A(\O_0)$ which lay in $n_{\hat{E'}_{\O_0}}$, where $E'_{\O_0} = \gamma \circ E_{\restriction\B(\O_0)} \circ \gamma^{-1}$, and which give a local (in this case also global) decomposition of $\theta = \oplus_i \rho_i$ into DHR subendomorphisms. Now, let $w_i$ such that $p_i = w_iw_i^*$ and define $P_i$, $W_i$ and $\psi_{i,\O_0}$ as before such that $W_i = e_\A \psi_{i,\O_0}$. To conclude it is enough to observe that for any $\O\in\K$, $\O_0\subset\O$, we have $P_i\in n_{\hat{E}_{\O}}$, because $P_i = \psi_{i,\O_0}^* e_A \psi_{i,\O_0}$ by construction and $\psi_{i,\O_0}\in\B(\O_0)\subset\B(\O)$, and also $P_i\in \B_1(\O) \cap \A(\O)'$, because $\theta(\A(\O_0))'\cap\A(\O_0) = \theta(\A(\O))'\cap\A(\O)$ for every such $\O$.
\end{proof}

These assumptions are verified, \eg, for compact group orbifolds \cite{DoRo72}, \cite[\Thm 4.3]{Rob74}, and for theories with a good behaviour with respect to the scaling limit \cite[\Cor 6.2]{DMV04} in 3+1D, and of course for strongly additive CFTs in 1D, \ie, Haag dual nets on $\RR$ \cite[\Lem 1.3]{GLW98}.

\subsection{Construction of extensions: an alternative way}\label{sec:altext}

In this section we present an alternative proof of Theorem \ref{thm:discreteextnets} which we feel somewhat more intuitive and which lends itself to describing the braided product of nets in a more direct way.
The basic idea is that a generalized net Q-system of intertwiners $(\theta, w, \{m_i\})$ in $\DHR\{\A\}$ (Definition \ref{def:netgenQsysofinterts}), assuming Haag duality of $\{\A\}$, induces a family of Q-systems (one for every local algebra) each one of which characterizes a local extension. The algebraic structure of the extended net, including a distinguished conditional expectation, is then captured with a coherent inductive procedure and the spatial features of the net are completely determined by the vacuum state.

More precisely, let $\O_0\in\K$ be a reference localization region for $\theta$ and choose a unitary charge transporter $u_{\O}\in\Hom_{\DHR\{\A\}}(\theta,\theta_\O)$ for every $\mathcal{O}\in \mathcal{K}$, where $\theta_{\mathcal{O}} := \Ad_{u_{\mathcal{O}}}\theta$ is localized in $\mathcal{O}$. For every $\O\in\K$, we obtain a generalized Q-system (of intertwiners) in $\End(\mathcal{A}(\mathcal{O}))$ (Definition \ref{def:genQsysofinterts}) by setting
$$(\theta_\mathcal{O}, w_\mathcal{O}, \{{m_i}_\mathcal{O}\}) := (\Ad_{u_\mathcal{O}}\theta, u_\mathcal{O} w, \{u_\mathcal{O}\theta(u_\mathcal{O})m_{i} u_\mathcal{O}^*\}).$$
The faithfulness condition appearing in Definition \ref{def:genQsys} is verified since, for every $\tilde{\mathcal{O}}\in\mathcal{K}$ such that $\mathcal{O}_{0} \cup \mathcal{O}\subset\tilde{\mathcal{O}}$, we have
$$\mathcal{A}_{1}(\mathcal{O}) \subset u_{\mathcal{O}}\langle \theta(\mathcal{A}(\tilde{\mathcal{O}})),m_{i}\rangle u_{\mathcal{O}}^{*} = \Ad_{u_{\mathcal{O}}}(\mathcal{A}_{1}(\tilde{\mathcal{O}})),$$
where 
$$\A_1(\mathcal{O}) := \langle \theta_\O(\mathcal{A}(\mathcal{O})),u_{\mathcal{O}}\theta(u_{\mathcal{O}})m_{i}u_{\mathcal{O}}^{*}\rangle.$$

With this data, we now construct an inductive generalized sequence of net extensions $\{\{\mathcal{B}_{\tilde{\mathcal{O}}}\}, \mathcal{\tilde{O}\in\mathcal{K}}\}$ of $\{\A\}$, indexed by $\tilde\O\in\mathcal{K}$, and defined \emph{only} on regions $\mathcal{O}\in\mathcal{K}$, $\mathcal{O}\subset\tilde{\mathcal{O}}$, which we will then patch together. 
For a fixed $\tilde{\mathcal{O}}\in\mathcal{K}$, $\theta_{\tilde{\mathcal{O}}}=\Ad_{u_{\tilde{\mathcal{O}}}}\theta$ is a canonical endomorphism for 
$\mathcal{A}_{1}(\tilde{\mathcal{O}})\subset \mathcal{A}(\tilde{\mathcal{O}})$ by Theorem \ref{thm:FiIs}, and thus can be implemented on $\mathcal{A}(\tilde{\mathcal{O}})$ by a unitary $\Gamma_{\tilde{\mathcal{O}}}$, namely
$$\theta_{\tilde{\mathcal{O}}}(x)=\Gamma_{\tilde{\mathcal{O}}}x\Gamma_{\tilde{\mathcal{O}}}^{*}\,,\quad x\in\mathcal{A}(\tilde{\mathcal{O}}).$$
Similarly, for every $\mathcal{O}\subset \tilde{\mathcal{O}}$, we have
$$\theta_{\mathcal{O}}(x)=\Ad_{u_{\mathcal{O}}}(\theta(x))= \Ad_{u_{\mathcal{O}}u_{\tilde{\mathcal{O}}}^{*}\Gamma_{\tilde{\mathcal{O}}}}(x)\,,
\quad x\in \mathcal{A}(\tilde{\mathcal{O}}).$$
Now, fixed $\tilde{\mathcal{O}}\in\mathcal{K}$, we define an isotonous net $\{\B_{\tilde\O}\} := \{\mathcal{O}\in \mathcal{K}, \tilde\O\supset\O\mapsto\mathcal{B}_{\tilde{\mathcal{O}}}(\mathcal{O})\}$ by setting
$$\mathcal{B}_{\mathcal{\tilde{O}}}(\mathcal{O}):=\Ad_{(u_{\mathcal{O}}u_{\tilde{\mathcal{O}}}^{*}\Gamma_{\tilde{\mathcal{O}}})^{*}}(\mathcal{A}_{1}(\mathcal{O}))=\langle \mathcal{A}(\mathcal{O}), \{u_{\mathcal{O}} \Ad_{\Gamma_{\tilde{\mathcal{O}}}^{*}u_{\tilde{\mathcal{O}}}}(m_{i})\}\rangle.$$

\begin{remark}
The dual canonical endomorphism $\theta$ for an extension of nets $\{\mathcal{A}\subset\mathcal{B}\}$, \cite[\Cor 3.8]{LoRe95}, is not implemented globally by unitaries. This is clear since by \cite[\Prop 3.4]{LoRe95} the embedding homomorphism $\iota$ of $\{\mathcal{A}\}$ into $\{\mathcal{B}\}$ is equivalent to $\theta$ as a representation and thus would imply the inclusion to be trivial. Of course it is possible to find unitaries which implement $\theta$ locally but the choice of these unitaries in non-unique. The above coherent choice guarantees the isotony of the net $\{\mathcal{B}_{\tilde{\mathcal{O}}}\}$.
\end{remark}

The next step is the construction of an inductive family of embeddings $\iota_{\tilde{\mathcal{O}}_{1},\tilde{\mathcal{O}}_{2}}$ of $\{\mathcal{B}_{\tilde{\mathcal{O}}_{1}}\}$ into $\{\mathcal{B}_{\tilde{\mathcal{O}}_{2}}\}$ with $\tilde{\mathcal{O}}_{1}\subset \tilde{\mathcal{O}}_{2}$. This is straightforward.

\begin{proposition}
Let $\tilde{\mathcal{O}}_{1}\subset \tilde{\mathcal{O}}_{2}$, $\tilde{\mathcal{O}}_{1},\tilde{\mathcal{O}}_{2}\in \mathcal{K}$. The map 
$$\iota_{\tilde{\mathcal{O}}_{1},\tilde{\mathcal{O}}_{2}}:=\Ad_{\Gamma_{\tilde{\mathcal{O}}_{2}}^{*}u_{\tilde{\mathcal{O}}_{2}}u_{\tilde{\mathcal{O}}_{1}}^{*}\Gamma_{\tilde{\mathcal{O}}_{1}}}$$
 is an embedding of $\{\mathcal{B}_{\tilde{\mathcal{O}}_{1}}\}$ into $\{\mathcal{B}_{\tilde{\mathcal{O}}_{2}}\}$, \ie, it sends local algebras $\mathcal{B}_{\tilde{\mathcal{O}}_{1}}(\mathcal{O})$ onto local algebras $\mathcal{B}_{\tilde{\mathcal{O}}_{2}}(\mathcal{O})$ and acts as the identity map on $\mathcal{A}(\mathcal{O})$ for every $\O\in\K$, $\mathcal{O}\subset\tilde{\mathcal{O}}_{1}$.
\end{proposition}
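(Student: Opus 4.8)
The plan is to set $V := \Gamma_{\tilde{\mathcal{O}}_{2}}^{*}u_{\tilde{\mathcal{O}}_{2}}u_{\tilde{\mathcal{O}}_{1}}^{*}\Gamma_{\tilde{\mathcal{O}}_{1}}$, so that $\iota_{\tilde{\mathcal{O}}_{1},\tilde{\mathcal{O}}_{2}} = \Ad_V$. As a product of unitaries $V$ is unitary, hence $\Ad_V$ is a normal injective $*$-homomorphism of $\B(\Hilb)$; in particular it preserves every inclusion $\mathcal{O}\subset\mathcal{O}'$, so once the two pointwise statements below are checked, compatibility with the net structure (isotony) comes for free. Thus the whole content reduces to verifying, for every $\mathcal{O}\subset\tilde{\mathcal{O}}_{1}$, that $\Ad_V(\mathcal{B}_{\tilde{\mathcal{O}}_{1}}(\mathcal{O})) = \mathcal{B}_{\tilde{\mathcal{O}}_{2}}(\mathcal{O})$ and that $\Ad_V(a)=a$ for $a\in\mathcal{A}(\mathcal{O})$.

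For the first identity I would rewrite the defining formula as $\mathcal{B}_{\tilde{\mathcal{O}}}(\mathcal{O}) = \Ad_{\Gamma_{\tilde{\mathcal{O}}}^{*}u_{\tilde{\mathcal{O}}}u_{\mathcal{O}}^{*}}(\mathcal{A}_{1}(\mathcal{O}))$ and compose the conjugating unitaries. The product
$$V\,\Gamma_{\tilde{\mathcal{O}}_{1}}^{*}u_{\tilde{\mathcal{O}}_{1}}u_{\mathcal{O}}^{*} = \Gamma_{\tilde{\mathcal{O}}_{2}}^{*}u_{\tilde{\mathcal{O}}_{2}}u_{\tilde{\mathcal{O}}_{1}}^{*}\Gamma_{\tilde{\mathcal{O}}_{1}}\Gamma_{\tilde{\mathcal{O}}_{1}}^{*}u_{\tilde{\mathcal{O}}_{1}}u_{\mathcal{O}}^{*} = \Gamma_{\tilde{\mathcal{O}}_{2}}^{*}u_{\tilde{\mathcal{O}}_{2}}u_{\mathcal{O}}^{*}$$
telescopes, the inner $u_{\tilde{\mathcal{O}}_{1}}^{*}\Gamma_{\tilde{\mathcal{O}}_{1}}\Gamma_{\tilde{\mathcal{O}}_{1}}^{*}u_{\tilde{\mathcal{O}}_{1}}$ collapsing to $\oneop$, whence $\Ad_V(\mathcal{B}_{\tilde{\mathcal{O}}_{1}}(\mathcal{O})) = \Ad_{\Gamma_{\tilde{\mathcal{O}}_{2}}^{*}u_{\tilde{\mathcal{O}}_{2}}u_{\mathcal{O}}^{*}}(\mathcal{A}_{1}(\mathcal{O})) = \mathcal{B}_{\tilde{\mathcal{O}}_{2}}(\mathcal{O})$. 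This is an equality, not merely an inclusion, so $\Ad_V$ is an isomorphism \emph{onto} the target local algebra.

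For the second identity I would use that, by isotony, $\mathcal{O}\subset\tilde{\mathcal{O}}_{1}\subset\tilde{\mathcal{O}}_{2}$ forces $a\in\mathcal{A}(\tilde{\mathcal{O}}_{1})\cap\mathcal{A}(\tilde{\mathcal{O}}_{2})$, so both implementing relations $\Gamma_{\tilde{\mathcal{O}}_{k}}a\Gamma_{\tilde{\mathcal{O}}_{k}}^{*}=\theta_{\tilde{\mathcal{O}}_{k}}(a)=u_{\tilde{\mathcal{O}}_{k}}\theta(a)u_{\tilde{\mathcal{O}}_{k}}^{*}$, $k=1,2$, are available. Substituting the $k=1$ relation into
$$\Ad_V(a)=\Gamma_{\tilde{\mathcal{O}}_{2}}^{*}u_{\tilde{\mathcal{O}}_{2}}u_{\tilde{\mathcal{O}}_{1}}^{*}\bigl(\Gamma_{\tilde{\mathcal{O}}_{1}}a\Gamma_{\tilde{\mathcal{O}}_{1}}^{*}\bigr)u_{\tilde{\mathcal{O}}_{1}}u_{\tilde{\mathcal{O}}_{2}}^{*}\Gamma_{\tilde{\mathcal{O}}_{2}}$$
cancels the $u_{\tilde{\mathcal{O}}_{1}}$ factors and leaves $\Gamma_{\tilde{\mathcal{O}}_{2}}^{*}u_{\tilde{\mathcal{O}}_{2}}\theta(a)u_{\tilde{\mathcal{O}}_{2}}^{*}\Gamma_{\tilde{\mathcal{O}}_{2}} = \Gamma_{\tilde{\mathcal{O}}_{2}}^{*}\theta_{\tilde{\mathcal{O}}_{2}}(a)\Gamma_{\tilde{\mathcal{O}}_{2}} = a$, the last step applying the $k=2$ relation in reverse.

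There is no genuine analytical obstacle: the statement is exactly as the text advertises, \emph{straightforward}, and everything is telescoping of unitaries. The only point deserving attention is the localization bookkeeping — one must invoke isotony to ensure that $a$ lies in the domain $\mathcal{A}(\tilde{\mathcal{O}}_{k})$ of both implementing unitaries $\Gamma_{\tilde{\mathcal{O}}_{k}}$, and that $\mathcal{B}_{\tilde{\mathcal{O}}_{2}}(\mathcal{O})$ is defined (i.e. $\mathcal{O}\subset\tilde{\mathcal{O}}_{2}$), both of which follow from $\tilde{\mathcal{O}}_{1}\subset\tilde{\mathcal{O}}_{2}$.
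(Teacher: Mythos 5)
Your proof is correct and is exactly the "easy direct computation" the paper invokes without writing out: the telescoping of the conjugating unitaries gives $\Ad_V(\mathcal{B}_{\tilde{\mathcal{O}}_{1}}(\mathcal{O})) = \mathcal{B}_{\tilde{\mathcal{O}}_{2}}(\mathcal{O})$, and the two local implementations of $\theta_{\tilde{\mathcal{O}}_{k}}$ give $\Ad_V = \id$ on $\mathcal{A}(\mathcal{O})$. Nothing is missing; your remark that one must check $a$ lies in $\mathcal{A}(\tilde{\mathcal{O}}_{1})\cap\mathcal{A}(\tilde{\mathcal{O}}_{2})$ (by isotony) before using the implementing unitaries is the only non-trivial bookkeeping point, and you handle it correctly.
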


\begin{proof}
Follows by easy direct computation.
\end{proof}

The collection of nets $\{\{\mathcal{B}_{\tilde{\O}}\},\tilde{\O}\in\mathcal{K}\}$ and maps $\{\iota_{\tilde{\mathcal{O}}_{1},\tilde{\mathcal{O}}_{2}}, \tilde{\mathcal{O}}_{1}\subset\tilde{\mathcal{O}}_{1},\tilde{\mathcal{O}}_{1},\tilde{\mathcal{O}}_{2}\in\mathcal{K}\}$ forms an \emph{inductive system}. We can thus take the inductive limit of the $C^{*}$-algebras $\mathcal{B}_{\tilde{\mathcal{O}}}(\tilde{\mathcal{O}})$ from which we obtain a $C^{*}$-algebra $\mathcal{B}$. The subalgebras $\mathcal{B}(\mathcal{O}):=\iota_{\mathcal{O}}(\mathcal{B}_{\mathcal{O}}(\mathcal{O}))$, where $\iota_{\mathcal{O}}$ is the embedding of $\mathcal{B}_{\mathcal{O}}(\mathcal{O})$ into $\mathcal{B}$, are $W^{*}$-algebras since it is easy to see that they have a predual. Thus we have obtained an isotonous net of $W^{*}$-algebras, $\{\mathcal{B}\}$. Now we see that from the data of the Q-system we can also define a consistent family of conditional expectations.

\begin{proposition}\label{prop:altexpectation}
There is a normal faithful conditional expectation from $\{\mathcal{B}\}$ to $\{\mathcal{\A}\}$, \ie, $E_{\mathcal{O}}:\mathcal{B}(\mathcal{O})\rightarrow\mathcal{A}(\mathcal{O})$ for every $\O\in\K$, such that ${E_{\mathcal{O}_{2}}}_{\restriction\mathcal{B}(\mathcal{O}_{1})}=E_{\mathcal{O}_{1}}$ if $\mathcal{O}_{1}\subset \mathcal{O}_{2}$.
\end{proposition}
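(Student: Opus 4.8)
The plan is to equip each top algebra $\mathcal{B}_{\tilde{\mathcal{O}}}(\tilde{\mathcal{O}})$ of the inductive system with the conditional expectation onto $\mathcal{A}(\tilde{\mathcal{O}})$ furnished by its Q-system, to show that this single expectation \emph{already} restricts correctly to all subregions $\mathcal{O}\subset\tilde{\mathcal{O}}$, and finally to check that these expectations are intertwined by the embeddings $\iota_{\tilde{\mathcal{O}}_1,\tilde{\mathcal{O}}_2}$ so that they descend to the limit. Concretely, by Theorem \ref{thm:FiIs} applied to the localized generalized Q-system of intertwiners $(\theta_{\tilde{\mathcal{O}}},w_{\tilde{\mathcal{O}}},\{m_{i\tilde{\mathcal{O}}}\})$ in $\End(\mathcal{A}(\tilde{\mathcal{O}}))$, the inclusion $\mathcal{A}(\tilde{\mathcal{O}})\subset\mathcal{B}_{\tilde{\mathcal{O}}}(\tilde{\mathcal{O}})$ carries a normal faithful conditional expectation $E_{\tilde{\mathcal{O}}}$. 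First I would make it explicit: since $\theta_{\tilde{\mathcal{O}}}=\Ad_{\Gamma_{\tilde{\mathcal{O}}}}$ on $\mathcal{A}(\tilde{\mathcal{O}})$ and $E_{\tilde{\mathcal{O}}}$ is related to $E'_{\tilde{\mathcal{O}}}=\theta_{\tilde{\mathcal{O}}}(w_{\tilde{\mathcal{O}}}^*\,\cdot\,w_{\tilde{\mathcal{O}}})$ by $E'_{\tilde{\mathcal{O}}}\circ\theta_{\tilde{\mathcal{O}}}=\theta_{\tilde{\mathcal{O}}}\circ E_{\tilde{\mathcal{O}}}$ as in (\ref{eq:vNtower}), one gets $E_{\tilde{\mathcal{O}}}(b)=w_{\tilde{\mathcal{O}}}^*\Gamma_{\tilde{\mathcal{O}}}b\Gamma_{\tilde{\mathcal{O}}}^*w_{\tilde{\mathcal{O}}}$ for $b\in\mathcal{B}_{\tilde{\mathcal{O}}}(\tilde{\mathcal{O}})$.

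The crux is that this one expectation respects the net structure. Set $V:=u_{\mathcal{O}}u_{\tilde{\mathcal{O}}}^*\Gamma_{\tilde{\mathcal{O}}}$, so that $\mathcal{B}_{\tilde{\mathcal{O}}}(\mathcal{O})=\Ad_{V^*}(\mathcal{A}_1(\mathcal{O}))$. Using $w_{\tilde{\mathcal{O}}}=u_{\tilde{\mathcal{O}}}w$ and $w_{\mathcal{O}}=u_{\mathcal{O}}w$ one checks $w_{\tilde{\mathcal{O}}}^*\Gamma_{\tilde{\mathcal{O}}}=w_{\mathcal{O}}^*V$, whence for $b\in\mathcal{B}_{\tilde{\mathcal{O}}}(\mathcal{O})$
$$E_{\tilde{\mathcal{O}}}(b)=w_{\mathcal{O}}^*\,V b V^*\,w_{\mathcal{O}}.$$
Here $\Ad_{V}(b)\in\mathcal{A}_1(\mathcal{O})\subset\mathcal{A}(\mathcal{O})$ and $w_{\mathcal{O}}\in\Hom_{\DHR\{\mathcal{A}\}}(\id,\theta_{\mathcal{O}})\cap\mathcal{A}(\mathcal{O})$ by Haag duality and localization of $\theta_{\mathcal{O}}$ in $\mathcal{O}$, so $E_{\tilde{\mathcal{O}}}(b)\in\mathcal{A}(\mathcal{O})$. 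Moreover $E_{\tilde{\mathcal{O}}}$ fixes $\mathcal{A}(\mathcal{O})$: for $x\in\mathcal{A}(\mathcal{O})$ one has $\Ad_V(x)=\theta_{\mathcal{O}}(x)$ and $w_{\mathcal{O}}^*\theta_{\mathcal{O}}(x)w_{\mathcal{O}}=w_{\mathcal{O}}^*w_{\mathcal{O}}\,x=x$. Thus $E_{\mathcal{O}}^{\tilde{\mathcal{O}}}:={E_{\tilde{\mathcal{O}}}}_{\restriction\mathcal{B}_{\tilde{\mathcal{O}}}(\mathcal{O})}$ is a normal faithful conditional expectation onto $\mathcal{A}(\mathcal{O})$, and these are automatically compatible in $\mathcal{O}$ for fixed $\tilde{\mathcal{O}}$, being restrictions of one $E_{\tilde{\mathcal{O}}}$.

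Next I would verify compatibility with the embeddings. Writing $\iota_{\tilde{\mathcal{O}}_1,\tilde{\mathcal{O}}_2}=\Ad_W$ with $W=\Gamma_{\tilde{\mathcal{O}}_2}^*u_{\tilde{\mathcal{O}}_2}u_{\tilde{\mathcal{O}}_1}^*\Gamma_{\tilde{\mathcal{O}}_1}$, a one-line computation gives $V_{\mathcal{O},\tilde{\mathcal{O}}_2}W=V_{\mathcal{O},\tilde{\mathcal{O}}_1}$ for $\mathcal{O}\subset\tilde{\mathcal{O}}_1\subset\tilde{\mathcal{O}}_2$, so that
$$E_{\mathcal{O}}^{\tilde{\mathcal{O}}_2}\big(\iota_{\tilde{\mathcal{O}}_1,\tilde{\mathcal{O}}_2}(b)\big)=w_{\mathcal{O}}^*\,V_{\mathcal{O},\tilde{\mathcal{O}}_1}\,b\,V_{\mathcal{O},\tilde{\mathcal{O}}_1}^*\,w_{\mathcal{O}}=E_{\mathcal{O}}^{\tilde{\mathcal{O}}_1}(b),\quad b\in\mathcal{B}_{\tilde{\mathcal{O}}_1}(\mathcal{O}).$$
Since $\iota_{\tilde{\mathcal{O}}_1,\tilde{\mathcal{O}}_2}$ acts as the identity on $\mathcal{A}(\mathcal{O})$, this is exactly compatibility of $\{E_{\mathcal{O}}^{\tilde{\mathcal{O}}}\}$ with the inductive system.

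Finally I would descend to the limit. On $\mathcal{B}(\mathcal{O})=\iota_{\mathcal{O}}(\mathcal{B}_{\mathcal{O}}(\mathcal{O}))$ define $E_{\mathcal{O}}$ by transporting $E_{\mathcal{O}}^{\mathcal{O}}$ along $\iota_{\mathcal{O}}$ (which fixes $\mathcal{A}(\mathcal{O})$); then $E_{\mathcal{O}}$ is a normal faithful conditional expectation onto $\mathcal{A}(\mathcal{O})$. For $\mathcal{O}_1\subset\mathcal{O}_2$ and $b\in\mathcal{B}(\mathcal{O}_1)$, write $b=\iota_{\mathcal{O}_1}(b'')=\iota_{\mathcal{O}_2}(\iota_{\mathcal{O}_1,\mathcal{O}_2}(b''))$ with $b''\in\mathcal{B}_{\mathcal{O}_1}(\mathcal{O}_1)$, using $\iota_{\mathcal{O}_2}\circ\iota_{\mathcal{O}_1,\mathcal{O}_2}=\iota_{\mathcal{O}_1}$; the coherence in $\mathcal{O}$ and in $\tilde{\mathcal{O}}$ then give $E_{\mathcal{O}_2}(b)=E_{\mathcal{O}_1}^{\mathcal{O}_2}(\iota_{\mathcal{O}_1,\mathcal{O}_2}(b''))=E_{\mathcal{O}_1}^{\mathcal{O}_1}(b'')=E_{\mathcal{O}_1}(b)$, i.e. ${E_{\mathcal{O}_2}}_{\restriction\mathcal{B}(\mathcal{O}_1)}=E_{\mathcal{O}_1}$. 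The one genuinely structural step is the second paragraph: that the globally defined $E_{\tilde{\mathcal{O}}}$ lands inside the correct small algebra $\mathcal{A}(\mathcal{O})$, not merely $\mathcal{A}(\tilde{\mathcal{O}})$. This is precisely where Haag duality and the localization of $\theta_{\mathcal{O}}$ and $w_{\mathcal{O}}$ are essential; the remaining coherences are bookkeeping with the coherent choices of charge transporters $u_{\mathcal{O}}$ and implementing unitaries $\Gamma_{\tilde{\mathcal{O}}}$.
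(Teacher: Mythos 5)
Your proof is correct and follows essentially the same route as the paper: the explicit formula $E_{\tilde{\mathcal{O}}}=w^*u_{\tilde{\mathcal{O}}}^*\Gamma_{\tilde{\mathcal{O}}}\,\cdot\,\Gamma_{\tilde{\mathcal{O}}}^*u_{\tilde{\mathcal{O}}}w$ you derive is exactly the paper's $E_{\tilde{\mathcal{O}}\mathcal{O}}$, obtained there by lifting $E'_{\mathcal{O}}$ through the isomorphism $\Ad_{u_{\mathcal{O}}u_{\tilde{\mathcal{O}}}^*\Gamma_{\tilde{\mathcal{O}}}}$ and observing its independence of $\mathcal{O}$, which is the same content as your check that the globally defined expectation lands in $\mathcal{A}(\mathcal{O})$. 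You merely spell out the "trivial computation" $V_{\mathcal{O},\tilde{\mathcal{O}}_2}W=V_{\mathcal{O},\tilde{\mathcal{O}}_1}$ for the compatibility with the embeddings, which the paper leaves to the reader.
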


\begin{proof} 
First define a coherent conditional expectation on  $\{\mathcal{B}_{\tilde{\mathcal{O}}}\}$ for $\tilde{\mathcal{O}}\in\mathcal{K}$. By Theorem \ref{thm:FiIs} we have a conditional expectation $E_\O':=\theta_{\mathcal{O}}(w^{*}u_{\mathcal{O}}^{*}\cdot u_{\mathcal{O}}w)$ for the inclusion $\theta_{\mathcal{O}}(\mathcal{A}(\mathcal{O}))\subset\mathcal{A}_{1}(\mathcal{O})$. $E_\O'$ can be lifted to a conditional expectation $E_{\tilde{\mathcal{O}} \mathcal{O}}$ for the inclusion $\mathcal{A}(\mathcal{O})\subset\mathcal{B}_{\tilde{\mathcal{O}}}(\mathcal{O})$, $\O\subset\tilde\O$, since the two inclusions are isomorphic via $\Ad_{u_{\mathcal{O}}u_{\tilde{\mathcal{O}}}^{*}\Gamma_{\tilde{\mathcal{O}}}}$. Computing explicitly, we get
$$E_{\tilde{\mathcal{O}} \mathcal{O}} = w^{*}u_{\tilde{\mathcal{O}}}^{*}\Gamma_{\tilde{\mathcal{O}}}\cdot \Gamma_{\tilde{\mathcal{O}}}^{*}u_{\tilde{\mathcal{O}}}w$$
which shows that we indeed have a consistent family of conditional expectations on $\{\mathcal{B}_{\tilde{\mathcal{O}}}\}$. Now, to show that these expectations lift to the inductive limit net $\{\B\}$, it is enough to check that ${E_{\tilde{\mathcal{O}}_{2} \mathcal{O}}}_{\restriction\iota_{\tilde{\mathcal{O}}_{1},\tilde{\mathcal{O}}_{2}}(\mathcal{B}_{\tilde{\mathcal{O}}_{1}}(\mathcal{O}))}={E_{\tilde{\mathcal{O}}_{1} \mathcal{O}}}$ for $\tilde{\O}_{1}\subset\tilde{\O}_{2}$, but this is a trivial computation.
\end{proof}

If $\omega_{0}$ is the vacuum state of $\{\mathcal{A}\}$, let $\omega:=\omega_{0}\circ E$, where $E$ is the consistent conditional expectation of the inclusion $\{\mathcal{A}\subset\mathcal{B}\}$ defined above, lifted to the quasilocal \Cstar-algebra $\mathcal{B}$. We call $\omega$ the vacuum state of $\{\mathcal{B}\}$ and the GNS representation induced by $\omega$ the vacuum representation. We denote by $\{\A\subset\B^Q\}$ and $\{\B^Q\}$ the extension constructed in this way, in its vacuum representation. 

\begin{remark}
It is not hard to check that the construction of the net $\{\mathcal{B}^Q\}$ and its conditional expectation $E^Q$ onto $\{\A\}$ does not depend on the choice of the family of unitary charge transporters $u_{\mathcal{O}}$, nor on the choice of $\Gamma_{\tilde{\mathcal{O}}}$.
\end{remark}

Up to now, we have seen that we can build (discrete, relatively local) extensions of nets $\{\A\subset\B^Q\}$ associated to generalized net Q-systems of intertwiners in $\DHR\{\mathcal{A}\}$. A natural question to ask is if this procedure insures that, if the Q-system comes from a given extension $\{\A\subset\B\}$, the induced extension will be unitarily equivalent to the starting one. The answer is affirmative when the generalized net Q-system is constructed as in Proposition \ref{prop:netqsys}.

\begin{lemma}\label{lemma:dimoffields}
Let $\{\mathcal{A}\subset \mathcal{B}\}$ be as in Proposition \ref{prop:netqsys}, assuming either $(i)$ or $(ii)$, and let $\theta=\oplus_{i} m_i\rho_{i}$ be a decomposition of $\theta$ into irreducibles in $\DHR\{\A\}$, where $[\rho_i]\neq[\rho_j]$, $m_i$ is the multiplicity of $[\rho_i]$ in $[\theta]$, and $\rho_i$, $\theta$ are localized in $\O\in\K$.
Then
$$H_{\rho_{i}}(\mathcal{O}):=\{\psi\in\mathcal{B}(\mathcal{O}), \psi a=\rho_{i}(a)\psi, a\in\mathcal{A}\}$$
is isomorphic as a Hilbert space to $\Hom_{\DHR\{\A\}}(\rho_i,\theta)$ via the map $\Phi:\psi\mapsto\gamma(\psi^{*})w$.
\end{lemma}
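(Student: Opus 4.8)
The plan is to show that $\Phi$ is a well-defined, isometric, conjugate-linear bijection between the two spaces, each carrying its natural inner product, and to obtain surjectivity by recognizing the images of the basis charged fields of Proposition \ref{prop:netqsys} as an orthonormal basis of $\Hom_{\DHR\{\A\}}(\rho_i,\theta)$. Throughout I would use the data supplied by Proposition \ref{prop:netqsys}: the canonical endomorphism $\gamma$ with $\theta = \gamma_{\restriction\A}$, the isometry $w\in\Hom_{\DHR\{\A\}}(\id,\theta)$ with $E = w^*\gamma(\slot)w$, the decomposition $\theta = \oplus_i m_i\rho_i$ into irreducible finite-dimensional DHR endomorphisms, and, for the fixed sector $[\rho_i]$, the global charged fields $\psi_{i,k}$ ($k$ ranging over the $m_i$ copies) from the Pimsner--Popa basis, with associated isometries $w_{i,k} := \gamma(\psi_{i,k}^*)w\in\Hom_{\DHR\{\A\}}(\rho_i,\theta)$ satisfying $w_{i,k}^* w_{i,l} = \delta_{k,l}\oneop$.

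First I would check that $\Phi$ lands in $\Hom_{\DHR\{\A\}}(\rho_i,\theta)$. For $\psi\in H_{\rho_i}(\mathcal{O})$ one has $\gamma(\psi^*)w\in\mathcal{A}(\mathcal{O})$, and the charged field relation $\psi a = \rho_i(a)\psi$ ($a\in\A$) gives, upon taking adjoints, $\psi^*\rho_i(a) = a\psi^*$; applying $\gamma$ and using $\gamma_{\restriction\A}=\theta$ yields $\gamma(\psi^*)\gamma(\rho_i(a)) = \theta(a)\gamma(\psi^*)$. Since $w\rho_i(a) = \theta(\rho_i(a))w = \gamma(\rho_i(a))w$, this combines to
$$\gamma(\psi^*)w\,\rho_i(a) = \gamma(\psi^*)\gamma(\rho_i(a))w = \theta(a)\,\gamma(\psi^*)w,\quad a\in\A,$$
so $\Phi(\psi)$ intertwines $\rho_i$ and $\theta$ globally, which is precisely the definition of an arrow $\rho_i\to\theta$ in $\DHR\{\A\}$.

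Next come the inner products and isometry. On $\Hom_{\DHR\{\A\}}(\rho_i,\theta)$ I take $\langle t,t'\rangle\oneop := t^*t'$, which is scalar since $\rho_i$ is irreducible, so $\Hom_{\DHR\{\A\}}(\rho_i,\rho_i)=\CC\oneop$. The key computation is
$$\Phi(\psi)^*\Phi(\psi') = w^*\gamma(\psi)\gamma(\psi'^*)w = w^*\gamma(\psi\psi'^*)w = E(\psi\psi'^*),$$
using $E = w^*\gamma(\slot)w$. By the previous step the left-hand side lies in $\CC\oneop$, hence so does $E(\psi\psi'^*)$; therefore $\langle\psi,\psi'\rangle\oneop := E(\psi\psi'^*)$ is a well-defined, positive-definite (by faithfulness of $E$) inner product on $H_{\rho_i}(\mathcal{O})$, and the displayed identity says exactly that $\Phi$ matches the two inner products, so $\Phi$ is an isometry (conjugate-linear, as $\psi\mapsto\psi^*$ is anti-linear). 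Injectivity is then immediate: $\Phi(\psi)=0$ forces $E(\psi\psi^*)=0$, hence $\psi=0$.

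Finally, for surjectivity I would use that $\psi_{i,k}\in H_{\rho_i}(\mathcal{O})$ with $\Phi(\psi_{i,k}) = w_{i,k}$, and that $\{w_{i,k}\}_k$ is an orthonormal basis of $\Hom_{\DHR\{\A\}}(\rho_i,\theta)$: orthonormality is $w_{i,k}^*w_{i,l}=\delta_{k,l}\oneop$, and totality follows from $\theta=\oplus_i m_i\rho_i$, since any $t\in\Hom_{\DHR\{\A\}}(\rho_i,\theta)$ satisfies $t = p_i t = \sum_k w_{i,k}(w_{i,k}^* t)$ with $w_{i,k}^* t\in\CC\oneop$. Thus the image of $\Phi$ contains an orthonormal basis of the target; together with isometry and injectivity this gives the asserted Hilbert space isomorphism. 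The point requiring care --- and the main obstacle --- is the completeness of $H_{\rho_i}(\mathcal{O})$ in the norm induced by $E(\psi\slot^*)$, equivalently that each $t=\sum_k c_k w_{i,k}$ is the image of a genuine element $\psi=\sum_k\overline{c_k}\psi_{i,k}$ of $\mathcal{B}(\mathcal{O})$. When the multiplicities $m_i$ are finite (automatic in the irreducible case $(i)$ by Remark \ref{rmk:discreteandirred}) this is trivial; in general it follows from the mutual orthogonality of the projections $\psi_{i,k}^* e_{\mathcal{A}}\psi_{i,l}$, which forces strong convergence of the series in $\mathcal{B}(\mathcal{O})$ and hence closedness of $H_{\rho_i}(\mathcal{O})$.
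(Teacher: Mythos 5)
Your proof is correct and follows essentially the same route as the paper: the scalar inner product $E(\psi_1\psi_2^*)$ on $H_{\rho_i}(\O)$ coming from irreducibility of $\rho_i$, the key identity $\Phi(\psi_1)^*\Phi(\psi_2)=E(\psi_1\psi_2^*)$, and surjectivity via the orthonormal family of charged fields $\psi_{i,k}$ from Proposition \ref{prop:netqsys} whose images $w_{i,k}$ form an orthonormal basis of $\Hom_{\DHR\{\A\}}(\rho_i,\theta)$. You additionally spell out two points the paper leaves implicit --- that $\Phi(\psi)$ is a global intertwiner, and the convergence of $\sum_k\overline{c_k}\psi_{i,k}$ in $\B(\O)$ when $m_i$ is infinite --- which is a welcome refinement rather than a deviation.
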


\begin{proof}
Note that $E(\psi_{1}\psi_{2}^{*})\in \Hom_{\DHR\{\A\}}(\rho_{i},\rho_{i})$, $\psi_{1},\psi_{2}\in H_{\rho_{i}}(\mathcal{O})$, is an inner product for $H_{\rho_{i}}(\mathcal{O})$ since $\rho_{i}$ is irreducible in $\DHR\{\A\}$. We have seen in Proposition \ref{prop:netqsys} that there is a collection $\{\psi_{j}\}\subset H_{\rho_{i}}(\mathcal{O})$, $j=1,\ldots, m_i$ (with $m_i$ possibly infinite), which is orthonormal with respect to the above inner product, and which is mapped via $\Phi:\psi\mapsto\gamma(\psi^{*})w$ onto an orthonormal basis of $\Hom_{\DHR\{\A\}}(\rho_{i},\theta)$.
Since $E(\psi_{1}\psi_{2}^{*})=\Phi(\psi_{1})^{*}\Phi(\psi_{2})$, the map $\Phi:\psi\mapsto\gamma(\psi^{*})w$ is an isomorphism of $H_{\rho_{i}}(\mathcal{O})$ onto $\Hom_{\DHR\{\A\}}(\rho_{i},\theta)$.
\end{proof}

\begin{proposition}
Let $\{\mathcal{A}\subset\mathcal{B}\}$ and $(\theta,w,\{m_{i}\})$ be as in Proposition \ref{prop:netqsys}, assuming either $(i)$ or $(ii)$. Then the inclusion $\{\mathcal{A}\subset \mathcal{B}^{Q}\}$ obtained from $(\theta,w,\{m_{i}\})$ is unitarily equivalent to $\{\mathcal{A}\subset \mathcal{B}\}$.
\end{proposition}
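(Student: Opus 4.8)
The plan is to construct a net isomorphism $\Phi\colon\{\B\}\to\{\B^Q\}$ that fixes the common subnet $\{\A\}$ pointwise and intertwines the two conditional expectations, and then to implement it spatially. Since both $\{\B\}$ and $\{\B^Q\}$ are given in their vacuum representations, i.e.\ as GNS representations of $\omega_0\circ E$ and $\omega_0\circ E^Q$ respectively (see \Prop \ref{prop:altexpectation} and the paragraph following it), once $\Phi$ is shown to preserve the vacuum state the required unitary will be provided by uniqueness of the GNS construction. Its restriction to $\{\A\}$ will then be the canonical identification of the two vacuum representations of $\{\A\}$ on $\Hilb_0$, which is exactly what \lq\lq unitary equivalence of the inclusions" means.

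First I would fix convenient generators. By Proposition \ref{prop:netqsys} the family $\{\psi_i\}$ of \emph{global} charged fields is a Pimsner--Popa basis for $\A(\O_0)\stackrel{E}{\subset}\B(\O_0)$ with $E(\psi_i\psi_j^*)=\delta_{i,j}\oneop$; hence by Proposition \ref{prop:uniquepipoexpansion} every $b\in\B(\O_0)$ has the \emph{unique} expansion $b=\sum_i\psi_i^*E(\psi_i b)$ with $E(\psi_i b)\in\A(\O_0)$, and by discreteness $\B(\O_0)=\langle\A(\O_0),\{\psi_i\}\rangle$ (as in the proof of Proposition \ref{prop:discreteiffintert}). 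Lemma \ref{lemma:dimoffields} guarantees moreover that $\{\psi_i\}$ exhausts the charged fields of each $\rho_i$, with the multiplicities prescribed by $\Hom_{\DHR\{\A\}}(\rho_i,\theta)$. On the reconstructed side, $\B^Q(\O)$ is generated over $\pi_\varphi(\A(\O))$ by the operators $\pi_\varphi(M_i)$, and setting $\hat\psi_i:=\pi_\varphi(w_i^*M_i)$ (recall $M_i=w_i\psi_i$ with $w_i\in\Hom_{\DHR\{\A\}}(\rho_i,\theta)\cap\A(\O_0)$) produces charged fields for the $\rho_i$: indeed $w_i^*\theta(a)=\rho_i(a)w_i^*$ together with (\ref{eq:globalintertMi}) gives $\hat\psi_i\,\pi_\varphi(a)=\pi_\varphi(\rho_i(a))\,\hat\psi_i$, while $E^Q(\hat\psi_i\hat\psi_j^*)=w_i^*E^Q(M_iM_j^*)w_j=\delta_{i,j}\oneop$. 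Thus the $\hat\psi_i$ satisfy the very same intertwining relations and the same $\ast$-algebraic fusion relations as the $\psi_i$, with coefficients $E(\psi_k\psi_i\psi_j^*)\in\Hom_{\DHR\{\A\}}(\rho_j,\rho_k\rho_i)$ as in Remark \ref{rmk:*algchargedintert}, since these are dictated entirely by the shared Q-system data $(\theta,w,\{m_i\})$.

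I would then define $\Phi$ on $\B(\O_0)$ by $\Phi(a)=\pi_\varphi(a)$ for $a\in\A(\O_0)$ and $\Phi(\psi_i)=\hat\psi_i$, extended by $\Phi(b)=\sum_i\hat\psi_i^*\,\pi_\varphi(E(\psi_i b))$; uniqueness of the Pimsner--Popa expansion on both sides makes $\Phi$ a well-defined $\ast$-isomorphism. Since the $\psi_i$ are global charged fields, the same assignment is coherent for every $\O\supset\O_0$, and for the remaining regions one uses the charge-transported fields as in (\ref{eq:extlocalalgebras}); this yields a net isomorphism fixing $\{\A\}$. Because $\Phi$ fixes $\A$ and both expectations are induced by the \emph{same} isometry $w$, namely $E=w^*\gamma(\slot)w$ and $E^Q=w^*\gamma(\slot)w$ through the respective canonical endomorphisms, one checks $E^Q\circ\Phi=\Phi\circ E$, whence $\omega_0\circ E^Q\circ\Phi=\omega_0\circ E$. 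Thus $\Phi$ preserves the vacuum state, and GNS uniqueness furnishes a unitary $V$ with $V\Omega=\Omega_\varphi$ and $\Ad_V=\Phi$, restricting to the canonical identification of the two vacuum representations of $\{\A\}$.

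The main obstacle I anticipate is the \emph{coherence of $\Phi$ across regions} together with the exact matching of $E^Q$ with $E$: one must verify that the single global family $\{\psi_i\}$ generates $\B(\O)$ for \emph{all} $\O$ and that the fusion coefficients of Remark \ref{rmk:*algchargedintert} are reproduced verbatim by the reconstruction, so that $\Phi$ is simultaneously a homomorphism over every local algebra and independent of the localization region. This is precisely where Lemma \ref{lemma:dimoffields} and the global intertwining relation (\ref{eq:globalintertMi}) are essential, ensuring that no charged fields are lost or created in passing from $\{\B\}$ to $\{\B^Q\}$ and that the algebraic data carried by $\Phi$ do not depend on the chosen region.
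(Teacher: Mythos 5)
Your overall architecture is the same as the paper's: build a net isomorphism onto $\{\B^Q\}$ that fixes $\{\A\}$, check that it intertwines the conditional expectations ($E^Q\circ\Phi=\Phi\circ E$, both being $w^*\gamma(\cdot)w$ through the respective canonical endomorphisms), and conclude by uniqueness of the GNS representation. That skeleton is correct. However, the two places where you merely assert the conclusion are exactly where the paper's proof does its work, and as written they leave genuine gaps. First, the claim that "uniqueness of the Pimsner--Popa expansion on both sides makes $\Phi$ a well-defined $*$-isomorphism" does not follow: in the infinite index case the expansion $b=\sum_i\psi_i^*E(\psi_i b)$ converges only in the weak topology induced by $E$-invariant states (Proposition \ref{prop:pipoexpansion}), so you must separately show that $\sum_i\hat\psi_i^*\,\pi_\varphi(E(\psi_i b))$ converges to an element of $\B^Q(\O_0)$, that the map is multiplicative (interchanging these weakly convergent sums), normal, and surjective. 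The paper avoids all of this by taking $\pi_{\tilde\O}:=\Ad_{\Gamma_{\tilde\O}^*}\circ\gamma_{\tilde\O}$, which is a normal injective $*$-homomorphism by construction; the only thing to prove is then that its image is the right algebra. Equivalently, one can invoke the uniqueness in the reconstruction of Theorem \ref{thm:FiIs}, since both inclusions carry the same generalized Q-system — but some such argument must be made explicit.

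Second, and more importantly, you name "coherence of $\Phi$ across regions" as the anticipated obstacle and point at Lemma \ref{lemma:dimoffields}, but you do not resolve it. The substantive content of the paper's proof is precisely the verification that $\gamma_\O(\B(\O))=\A_1(\O)=\langle\theta_\O(\A(\O)),u_\O\theta(u_\O)m_iu_\O^*\rangle$ for \emph{every} $\O\in\K$, including regions not containing $\O_0$. This amounts to showing that the charge-transported generators actually lie in the given local algebra: writing $u_\O M_i=(u_\O w_iw_i^*u_\O^*)(u_\O w_i\psi_i)=\tilde w_i\tilde\psi_i$ and using Lemma \ref{lemma:dimoffields} in the form
$$H_{\rho_i}(\O_0)=H_{\rho_i}(\tilde\O)=\tilde w_i^*u_\O w_i\,H_{\rho_{i,\O}}(\tilde\O)=\tilde w_i^*u_\O w_i\,H_{\rho_{i,\O}}(\O)$$
to conclude $u_\O M_i\in\B(\O)$. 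Without this step you have not shown that $\Phi$ carries $\B(\O)$ \emph{onto} $\B^Q(\O)$ (rather than into some larger or smaller algebra) for general $\O$, so the statement that the two \emph{inclusions of nets} are unitarily equivalent is not yet established. Supplying this computation, together with a justification of the local isomorphism via the canonical endomorphism or Theorem \ref{thm:FiIs}, would close the argument and bring it in line with the paper's proof.
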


\begin{proof}
We first show that 
$$\mathcal{A}_{1}(\mathcal{O}) = \gamma_{\mathcal{O}}(\mathcal{B}(\mathcal{O})), \quad \mathcal{A}_{1}(\mathcal{O}) := \langle \theta_\O(\mathcal{A}(\mathcal{O})),u_{\mathcal{O}}\theta(u_{\mathcal{O}})m_{i}u_{\mathcal{O}}^{*}\rangle$$
where $\gamma_{\mathcal{O}} := \Ad_{u_\O} \gamma$ is a canonical endomorphism for the inclusion $\mathcal{A}(\mathcal{O})\subset\mathcal{B}(\mathcal{O})$. To see this,
let $\tilde{w_{i}}\in\mathcal{A}(\mathcal{O})$ be isometries such that $\tilde{w_{i}}\tilde{w_{i}}^{*}=u_{\mathcal{O}}w_{i}w_{i}^{*}u_{\mathcal{O}}^{*}$.
Then we have that $\tilde{w_{i}}^{*}u_{\mathcal{O}}w_{i}$ is a unitary and $\rho_{i, \mathcal{O}}:=\Ad_{\tilde{w_{i}}^{*}u_{\mathcal{O}}w_{i}}\rho_{i}$ is localized in $\mathcal{O}$. 
Using Lemma \ref{lemma:dimoffields}, we have that
$$H_{\rho_i}(\mathcal{O}_{0})=H_{\rho_i}(\tilde{\mathcal{O}})=\tilde{w_{i}}^{*}u_{\mathcal{O}}w_{i} H_{\rho_{i,\mathcal{O}}}(\tilde{\mathcal{O}})=\tilde{w_{i}}^{*}u_{\mathcal{O}}w_{i} H_{\rho_{i,\mathcal{O}}}(\mathcal{O})$$
for every $\tilde{\O}\in\mathcal{K}$ with $\O_0\cup \mathcal{O}\subset \tilde{\O}$.
Consequently
$$u_{\mathcal{O}}M_{i}=u_{\mathcal{O}}w_{i}\psi_{i}=u_{\mathcal{O}}w_{i}w_{i}^{*}w_{i}\psi_{i}=(u_{\mathcal{O}}w_{i}w_{i}^{*}u_{\mathcal{O}}^{*})(u_{\mathcal{O}}w_{i}\psi_{i})$$
$$=(\tilde{w_{i}}\tilde{w_{i}}^{*})(u_{\mathcal{O}}w_{i}\psi_{i})=\tilde{w_{i}}\tilde{\psi_{i}}\in\mathcal{B}(\mathcal{O})$$
from which the claim easily follows.

Thus it is clear that for a fixed $\tilde{\mathcal{O}}\in\mathcal{K}$, the map $\pi_{\tilde{\mathcal{O}}}:\mathcal{B}(\tilde{\mathcal{O}})\rightarrow \mathcal{B}^{Q}_{\tilde{\mathcal{O}}}(\tilde{\mathcal{O}})$, $\pi_{\tilde{\mathcal{O}}}:=\Ad_{\Gamma_{\tilde\O}^*}\gamma_{\tilde{\mathcal{O}}}$, is an isomorphism of von Neumann algebras, which maps $\mathcal{B}(\mathcal{O})$ onto $\mathcal{B}^Q_{\tilde{\mathcal{O}}}(\mathcal{O})$ and which lifts to a representation $\pi$ of the net $\{\mathcal{B}\}$. To show that this representation is unitarily equivalent to the vacuum representation, it is enough to show by the GNS theorem that
$$E^{Q}\circ \pi = E$$
with $E^{Q}$ and $E$ respectively the conditional expectations of $\{\mathcal{A}\subset\mathcal{B}^{Q}\}$ and $\{\mathcal{A}\subset\mathcal{B}\}$, but this is clear using $E=w^{*}\gamma(\cdot)w$.
\end{proof}

\begin{remark}
Note that in our second construction of the net $\{\mathcal{B}^Q\}$, the \emph{only} instance where the \emph{intertwining property} of the $m_{i}$ was used is to make sure that ${m_i}_\O = u_{\mathcal{O}}\theta(u_{\mathcal{O}}) m_i u_{\mathcal{O}}^{*} \in \mathcal{A}(\mathcal{O})$ (precisely by the intertwining property of the $m_{i}$ and Haag duality of $\{\mathcal{A}\}$, \cf Remark \ref{rmk:netQsysislocalQsys}). If a generalized Q-system does not have the intertwining property then the isotonous, relatively local net extension $\{\mathcal{B}^Q\}$ can still be defined in the same way, although the conditional expectation $E^Q$ cannot be defined on regions $\mathcal{O}\not\supset\mathcal{O}_{0}$ since in general ${m_i}_{_\mathcal{O}} = u_{\mathcal{O}}\theta(u_{\mathcal{O}}) m_i u_{\mathcal{O}}^{*}$ need not be in $\mathcal{A}(\mathcal{O})$, and thus $(\theta_{\mathcal{O}}, w_{\mathcal{O}},\{{m_{i}}_{\mathcal{O}}\})$ is not a priori a generalized Q-system in $\End(\mathcal{A}(\mathcal{O}))$.
\end{remark}

\section{Covariance of extensions}\label{sec:covarext}

In this section we show how spacetime covariance (\eg, M\"obius covariance in 1D or Poincar\'e covariance in 3+1D) extends from $\{\A\}$ to $\{\B\}$, where $\{\A\}$ is a local \emph{covariant} net over a directed set of spacetime regions $\K$, and $\{\A\subset\B\}$ is an extension with the properties implied by Theorem \ref{thm:discreteextnets}. This fact is common knowledge among experts, \cf \cite[\Rmk 4.3]{KaLo04} for irreducible extensions of $\{\Vir_c\}$, $c<1$, \cite[\Sec 3C]{BMT88} for time translation covariance in extensions of the chiral $U(1)$-current, and \cite[\Sec 6]{MTW16} for more recent examples of diffeomorphism covariant extensions of the $U(1)$-current in 1+1D. See also \cite[\Sec 6]{DoRo90}, \cite[\Thm 8.4]{DoRo89-1} for the covariance of canonical field extensions in 3+1D. In this section, see Theorem \ref{thm:discretecovariance}, we give a general proof of covariance for extensions of nets, with finite or infinite index (of discrete type as in Theorem \ref{thm:discreteextnets}). The proof essentially relies on \emph{tensoriality} and \emph{naturality} properties of the \emph{action} of the spacetime symmetry group (implemented by covariance cocycles) on the DHR category. Hence we formulate it in a \Cstar tensor categorical language, \cf \cite[\App 5]{Tur10mueger} due to M. M\"uger. But first we need a few definitions. 

Let $\cP$ be a (pathwise) connected and simply connected group of spacetime symmetries (\eg, $\cP = \widetilde{\Mob}$ the universal covering of the M\"obius group acting on $\RR$ (actually on $\RRbar = \RR \cup \{\infty\}$), or $\cP = \tilde{\cP}^\uparrow_+$ the universal covering of the proper orthochronous Poincar\'e group acting on $\RR^{3+1}$). 
Assume that $\cP$ contains a distinguished $(n+1)$-parameter subgroup, $n \geq 0$, of \lqq spacetime translations" (\eg, the rotations inside $\widetilde{\Mob}$, or the four-dimensional spacetime translations inside $\tilde{\cP}^\uparrow_+$). The following definition describes Poincar\'e covariant theories on Minkowski space and M\"obius covariant theories on the real line at the same time, \cf \cite[\Sec 8]{GuLo92}, \cite[\Sec 1]{BGL93}, \cite[\Sec 3]{CKL08}.

\begin{definition}\label{def:covarnet}
An isotonous net $\{\A\}$ of von Neumann algebras realized on $\Hilb_0$ over a directed set of spacetime regions $\K$ is called \textbf{covariant} with respect to $\cP$ if there is a strongly continuous unitary representation $g\mapsto U(g) $ of $\cP$ on $\Hilb_0$ such that
$$U(g)\A(\O)U(g)^* = \A(g\O),\quad \O\in\K,\quad g\in\cU_\O$$
where $\cU_\O\subset\cP$ denotes the (pathwise) connected component of the identity $e$ in $\cP$ of the set $\{g\in\cP : g\O\in\K\}$.
We always assume that $\cU_\O$ is a non-trivial neighborhood of $e$ for every $\O\in\K$ (\ie, $\cK$ is \lqq \emph{locally stable}" under the action of $\cP$), and that if $\O_1, \O_2\in\K$ and $g\in\cU_{\O_1}\cap\cU_{\O_2}$ then there is $\O\in\K$ such that $\O_1\cup\O_2\subset\O$ and $g\in\cU_\O$ (\ie, $\K$ is \lqq \emph{$\cP$-stably directed}").\footnote{These assumptions are the abstraction of the geometric properties which are needed in this section. They are fulfilled, \eg, by all the examples of spacetime symmetries $\cP$ acting on directed sets of bounded regions $\cK$ mentioned above.}

Concerning spectral properties, we assume that the generators of the spacetime translation subgroup (energy-momentum operators) have positive joint spectrum, and that there is a $\cP$-invariant unit vector $\Omega_0\in\Hilb_0$ (vacuum vector) which is cyclic for $\left\langle\A(\O), U(g) : \O\in\K, g\in\cP\right\rangle$.
\end{definition}

\begin{remark}\label{rmk:globalaction}
Assume first that $\cP$ preserves $\K$, \ie, $g\O \in\K$ for every $g\in\cP$, $\O\in\K$ (\eg, if $\K$ is the set of all double cones in Minkowski space and $\cP$ is the universal covering of the Poincar\'e group, or if $\K$ is the set of all open proper bounded intervals of $\RR$ and $\cP$ is the translation-dilation subgroup of the M\"obius group), or equivalently $\cU_\O = \cP$ for every $\O\in\K$. Consider then a local net $\{\A\}$ over $\K$ as in Definition \ref{def:localnet}, fulfilling Haag duality and covariant with respect to $\cP$ as in Definition \ref{def:covarnet}. Denoted by $\alpha_g:=\Ad_{U(g)}$ the adjoint action on $\B(\Hilb_0)$, we have an action of $\cP$ on the net $\{\A\}$ (which extends to an action by normal *-automorphisms of the quasilocal algebra $\A$), and another action of $\cP$ on DHR endomorphisms $\rho$ in $\DHR\{\A\}$ given by $^g \rho := \alpha_g\rho\alpha_g^{-1}$. Observe that $^g \rho$ is again DHR and localizable in $g\O$ if $\rho$ is localizable in $\O$. Moreover, $^g t := \alpha_g(t)\in\Hom_{\DHR\{\A\}}({^g\rho},{^g\sigma})$, $g\in\cP$, if $t\in\Hom_{\DHR\{\A\}}(\rho,\sigma)$. In other words, we have an \textbf{action} of $\cP$ on the category $\DHR\{\A\}$ (as a strict \Cstar braided tensor category) by autoequivalences (actually automorphisms), which is also strict in the terminology of \cite[\App 5]{Tur10mueger}. Indeed, one can easily check that ${^g(\rho\times\sigma)} = {^g\rho} \times {^g\sigma}$, where $\rho\times\sigma = \rho\sigma$ (composition of endomorphisms of $\A$), and ${^g\id = \id}$ for every $g\in\cP$ and $\rho,\sigma$ in $\DHR\{\A\}$. Also, ${^{g}({^h\rho}) = {^{gh}\rho}}$ and ${^e\rho} = \rho$ if $e$ is the identity in $\cP$.
\end{remark}

On the other hand, if not every $g\in\cP$, $\O\in\K$ fulfill $g\O \in\K$ (\eg, if $\K$ is the set of all open proper bounded intervals in $\RR$ and $\cP$ is the universal covering of the M\"obius group) then $\alpha_g$, $g\in\cP$, are \emph{not} always automorphisms of the quasilocal algebra $\A$ and the previous global statements have to be replaced with \emph{local} ones by specifying local algebras and spacetime regions. For instance, ${^g \rho} = \alpha_g\rho\alpha_{g^{-1}}$, for a fixed $g\in\cP$, is well defined on every $\A(\O)$, $\O\in\K$, such that $g^{-1}\O\in\K$, and it is an endomorphisms of $\A(\O)$ if $\rho$ is and endomorphisms of $\A(g^{-1}\O)$ (\eg, if $\rho$ is DHR localizable in $g^{-1}\O$). Similarly, the intertwining relation for ${^g t}$ between ${^g\rho}$ and ${^g\sigma}$, if $t\in\Hom_{\DHR\{\A\}}(\rho,\sigma)$, must be intended locally. In this level of generality we give the following definition, \cf \cite[\Sec 2, \App A]{Lon97}, \cite[\App 5]{Tur10mueger}.

\begin{definition}\label{def:equivaraction}
Let $\{\A\}$ be a local net realized on $\Hilb_0$ as in Definition \ref{def:localnet}, fulfilling Haag duality and covariant with respect to a group of spacetime symmetries $\cP$ as in Definition \ref{def:covarnet}. Let $\alpha_g :=\Ad_{U(g)}$, $g\in\cP$, and let 
$$\C\subset\DHR\{\A\}$$ 
be a full and replete tensor subcategory, closed under finite direct sums and subobjects. 
We say that $\cP$ has an \textbf{equivariant action} on $\C$ (and write ${\C^\cP} = \C$) if there is a map
$$g,\rho \mapsto z(g,\rho)$$
where $g\in\cP$, $\rho$ is an object of $\C$, such that
\begin{itemize}
\item [$(i)$] $z(\cdot,\rho)$ is a strongly continuous unitary valued map in $\B(\Hilb_0)$, $z(g,\id) = \oneop$ for every $g\in\cP$, $z(e,\rho) = \oneop$ for every $\rho$ in $\C$, and
$$z(gh,\rho) = \alpha_g(z(h,\rho)) z(g,\rho)$$ for every $g,h\in\cP$ and $\rho$ in $\C$. (\lqq cocycle identity")
\item [$(ii)$] $$z(g,\rho) \rho(a) z(g,\rho)^* = \alpha_g \rho\, \alpha_{g^{-1}}(a),\quad a\in\A(\tilde\O)$$ if $\rho$ is DHR localizable in $\O_0\in\K$, $g\in\cU_{\O_0}$ is such that $g^{-1}\in\cU_{\O_0}$, and $\tilde\O\in\K$ is such that $\O_0 \cup g\O_0 \subset \tilde\O$, $g^{-1}\in\cU_{\tilde\O}$.
\footnote{The existence of at least one $\tilde\O$ with these properties is guaranteed because $\cK$ is $\cP$-stably directed by assumption (Definition \ref{def:covarnet}).} (\lqq local intertwining property")
\item [$(iii)$] $$z(g,\rho)\in\A(\O)$$ if $\rho$ is DHR localizable in $\O_0\in\K$, $g\in\cU_{\O_0}$, and $\O\in\K$ is such that $\O_0 \cup g\O_0 \subset \O$.
\item [$(iv)$] $$\alpha_g(t) = z(g,\sigma) t z(g,\rho)^*$$ if $\rho$ and $\sigma$ are DHR localizable respectively in $\O_1$ and $\O_2\in\K$, $g\in\cU_{\O_1}\cap\cU_{\O_2}$, and $t\in\Hom_{\DHR\{\A\}}(\rho,\sigma)$. (\lqq naturality of cocycles")
\item [$(v)$] $$z(g,\rho\sigma) = z(g,\rho) \rho(z(g,\sigma))$$ if $\rho, \sigma, g$ are as in $(iv)$. (\lqq tensoriality of cocycles")
\item [$(vi)$] $\Ad_{z(g,\rho)} \rho$ is DHR localizable in $g\O_0\in\K$, if $\rho, g$ are as in $(iii)$. (\lqq global localization property")
\end{itemize}
\end{definition}

\begin{remark}
In the case that $g\O\in\K$ for every $g\in\cP$, $\O\in\K$, we have a (global) action of $\cP$ on $\C \subset \DHR\{\A\}$ (as a strict \Cstar braided tensor category), see \cite[\Def 1.2]{Tur10mueger}. Then the equivariance of the action as in Definition \ref{def:equivaraction}, \cf \cite[\Def 2.1]{Tur10mueger}, says that the map $z$ defines a \emph{tensor natural transformation} (isomorphism) between the trivial action $\iota$ of $\cP$ on $\C$ by autoequivalences  and the action defined by $\alpha$. Naturality is automatic because $\cP$ is considered as a discrete tensor category, \ie, the only morphisms are the identity morphisms, while tensoriality is encoded in the cocycle identity $(i)$. The properties $(iv)$ and $(v)$ above say that $z(g,\cdot)$ is a natural tensor transformation (unitary isomorphism) between tensor functors $\iota_g$ and $\alpha_g$ for every $g\in\cP$.
\end{remark}

\begin{lemma}\label{lem:covarDHRendo}
In the assumptions of Definition \ref{def:equivaraction}, let $z(\cdot,\cdot)$ be a map fulfilling the properties $(i)$ and $(ii)$, then the following holds as well
\begin{itemize}
\item [$(ii)'$] $$z(g,\rho) \rho(\alpha_g(a)) z(g,\rho)^* = \alpha_g (\rho(a)),\quad a\in\A(\O),\quad g\in\cU_{\O}$$ if $\rho$ is DHR localizable in $\O\in\K$.
The same is true if $a\in\A(\tilde\O)$ and $g\in\cU_{\tilde\O}$ for any $\tilde\O\in\K$.
\end{itemize}
In other words, the unitaries $U_\rho(g) := z(g,\rho)^* U(g)$, $g\in\cP$, implement the covariance of $\rho$ with respect to $\cP$ (\cf \emph{\cite[\Sec 4.2]{CKL08}}) and give a strongly continuous unitary representation of $\cP$ on $\Hilb_0$. 
\end{lemma}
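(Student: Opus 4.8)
The plan is to treat the statement in two stages: first I would derive the intertwining relation $(ii)'$ from the hypotheses $(i)$ and $(ii)$ alone, and then, taking $(ii)'$ for granted, verify that $U_\rho(g) := z(g,\rho)^*U(g)$ is a strongly continuous unitary representation of $\cP$ which implements the covariance of $\rho$.

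For the first stage the idea is that $(ii)'$ for the parameter $g$ is exactly the $\alpha_g$-conjugate of $(ii)$ written for $g^{-1}$. So I would begin with $(ii)$ in the form $z(g^{-1},\rho)\,\rho(a)\,z(g^{-1},\rho)^* = \alpha_{g^{-1}}\rho\,\alpha_g(a)$ and apply $\alpha_g = \Ad_{U(g)}$ to both sides. The right-hand side becomes $\rho(\alpha_g(a))$, since $\alpha_g\alpha_{g^{-1}} = \id$ because $U$ is a representation. For the left-hand side I would rewrite $\alpha_g(z(g^{-1},\rho))$ by means of the cocycle identity: putting $h = g^{-1}$ in $z(gh,\rho) = \alpha_g(z(h,\rho))z(g,\rho)$ and using $z(e,\rho) = \oneop$ gives $\alpha_g(z(g^{-1},\rho)) = z(g,\rho)^*$. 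Substituting, the left-hand side turns into $z(g,\rho)^*\,\alpha_g(\rho(a))\,z(g,\rho)$, and rearranging yields precisely $z(g,\rho)\,\rho(\alpha_g(a))\,z(g,\rho)^* = \alpha_g(\rho(a))$, i.e.\ $(ii)'$. The only thing to watch is the localization bookkeeping: $(ii)$ may be invoked only when the regions involved lie in $\K$ and carry $g^{-1}$ (and its inverse $g$) in the appropriate $\cU$'s, so that the auxiliary regions such as $g^{-1}\O$ again belong to $\K$. I would secure this by $\cP$-stable directedness (Definition \ref{def:covarnet}) together with the elementary geometric fact that $g\in\cU_\O$ entails $g^{-1}\in\cU_{g\O}$ (this is what makes $\alpha_{g^{-1}}$ carry $\A(g\O)$ back onto $\A(\O)$); choosing an auxiliary region $\tilde\O \supset \O\cup g^{-1}\O$ with $g\in\cU_{\tilde\O}$ then places the computation within the scope of $(ii)$. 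This region-matching, rather than the algebra, is where I expect the only real care to be needed; the second, more general form of $(ii)'$ (with $a\in\A(\tilde\O)$ for arbitrary $\tilde\O$) follows by the same computation after enlarging the region.

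For the second stage, the representation property is immediate from the cocycle identity. I would compute
\begin{align*}
U_\rho(g)U_\rho(h) &= z(g,\rho)^*U(g)z(h,\rho)^*U(h) = z(g,\rho)^*\,\alpha_g(z(h,\rho))^*\,U(gh) \\
&= z(gh,\rho)^*U(gh) = U_\rho(gh),
\end{align*}
where $U(g)$ is moved across $z(h,\rho)^*$ via $\Ad_{U(g)} = \alpha_g$ and the penultimate equality is $(i)$. Strong continuity of $g\mapsto U_\rho(g)$ follows from the strong continuity of $g\mapsto U(g)$ (covariance) and of $g\mapsto z(g,\rho)$ (part of $(i)$), since a product of two uniformly bounded strongly continuous unitary-valued maps is strongly continuous. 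Finally, that $U_\rho$ implements the covariance of $\rho$ is read directly from $(ii)'$: using $U(g)\rho(a)U(g)^* = \alpha_g(\rho(a))$ one obtains $U_\rho(g)\rho(a)U_\rho(g)^* = z(g,\rho)^*\alpha_g(\rho(a))z(g,\rho) = \rho(\alpha_g(a))$, the last equality being $(ii)'$.
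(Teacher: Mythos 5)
Your stage-two argument (the representation property from the cocycle identity, strong continuity, and reading off covariance from $(ii)'$) is fine and matches the paper. The gap is in stage one. The algebraic manipulation --- conjugating $(ii)$ at $g^{-1}$ by $\alpha_g$ and using $\alpha_g(z(g^{-1},\rho))=z(g,\rho)^*$ --- is correct as far as it goes, but it can only be run when $(ii)$ is actually available at $g^{-1}$, and the hypothesis of $(ii)$ demands that \emph{both} $g$ and $g^{-1}$ lie in $\cU_{\O_0}$ for a localization region $\O_0$ of $\rho$. The claim $(ii)'$ is asserted for \emph{all} $g\in\cU_\O$, and $\cU_\O$ need not be symmetric: for $\widetilde{\Mob}$ acting on bounded intervals one easily finds $g$ with $g\O$ bounded while $g^{-1}\O$ contains the point at infinity. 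Your proposed fix does not close this: the observation that $g\in\cU_\O$ entails $g^{-1}\in\cU_{g\O}$ concerns the wrong region (what is needed is $g^{-1}\in\cU_{\O_0}$ for a region $\O_0$ in which $\rho$ is localized, and enlarging $\O_0$ only shrinks $\cU_{\O_0}$), and $\cP$-stable directedness cannot manufacture such a region. So your argument establishes $(ii)'$ only for $g$ in the symmetric neighborhood $\cV:=\cU_\O\cap\cU_\O^{-1}$ of $e$.

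The missing ingredient --- which is the actual substance of the paper's proof --- is a connectedness/chain argument: since $\cU_\O$ is connected, every $g\in\cU_\O$ can be written as $g=g_1\cdots g_n$ with each $g_j\in\cV$ and each partial product $g_j\cdots g_n$ still in $\cU_\O$; the iterated cocycle identity $z(g,\rho)=\alpha_{g_1\cdots g_{n-1}}(z(g_n,\rho))\cdots\alpha_{g_1}(z(g_2,\rho))\,z(g_1,\rho)$ then lets one apply the one-step identity $n$ times, choosing at each step an auxiliary region as in $(ii)$ that contains the successively translated localization regions. The same issue recurs, more severely, for the second form of $(ii)'$ (with $a\in\A(\tilde\O)$ and $g\in\cU_{\tilde\O}$ for arbitrary $\tilde\O$): there $g$ need not belong to $\cU_\O$ at all, so ``the same computation after enlarging the region'' is not available; the paper runs a second chain argument in a symmetric neighborhood $\cW\subset\cU_\O\cap\cU_{\tilde\O}$. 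Without these decompositions your proof only yields the lemma on a neighborhood of the identity in $\cP$.
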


\begin{proof}
Let $a\in\A(\O)$, $g\in\cU_\O$ and assume that $\O\in\cK$ is a localization region of $\rho$. Also, let $\cV\subset\cU_\O$ be a symmetric neighborhood of $e$, \eg, $\cV := \cU_\O\cap\cU_\O^{-1}$. Consider the set of all elements $\cV(e) \subset \cU_\O$ that can be joined to $e$ by a $\cV$-chain in $\cU_\O$, namely those $g\in\cU_\O$ such that there are $x_1,\ldots,x_n\in\cU_\O$, $n\geq 1$, with $x_1=e$, $x_n=g$, and $x_{j+1}x_j^{-1}\in\cV$ for every $j=1,\ldots,n-1$, \cf \cite[\Def 19, 144]{BePl01}. By a standard argument, $\cV(e)$ is open and closed in $\cU_\O$, hence $\cV(e) = \cU_\O$ by connectedness. Then every $g\in\cU_\O$ can be written as $g=g_1g_2\cdots g_n$ where $g_j\in\cV$, and in addition $g_j\cdots g_n\in\cU_\O$ for every $j=1,\ldots, n$, $n\geq 1$. Just set $g_j := x_{j+1}x_j^{-1}$, $j=1,\ldots,n-1$, and $g_n := e$. 

Now, by the cocycle identity $(i)$ we have
\begin{equation}\label{eq:longcocycleid}z(g_1g_2\cdots g_n,\theta) = \alpha_{g_1\cdots g_{n-1}}(z(g_n,\theta))\cdots \alpha_{g_1}(z(g_2,\theta)) z(g_1,\theta)\end{equation}
and we want to compute its adjoint action on $\rho(\alpha_g(a))$, $a\in\A(\O)$. 

Thus, $z(g_1,\theta) \rho(\alpha_g(a)) z(g_1,\theta)^* = \alpha_{g_1}(\rho(\alpha_{g_2\cdots g_n}(a)))$ because $g_1, {g_1}^{-1}\in\cV\subset\cU_\O$, hence equality $(ii)$ holds on every $\A(\tilde\O)$, $\tilde\O\in\K$, such that $\O\cup g_1\O\subset\tilde\O$ and $g_1^{-1}\in\cU_{\tilde\O}$. Moreover, $g_1^{-1}\in\cU_{g\O}=\cU_\O g^{-1}$ because $g_1^{-1}g = g_2\cdots g_n\in\cU_\O$, hence we can assume that $g\O\subset\tilde\O$, by enlarging $\tilde\O$ if necessary, because $\cK$ is $\cP$-stably directed by assumption (Definition \ref{def:covarnet}). Continuing, $z(g_2,\theta) \rho(\alpha_{g_2\cdots g_n}(a)) z(g_2,\theta)^* = \alpha_{g_2}(\rho(\alpha_{g_3\cdots g_n}(a)))$ because $g_2, {g_2}^{-1}\in\cV\subset\cU_\O$, hence we can choose $\tilde\O\in\K$ as in $(ii)$ such that $g_2^{-1}\in\cU_{\tilde\O}$ and again further assume that $g_2\cdots g_n\O\subset\tilde\O$. Indeed, $g_2^{-1}\in\cU_{g_2\cdots g_n\O}=\cU_\O g_n^{-1}\cdots g_2^{-1}$ because $g_3\cdots g_n\in\cU_\O$. By finite iteration we get the first claim.

By the cocycle identity $(i)$, the unitaries $U_\rho(g) := z(g,\rho)^* U(g)$, $g\in\cP$, form a representation of $\cP$. Indeed, $U_\rho(g) U_\rho(h) = z(g,\rho)^* \alpha_g(z(h,\rho))^* U(gh) = U_\rho(gh)$ and $z(g^{-1},\rho) = \alpha_{g^{-1}}(z(g,\rho)^*)$, hence also $U_\rho(g^{-1}) = U_\rho(g)^*$, follow from $z(e,\rho) = \oneop$. We want to show that it implements the covariance of $\rho$. 

Let $a\in\A(\tilde\O)$, $g\in\cU_{\tilde\O}$ for an arbitrary $\tilde\O\in\cK$. Define $\tilde\cV := \cU_{\tilde\O}\cap\cU_{\tilde\O}^{-1}$ and consider $\cW := \cV \cap \tilde\cV$, or any other symmetric neighborhood $\cW$ of $e$ such that $\cW\subset\cU_{\O} \cap\cU_{\tilde\O}$. By the same argument as above, we have $g\in\cU(\tilde\O)=\cW(e)$, \ie, we can write $g=g_1\cdots g_n$, where $g_j\in\cW$ and $g_j\cdots g_n\in\cU_{\tilde\O}$ for every $j=1,\ldots,n$, $n\geq 1$, and
$$U_\rho(g) \rho(a) U_\rho(g)^* = U_\rho(g_1)\cdots U_\rho(g_n) \rho(a) U_\rho(g_n)^*\cdots U_\rho(g_1)^*.$$
Now, $g_n\in\cW\subset\cU_{\O} \cap\cU_{\tilde\O}$ hence there is $\O_1\in\K$ such that $\O\cup\tilde\O\subset \O_1$, $g_n\in\cU_{\O_1}$, moreover $\rho$ is localized in $\O_1$, $a\in\A(\O_1)$, thus by the first claim we get $U_\rho(g_n) \rho(a) U_\rho(g_n)^* = \rho(\alpha_{g_n}(a))$. Continuing, $g_{n-1}\in\cU_\O\cap\cU_{g_n\tilde\O}$ and we can repeat the previous argument on $\O_2\in\K$ such that $\O\cup g_n\tilde\O\subset\O_2$, $g_{n-1}\in\cU_{\O_2}$, to get $U_\rho(g_{n-1})\rho(\alpha_{g_n}(a))U_\rho(g_{n-1}^*) = \rho(\alpha_{g_{n-1}g_n}(a))$. By finite iteration we get
$$U_\rho(g)\rho(a)U_\rho(g)^* = \rho(\alpha_g(a))$$
where $a\in\A(\tilde\O)$, $g\in\cU_{\tilde\O}$ for an arbitrary $\tilde\O\in\cK$, completing the proof.
\end{proof}

With similar arguments one can extend naturality and tensoriality of cocycles to (almost all) $g\in\cP$, namely

\begin{lemma}\label{lem:tensornat}
In the assumptions of Definition \ref{def:equivaraction}, let $z(\cdot,\cdot)$ be a map fulfilling the properties $(i)$ and $(iv)$, then the following holds as well
\begin{itemize}
\item [$(iv)'$] $$\alpha_g(t) = z(g,\sigma) t z(g,\rho)^*,\quad g\in\cP.$$
\end{itemize}
If $z(\cdot,\cdot)$ fulfills $(i)$, $(ii)$, $(iii)$ and $(v)$, then it fulfills also
\begin{itemize}
\item [$(v)'$] $$z(g,\rho\sigma) = z(g,\rho) \rho(z(g,\sigma)),\quad g\in\cU_{\O_2},$$
where $\O_2\in\K$ is a DHR localization region of $\sigma$.
\end{itemize}
\end{lemma}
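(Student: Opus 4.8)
The plan is to treat the two assertions $(iv)'$ and $(v)'$ separately, in each case propagating a locally valid identity over a connected set (the whole group $\cP$ for $(iv)'$, the connected neighborhood $\cU_{\O_2}$ for $(v)'$), in the spirit of the proof of Lemma~\ref{lem:covarDHRendo}.

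For $(iv)'$ I would first reformulate the statement representation-theoretically. As already observed in Lemma~\ref{lem:covarDHRendo}, the cocycle identity $(i)$ makes $U_\rho(g) := z(g,\rho)^* U(g)$ and $U_\sigma(g) := z(g,\sigma)^* U(g)$ strongly continuous unitary representations of $\cP$ on $\Hilb_0$. A direct computation then shows that $\alpha_g(t) = z(g,\sigma)\,t\,z(g,\rho)^*$ is equivalent to $U_\sigma(g)\,t = t\,U_\rho(g)$, i.e.\ to $t$ intertwining these two representations at $g$. The set $H := \{g\in\cP : U_\sigma(g)\,t = t\,U_\rho(g)\}$ is a subgroup of $\cP$ — it contains $e$ and is closed under products and inverses precisely because $U_\rho,U_\sigma$ are representations — and by hypothesis $(iv)$ it contains the neighborhood $\cU_{\O_1}\cap\cU_{\O_2}$ of $e$ (nonempty by the local stability assumption in Definition~\ref{def:covarnet}). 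Since a subgroup containing a neighborhood of the identity is open, and an open subgroup of the connected group $\cP$ is all of $\cP$, we conclude $H=\cP$, which is $(iv)'$.

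For $(v)'$ I would set $c_1(g) := z(g,\rho\sigma)$ and $c_2(g) := z(g,\rho)\,\rho(z(g,\sigma))$, the goal being $c_1 = c_2$ on $\cU_{\O_2}$ given that $c_1 = c_2$ on $\cU_{\O_1}\cap\cU_{\O_2}$ by $(v)$. The first step records that $c_1$ satisfies the cocycle identity $c_1(gh) = \alpha_g(c_1(h))\,c_1(g)$ for all $g,h$ (this is just $(i)$), and that $c_2$ satisfies the \emph{same} identity whenever the localizations allow: expanding $c_2(gh)$ via $(i)$ and commuting $\rho(\alpha_g(z(h,\sigma)))$ past $z(g,\rho)$ by $(ii)'$ (available since we assume $(i)$ and $(ii)$), after using $(iii)$ to place $z(h,\sigma)$ in some $\A(\O'')$, yields $c_2(gh) = \alpha_g(c_2(h))\,c_2(g)$. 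The second step is a local-constancy argument on the connected (and open) set $\cU_{\O_2}$: I would show that every $g_0\in\cU_{\O_2}$ admits a neighborhood $g_0\cW_{g_0}$, with $\cW_{g_0}$ a neighborhood of $e$ inside $\cU_{\O_1}\cap\cU_{\O_2}$, on which the predicate ``$c_1=c_2$'' is constant. Indeed, for $k\in\cW_{g_0}$ both cocycle identities give $c_i(g_0 k) = \alpha_{g_0}(c_i(k))\,c_i(g_0)$, and since $c_1(k)=c_2(k)$ by $(v)$, one gets $c_1(g_0 k)=c_2(g_0 k)$ if and only if $c_1(g_0)=c_2(g_0)$. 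Hence $S := \{g\in\cU_{\O_2} : c_1(g)=c_2(g)\}$ and its complement in $\cU_{\O_2}$ are both open; as $e\in S$ and $\cU_{\O_2}$ is connected, $S=\cU_{\O_2}$.

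The hard part will be the localization bookkeeping in this second step, namely ensuring that the cocycle identity for $c_2$ is genuinely applicable at the pair $(g_0,k)$. This forces a choice of auxiliary region $\O''\supset\O_2\cup k\O_2$ containing $z(k,\sigma)$ and such that, in addition, $g_0\in\cU_{\O''}$, as required by $(ii)'$. For $k$ in a sufficiently small neighborhood of $e$ one has $g_0\in\cU_{\O_2}\cap\cU_{k\O_2}$, and then the $\cP$-stably directed hypothesis of Definition~\ref{def:covarnet} supplies exactly such an $\O''$; this is the precise point where the confinement to $g\in\cU_{\O_2}$ (rather than all of $\cP$) is dictated, mirroring the region-enlarging step in the proof of Lemma~\ref{lem:covarDHRendo}.
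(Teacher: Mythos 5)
Your proof is correct, and it reaches both conclusions by a route that differs in packaging, though not in its essential input, from the paper's. The paper proves $(iv)'$ and $(v)'$ by writing $g$ explicitly as a product $g_1\cdots g_n$ of elements of $\cU_{\O_1}\cap\cU_{\O_2}$ (a chain decomposition as in the proof of Lemma \ref{lem:covarDHRendo}, with the partial products $g_j\cdots g_n$ kept inside $\cU_{\O_2}$ for $(v)'$), expanding $z(g,\cdot)$ via the iterated cocycle identity (\ref{eq:longcocycleid}), and applying $(iv)$, respectively $(v)$ together with repeated use of Lemma \ref{lem:covarDHRendo}, factor by factor. Your argument for $(iv)'$ replaces this iteration by the observation that $\{g\in\cP : U_\sigma(g)\,t = t\,U_\rho(g)\}$ is a subgroup containing a neighborhood of $e$, hence open, hence equal to the connected group $\cP$; this is cleaner and makes the group-theoretic content transparent. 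Your argument for $(v)'$ isolates a fact left implicit in the paper's \lqq repeated use of Lemma \ref{lem:covarDHRendo}", namely that $g\mapsto z(g,\rho)\rho(z(g,\sigma))$ satisfies the same cocycle identity as $z(g,\rho\sigma)$ wherever $(ii)'$ applies, and then propagates the equality by an open-closed argument on the connected set $\cU_{\O_2}$; this is the same local-to-global mechanism as the paper's finite iteration, recast as a local-constancy statement. The localization bookkeeping you flag (choosing $\O''\supset\O_2\cup k\O_2$ with $g_0\in\cU_{\O''}$ via the $\cP$-stably-directed hypothesis of Definition \ref{def:covarnet}, so that $z(k,\sigma)\in\A(\O'')$ by $(iii)$ and $(ii)'$ is applicable) is exactly the region-enlargement performed inside the proof of Lemma \ref{lem:covarDHRendo}, so your treatment is at the same level of rigor as the paper's. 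What your version buys is a reusable abstract principle (two cocycles for the same action that agree on a neighborhood of $e$ agree on a connected set); what the paper's buys is the explicit product formula, which is the form actually invoked later, e.g.\ in Example \ref{ex:moebcovar}.
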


\begin{proof}
Let $\O_1,\O_2\in\K$ be respectively localization regions of $\rho, \sigma$. To prove the first statement, write $g\in\cP$ as $g=g_1\cdots g_n$, $n\geq 1$, where $g_j\in\cU_{\O_1}\cap\cU_{\O_2}$, $j=1,\ldots, n$. Then make use of equation (\ref{eq:longcocycleid}) and apply $(iv)$ at each step.

To prove the second statement, write $g\in\cU_{\O_2}$ as before, and assume in addition that $g_j\cdots g_n\in\cU_{\O_2}$, $j=1,\ldots, n$, \cf proof of Lemma \ref{lem:covarDHRendo}. Then make again use of equation (\ref{eq:longcocycleid}) for $z(g,\rho\sigma)$ and apply $(v)$ for each $g_j\in\cU_{\O_1}\cap\cU_{\O_2}$. Repeated use of Lemma \ref{lem:covarDHRendo} gives the desired conclusion. Notice that $z(g,\sigma)$ belongs to the quasilocal algebra $\A$ because of assumption $(iii)$, hence one can safely apply the endomorphisms $\rho$. 
\end{proof}

Now we show that the properties (mainly tensoriality and naturality) of covariance cocycles expressed by the \emph{equivariance} of the action of spacetime symmetries on the DHR category ensure \emph{covariance} of the extended nets constructed as in Theorem \ref{thm:discreteextnets}.

\begin{theorem}\label{thm:discretecovariance}
Let $\{\A\}$ be a local net fulfilling Haag duality, standardly realized on $\Hilb_0$, and covariant with respect to a group of spacetime symmetries $\cP$ (Definition \ref{def:covarnet}). Assume in addition that either $\cP$ acts transitively on $\K$ (\ie, for every $\O_1, \O_2\in\K$ there is $g\in\cU_{\O_1}$ such that $g\O_1=\O_2$), 
or $\cP$ preserves $\cK$ (\ie, $g\O\in\K$ for every $g\in\cP$, $\O\in\K$), \footnote{This assumption is needed to obtain covariance of $\{\B\}$ on all the regions in $\K$, \cf footnote after equation (\ref{eq:extlocalalgebras}). Examples are $\widetilde \Mob$ acting transitively on bounded intervals in $\RR$, or $\tilde\cP^{\uparrow}_+$ preserving double cones in $\RR^{3+1}$.}.\\
Then an extension $\{\B\}$ of $\{\A\}$ constructed as in Theorem \ref{thm:discreteextnets} from a unital generalized net Q-system of intertwiners $(\theta, w, \{m_i\})$ is automatically covariant, provided that $\cP$ has an equivariant action on a tensor subcategory $\C\subset \DHR\{\A\}$ (\ie, $\C^{\cP} = \C$) which contains $\theta$ (Definition \ref{def:equivaraction}).
\end{theorem}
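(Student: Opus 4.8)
The plan is to produce a strongly continuous unitary representation $\hat U$ of $\cP$ on the Hilbert space $\Hilb_\varphi$ of the extension constructed in the proof of Theorem \ref{thm:discreteextnets}, implementing the geometric action $\B(\O)\mapsto\B(g\O)$ and fixing the vacuum vector $\Omega_\varphi$. Rather than writing $\hat U$ by hand, I would first build a $\cP$-action $\beta$ by $*$-automorphisms on the field net $\{\B\}$ extending the adjoint action $\alpha_g=\Ad_{U(g)}$ on $\{\A\}$, and then recover $\hat U$ from the GNS construction once $\beta$ is shown to preserve the vacuum state $\omega=\omega_0\circ E$.

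To define $\beta_g$ I would transport the generalized net Q-system of intertwiners: let $\beta_g$ act as $\alpha_g$ on observables and send the field operators attached to $\theta$ (localized in $\O_0$) to the fields attached to the transported endomorphism ${}^{g}\theta=\alpha_g\theta\alpha_{g^{-1}}$. Using the cocycle $z(g,\theta)$ as a charge transporter from $\theta$ to ${}^{g}\theta$, which by Definition \ref{def:equivaraction}$(iii),(vi)$ lies in $\A$ and relocalizes $\theta$ to $g\O_0$, one checks that $({}^{g}\theta,\alpha_g(w),\{\alpha_g(m_i)\})$ is again a unital generalized net Q-system of intertwiners in $\C$; here the hypothesis $\C^\cP=\C$ is essential, so that ${}^{g}\theta$ stays in $\C$. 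Since the construction of $\{\B\}$ in Theorem \ref{thm:discreteextnets} depends on the localization region and the charge transporters only up to unitary isomorphism, the extension built from the transported data localized in $g\O_0$ is canonically identified with $\{\B\}$, and this identification furnishes $\beta_g$ together with $\beta_g(\B(\O))=\B(g\O)$ (either for the relevant $g$ in the transitive case, or for all $\O$ in the $\K$-preserving case).

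The heart of the matter, and the step I expect to be the main obstacle, is verifying that $\beta_g$ is a well-defined $*$-homomorphism, i.e.\ that it respects all relations among observables and fields: both the intertwining relations $M_i a=\theta(a)M_i$ and the product expansions governed by the intertwiners $l_j^{ki}\in\Hom_{\DHR\{\A\}}(\theta,\theta^2)$ of equation (\ref{eq:pipoalgebra}) and Remark \ref{rmk:*algchargedintert}. This is exactly where the mere existence of covariance cocycles is insufficient. Naturality $(iv)$ of $z$, in the extended form $(iv)'$ of Lemma \ref{lem:tensornat}, ensures that $\beta_g$ sends an intertwiner $t$ to $\alpha_g(t)$ compatibly, so the field relations are preserved; tensoriality $(v)$, $(v)'$ relates $z(g,\theta^2)$ to $z(g,\theta)\,\theta(z(g,\theta))$ and thereby makes $\beta_g$ compatible with the composition $\theta^2$ entering the field products; and the cocycle identity $(i)$, via equation (\ref{eq:longcocycleid}), yields the group law $\beta_g\beta_h=\beta_{gh}$.

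Finally I would check vacuum invariance and continuity. Invariance $\omega\circ\beta_g=\omega$ follows from $\omega_0\circ\alpha_g=\omega_0$ together with the covariance of the conditional expectation $E\circ\beta_g=\alpha_g\circ E$, immediate from $E=w^*\gamma(\cdot)w$ and the transported data. By the GNS uniqueness theorem there are then unitaries $\hat U(g)$ with $\hat U(g)\Omega_\varphi=\Omega_\varphi$ and $\hat U(g)\pi_\varphi(b)\hat U(g)^*=\pi_\varphi(\beta_g(b))$; the group law for $\beta$ and uniqueness force $\hat U(g)\hat U(h)=\hat U(gh)$, while the geometric covariance of $\hat U$ is precisely the region-mapping property of $\beta$. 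Strong continuity of $g\mapsto\hat U(g)$ I would deduce from strong continuity of $U$ and of $z(\cdot,\rho)$ (property $(i)$) on the total set of vectors $\pi_\varphi(b)\Omega_\varphi$, using invariance of $\Omega_\varphi$. Positivity of the energy I would obtain from the fact that $\hat U$ restricts to $U$ on the vacuum subspace $\Hilb_{0,\N,\varphi}\cong\Hilb_0$, while on the charged components it is governed by the covariance implementations $U_{\rho_i}(g)=z(g,\rho_i)^*U(g)$ of Lemma \ref{lem:covarDHRendo}, whose positive-energy property is inherited from the positive-energy equivariance data on the finite-dimensional sectors $\rho_i$.
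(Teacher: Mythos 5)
Your proposal is correct in substance and isolates exactly the ingredients the paper uses --- naturality and tensoriality of the cocycles to preserve the relations among the charged fields, the cocycle identity for the group law, the decomposition $\theta=\oplus_i\rho_i$ together with $U_{\rho_i}(g)=z(g,\rho_i)^*U(g)$ for positivity of the energy, and $\Omega_\varphi=M_0^*w\Omega_0$ with $z(g,\id)=\oneop$ for vacuum invariance --- but it packages them differently. The paper does not pass through an abstract automorphism $\beta_g$ and GNS uniqueness: it writes the unitary down directly on the decomposed Hilbert space, $\hat U(g)\sum_i M_i^*\psi_i := \sum_i M_i^* z(g,\theta)^* U(g)\psi_i$, and then verifies that $\Ad_{\hat U(g)}$ acts as $\alpha_g$ on $\A(\O)$ (via Lemma \ref{lem:covarDHRendo}) and sends $M_i$ to $z(g,\theta)M_i$; that second computation, which runs through the expansion coefficients $l_j^{ki}$ of equation (\ref{eq:pipoalgebra}) and invokes properties $(iv)$ and $(v)$ of Definition \ref{def:equivaraction}, is precisely the ``well-definedness of $\beta_g$'' check you identify as the heart of the matter. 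What the direct formula buys is that $\hat U(g)$ is manifestly globally defined and unitary for every $g\in\cP$, with no appeal to GNS uniqueness and no need to first identify the extension built from the transported Q-system with $\{\B\}$ (an identification which, as you note, is only canonical once the specific charge transporter $z(g,\theta)$ is singled out).

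The one genuine soft spot in your write-up is the global definability of $\beta_g$. When $\cP$ does not preserve $\K$ --- the M\"obius group acting on bounded intervals of $\RR$, which is the transitive alternative in the hypotheses --- $\alpha_g$ is not an endomorphism of the quasilocal algebra $\A$, ${}^{g}\theta$ is only locally defined, and consequently $\beta_g$ cannot be a globally defined $*$-automorphism of the field net for general $g$: it only maps $\B(\O)$ to $\B(g\O)$ for $g\in\cU_\O$. As stated, ``$\omega\circ\beta_g=\omega$ plus GNS uniqueness'' therefore does not produce $\hat U(g)$ for all $g\in\cP$. This is repairable --- define and implement $\beta_g$ for $g$ in a symmetric neighborhood of $e$ and extend by the group law using connectedness, simple connectedness and the cocycle identity, in the spirit of the chain arguments of Lemmas \ref{lem:covarDHRendo} and \ref{lem:tensornat} --- but it must be addressed, and it is exactly the complication that the paper's globally defined formula for $\hat U(g)$ is designed to sidestep.
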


\begin{proof}
Let $(\theta, w, \{m_i\})$, $i\in I$ be a generalized net Q-system of intertwiners in $\DHR\{\A\}$ and construct the extension $\{\A\subset\B\}$ as in Theorem \ref{thm:discreteextnets}. Here $\O\in\K$ is a fixed localization region for $\theta$ and $\N = \A(\O)$. In the following we denote by $\Hilb := \Hilb_\varphi$ the Hilbert space of $\{\B\}$, we identify $\Hilb_0 = e_\N\Hilb$ and $a\in\A$, $M_i\in\B(\O)$ with their images under $\pi_\varphi$ in $\B(\Hilb)$. Thus 
$$\Hilb = \overline{\sum_i M_i^* \Hilb_0}$$
where every $\psi\in\Hilb$ can be written as $\psi = \sum_i M_i^* \psi_i$, with $\psi_i \in q_i \Hilb_0$.
Moreover, we have $M_i a = \theta(a) M_i$ for every $a\in\A$, $i\in I$. Having full control of the Hilbert space thanks to the Pimsner-Popa condition, we can set
$$\hat U(g)\psi := \sum_i M_i^* z(g,\theta)^* U(g) \psi_i $$
for every $g\in\cP$, $\psi\in\Hilb$, where $U$ implements the covariance of $\{\A\}$ on $\Hilb_0$ and $z(\cdot,\theta)$ is the covariance cocycle of $\theta$ given by equivariance. By definition of $\alpha_g$ and by the cocycle identity we have $z(g,\theta)^* U(g)z(h,\theta)^* U(h) = z(g,\theta)^* \alpha_g(z(h,\theta))^* U(g)U(h) = z(gh,\theta)^*U(gh)$, hence $\hat U$ is a representation of $\cP$ on $\Hilb$, which is strongly continuous and unitary as one can easily check.

In order to show that $\hat U$ implements covariance of $\{\B\}$ with respect to $\cP$, take first $a\in\A(\O)$, where $\O$ is as above, take $g\in\cU_\O$, see Definition \ref{def:covarnet}, $\psi\in\Hilb$, and compute
$$\hat U(g) a \hat U(g)^*\psi = \sum_i M_i^* z(g,\theta)^* U(g) \theta(a) U(g)^* z(g,\theta)\psi_i$$
$$= \sum_i M_i^* z(g,\theta)^* \alpha_g(\theta(a)) z(g,\theta)\psi_i = \sum_i M_i^* \theta(\alpha_g(a)) \psi_i = \alpha_g(a) \psi,$$
where the third equality follows from Lemma \ref{lem:covarDHRendo}. Take now $M_i\in\B(\O)$, $i\in I$, then
$$\hat U(g) M_i \hat U(g)^* \psi = \sum_{j,k} M_k^* z(g,\theta)^* U(g) l_j^{ki} U(g)^* z(g,\theta) \psi_j$$
where the coefficients $l_j^{ki}\in\N = \A(\O)$ are those given in equation (\ref{eq:pipoalgebra}). By naturality of cocycles, see the property $(iv)$ in Definition \ref{def:equivaraction}, and because $l_j^{ki} \in \Hom_{\DHR\{\A\}}(\theta,\theta^2)$, we have $\alpha_g(l_j^{ki}) \equiv U(g) l_j^{ki} U(g)^*= z(g,\theta^2) l_j^{ki} z(g,\theta)^*$. Moreover, by tensoriality of cocycles, see the property $(v)$ in Definition \ref{def:equivaraction}, we have that $z(g,\theta^2) = z(g,\theta) \theta(z(g,\theta))$, hence
$$= \sum_{j,k} M_k^* \theta(z(g,\theta)) l_j^{ki} \psi_j = z(g,\theta) M_i \psi.$$
The global localization property $(vi)$ in Definition \ref{def:equivaraction} implies that $z(g,\theta)$ is a unitary charge transporter for $\theta$ from $\O$ to $g\O\in\K$, hence $z(g,\theta) M_i \in\B(g\O)$ by definition ($\ref{eq:extlocalalgebras}$) of the local algebras, and we conclude
$$\hat U(g) \B(\O) \hat U(g)^* = \B(g\O).$$
Now, covariance for arbitrary regions $\tilde\O\in\K$ and $g\in\cU_{\tilde\O}$ follows either by transitivity of $\cP$ on $\cK$ (trivially), or because $\cP$ preserves $\cK$, in which case $\cU_{\tilde\O} = \cP$ and we can meaningfully write $\Ad_{U_\theta(g)} \theta(u) = \theta(\alpha_g(u))$, $u\in\Hom_{\DHR\{\A\}}(\theta,\tilde\theta)$ and $\theta(z(g,\theta))$ in $\A$, \cf Lemma \ref{lem:covarDHRendo}, \ref{lem:tensornat}.

Positivity of the energy-momentum spectrum holds because $U_\theta$ has positive spectrum, indeed $\theta$ is a (possibly infinite) direct sum of covariant endomorphisms fulfilling the spectrum condition, see \cite[\Thm 5.2]{DHR74}. 
The $\cP$-invariance of the vacuum vector $\Omega := \Omega_\varphi$ follows from $\Omega = M_0^*w\Omega_0$, indeed $\hat U(g) \Omega = M_0^* z(g,\theta)^* U(g) w \Omega_0 = M_0^* z(g,\theta)^* \alpha_g(w) \Omega_0 = \Omega$ by naturality $(iv)$ of the action of $g$ on $w\in\Hom_{\DHR\{\A\}}(\id,\theta)$ and because $z(g,\id) = \oneop$. Thus the extended net $\{\B\}$ is covariant as in Definition \ref{def:covarnet}.
\end{proof}

\begin{remark}
If the quasilocal algebra $\A$ together with the elements of the Pimsner-Popa basis $M_i$, $i\in I$, form a *-algebra of charged intertwiners in the sense of Remark \ref{rmk:*algchargedintert}, one can try to define covariance of the extension $\B$ (at the *-algebra level) by postulating $\hat\alpha_g := \alpha_g$ on $\A$ and $\hat\alpha_g(M_i) := z(g,\theta) M_i$. In this case as well, naturality and tensoriality of the cocycle $z$ guarantee that $\hat\alpha_g$ is *-multiplicative.
\end{remark}

Next, we show how \emph{equivariance} holds, in the sense of Definition \ref{def:equivaraction}, for the action of some typical spacetime symmetry groups on the DHR category in different dimensions. More precisely, we consider here the subcategory $\C=\DHR_{d}\{\A\}$ of $\DHR\{\A\}$ (Definition \ref{def:DHRf-d}) which is relevant for finite index or infinite index discrete extensions treated in Theorem \ref{thm:discreteextnets}.

\begin{example}\label{ex:moebcovar} (\textbf{M\"obius covariant nets in 1D}). 
Let $\cP = \widetilde\Mob$ the universal covering of the M\"obius group and $\cK =$ \{open proper bounded intervals $I\subset\RR$\}. Consider a local $\cP$-covariant net $\{\A\}$ over $\cK$ as in Definition \ref{def:localnet}, \ref{def:covarnet}, fulfilling Haag duality on $\RR$, namely $\A(I')' = \A(I)$, $I\in\cK$, $I' = \RR\smallsetminus \bar I$. By locality and $\cP$-covariance we have $U(\Rot_{2\pi}) = \oneop$ \cite[\Thm 1.1]{GuLo96}, hence $\{\A\}$ is automatically $\Mob$-covariant and we can extend it to a net $\{\tilde\A\}$ over the open proper intervals of $\Sone$, see \cite[\Prop 16, \Cor 17]{CKL08}. 
The extension coincides with the one given by $\tilde\A(I) := \A(\Sone \smallsetminus \bar I)'$ if $I\subset \Sone$ contains the point at infinity in its closure and $\tilde\A(I) := \A(I)$ otherwise, see \cite[\Lem 49]{KLM01}. 
Moreover, every endomorphism $\rho$ in $\DHR\{\A\}$ extends to a representation $\{\pi_I, I\subset\Sone\}$ of $\{\tilde\A\}$ on $\Hilb_0$ such that 
$$\pi_I = \rho_{\restriction\A(I)}$$ 
if $I$ is identified to a bounded interval of $\RR$ via the Cayley map, see \cite[\Prop 50]{KLM01}. The Bisognano-Wichmann property \cite[\Prop 1.1]{GuLo96} and strong additivity \cite[\Lem 1.3]{GLW98} ensure that finite-dimensional DHR endomorphisms are covariant (with positive energy) with respect to $\cP$, see \cite[\Thm 5.2]{GuLo92}.
For every $\rho$ in $\DHR_{f}\{\A\}$, following \cite[\Prop 8.2]{GuLo92}, we can define by equation (\ref{eq:longcocycleid}) the cocycle $z(\cdot,\rho)$. The definition is well posed (in $\B(\Hilb_0)$) by \cite[\Eq (8.5)]{GuLo92} because any two chains in $\cP$ are homotopic by simple connectedness of $\cP$, see \cite[\Def 45, \Lem 46]{BePl01} for more details. Thus the properties $(i)$, $(ii)$, $(iii)$ of Definition \ref{def:equivaraction} hold, see also $(ii)'$ of Lemma \ref{lem:covarDHRendo}. The property $(vi)$ holds by additivity \cite[\Sec 3]{FrJoe96} while $(iv)$ and $(v)$ can be derived from the results of \cite{Lon97}. Indeed, let $\rho$ and $\sigma$ in $\DHR\{\A\}$ and choose a common localization interval $I\in\cK$. Let $I_1\in\cK$ be such that $\bar I \subset I_1$ and $I_2, I_3\in\cK$ such that $\{I_i, i=1,2,3\}$ is a partition of $\Sone$ obtained by removing three distinct points and counterclockwise ordered. Let $\cV\subset\cU_I\cap\cU_I^{-1}$ be an arbitrarily small symmetric neighborhood of $e$ in $\cP$, whose elements $g$ can be written as products of dilations $\Lambda_{I_i}$ associated to $I_i$, $i=1,2,3$ such that in addition $\Lambda^{j}_{I_{i_j}}$ and $\Lambda^{j}_{I_{i_j}}\cdots\Lambda^{n}_{I_{i_n}}$ map $I$ inside $I_1$ for every $j=1,\ldots,n$ and $g = \Lambda^{1}_{I_{i_1}}\cdots\Lambda^{n}_{I_{i_n}}$. Thus at each step we can consider $I$ as a subinterval of either $I_1$, or $\Sone \smallsetminus \bar I_2$, or $\Sone \smallsetminus \bar I_3$. Observe that the dilations with respect to any such partition of $\Sone$ into three intervals generate $\cP$, see \cite[\Lem 1.1]{GLW98}. Now, with this choice of $\cV$, \cf \cite[\Lem 2.2]{Lon97}, for every $g\in\cV$ we have 
$$\alpha_g(t) = z(g,\sigma) t z(g,\rho)^*$$
if $t\in\Hom_{\DHR\{\A\}}(\rho,\sigma)$, and  
$$z(g,\rho\sigma) = z(g,\rho) \rho(z(g,\sigma)).$$
Building suitable $\cV$-chains in $\cP$ and reasoning as in the proof of Lemma \ref{lem:tensornat}, one can show the properties $(iv)$ and $(v)$\footnote{Tensoriality also follows by observing that $\cP=\widetilde\Mob$ is perfect, see, \eg, \cite[\App A]{Lon97}, hence the unitary representation $U_{\rho\sigma}$ which implements covariance of $\rho\sigma$ in $\DHR_f\{\A\}$ is unique.} in their global formulation $(iv)'$ and $(v)'$ of Lemma \ref{lem:tensornat}.

Now, if $\rho$ is in $\DHR_d\{\A\}$ let $\{w_i\}$ be a (possibly infinite) Cuntz family of isometries in $\A(I)$, for $I\in\cK$ big enough, such that $w_i w_i^*\in\rho(\A)'\cap\A(I)$ are mutually orthogonal projections, $\sum_i w_i w_i^* = \oneop$ and $\Ad_{w_i^*} \rho =: \rho_i$ are irreducible DHR endomorphisms of finite-dimension. For every $g\in\cP$  
$$z(g,\rho) := \sum_i \alpha_g(w_i) z(g,\rho_i)w_i^*$$
converges in the strong operator topology and extends the definition given in $\DHR_f\{\A\}$ by \cite[\Prop 1.3, \Eq (1.13)]{Lon97}. Let $\C:=\DHR_d\{\A\}$, the unitaries $z(g,\rho)$, $g\in\cP$, $\rho$ in $\C$ form again a cocycle map, as one can check directly on each direct summand of $\rho = \oplus_i \rho_i$, hence the action of $\cP$ on $\C$ is equivariant in the sense of Definition \ref{def:equivaraction}.
\end{example}

\begin{example} (\textbf{Poincar\'e covariant nets in 3+1D}). 
Let $\cP = \tilde\cP^{\uparrow}_+$ the universal covering of the Poincar\'e group and $\cK =$ \{double cones $\O\subset \RR^{3+1}$\}. Consider a local $\cP$-covariant net $\{\A\}$ over $\cK$ as in Definition \ref{def:localnet}, \ref{def:covarnet}, fulfilling Haag duality on $\RR^{3+1}$. Assume that $\{\A\}$ fulfills in addition the Bisognano-Wichmann property on wedges, see \cite{BiWi75}, and that local intertwiners between finite-dimensional DHR endomorphisms are global intertwiners, \cf \cite[\Thm 4.3]{Rob74}, \cite[\Cor 6.2]{DMV04}. Due to the fact that Lorentz boosts with respect to different wedges generate $\cP = \tilde\cP^{\uparrow}_+$, we can make again use of the results of \cite{GuLo92}, \cite{Lon97}, in a different geometrical situation, to draw analogous conclusions. Namely, the action of $\cP$ of $\C:=\DHR_d\{\A\}$, which in this case is globally defined, see Remark \ref{rmk:globalaction}, is again equivariant in the sense of Definition \ref{def:equivaraction}.
\end{example}

\section{Braided product of nets}\label{sec:brproductnets}

In this section we apply the braided product construction to nets of von Neumann algebras and show that it enjoys some remarkable properties, in analogy to the finite index case, which allows one to extract boundary quantum field theories as in \cite{BKLR16}. Such field theories with transparent boundaries will be discussed in the next section.

Denote by $\{\A\stackrel{E^{L}}{\subset}\B^{L}\}$, $\{\A\stackrel{E^{R}}{\subset}\B^{R}\}$ two discrete relatively local extensions of the same local net $\{\A\}$ (Definition \ref{def:inclnets}) constructed from unital generalized net Q-systems of intertwiners $(\theta^{L},w^{L},\{m^{L}_{i}\})$, $(\theta^{R},w^{R},\{m^{R}_{j}\})$ in $\DHR\{\A\}$ as in Theorem \ref{thm:discreteextnets}. By Proposition \ref{prop:brprodisQsys} and again Theorem \ref{thm:discreteextnets} we know that there is a \emph{braided product extension} $\{\A\subset\B^L\times^\pm_\eps\B^R\}$ such that
$$\begin{array}{ccccc}
&&\B^L&&\\
&\rotatebox[origin=c]{35}{$\subset$}&&\rotatebox[origin=c]{-35}{$\subset$}&\\ 
\A&&&&\hspace{-2mm}\B^L\times^\pm_\eps\B^R\\
&\rotatebox[origin=c]{-35}{$\subset$}&&\rotatebox[origin=c]{35}{$\subset$}&\\
&&\B^R&&
\end{array}$$
where $\eps$ is the DHR braiding. Of course we have the analogous of Proposition \ref{prop:brprodembeddings}, which we rewrite below to establish notation.

We prefer to think of the net $\{\B^L\times^\pm_\eps\B^R\}$ as the one constructed in Section \ref{sec:altext}. Let $\{(\mathcal{B}^{L}\times^{\pm}_{\eps}\mathcal{B}^{R})_{\tilde{\mathcal{O}}}\}$ be the inductive family of nets indexed by $\tilde\O\in\K$ (see Section \ref{sec:altext}), and let $\Gamma_{\tilde{\mathcal{O}}}$ be a unitary that implements $(\theta^L\theta^R)_{\tilde\O} := \theta_{\tilde{\mathcal{O}}}^{L}\theta_{\tilde{\mathcal{O}}}^{R}$ on $\A(\tilde{\mathcal{O}})$, namely
$$\theta_{\tilde{\mathcal{O}}}^{L}\theta_{\tilde{\mathcal{O}}}^{R}(x) = \Ad_{u^{L}_{\tilde{\mathcal{O}}}\theta^{L}(u^{R}_{\tilde{\mathcal{O}}})} \theta^{L}\theta^{R}(x) = \Ad_{\Gamma_{\tilde{\mathcal{O}}}}(x),\quad x\in\A(\tilde{\mathcal{O}}).$$

\begin{proposition}\label{prop:netsbrprodembeddings}
The maps
$$\jmath^{L,\tilde{\mathcal{O}}}:\mathcal{B}^{L}(\tilde{\mathcal{O}})\rightarrow (\mathcal{B}^{L}\times^{\pm}_{\eps}\mathcal{B}^{R})_{\tilde{\mathcal{O}}}(\tilde{\mathcal{O}}) \,, \quad\jmath^{L,\tilde{\mathcal{O}}} := \Ad_{\Gamma_{\tilde{\mathcal{O}}}^{*}} \circ \Ad_{(\eps^{\pm}_{{\theta^{L}_{\tilde{\mathcal{O}}},\theta^{R}_{\tilde{\mathcal{O}}}}})^*}\circ\,\theta^{R}_{\tilde{\mathcal{O}}}\circ\gamma^{L}_{\tilde{\mathcal{O}}}$$
$$\jmath^{R,\tilde{\mathcal{O}}}:\mathcal{B}^{R}(\tilde{\mathcal{O}})\rightarrow (\mathcal{B}^{L}\times^{\pm}_{\eps}\mathcal{B}^{R})_{\tilde{\mathcal{O}}}(\tilde{\mathcal{O}}) \,,\quad \jmath^{R,\tilde{\mathcal{O}}} := \Ad_{\Gamma_{\tilde{\mathcal{O}}}^{*}}\circ\,\theta^{L}_{\tilde{\mathcal{O}}}\circ\gamma^{R}_{\tilde{\mathcal{O}}}$$
lift to embeddings \footnote{The same is true in the representation employed in the first proof of Theorem \ref{thm:discreteextnets}, but it is more lengthy to check.} of $\{\mathcal{B}^{L}\}$ and $\{\mathcal{B}^{R}\}$ into the braided product net $\{\mathcal{B}^{L}\times^\pm_{\eps}\mathcal{B}^{R}\}$
$$\jmath^{L}:\mathcal{B}^{L}\rightarrow \mathcal{B}^{L}\times^{\pm}_{\eps}\mathcal{B}^{R}$$
$$\jmath^{R}:\mathcal{B}^{R}\rightarrow \mathcal{B}^{L}\times^{\pm}_{\eps}\mathcal{B}^{R}.$$
\end{proposition}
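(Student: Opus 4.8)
The plan is to reduce the statement to its von Neumann algebraic counterpart, Proposition \ref{prop:brprodembeddings}, applied region by region, and then to verify that the resulting local maps are compatible with the inductive structure of Section \ref{sec:altext}, so that they glue to morphisms of nets.

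First I would fix $\tilde{\mathcal{O}}\in\K$ and observe that, by the construction of Section \ref{sec:altext}, the local algebra $(\mathcal{B}^{L}\times^{\pm}_{\eps}\mathcal{B}^{R})_{\tilde{\mathcal{O}}}(\tilde{\mathcal{O}})$ is precisely the braided product $\M^{L}\times^{\pm}_{\eps}\M^{R}$ of the two von Neumann algebras $\M^{L}:=\mathcal{B}^{L}_{\tilde{\mathcal{O}}}(\tilde{\mathcal{O}})$ and $\M^{R}:=\mathcal{B}^{R}_{\tilde{\mathcal{O}}}(\tilde{\mathcal{O}})$ over $\N:=\A(\tilde{\mathcal{O}})$, associated to the localized generalized Q-systems of intertwiners $(\theta^{L}_{\tilde{\mathcal{O}}},w^{L}_{\tilde{\mathcal{O}}},\{m^{L}_{i,\tilde{\mathcal{O}}}\})$ and $(\theta^{R}_{\tilde{\mathcal{O}}},w^{R}_{\tilde{\mathcal{O}}},\{m^{R}_{j,\tilde{\mathcal{O}}}\})$ in $\End(\N)$, see Remark \ref{rmk:netQsysislocalQsys}. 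Since the net Q-systems are \emph{unital} (Definition \ref{def:unitalqsys}) and unitality is preserved under localization by $\Ad_{u_{\tilde{\mathcal{O}}}}$, the restricted Q-systems satisfy in particular $\theta^{L}_{\tilde{\mathcal{O}}}(w^{L*}_{\tilde{\mathcal{O}}})m^{L}_{0,\tilde{\mathcal{O}}}=\oneop$ and its $R$-analogue, so the hypotheses of Proposition \ref{prop:brprodembeddings} hold. The canonical endomorphism $\gamma^{LR}$ dual to $\theta^{L}_{\tilde{\mathcal{O}}}\theta^{R}_{\tilde{\mathcal{O}}}$ is implemented on the braided product algebra by $\Gamma_{\tilde{\mathcal{O}}}$, so that $(\gamma^{LR})^{-1}$ is realized by $\Ad_{\Gamma_{\tilde{\mathcal{O}}}^{*}}$; with this identification $\jmath^{L,\tilde{\mathcal{O}}}$ and $\jmath^{R,\tilde{\mathcal{O}}}$ coincide with the maps $\jmath^{L}$, $\jmath^{R}$ of Proposition \ref{prop:brprodembeddings} and are therefore (normal, injective, unital $*$-homomorphisms, i.e.) embeddings. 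Tracking the definitions of the local algebras one also checks that $\jmath^{L,\tilde{\mathcal{O}}}$ sends $\mathcal{B}^{L}(\mathcal{O})$ into $(\mathcal{B}^{L}\times^{\pm}_{\eps}\mathcal{B}^{R})_{\tilde{\mathcal{O}}}(\mathcal{O})$ for every $\mathcal{O}\subset\tilde{\mathcal{O}}$, and likewise for $R$, so that each is a morphism of the nets indexed by $\tilde{\mathcal{O}}$.

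Second, I would establish coherence with the two inductive systems. Writing $\iota^{L}_{\tilde{\mathcal{O}}_{1},\tilde{\mathcal{O}}_{2}}$ and $\iota^{LR}_{\tilde{\mathcal{O}}_{1},\tilde{\mathcal{O}}_{2}}$ for the connecting embeddings of $\{\mathcal{B}^{L}_{\tilde{\mathcal{O}}}\}$ and of $\{(\mathcal{B}^{L}\times^{\pm}_{\eps}\mathcal{B}^{R})_{\tilde{\mathcal{O}}}\}$, given by the adjoint-action formula of Section \ref{sec:altext}, the goal is the identity
$$\iota^{LR}_{\tilde{\mathcal{O}}_{1},\tilde{\mathcal{O}}_{2}}\circ \jmath^{L,\tilde{\mathcal{O}}_{1}} = \jmath^{L,\tilde{\mathcal{O}}_{2}}\circ \iota^{L}_{\tilde{\mathcal{O}}_{1},\tilde{\mathcal{O}}_{2}}, \qquad \tilde{\mathcal{O}}_{1}\subset\tilde{\mathcal{O}}_{2},$$
together with its $R$-analogue. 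This is a direct computation: one expands both sides into adjoint actions, substitutes $u^{LR}_{\tilde{\mathcal{O}}}=u^{L}_{\tilde{\mathcal{O}}}\theta^{L}(u^{R}_{\tilde{\mathcal{O}}})$ together with the relations expressing $\theta^{L}_{\tilde{\mathcal{O}}}\theta^{R}_{\tilde{\mathcal{O}}}$, $\theta^{L}_{\tilde{\mathcal{O}}}$, $\theta^{R}_{\tilde{\mathcal{O}}}$ as inner automorphisms via $\Gamma_{\tilde{\mathcal{O}}}$, $\Gamma^{L}_{\tilde{\mathcal{O}}}$, $\Gamma^{R}_{\tilde{\mathcal{O}}}$ and the charge transporters, and then uses naturality of the DHR braiding to commute the factor $(\eps^{\pm}_{\theta^{L},\theta^{R}})^{*}$ past the charge transporters. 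The independence of the braiding from the chosen localization region (for appropriately localized endomorphisms) is what makes the $\eps^{\pm}$ contribution thread consistently across regions.

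Finally, a compatible family of injective normal $*$-homomorphisms between two inductive systems induces an injective morphism of the inductive limits; by Step 1 each $\jmath^{L,\tilde{\mathcal{O}}}$ is injective and normal, by Step 2 they commute with the connecting maps, and by the isotony observed in Step 1 they respect the assignment $\mathcal{O}\mapsto$ local algebra. Hence they lift to an embedding $\jmath^{L}:\{\mathcal{B}^{L}\}\to\{\mathcal{B}^{L}\times^{\pm}_{\eps}\mathcal{B}^{R}\}$ of nets, and symmetrically for $\jmath^{R}$, passing to the limits $\varinjlim \mathcal{B}^{L}_{\tilde{\mathcal{O}}}(\tilde{\mathcal{O}})$ and $\varinjlim (\mathcal{B}^{L}\times^{\pm}_{\eps}\mathcal{B}^{R})_{\tilde{\mathcal{O}}}(\tilde{\mathcal{O}})$. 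The main obstacle is the coherence computation of Step 2: although each step is elementary, the bookkeeping is delicate, since one must simultaneously track the three families of charge transporters $u^{L}_{\tilde{\mathcal{O}}},u^{R}_{\tilde{\mathcal{O}}},u^{LR}_{\tilde{\mathcal{O}}}$, the three implementing unitaries $\Gamma^{L}_{\tilde{\mathcal{O}}},\Gamma^{R}_{\tilde{\mathcal{O}}},\Gamma_{\tilde{\mathcal{O}}}$, and the braiding operators $\eps^{\pm}_{\theta^{L},\theta^{R}}$, invoking naturality and tensoriality of the braiding at exactly the right places so that the asymmetric $\eps^{\pm}$ factor appearing in $\jmath^{L}$ (but not in $\jmath^{R}$) is carried correctly through the inductive maps.
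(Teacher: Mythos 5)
Your proposal is correct and follows essentially the same route as the paper's (very terse) proof: the paper likewise reduces everything to checking that $\jmath^{L/R,\tilde{\mathcal{O}}}$ maps $\mathcal{B}^{L/R}(\mathcal{O})$ into $(\mathcal{B}^{L}\times^{\pm}_{\eps}\mathcal{B}^{R})_{\tilde{\mathcal{O}}}(\mathcal{O})$ for $\mathcal{O}\subset\tilde{\mathcal{O}}$ and that the local maps intertwine the connecting embeddings of the two inductive systems, declaring both to be elementary calculations. Your additional step of identifying each local algebra with the von Neumann algebraic braided product and invoking Proposition \ref{prop:brprodembeddings} (whose unitality hypothesis is indeed available here) is a reasonable way to make the ``embedding'' part explicit, but it is not a different argument.
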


\begin{proof}
It is enough to show that $\jmath^{L/ R,\tilde{\mathcal{O}}}(\mathcal{B}^{L/ R}(\O))\subset (\mathcal{B}^{L}\times^{\pm}_{\eps}\mathcal{B}^{R})_{\tilde{\mathcal{O}}}(\O)$ for $\O\subset\tilde{\mathcal{O}}$ and $\iota^{\tilde{\mathcal{O}}_{1},\tilde{\mathcal{O}}_{2}}\circ\jmath^{L/ R, \tilde{\mathcal{O}}_{1}} = (\jmath^{L/ R, \tilde{\mathcal{O}}_{2}})_{\restriction \mathcal{B}^{L/ R}(\tilde{\mathcal{O}}_{1})}$
for $\tilde{\mathcal{O}}_{1}\subset \tilde{\mathcal{O}}_{2}$, but these follow from elementary calculations.
\end{proof}

\begin{proposition}\label{prop:brexpectation}
Let $E^{LR}$ denote the distinguished conditional expectation from $\{\B^L\times_\eps^\pm \B^R\}$ to $\{\A\}$ obtained as in Proposition \ref{prop:altexpectation}, then
$$E^{LR}(\jmath^{L}(b_{L})\jmath^{R}(b_{R}))=E^{L}(b_{L})E^{R}(b_{R}), \quad b_{L}\in \B^{L},\quad b_{R}\in \B^{R}.$$
\end{proposition}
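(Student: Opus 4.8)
We must show that the distinguished conditional expectation $E^{LR}$ on the braided product net factorizes on products of the embedded subalgebras:
$$E^{LR}(\jmath^{L}(b_{L})\jmath^{R}(b_{R}))=E^{L}(b_{L})E^{R}(b_{R})$$
for $b_L \in \mathcal{B}^L$, $b_R \in \mathcal{B}^R$.

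**Strategy.** Everything is local — $E^{LR}$ respects inclusions by Proposition \ref{prop:altexpectation}, and $\jmath^L, \jmath^R$ are inductive limits of the local maps $\jmath^{L,\tilde{\mathcal{O}}}, \jmath^{R,\tilde{\mathcal{O}}}$ from Proposition \ref{prop:netsbrprodembeddings}. So I should fix a region $\tilde{\mathcal{O}} \in \mathcal{K}$ with $b_L \in \mathcal{B}^L(\tilde{\mathcal{O}})$, $b_R \in \mathcal{B}^R(\tilde{\mathcal{O}})$ and prove the identity at the level of the $\tilde{\mathcal{O}}$-net, where $E^{LR}$ is the concrete expectation $E_{\tilde{\mathcal{O}}\tilde{\mathcal{O}}} = w^{*}u_{\tilde{\mathcal{O}}}^{*}\Gamma_{\tilde{\mathcal{O}}}\cdot \Gamma_{\tilde{\mathcal{O}}}^{*}u_{\tilde{\mathcal{O}}}w$ (with $w = w^A w^B = w^L w^R$, $u_{\tilde{\mathcal{O}}} = u^L_{\tilde{\mathcal{O}}}\theta^L(u^R_{\tilde{\mathcal{O}}})$) and where $\jmath^{L,\tilde{\mathcal{O}}}$, $\jmath^{R,\tilde{\mathcal{O}}}$ are the explicit adjoint-action maps.

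**Plan of computation.** The computation is tidiest in the algebraic picture underlying Proposition \ref{prop:brprodembeddings}. I would translate the factorization into a statement about the Q-system data by conjugating: applying the canonical endomorphism $\gamma^{AB}$ (or working through $\Gamma_{\tilde{\mathcal{O}}}^*$) turns $E^{LR}$ into the form ${E^{AB}}' = \theta^A\theta^B(w^{B*}w^{A*}\cdot w^A w^B)$ from Corollary \ref{cor:brexp}, and turns $\jmath^L(b_L)\jmath^R(b_R)$ into the product $m$-expression. Concretely, using that $E = w^*\gamma(\cdot)w$ and the push-through formulas from the proof of Proposition \ref{prop:brprodembeddings} (there it was shown $\jmath^A(M_i^A)\jmath^B(M_j^B) = (\gamma^{AB})^{-1}(m_i^A \times_\eps^\pm m_j^B)$), the key point reduces to the \emph{naturality of the braiding}: the crossing $\eps^\pm_{\theta^L,\theta^R}$ that appears in $\jmath^L$ slides harmlessly past the $w^L, w^R$ isometries because $\eps^\pm_{\theta,\id} = \oneop$ and because $w^L \in \Hom(\id,\theta^L)$, $w^R \in \Hom(\id,\theta^R)$. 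After the braiding is trivialized against the units, the expression splits as a tensor product and $E^{LR}$ separates into $E^L$ acting on the $L$-factor and $E^R$ on the $R$-factor. It suffices to verify this on the generators, i.e. for $b_L = M^L_i$ and $b_R = M^R_j$ (and their products with elements of $\mathcal{A}$), since these generate $\mathcal{B}^L$, $\mathcal{B}^R$ as von Neumann algebras and $E^{LR}$, $E^L$, $E^R$ are normal; the general case follows by normality and the module property of the expectations over $\mathcal{A}$.

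**The main obstacle.** The genuine work is confirming that the braiding unitary $\eps^\pm_{\theta^L,\theta^R}$ truly disappears upon composing with the conditional expectation and does not leave a residual charge-transport factor. This is exactly where naturality and the convention $\eps^\pm_{\id,\theta} = \eps^\pm_{\theta,\id} = \oneop$ must be invoked in the right order, as in the $p^{AB,\pm}_{i,j} = p^A_i\,\theta^A(p^B_j)$ computation in the proof of Proposition \ref{prop:brprodisQsys}. Once that factorization of the braiding against the units is in hand, the remaining manipulations — pushing $\theta^A$ through, using $w^{A*}w^A = \oneop$, and recognizing $w^{L*}\theta^L(\cdots)w^L = E^L$ — are routine. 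I would organize the proof so that the braiding-annihilation step is isolated and done once, and then remark that the factorized form is immediate; I expect no factoriality or irreducibility input is needed, consistent with the flexibility emphasized throughout the paper.
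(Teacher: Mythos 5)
Your proposal is correct and follows essentially the same route as the paper: write $E^{LR}(\jmath^L(b_L)\jmath^R(b_R))$ explicitly in terms of $\gamma^{L/R}$ and $w^{L/R}$, use naturality of the braiding against the charge-zero isometries (i.e.\ $\eps^{\pm}_{\theta,\id}=\oneop$ and $w^{L/R}\in\Hom(\id,\theta^{L/R})$) to absorb $\eps^{\pm}_{\theta^L,\theta^R}$, and then recognize the nested expectations $E^{L}(b_{L}E^{R}(b_{R}))=E^{L}(b_{L})E^{R}(b_{R})$ via the $\A$-bimodule property. The only difference is that the paper's computation is carried out directly for arbitrary $b_L,b_R$ (since the braiding manipulations touch only the $w$'s, never $\gamma^L(b_L)$ or $\gamma^R(b_R)$), so your reduction to the generators $M^L_i,M^R_j$ and the subsequent normality/density argument are redundant, though harmless.
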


\begin{proof}
Using $\theta^{L}({w^{R}}^{*})\eps^{\pm *}_{\theta^{L},\theta^{R}}={w^{R}}^{*}$ and ${w^{R}}^{*}\eps^{\pm}_{\theta^{L},\theta^{R}}=\theta^{R}({w^{R}}^{*})$ we get
$$\theta^{L}\theta^{R}({w^{L}}^{*}\theta^{R}({w^{R}}^{*})\eps^{\pm *}_{\theta^{L},\theta^{R}}\gamma^{R}\gamma^{L}(b_{L})\eps^{\pm}_{\theta^{L},\theta^{R}}\gamma^{L}\gamma^{R}(b_{R})\theta^{L}(w^{R})w^{L})$$
$$=\theta^{L}\theta^{R}({w^{L}}^{*}\gamma^{R}\gamma^{L}(b_{L})\theta^{L}({w^{R}}^{*}\gamma^{R}(b_{R})w^{R})w^{R})$$
$$=\theta^{L}\theta^{R}(E^{L}(b_{L}E^{R}(b_{R})))=\theta^{L}\theta^{R}(E^{L}(b_{L})E^{R}(b_{R}))$$
from which the proposition follows.
\end{proof}

For the rest of the section, we assume that $\{\mathcal{A}\subset\mathcal{B}^{L}\}$, $\{\mathcal{A}\subset\mathcal{B}^{R}\}$ are as in Proposition \ref{prop:netqsys}, so that the generalized Q-systems $(\theta^{L},w^{L},\{m^{L}_{i}\})$, $(\theta^{R},w^{R},\{m^{R}_{j}\})$ are induced by Pimsner-Popa bases of global charged fields $\{\psi^{L}_{\rho_i}\}\subset \mathcal{B}^{L}(\mathcal{O}), \{\psi^{R}_{\sigma_j}\}\subset \mathcal{B}^{R}(\mathcal{O})$, where $\rho_i\prec\theta^L$, $\sigma_j\prec\theta^R$ are respectively irreducible DHR subendomorphisms with finite dimension, localized in $\O\in\K$.

\begin{proposition}\label{prop:uniquenessbrexp}
$$\{\jmath^{L}(\{\psi^{L}_{\rho_i}\})\jmath^{R}(\{\psi^{R}_{\sigma_j}\})\}$$
is a Pimsner-Popa basis for $\iota(\mathcal{A}(\mathcal{O}))\stackrel{E^{LR}}{\subset} (\mathcal{B}^{L}\times^{\pm}_{\eps}\mathcal{B}^{R})(\mathcal{O})$ and it gives a unique Pimsner-Popa expansion (Proposition \ref{prop:pipoexpansion}).
\end{proposition}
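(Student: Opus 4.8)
The plan is to verify conditions $(i)'$ and $(ii)'$ of Lemma \ref{lem:pi-poequiv} for the family $\Psi_{ij} := \jmath^{L}(\psi^{L}_{\rho_i})\jmath^{R}(\psi^{R}_{\sigma_j})$ with respect to $E^{LR}$, and then to read off uniqueness of the expansion from Proposition \ref{prop:pipoexpansion}. I would organise everything around the intermediate inclusion
\[ \iota(\A(\O))\subset\jmath^{L}(\B^{L}(\O))\subset(\B^{L}\times_{\eps}^{\pm}\B^{R})(\O), \]
viewed as a tower of extensions. The inner inclusion is just $\A(\O)\subset\B^{L}(\O)$ transported by the isomorphism $\jmath^{L}$, so $\{\jmath^{L}(\psi^{L}_{\rho_i})\}$ is a Pimsner-Popa basis for it with respect to $\hat{E}^{L} := \jmath^{L}\circ E^{L}\circ(\jmath^{L})^{-1}$; moreover, by the normalisation of the charged fields built in the proof of Proposition \ref{prop:discreteiffintert} one has $E^{L}(\psi^{L}_{\rho_i}(\psi^{L}_{\rho_k})^{*}) = \delta_{ik}\oneop$ and likewise $E^{R}(\psi^{R}_{\sigma_j}(\psi^{R}_{\sigma_l})^{*}) = \delta_{jl}\oneop$, which is what will force all diagonal coefficients $q_{ij}$ to equal the identity.

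For the outer inclusion $\jmath^{L}(\B^{L}(\O))\subset(\B^{L}\times_{\eps}^{\pm}\B^{R})(\O)$ I would use that it carries a normal faithful conditional expectation $F^{R}$ (the partial expectation applying $E^{R}$ to the braided copy of $\B^{R}$), determined on the generators of (\ref{eq:genbrprod}) by $F^{R}(\jmath^{L}(b_{L})\jmath^{R}(b_{R})) = \jmath^{L}\big(b_{L}\,\iota^{L}(E^{R}(b_{R}))\big)$; comparing with Proposition \ref{prop:brexpectation} one checks $E^{LR} = \hat{E}^{L}\circ F^{R}$. Granting this, condition $(i)'$ is a short computation: writing $\Psi_{ij}\Psi_{kl}^{*} = \jmath^{L}(\psi^{L}_{\rho_i})\,\jmath^{R}\big(\psi^{R}_{\sigma_j}(\psi^{R}_{\sigma_l})^{*}\big)\,\jmath^{L}((\psi^{L}_{\rho_k})^{*})$ and using that $F^{R}$ is a $\jmath^{L}(\B^{L}(\O))$-bimodule map together with $F^{R}(\jmath^{R}(\psi^{R}_{\sigma_j}(\psi^{R}_{\sigma_l})^{*})) = \delta_{jl}\oneop$, one finds $F^{R}(\Psi_{ij}\Psi_{kl}^{*}) = \delta_{jl}\,\jmath^{L}(\psi^{L}_{\rho_i}(\psi^{L}_{\rho_k})^{*})$, and applying $\hat{E}^{L}$ then yields $E^{LR}(\Psi_{ij}\Psi_{kl}^{*}) = \delta_{ik}\delta_{jl}\oneop$. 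In particular $q_{ij} = E^{LR}(\Psi_{ij}\Psi_{ij}^{*}) = \oneop$ for all $i,j$.

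It remains to establish the completeness condition $(ii)'$, and this is the step I expect to be the main obstacle. The cleanest route is the composition lemma for Pimsner-Popa bases applied to the tower above: $\{\jmath^{L}(\psi^{L}_{\rho_i})\}$ is a basis for the inner inclusion and $\{\jmath^{R}(\psi^{R}_{\sigma_j})\}$ is a basis for the outer inclusion relative to $F^{R}$, whence their products $\{\Psi_{ij}\}$ form a basis for $\iota(\A(\O))\subset(\B^{L}\times_{\eps}^{\pm}\B^{R})(\O)$ relative to $E^{LR}$. The genuine work is the outer completeness, i.e.\ that every element of the braided product lies in the closure of $\sum_{j}\jmath^{R}(\psi^{R}_{\sigma_j})^{*}\,\jmath^{L}(\B^{L}(\O))$; I would derive this from the braided generation property (\ref{eq:genbrprod}), expanding each $\jmath^{R}(b_{R})$ as $\sum_{j}\jmath^{R}(\psi^{R}_{\sigma_j})^{*}\,\iota(E^{R}(\psi^{R}_{\sigma_j}b_{R}))$ and invoking the charged-field analogue of the commutation relation (\ref{eq:commrelbrprod}) to move the surviving $\jmath^{L}$-factors to the correct side. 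The subtlety is that these rearrangements must be controlled in the locally convex topology of Proposition \ref{prop:pipoexpansion} rather than in norm, and that the existence and well-definedness of $F^{R}$ themselves rest on this generation property. Once $(i)'$ and $(ii)'$ are in place, $\{\Psi_{ij}\}$ is a Pimsner-Popa basis, and since all $q_{ij} = \oneop$, Proposition \ref{prop:pipoexpansion} immediately delivers uniqueness of the expansion, completing the proof.
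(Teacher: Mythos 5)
Your overall skeleton (orthonormality of the coefficients $E^{LR}(\Psi_{ij}\Psi_{kl}^*)=\delta_{ik}\delta_{jl}\oneop$, then uniqueness from Proposition \ref{prop:pipoexpansion}) ends in the right place, and the final computation you describe is formally the same one the paper performs. But the route you take to the basis property has a genuine gap, which you yourself half-concede: everything hinges on the intermediate expectation $F^{R}$ from the braided product onto $\jmath^{L}(\B^{L}(\O))$ and on $\{\jmath^{R}(\psi^{R}_{\sigma_j})\}$ being a Pimsner--Popa basis for that outer inclusion, and neither is established. Defining $F^{R}$ only on products $\jmath^{L}(b_L)\jmath^{R}(b_R)$ does not give a well-defined map on their linear span unless you already control linear relations among such products, i.e.\ unless you already have the uniqueness of the expansion you are trying to prove; in the infinite index setting you cannot appeal to a general existence theorem for expectations onto intermediate subalgebras either. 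So the argument as written is circular at its load-bearing step (you say as much when you note that ``the existence and well-definedness of $F^{R}$ themselves rest on this generation property''), and the composition lemma for Pimsner--Popa bases cannot be invoked before the outer basis datum exists.

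The missing observation is that the first statement requires no new analysis at all: by Proposition \ref{prop:brprodembeddings} one has $\jmath^{L}(M^{L}_i)\jmath^{R}(M^{R}_j)=(\gamma^{LR})^{-1}(m^{L}_i\times_\eps^{\pm}m^{R}_j)$, i.e.\ the products of the dual Pimsner--Popa elements are exactly the dual Pimsner--Popa basis of the braided product Q-system, which is a generalized Q-system of intertwiners by Proposition \ref{prop:brprodisQsys}. Since $M^{L}_i=w^{L}_i\psi^{L}_{\rho_i}$ and $M^{R}_j=w^{R}_j\psi^{R}_{\sigma_j}$ with $w^{L}_i,w^{R}_j$ isometries in $\A(\O)$, the products $\Psi_{ij}=\jmath^{L}(\psi^{L}_{\rho_i})\jmath^{R}(\psi^{R}_{\sigma_j})$ differ from $\jmath^{L}(M^{L}_i)\jmath^{R}(M^{R}_j)$ only by left multiplication with isometries of $\A(\O)$, which leaves the projections $\Psi_{ij}^*e_{\A}\Psi_{ij}$ unchanged; hence $\{\Psi_{ij}\}$ is a Pimsner--Popa basis for $\iota(\A(\O))\subset(\B^{L}\times^{\pm}_{\eps}\B^{R})(\O)$ with respect to $E^{LR}$. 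With that in hand, the only thing left is the diagonal normalization $E^{LR}(\Psi_{ij}\Psi_{ij}^*)=\oneop$ (together with the off-diagonal vanishing), which the paper obtains by the same direct manipulation with the braiding and the isometries $w^{L},w^{R}$ used to prove Proposition \ref{prop:brexpectation}, reducing to $E^{L}(\psi^{L}_{\rho_i}\psi^{L\,*}_{\rho_k})=\delta_{ik}\oneop$ and $E^{R}(\psi^{R}_{\sigma_j}\psi^{R\,*}_{\sigma_l})=\delta_{jl}\oneop$ exactly as in your computation --- but without ever introducing $F^{R}$. If you do want your two-step factorization, you would first have to prove the proposition by the direct route and only then extract $F^{R}$ from the unique expansion.
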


\begin{proof}
The first statement is immediate. To prove the second statement, it is enough to show that 
$E^{LR}(\jmath^{L}(\psi_{\rho_i}^{L})\jmath^{R}(\psi^{R}_{\sigma_j})\jmath^{R}(\psi^{R *}_{\sigma^{\prime}_h})\jmath^{L}(\psi_{\rho^{\prime}_k}^{L *}))=\delta_{\rho_i,\rho'_k}\delta_{\sigma_j,\sigma'_h}$
which follows directly from a calculation analogous to the proof of Proposition \ref{prop:brexpectation}.
\end{proof}

In the following, with abuse of notation, we shall often suppress the above embeddings $\jmath^{L/ R}$, and $\iota:\A\rightarrow\B^L\times_\eps^\pm\B^R$ as well.

\begin{remark}\label{rmk:finindexbrprod}
In \cite{BKLR16} it is shown that the extension associated to the braided product of two \lqq ordinary" Q-systems is characterized algebraically in the following way. Let $\N\subset \M^{A}$ and $\N\subset \M^{B}$ be two finite index inclusions and let $(\theta^{A},w^{A},x^{A})$, $(\theta^{B},w^{B},x^{B})$ be the associated Q-systems. Denote by $\iota^{A/B}$ the respective inclusion maps and by $\theta^{A} = \oplus _{i}\rho_{i}$, $\theta^{B} = \oplus _{j}\sigma_{j}$ the irreducible decompositions of the dual canonical endomorphisms. Then it is known \cite[\Thm 3.11]{BKLR15} that $\M^{A}$ (\resp $\M^{B}$) is finitely generated by $\N$ and $\{\psi_{\rho_{i}}^{A}\}$ (\resp $\{\psi_{\sigma_{j}}^{B}\}$), where $\{\psi_{\rho_{i}}^{A}\}$, $\{\psi_{\sigma_{j}}^{B}\}$ are charged fields.

In this case, the braided product $\M^A\times^\pm_\eps\M^B$ can be completely characterized as the *-algebra freely generated by $\M^{A}$ and $\M^{B}$, modulo the relations
$$\iota^{A}(n)=\iota^{B}(n), \quad n\in \N,$$
$$\psi_{\sigma_{j}}^{B}\psi_{\rho_{i}}^{A}=\eps^{\pm}_{\rho_{i},\sigma_{j}}\psi_{\rho_{i}}^{A}\psi_{\sigma_{j}}^{B}.$$
In the discrete (infinite index) case this is no longer true since the extensions are \emph{not} finitely generated by $\N$ and the charged fields. We have to settle for a weaker form of this result, valid for pairs of \emph{irreducible} extensions, that will nevertheless prove to be useful in Section \ref{sec:exampleU(1)}. 
\end{remark}

Let $\mathfrak{B}^{L}\subset \mathcal{B}^{L}$ be the *-algebra generated by $\iota^{L}(\mathcal{A})$ and the charged fields $\{\psi_{\rho_i}^{L}\}\subset\mathcal{B}^{L}$. Similarly, let $\mathfrak{B}^{R}\subset \mathcal{B}^{R}$ be the *-algebra generated by $\iota^{R}(\mathcal{A})$ and the charged fields $\{\psi_{\sigma_j}^{R}\}\subset\mathcal{B}^{R}$. Let $\mathfrak{B}^{L\times R}\subset\mathcal{B}^{L}\times^\pm_{\eps}\mathcal{B}^{R}$ be the *-algebra generated by $\jmath^{L}(\iota^{L}(\mathcal{N}))=\jmath^{R}(\iota^{R}(\mathcal{N}))$, $\{\jmath^{L}(\psi_{\rho_i}^{L})\}\subset \mathcal{B}^{L}\times^\pm_{\eps}\mathcal{B}^{R}$ and $\{\jmath^{R}(\psi_{\sigma_j}^{R})\}\subset \mathcal{B}^{L}\times^\pm_{\eps}\mathcal{B}^{R}$. Then we have the following

\begin{lemma}\label{lem:freealg}
Suppose in addition that $\{\mathcal{A}\subset \mathcal{B}^{L}\}$ and $\{\mathcal{A}\subset \mathcal{B}^{R}\}$ are irreducible extensions. Then $\mathfrak{B}^{L\times R}$ is isomorphic to the *-algebra freely generated by $\mathfrak{B}^{L}$ and $\mathfrak{B}^{R}$, modulo the relations
$$\iota^{L}(a)=\iota^{R}(a),\quad a\in\mathcal{A},$$
$$\psi^{R}_{\sigma_j}\psi^{L}_{\rho_i}=\eps^{\pm}_{\rho_i,\sigma_j}\psi^{L}_{\rho_i}\psi^{R}_{\sigma_j}.$$
\end{lemma}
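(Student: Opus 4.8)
The plan is to exhibit an explicit surjective $*$-homomorphism $\Phi$ from the abstract algebra $\mathfrak{F}$ — the $*$-algebra freely generated by $\mathfrak{B}^{L}$ and $\mathfrak{B}^{R}$ modulo the two displayed relations — onto $\mathfrak{B}^{L\times R}$, and then to prove that $\Phi$ is injective. On generators $\Phi$ is given by $\jmath^{L}$ on $\mathfrak{B}^{L}$ and $\jmath^{R}$ on $\mathfrak{B}^{R}$ (Proposition \ref{prop:netsbrprodembeddings}); it is well defined once the two relations are seen to hold in $\mathfrak{B}^{L\times R}$. The amalgamation relation $\jmath^{L}\iota^{L}(a)=\jmath^{R}\iota^{R}(a)$ is the net version of equation (\ref{eq:embABonN}). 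The braided commutation relation for the charged fields is obtained from the commutation relation (\ref{eq:commrelbrprod}) for the $M_i$: writing $M^{L}_i=w^{L}_i\psi^{L}_{\rho_i}$ and $M^{R}_j=w^{R}_j\psi^{R}_{\sigma_j}$ with isometric intertwiners $w^{L}_i\in\Hom(\rho_i,\theta^{L})$, $w^{R}_j\in\Hom(\sigma_j,\theta^{R})$ (as in Proposition \ref{prop:netqsys}), and using $\jmath^{L/R}\iota^{L/R}=\iota$ together with the intertwining property of the $M_i$ to move observables across, naturality of the braiding replaces $\eps^{\pm}_{\theta^{L},\theta^{R}}$ by the charge-specific braiding $\eps^{\pm}_{\rho_i,\sigma_j}$. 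Surjectivity of $\Phi$ is immediate, since the images of the generators generate $\mathfrak{B}^{L\times R}$ by definition.

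For injectivity I would first put every element of $\mathfrak{F}$ into normal form. The commutation relation lets one move all factors from $\mathfrak{B}^{R}$ to the right of all factors from $\mathfrak{B}^{L}$, the braidings $\eps^{\pm}_{\rho_i,\sigma_j}\in\A$ produced in the process being absorbed into the $\mathfrak{B}^{L}$-part via the charged field property; together with the amalgamation relation this shows that the sorting map $q\colon\mathfrak{B}^{L}\otimes_{\A}\mathfrak{B}^{R}\to\mathfrak{F}$, $b^{L}\otimes b^{R}\mapsto b^{L}b^{R}$, is a well defined surjection of vector spaces. Since $\Phi\circ q$ is the concrete map $b^{L}\otimes b^{R}\mapsto\jmath^{L}(b^{L})\jmath^{R}(b^{R})$, it suffices to prove that this concrete map is injective: then $\Phi$ is injective by a short diagram chase ($q$ surjective and $\Phi\circ q$ injective force $\Phi$ injective).

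The injectivity of the concrete map is the heart of the matter, and it is exactly here that irreducibility of the two extensions enters. Suppose $\sum_k\jmath^{L}(b^{L}_k)\jmath^{R}(b^{R}_k)=0$. Sandwiching between $\jmath^{L}(\xi^{L})$ and $\jmath^{R}(\xi^{R})$ and applying the braided conditional expectation, the product formula of Proposition \ref{prop:brexpectation} yields
$$\sum_k E^{L}(\xi^{L}b^{L}_k)\,E^{R}(b^{R}_k\xi^{R})=0,\qquad \xi^{L}\in\mathfrak{B}^{L},\ \xi^{R}\in\mathfrak{B}^{R}.$$
By irreducibility the $\A$-valued inner products of charged fields are scalar (Lemma \ref{lemma:dimoffields}), so that $\{\psi^{L*}_{\rho_i}\}$ and $\{\psi^{R*}_{\sigma_j}\}$ are genuine orthonormal Pimsner-Popa bases and the expansion coefficients in Propositions \ref{prop:uniquepipoexpansion} and \ref{prop:uniquenessbrexp} are uniquely determined; expanding $b^{L}_k$, $b^{R}_k$ in these bases (the fusion being ``discrete'', Remark \ref{rmk:*algchargedintert}, so that only finitely many terms occur) decouples the displayed identity into the vanishing of each coefficient, i.e. $\sum_k b^{L}_k\otimes_{\A}b^{R}_k=0$. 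Equivalently, one can argue that $\omega^{LR}=\omega_0\circ E^{LR}$ is faithful on $\mathfrak{B}^{L\times R}$ (the vacuum vector being separating), that $\omega^{LR}\circ\Phi$ coincides on normal forms with the abstract form $\tau(b^{L}b^{R}):=\omega_0(E^{L}(b^{L})E^{R}(b^{R}))$, and that $\tau$ is faithful on $\mathfrak{F}$ precisely because the scalar charged field inner products realise $(\mathfrak{F},\tau)$ as a twisted tensor product of the two module GNS structures.

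I expect the main obstacle to be exactly this last step: establishing that the sorted monomials are linearly independent in $\mathfrak{B}^{L\times R}$, i.e. that no unexpected relations collapse them. In the infinite index regime the analogous statement fails for the full von Neumann algebras (Remark \ref{rmk:finindexbrprod}), so the argument must remain at the level of the $*$-subalgebras of charged fields and rely on the uniqueness of the Pimsner-Popa expansion and on irreducibility to reduce the $\A$-valued bookkeeping to scalars. A subsidiary technical point is that $\mathfrak{B}^{L/R}$ contains the whole quasilocal $\iota^{L/R}(\A)$ rather than a single local algebra; this is handled by transporting global observables to the left of the charged fields through the intertwining relations before invoking the Pimsner-Popa expansion in the fixed localization region $\O$.
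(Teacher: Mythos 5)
Your proposal is correct and takes essentially the same route as the paper: both sides are reduced to the unique normal form $\sum_{i,j} n_{\rho_i,\sigma_j}\psi^{L}_{\rho_i}\psi^{R}_{\sigma_j}$, with uniqueness on the concrete side supplied by the Pimsner--Popa expansion of Proposition \ref{prop:uniquenessbrexp} and finiteness of the fusion by Remark \ref{rmk:*algchargedintert}, irreducibility entering exactly where you place it (scalar inner products of charged fields). The paper's proof is a two-sentence compression of this argument; your verification of the two relations in $\mathfrak{B}^{L\times R}$ and the explicit injectivity step via the product formula for $E^{LR}$ merely fill in details it leaves implicit.
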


\begin{proof}
An arbitrary element $x$ of the free *-algebra generated by $\mathfrak{B}^{L}$ and $\mathfrak{B}^{R}$ modulo these relations can be written as a finite sum $x=\sum n_{\rho_i,\sigma_j}\psi^{L}_{\rho_i}\psi^{R}_{\sigma_j}$
in a unique way. The same is true for any element in $\mathfrak{B}^{L\times R}$ by Proposition \ref{prop:uniquenessbrexp} and Remark \ref{rmk:*algchargedintert}, thus the expansion yields an isomorphism.
\end{proof}

 In \cite{BKLR16} it was shown that the center of the braided product extension is an object of great interest since it contains all the information on transparent boundary conditions between the two starting quantum field theories. We here show that in the discrete case some relevant structural features are retained, in particular that the center of the braided product extension agrees with the relative commutant, which will be useful in the next section for the construction of irreducible phase boundaries from the central decomposition of the braided product. 

The expansion in terms of the Pimsner-Popa basis of charged fields (Proposition \ref{prop:uniquenessbrexp}) can be used to characterize the relative commutant.

\begin{lemma}\label{lem:exprelcomm}
For every $x\in(\mathcal{B}^{L}\times^\pm_{\eps}\mathcal{B}^{R})(\mathcal{O})$ we have
$$x\in(\mathcal{B}^{L}\times^\pm_{\eps}\mathcal{B}^{R})(\mathcal{O})\cap \mathcal{A}(\mathcal{O})^{\prime} \quad \Leftrightarrow \quad x=\sum_{\rho_i,\sigma_j}{\psi_{\sigma_j}^{R}}^{*}{\psi_{\rho_i}^{L}}^{*}r_{\rho_i,\sigma_j}$$ 
with $r_{\rho_i,\sigma_j} := E^{LR}(\psi^{L}_{\rho_i}\psi^{R}_{\sigma_j} x) \in\Hom_{\End_0(\mathcal{A}(\mathcal{O}))}(\id,\rho_i\sigma_j)$.
\end{lemma}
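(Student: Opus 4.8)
The plan is to read the statement directly off the unique Pimsner-Popa expansion of Proposition \ref{prop:uniquenessbrexp}, so that the content reduces to a purely algebraic intertwining check. Since $\{\psi^L_{\rho_i}\psi^R_{\sigma_j}\}$ is a Pimsner-Popa basis for $\A(\O)\stackrel{E^{LR}}{\subset}(\B^L\times_\eps^\pm\B^R)(\O)$ with $E^{LR}\big((\psi^L_{\rho_i}\psi^R_{\sigma_j})(\psi^L_{\rho_i}\psi^R_{\sigma_j})^*\big)=\oneop$, every $x\in(\B^L\times_\eps^\pm\B^R)(\O)$ admits the unique expansion
$$x=\sum_{\rho_i,\sigma_j}(\psi^L_{\rho_i}\psi^R_{\sigma_j})^*\,E^{LR}(\psi^L_{\rho_i}\psi^R_{\sigma_j}x)=\sum_{\rho_i,\sigma_j}{\psi^R_{\sigma_j}}^*{\psi^L_{\rho_i}}^*\,r_{\rho_i,\sigma_j},$$
with $r_{\rho_i,\sigma_j}=E^{LR}(\psi^L_{\rho_i}\psi^R_{\sigma_j}x)\in\A(\O)$ by definition of the conditional expectation. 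Thus an expansion of the displayed form, with coefficients in $\A(\O)$, exists for \emph{every} $x$; the whole content of the lemma is the equivalence between $x$ commuting with $\A(\O)$ and each $r_{\rho_i,\sigma_j}$ being an intertwiner $\id\to\rho_i\sigma_j$.

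The key observation I would record first is that each product $\psi^L_{\rho_i}\psi^R_{\sigma_j}$ is itself a charged field for the finite-dimensional DHR endomorphism $\rho_i\sigma_j$: identifying $\A$ with its common image $\iota(\A)$ in the braided product, for $a\in\A$ one computes
$$\psi^L_{\rho_i}\psi^R_{\sigma_j}\,a=\psi^L_{\rho_i}\,\sigma_j(a)\,\psi^R_{\sigma_j}=\rho_i\big(\sigma_j(a)\big)\,\psi^L_{\rho_i}\psi^R_{\sigma_j}=\rho_i\sigma_j(a)\,\psi^L_{\rho_i}\psi^R_{\sigma_j},$$
using the charged-field property of $\psi^R_{\sigma_j}$ and then of $\psi^L_{\rho_i}$ applied to $\sigma_j(a)\in\A$. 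Taking adjoints yields the dual relation $a\,{\psi^R_{\sigma_j}}^*{\psi^L_{\rho_i}}^*={\psi^R_{\sigma_j}}^*{\psi^L_{\rho_i}}^*\,\rho_i\sigma_j(a)$ for $a\in\A(\O)$.

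With this in hand the proof closes quickly. Fixing $a\in\A(\O)$ and using that one-sided multiplication is continuous for the $E^{LR}$-invariant seminorm topology in which the expansion converges (exactly as exploited in the proof of Theorem \ref{thm:discreteextnets}), I would distribute $a$ across the sum to obtain
$$a\,x=\sum_{\rho_i,\sigma_j}{\psi^R_{\sigma_j}}^*{\psi^L_{\rho_i}}^*\,\rho_i\sigma_j(a)\,r_{\rho_i,\sigma_j},\qquad x\,a=\sum_{\rho_i,\sigma_j}{\psi^R_{\sigma_j}}^*{\psi^L_{\rho_i}}^*\,r_{\rho_i,\sigma_j}\,a.$$
Since $\rho_i\sigma_j$ is localized in $\O$ and hence maps $\A(\O)$ into itself, both $\rho_i\sigma_j(a)r_{\rho_i,\sigma_j}$ and $r_{\rho_i,\sigma_j}a$ lie in $\A(\O)$, so these are again Pimsner-Popa expansions of $ax$ and $xa$. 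By the uniqueness clause of Proposition \ref{prop:uniquenessbrexp}, $ax=xa$ for all $a\in\A(\O)$ if and only if $\rho_i\sigma_j(a)\,r_{\rho_i,\sigma_j}=r_{\rho_i,\sigma_j}\,a$ for all $a$ and all $i,j$, which is precisely $r_{\rho_i,\sigma_j}\in\Hom_{\End_0(\A(\O))}(\id,\rho_i\sigma_j)$ (finite-dimensionality of $\rho_i\sigma_j$ placing the intertwiner space inside $\End_0$). This delivers both implications simultaneously.

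The only genuine subtlety — and the step I expect to need the most care — is the legitimacy of interchanging the fixed left/right multiplication with the (in general infinite) unconditionally convergent sum, and the assertion that the rearranged series are \emph{bona fide} Pimsner-Popa expansions with uniquely determined coefficients. This is exactly where the normalization $E^{LR}(\psi^L_{\rho_i}\psi^R_{\sigma_j}(\psi^L_{\rho_i}\psi^R_{\sigma_j})^*)=\oneop$, and hence the uniqueness of Proposition \ref{prop:uniquenessbrexp} together with continuity of multiplication in the invariant seminorm topology, are indispensable; everything else is the elementary intertwiner computation above.
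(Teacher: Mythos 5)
Your proof is correct and follows essentially the same route as the paper: expand in the Pimsner--Popa basis of charged fields, use the intertwining relation $\psi^L_{\rho_i}\psi^R_{\sigma_j}a=\rho_i\sigma_j(a)\psi^L_{\rho_i}\psi^R_{\sigma_j}$, and invoke the uniqueness clause of Proposition \ref{prop:uniquenessbrexp} to compare coefficients of $ax$ and $xa$. The only (cosmetic) difference is that the paper computes the coefficients of $nx$ and $xn$ directly as $E^{LR}(\psi^L_{\rho_i}\psi^R_{\sigma_j}nx)=\rho_i\sigma_j(n)r_{\rho_i,\sigma_j}$ and $E^{LR}(\psi^L_{\rho_i}\psi^R_{\sigma_j}xn)=r_{\rho_i,\sigma_j}n$ via the bimodule property of $E^{LR}$, which sidesteps the interchange-of-sum-and-multiplication subtlety you rightly flag at the end.
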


\begin{proof}
It is enough to use the uniqueness of the expansion in Proposition \ref{prop:uniquenessbrexp}
$$nx=\sum_{i,j}{\psi^{R}_{\sigma_j}}^*{\psi^{L}_{\rho_i}}^*E^{LR}(\psi^{L}_{\rho_i}\psi^{R}_{\sigma_j}nx)=\sum_{i,j}{\psi^{R}_{\rho_i}}^*{\psi^{L}_{\sigma_j}}^*\rho_i\sigma_j(n)E^{LR}(\psi^{L}_{\rho_i}\psi^{R}_{\sigma_j}x)$$
$$=\sum_{i,j}{\psi^{R}_{\sigma_j}}^{*}{\psi^{L}_{\rho_i}}^{*}E^{LR}(\psi^{L}_{\rho_i}\psi^{R}_{\sigma_j}xn)=\sum_{i,j}{\psi^{R}_{\sigma_j}}^{*}{\psi^{L}_{\rho_i}}^{*}E^{LR}(\psi^{L}_{\rho_i}\psi^{R}_{\sigma_j}x)n = xn$$
for every $n\in\A(\O)$, thus $r_{\rho_i,\sigma_j}\, n =\rho_i(\sigma_j(n))\,r_{\rho_i,\sigma_j}$.
\end{proof}

As in the finite index case, \cf \cite[\Prop 4.19]{BKLR16}, the center of the braided product of two \emph{local} extensions coincides with the relative commutant of $\{\A\}$ in the braided product.

\begin{proposition}\label{prop:relcomm=center}
Suppose in addition that $\{\mathcal{A}\subset \mathcal{B}^{L}\}$ and $\{\mathcal{A}\subset \mathcal{B}^{R}\}$ are local extensions, then
$$(\mathcal{B}^{L}\times^\pm_{\eps}\mathcal{B}^{R})(\mathcal{O})\cap \mathcal{A}(\mathcal{O})^{\prime}=(\mathcal{B}^{L}\times^\pm_{\eps}\mathcal{B}^{R})(\mathcal{O})\cap (\mathcal{B}^{L}\times^\pm_{\eps}\mathcal{B}^{R})(\mathcal{O})^{\prime}.$$

\begin{proof}
Let us first verify that the von Neumann algebra $\Z$ generated by $r_{\rho_i,\sigma_j}^{*}\psi^{L}_{\rho_i}\psi^{R}_{\sigma_j}$, with $r_{\rho_i,\sigma_j}\in\Hom_{\End_0(\mathcal{A}(\mathcal{O}))}(\id,\rho_i\sigma_j)$, is contained in the center. For every $n\in\A(\mathcal{O})$ we have
$$r_{\rho_i,\sigma_j}^{*}\psi_{\rho_i}^{L}\psi_{\sigma_j}^{R}  n \psi_{\rho^{\prime}_k}^{L}\psi_{\sigma^{\prime}_t}^{R} = n r_{\rho_i,\sigma_j}^{*}\rho_i(\eps^{\pm}_{\rho'_k,\sigma_j})\eps^{\pm}_{\rho'_k,\rho_i}\rho'_k\rho_i(\eps^{\pm}_{\sigma'_t,\sigma_j})\rho'_k(\eps^{\pm}_{\sigma'_t,\rho_i})\psi_{\rho'_k}^{L}\psi_{\sigma'_t}^{R}\psi_{\rho_i}^{L}\psi_{\sigma_j}^{R}$$
by direct computation and using locality of $\{\B^L\}$ and $\{\B^R\}$, \ie 
$$\psi_{\rho'_k}^{L}\psi_{\rho_i}^{L}=\eps^{\pm}_{\rho_i,\rho'_k}\psi_{\rho_i}^{L}\psi_{\rho'_k}^{L},$$
$$\psi_{\sigma'_t}^{R}\psi_{\sigma_j}^{R}=\eps^{\pm}_{\sigma_j,\sigma'_t}\psi_{\sigma_j}^{R}\psi_{\sigma'_t}^{R},$$
\cf Theorem \ref{thm:discreteextnets}. Now, it is easy to see that
$$r_{\rho_i,\sigma_j}^{*} \rho_i(\eps^{\pm}_{\rho'_k,\sigma_j})\eps^{\pm}_{\rho'_k,\rho_i}\rho'_k\rho_i(\eps^{\pm}_{\sigma'_t,\sigma_j})\rho'_k(\eps^{\pm}_{\sigma'_t,\rho_i}) = \rho'_k \sigma'_t(r_{\rho_i,\sigma_j}^{*})$$
from which $r_{\rho_i,\sigma_j}^{*}\psi^{L}_{\rho_i}\psi^{R}_{\sigma_j}$ is contained in the center of the braided product.

For brevity, in the following we denote $B\cap A^{\prime}=(\mathcal{B}^{L}\times^\pm_{\eps}\mathcal{B}^{R})(\mathcal{O})\cap \mathcal{A}(\mathcal{O})^{\prime}$ and $B\cap B^{\prime}=(\mathcal{B}^{L}\times^\pm_{\eps}\mathcal{B}^{R})(\mathcal{O})\cap (\mathcal{B}^{L}\times^\pm_{\eps}\mathcal{B}^{R})(\mathcal{O})^{\prime}$, and consider the inclusions 
$$\Z\subset B\cap B^{\prime}\subset B\cap A^{\prime}.$$
If we take the GNS representation of $B\cap A^{\prime}$ with respect to the vacuum $\Omega$, we get a cyclic and separating vector for $B\cap A^{\prime}$ and for $B\cap B^{\prime}$ as well, by Lemma \ref{lem:exprelcomm}. Now we check that the canonical conjugations of $B\cap A^{\prime}$ and $B\cap B^{\prime}$ with respect to $\Omega$ agree. This holds because the Tomita operator $S$ of $(B\cap A^{\prime},\Omega)$, \ie, the closure of the operator $S_{0}:x\Omega\rightarrow x^{*}\Omega$, $x\in B\cap A^{\prime}$, is an extension of the Tomita operator of $(B\cap B^{\prime}, \Omega)$. Since the latter is continuous and defined on all the GNS Hilbert space (because $B\cap B^{\prime}$ is abelian), the two operators agree and coincide with the respective canonical conjugations. Thus
$$J(B\cap A^{\prime})J=(B\cap A^{\prime})^{\prime}\subset (B\cap B^{\prime})^{\prime}=J(B\cap B^{\prime})J\subset J(B\cap A^{\prime})J$$
from which the result follows.
\end{proof}
\end{proposition}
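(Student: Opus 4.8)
The plan is to establish the nontrivial inclusion $(\B^L\times_\eps^\pm\B^R)(\O)\cap\A(\O)'\subseteq(\B^L\times_\eps^\pm\B^R)(\O)\cap(\B^L\times_\eps^\pm\B^R)(\O)'$, the reverse being immediate from $\A(\O)\subseteq(\B^L\times_\eps^\pm\B^R)(\O)$. Writing $B\cap A'$ and $B\cap B'$ for the two algebras as in the statement, I would first exhibit an explicit family of central elements. For each pair of finite-dimensional irreducible DHR subendomorphisms $\rho_i\prec\theta^L$, $\sigma_j\prec\theta^R$ and each $r\in\Hom_{\End_0(\A(\O))}(\id,\rho_i\sigma_j)$, the candidate is $r^*\psi^L_{\rho_i}\psi^R_{\sigma_j}$. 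I would verify centrality by commuting this element past a general product $n\,\psi^L_{\rho'_k}\psi^R_{\sigma'_t}$ with $n\in\A(\O)$, using the locality commutation relations of $\{\B^L\}$ and $\{\B^R\}$ (namely $\psi^L_{\rho'_k}\psi^L_{\rho_i}=\eps^\pm_{\rho_i,\rho'_k}\psi^L_{\rho_i}\psi^L_{\rho'_k}$ and its $\B^R$ analogue, \cf Theorem \ref{thm:discreteextnets}) together with naturality and tensoriality of the braiding, which collapse the resulting string of braiding operators acting on $r^*$ into $\rho'_k\sigma'_t(r^*)$. I then let $\Z$ be the von Neumann algebra generated by all such elements, so that $\Z\subseteq B\cap B'\subseteq B\cap A'$.

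The heart of the argument is a Tomita--Takesaki comparison carried out in the GNS representation of $B\cap A'$ with respect to the vacuum $\Omega$. There $\Omega$ is cyclic for $B\cap A'$ by construction, and separating because $\Omega$ is separating for the full braided product algebra. The delicate point is to upgrade cyclicity to the smaller algebra $B\cap B'$. Here I would invoke Lemma \ref{lem:exprelcomm}: every $x\in B\cap A'$ expands as $\sum\psi^{R*}_{\sigma_j}\psi^{L*}_{\rho_i}\,r_{\rho_i,\sigma_j}$, and each summand is exactly the adjoint $(r^*_{\rho_i,\sigma_j}\psi^L_{\rho_i}\psi^R_{\sigma_j})^*$ of a generator of $\Z$. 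Since $\Z$ is $*$-closed, these adjoints lie in $\Z\subseteq B\cap B'$, and the expansion (Proposition \ref{prop:uniquenessbrexp}) converges so that $x\Omega\in\overline{\Z\Omega}\subseteq\overline{(B\cap B')\Omega}$. Hence $\overline{(B\cap A')\Omega}=\overline{(B\cap B')\Omega}$ is the whole GNS space and $\Omega$ is cyclic and separating for $B\cap B'$ as well.

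With $\Omega$ cyclic and separating for both algebras I would compare their modular data. Because $B\cap B'$ is abelian, its modular operator is trivial and its Tomita operator is an everywhere-defined, bounded antiunitary conjugation $J$. The Tomita operator $S$ of $(B\cap A',\Omega)$, being the closure of $x\Omega\mapsto x^*\Omega$, extends this bounded everywhere-defined operator, so the two Tomita operators agree and $B\cap A'$ and $B\cap B'$ share the modular conjugation $J$. Running the chain $J(B\cap A')J=(B\cap A')'\subseteq(B\cap B')'=J(B\cap B')J\subseteq J(B\cap A')J$, where the outer equalities are Tomita duality and the inner inclusions come from $B\cap B'\subseteq B\cap A'$, forces every inclusion to be an equality; taking commutants gives $B\cap A'=B\cap B'$.

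I expect the main obstacle to be the second step, transferring cyclicity of $\Omega$ from the relative commutant down to the center. This depends on the precise matching between the generators $r^*\psi^L_{\rho_i}\psi^R_{\sigma_j}$ of $\Z$ and the conjugate expansion coefficients furnished by Lemma \ref{lem:exprelcomm}, so that the $*$-closedness of $\Z$ can actually be exploited, and the convergence bookkeeping of the Pimsner--Popa expansion must be handled with some care. The centrality verification in the first step, though conceptually routine once the braiding identities are assembled, is also delicate since it has to be performed symbolically in the $*$-algebra of charged intertwiners (Remark \ref{rmk:*algchargedintert}), where only finitely many fusion channels contribute.
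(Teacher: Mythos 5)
Your proposal is correct and follows essentially the same route as the paper: the same explicit central elements $r^*\psi^L_{\rho_i}\psi^R_{\sigma_j}$ verified via locality and the braiding identities, cyclicity of $\Omega$ for $B\cap B'$ extracted from the expansion of Lemma \ref{lem:exprelcomm}, and the identical Tomita--Takesaki comparison (abelianness of $B\cap B'$ forcing its Tomita operator to be bounded and everywhere defined, hence equal to the extension $S$) followed by the same chain $J(B\cap A')J\subseteq(B\cap B')'\subseteq J(B\cap A')J$. Your write-up in fact spells out the cyclicity step in slightly more detail than the paper, which simply cites the lemma.
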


Lastly, as an application of Theorem \ref{thm:discretecovariance}, we show covariance of the braided product net.

\begin{proposition}\label{prop:covarbrprod}
Let $\{\A\}$ be a local net, covariant with respect to $\cP$ as in the assumptions of Theorem \ref{thm:discretecovariance}. Let $\{\B^L\}$ and $\{\B^R\}$ be two extensions of $\{\A\}$ constructed as in Theorem \ref{thm:discreteextnets} from unital generalized net Q-systems of intertwiners $(\theta^L, w^L, \{m_i^L\})$ and $(\theta^R, w^R, \{m_j^R\})$. Assume that $\cP$ acts equivariantly on two tensor subcategories $\C^L$ and $\C^R$ of $\DHR\{\A\}$ which contain respectively $\theta^L$ and $\theta^R$. Then the braided product net $\{\B^L\times_\eps^{\pm}\B^R\}$ is also covariant with respect to $\cP$. Moreover, the embeddings $\jmath^{L}$ and $\jmath^{R}$ given in Proposition \ref{prop:netsbrprodembeddings} are covariant as representations, namely $\jmath^{L} \circ \hat\alpha_g^{L} (b^L)= \hat\alpha_g^{LR} \circ \jmath^L (b^L)$ and $\jmath^{R} \circ \hat\alpha_g^{R} (b^R) = \hat\alpha_g^{LR} \circ \jmath^R (b^R)$, where $b^{L/R}\in\B^{L/R}(\O)$, for every $g\in\cU_{\O}$ and $\O\in\K$.
\end{proposition}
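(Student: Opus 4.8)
The plan is to deduce covariance of the braided product net from Theorem \ref{thm:discretecovariance}, and then to verify covariance of the embeddings by evaluating both sides on generators, exploiting that the covariance action acts diagonally on (Pimsner-Popa basis) charged fields through the cocycle. First I would record that, by Proposition \ref{prop:brprodisQsys} and the remark following Definition \ref{def:unitalqsys}, the triple $(\theta^L\theta^R, w^Lw^R, \{m^L_i\times_\eps^\pm m^R_j\})$ is again a \emph{unital} generalized net Q-system of intertwiners in $\DHR\{\A\}$, so that $\{\B^L\times_\eps^\pm\B^R\}$ is precisely the extension it produces via Theorem \ref{thm:discreteextnets}. To invoke Theorem \ref{thm:discretecovariance} I need an equivariant $\cP$-action on a tensor subcategory of $\DHR\{\A\}$ containing the dual canonical endomorphism $\theta^L\theta^R$. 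I would take $\C$ to be the full, replete tensor subcategory generated by $\C^L\cup\C^R$, closed under finite direct sums and subobjects; it contains $\theta^L\theta^R=\theta^L\times\theta^R$. On products of generators I extend the cocycle by iterated tensoriality, $z(g,\rho\sigma):=z(g,\rho)\,\rho(z(g,\sigma))$, and extend to subobjects and direct sums by naturality; associativity of the tensoriality formula makes this consistent, and properties $(i)$--$(vi)$ of Definition \ref{def:equivaraction} for the extended map follow from those on $\C^L,\C^R$ together with naturality $(iv)$ and tensoriality $(v)$.

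The only compatibility to be checked is that the two given actions agree on objects common to $\C^L$ and $\C^R$; this follows from uniqueness of the positive-energy unitary implementation $U_\rho(g)=z(g,\rho)^*U(g)$ of the covariance of $\rho$ (Lemma \ref{lem:covarDHRendo}), which for the relevant groups ($\cP=\widetilde\Mob$, which is perfect, or $\cP=\tilde\cP^\uparrow_+$) pins $z(g,\rho)$ down uniquely. In the concrete examples one has $\C^L,\C^R\subset\DHR_d\{\A\}$ with both actions restrictions of the single canonical one, so no compatibility issue arises at all. With this $\C$ in hand, Theorem \ref{thm:discretecovariance} yields a strongly continuous unitary representation $\hat U^{LR}$ of $\cP$ implementing covariance of $\{\B^L\times_\eps^\pm\B^R\}$, with $\hat\alpha^{LR}_g:=\Ad_{\hat U^{LR}(g)}$ and $z(g,\theta^L\theta^R)=z(g,\theta^L)\,\theta^L(z(g,\theta^R))$.

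For covariance of the embeddings, recall from Proposition \ref{prop:netqsys} that $\B^L(\O)$ is generated by $\iota^L(\A)$ and the Pimsner-Popa basis of charged fields $\{\psi^L_{\rho_i}\}$, with $\rho_i\prec\theta^L$ irreducible and finite-dimensional, hence $\rho_i\in\C^L$. Since all actions are automorphisms, it suffices to check $\jmath^L\circ\hat\alpha^L_g=\hat\alpha^{LR}_g\circ\jmath^L$ on these generators (and symmetrically for $\jmath^R$). On $\A$ both sides reduce to $\alpha_g$: indeed $\jmath^L\circ\iota^L=\iota=\jmath^R\circ\iota^R$ by the net version of equation (\ref{eq:embABonN}), while the proof of Theorem \ref{thm:discretecovariance} shows $\hat\alpha^L_g,\hat\alpha^R_g,\hat\alpha^{LR}_g$ all restrict to $\alpha_g$ on $\A$. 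The key step I would isolate is that, for a basis charged field $\psi_\tau$ of sector $\tau$, covariance acts by $\hat\alpha_g(\psi_\tau)=z(g,\tau)\psi_\tau$; this follows from $\hat\alpha_g(M_i)=z(g,\theta)M_i$ (proof of Theorem \ref{thm:discretecovariance}), the factorization $M_i=w_i\psi_i$ with $w_i\in\Hom(\rho_i,\theta)$, and naturality $\alpha_g(w_i)=z(g,\theta)w_iz(g,\rho_i)^*$. Hence $\hat\alpha^L_g(\psi^L_{\rho_i})=z(g,\rho_i)\psi^L_{\rho_i}$, and since $z(g,\rho_i)\in\A$ and $\jmath^L$ is the identity on $\A$, the left-hand side equals $z(g,\rho_i)\,\jmath^L(\psi^L_{\rho_i})$. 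On the other hand $\jmath^L(\psi^L_{\rho_i})=\jmath^L(\psi^L_{\rho_i})\jmath^R(\psi^R_{\sigma_0})$ is exactly the braided-product basis charged field of sector $\rho_i\sigma_0$ with the distinguished label $\sigma_0=\id$ (so $\psi^R_{\sigma_0}=\oneop$ in the normalization of Proposition \ref{prop:discreteminimal}); applying the isolated formula together with tensoriality and $z(g,\id)=\oneop$ gives $\hat\alpha^{LR}_g(\jmath^L(\psi^L_{\rho_i}))=z(g,\rho_i\sigma_0)\,\jmath^L(\psi^L_{\rho_i})=z(g,\rho_i)\,\jmath^L(\psi^L_{\rho_i})$, matching the left-hand side. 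The computation for $\jmath^R$ is identical.

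The main obstacle is conceptual rather than computational: recognizing that covariance acts on charged fields diagonally via the cocycle $z(g,\tau)$, and that $\jmath^L$ carries a charged field of sector $\rho_i$ to a charged field of the \emph{same} sector (with trivial right-hand charge). Once this is isolated, both statements collapse to tensoriality of the cocycle and $z(g,\id)=\oneop$. The secondary technical point is the extension of the equivariant action to the generated tensor category in the first part, whose well-definedness rests on uniqueness of positive-energy covariance implementations; this is exactly where one needs the full force of equivariance (naturality and tensoriality of the cocycles, not merely their existence), mirroring their role in the proof of Theorem \ref{thm:discretecovariance}.
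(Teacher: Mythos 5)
Your proposal is correct and follows essentially the same route as the paper's proof: extend the cocycle to the tensor subcategory generated by $\theta^L$ and $\theta^R$ via tensoriality, invoke Theorem \ref{thm:discretecovariance}, and then verify the intertwining of $\jmath^{L/R}$ on generators by direct computation with naturality of cocycles. You supply more detail than the paper (the reduction to charged fields $\psi_{\rho_i}$ with $\hat\alpha_g(\psi_\tau)=z(g,\tau)\psi_\tau$, and the compatibility of the two given actions on common objects), but the underlying argument is the same.
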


\begin{proof}
$\cP$ acts equivariantly on $\theta^L\theta^R$ and on the (full, replete) tensor subcategory $\cD$ generated in $\DHR\{\A\}$ by $\theta^L$ and $\theta^R$. Indeed, the cocycle given by $z(g,\theta^L\theta^R) = z(g,\theta^L) \theta^L(z(g,\theta^R))$, $g\in\cP$, is manifestly natural and tensor in $\cD$, hence we can apply Theorem \ref{thm:discretecovariance}. 

The second statement follows from $\hat\alpha_g^{LR}(\jmath^L(M_i^L)) = \jmath^L (\hat\alpha_g^L(M_i^L))$ and $\hat\alpha_g^{LR}(\jmath^R(M_j^R)) = \jmath^R (\hat\alpha_g^R(M_j^R))$ by direct computation using naturality of cocycles.
\end{proof}

\section{Applications to phase boundaries in QFT}\label{sec:apptodefectsinQFT}

The main application in QFT for the braided product of ordinary Q-systems in \cite{BKLR16} is the construction and classification of \emph{phase boundary QFTs}.

A boundary is simply a time-like hypersurface of codimension 1 in Minkowski spacetime $\RR^{n+1}$, $n\geq 1$, or a point in $\RR$. 
Perhaps the simplest type of boundary QFT is a system in a one-sided box. Namely, on one side of the boundary (the side of the box) there is a physical system described by bulk fields, while on the other side there is no physical content. This situation is usually referred to as a \emph{hard boundary}, or \emph{reflective boundary}. 

In the following we will be concerned with \emph{phase boundaries}, also called \emph{transmissive boundaries}, which describe QFTs sharing some distinguished chiral fields across the boundary (for example the stress-energy tensor) but the field content may in general be different on the two opposite sides. If the common fields which are not affected by the presence of the boundary include the stress-energy tensor, then the bulk fields may be defined by covariance on all Minkowski spacetime. Of course they do not represent physically meaningful quantities when they are transported to the opposite side of the boundary. In any case, this observation is crucial for the meaningfulness of the following definition.

Let $\{\A\}$ be a local net and let $\{\mathcal{A}\subset\mathcal{B}^{L}\}$, $\{\mathcal{A}\subset\mathcal{B}^{R}\}$ be two \emph{local} extensions (see Definition \ref{def:inclnets}). Let $\iota^{L}$ and $\iota^{R}$ be the corresponding embeddings. $\mathbb{M}^{L}$ and $\mathbb{M}^{R}$ denote the two portions of Minkowski spacetime determined by the boundary.

\begin{definition}\label{def:phaseboundary}
A \textbf{phase boundary condition} (for short \textbf{phase boundary}) between two local extensions $\{\mathcal{A}\subset\mathcal{B}^{L}\}$, $\{\mathcal{A}\subset\mathcal{B}^{R}\}$ is a pair of locally normal representations $\pi^{L}$ and $\pi^{R}$ of the nets $\{\mathcal{B}^{L}\}$ and $\{\mathcal{B}^{R}\}$, respectively, on a common Hilbert space $\mathcal{H}$, with the following properties.
They agree when restricted to the common subnet $\mathcal{A}$, namely 
$$\pi^{L}\circ \iota^{L}=\pi^{R}\circ \iota^{R}$$
and, for $\mathcal{O}_{1}\subset\mathbb{M}^{L}$, $\mathcal{O}_{2}\subset\mathbb{M}^{R}$, and $\mathcal{O}_{1}, \mathcal{O}_{2}$ in relative space-like position, $\pi^{L}(\mathcal{B}^{L}(\mathcal{O}_{1}))$ and $\pi^{R}(\mathcal{B}^{R}(\mathcal{O}_{2}))$ commute, \ie, they respect \emph{locality} across the boundary.

A phase boundary is called \textbf{irreducible} if the inclusions 
$$\pi^{L/R}\circ\iota^{L/R}(\mathcal{A}(\mathcal{O}))\subset \pi^{L}(\mathcal{B}^{L}(\mathcal{O}))\vee \pi^{L}(\mathcal{B}^{R}(\mathcal{O}))$$
are irreducible for every $\mathcal{O}\in\mathcal{K}$.
\end{definition}

In the present setting, we show that the braided product can be decomposed over its center (in general as a direct integral) and its components give rise to irreducible phase boundaries, in analogy to the finite index case.

\begin{remark}
A prominent feature of the finite index case is that the phase boundaries found within the braided product net by central decomposition do exhaust the set of all possible irreducible phase boundaries modulo unitary equivalence. The proof of the latter heavily relies on the finiteness of the index since this insures that the braided product construction can be completely determined algebraically as the free $^{*}$-algebra generated by the starting nets $\{\mathcal{B}^{L}\}$ and $\{\mathcal{B}^{R}\}$ modulo relations as in Remark \ref{rmk:finindexbrprod}. This makes the braided product a universal object in the sense that every irreducible phase boundary condition arises a representation of the former \cite[\Prop 5.1]{BKLR16}. In the infinite index setting this is no longer the case as we will see in Section \ref{sec:exampleU(1)}.
\end{remark}

For ease of exposition, we state the results for \emph{chiral} CFTs (and thus phase boundaries in 1D) although the analysis can be extended to greater generality without difficulty.
Moreover, in order to avoid inconvenient technicalities with disintegration theory, we assume that the starting local extensions have the \emph{split property}, \cite{DoLo84}. This assumption is not too restrictive since most interesting models in QFT have this property, in particular all chiral diffeomorphism covariant models \cite{MTW16}.

Let $\{\mathcal{A}\}$ be a local conformal net (M\"obius covariant, see Definition \ref{def:covarnet}) on $\mathbb{R}$ over a separable Hilbert space and satisfying Haag duality on $\RR$. Exactly as in the notation of \cite[\Prop 55]{KLM01}, for $I, \tilde{I}\in\K$ (here $\K$ is the set of open proper bounded intervals of $\mathbb{R}$), $I\subset\subset\tilde{I}$ means that $\bar I\subset\tilde{I}$. If $\{\A\}$ has the split property, then, for each pair of intervals $I\subset\subset\tilde{I}$, there is an intermediate type $I$ factor $\mathcal{A}(I)\subset N(I,\tilde{I})\subset\mathcal{A}(\tilde{I})$ and we denote by $K(I,\tilde{I})$ the compact operators of $N(I,\tilde{I})$. $\mathcal{I}_{\mathbb{Q}}$ is the set of intervals with rational endpoints and $\mathfrak{A}$ is the \emph{separable} $C^{*}$-subalgebra of $\mathcal{A}$ generated by all $K(I,\tilde{I})$ with $I\subset\subset\tilde{I}$, $I,\tilde{I}\in\mathcal{I}_{\mathbb{Q}}$.

\begin{proposition}\label{prop:KLM}\emph{\cite{KLM01}}.
Let $\pi$ be a locally normal representation of $\mathcal{A}$. Then $\pi_{\restriction \mathfrak A}$ is a representation of $\mathfrak A$ and $\pi_{\restriction K(I,\tilde{I})}$ is non-degenerate for every pair of intervals $I\subset\subset\tilde{I}$. 

Conversely, if $\sigma$ is a representation of $\mathfrak A$ such that $\sigma_{\restriction K(I,\tilde{I})}$ is non-degenerate for all intervals $I$, $\tilde{I}\in\mathcal{I}_{\mathbb{Q}}$, $I\subset\subset\tilde{I}$, there exists a unique locally normal representation $\tilde{\sigma}$ of $\mathcal{A}$ that extends $\sigma$. Moreover, equivalent representations of $\mathcal{A}$ correspond to equivalent representations of $\mathfrak A$.
\end{proposition}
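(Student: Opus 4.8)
The plan is to reduce the statement to a single functional-analytic fact about type $I$ factors and their compact ideals, and then to organise the local data coherently along the net. The fact is: if $N$ is a type $I$ factor, spatially $N \cong B(\Hilb)$ with compact ideal $K \cong \mathcal{K}(\Hilb)$, then $N = M(K)$ is the multiplier algebra of $K$, and restriction to $K$ sets up a bijection between the normal representations of $N$ and the non-degenerate representations of $K$. In one direction, a normal representation $\pi$ of $N$ restricts to a non-degenerate representation of $K$, because the net of finite-rank projections in $K$ increases strongly to $\oneop_N$ and is preserved by $\pi$; in the other, a non-degenerate representation of $K$ extends uniquely to a (necessarily normal) representation of $M(K) = N$ by the universal property of the multiplier algebra.

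For the forward implication I would fix $I \subset\subset \tilde{I}$ and note that, since $N(I,\tilde{I}) \subset \mathcal{A}(\tilde{I})$ and $\pi_{\restriction\mathcal{A}(\tilde{I})}$ is normal by local normality, $\pi_{\restriction N(I,\tilde{I})}$ is a normal representation of the type $I$ factor $N(I,\tilde{I})$. By the fact above, its restriction to $K(I,\tilde{I})$ is non-degenerate, which is the first assertion; that $\pi_{\restriction\mathfrak{A}}$ is a representation of $\mathfrak{A}$ is immediate from $\mathfrak{A} \subset \mathcal{A}$.

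The converse is the substance. Given $\sigma$ on $\mathfrak{A}$ with each $\sigma_{\restriction K(I,\tilde{I})}$ non-degenerate, the fact produces a unique normal extension $\hat{\sigma}_{I,\tilde{I}}$ of $\sigma_{\restriction K(I,\tilde{I})}$ to $N(I,\tilde{I})$, and I set $\sigma_I := \hat{\sigma}_{I,\tilde{I}}\circ(\mathcal{A}(I)\hookrightarrow N(I,\tilde{I}))$. The step I expect to be the main obstacle is coherence: showing $\sigma_I$ is independent of $\tilde{I}$ and compatible with isotony. The observation resolving this is that $\hat{\sigma}_{I',\tilde{I}'}$ agrees with $\sigma$ not only on $K(I',\tilde{I}')$ but on \emph{every} $K(I,\tilde{I}) \subset N(I',\tilde{I}')$: for $a \in K(I,\tilde{I})$ and $k \in K(I',\tilde{I}')$ one has $ak \in K(I',\tilde{I}')$ (as $K(I',\tilde{I}')$ is an ideal of $N(I',\tilde{I}')$, with $a,k,ak \in \mathfrak{A}$), so $\hat{\sigma}_{I',\tilde{I}'}(a)\sigma(k) = \sigma(ak) = \sigma(a)\sigma(k)$, and non-degeneracy of $\sigma_{\restriction K(I',\tilde{I}')}$ forces $\hat{\sigma}_{I',\tilde{I}'}(a) = \sigma(a)$. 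Now whenever $I \subset\subset \tilde{I} \subset\subset I' \subset\subset \tilde{I}'$ one has $N(I,\tilde{I}) \subset \mathcal{A}(\tilde{I}) \subset N(I',\tilde{I}')$, so the normal representation $(\hat{\sigma}_{I',\tilde{I}'})_{\restriction N(I,\tilde{I})}$ extends $\sigma_{\restriction K(I,\tilde{I})}$ and hence equals $\hat{\sigma}_{I,\tilde{I}}$ by uniqueness; in particular the two agree on $\mathcal{A}(I)$. Since the rational intervals are cofinal and nested and the net is continuous, the coherent family $\{\sigma_I\}$ assembles into a single locally normal representation $\tilde{\sigma}$ of $\mathcal{A}$ extending $\sigma$; here $\tilde{\sigma}_{\restriction\mathfrak{A}} = \sigma$ because $\mathcal{A}(I)$ is the weak closure of $K(I,\tilde{I})$.

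Finally, uniqueness of $\tilde{\sigma}$ and the statement about equivalences both follow from the uniqueness of the normal extension. Any locally normal extension of $\sigma$ must agree with $\tilde{\sigma}$ on each $K(I,\tilde{I})$, hence on each $N(I,\tilde{I})$ and therefore on each $\mathcal{A}(I)$, so it equals $\tilde{\sigma}$. For equivalences, an intertwiner of two representations of $\mathcal{A}$ restricts to one of the associated $\mathfrak{A}$-representations; conversely a unitary intertwining two representations of $\mathfrak{A}$ intertwines them on each $K(I,\tilde{I})$, hence on the normal extensions to $N(I,\tilde{I})$ and so on all local algebras, yielding an intertwiner of the extended net representations.
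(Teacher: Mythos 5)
Your proof is correct and takes essentially the same route as the reference: the paper gives no proof of Proposition \ref{prop:KLM} (it is quoted from \cite{KLM01}), and the argument there likewise rests on the fact you isolate --- non-degenerate representations of the compact ideal of a type $I$ factor extend uniquely to normal representations of the factor --- followed by the same coherence check over nested rational pairs. The only blemish is cosmetic: in justifying $\tilde{\sigma}_{\restriction\mathfrak{A}}=\sigma$ you say $\mathcal{A}(I)$ is the weak closure of $K(I,\tilde{I})$ (it is $N(I,\tilde{I})$ that is), but what you actually need is only that $\tilde{\sigma}$ agrees with $\sigma$ on the generators $K(I,\tilde{I})$ of $\mathfrak{A}$, which your multiplier-type computation already establishes.
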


Now, let $\{\mathcal{B}^{L}\}$ and $\{\mathcal{B}^{R}\}$ be local conformal nets extending $\{\A\}$ as in Definition \ref{def:inclnets}. Assume that $\{\B^{L/R}\}$ have the split property and that $\{\mathcal{A}\subset\mathcal{B}^{L/R}\}$ are discrete irreducible extensions with corresponding unital generalized net Q-systems of intertwiners $(\theta^L, w^L, \{m_i^L\})$, $(\theta^R, w^R, \{m_j^R\})$ given by global charged fields as in Proposition \ref{prop:netqsys}.

Define $K_{L}(I,\tilde{I})$, $K_{R}(I,\tilde{I})$, and the separable \Cstar-algebras $\mathfrak{B}_{L}$, $\mathfrak{B}_{R}$ as above. Using the last proposition, we want to show that the embedding homomorphisms $\jmath^{L}$ and $\jmath^{R}$ into the braided product (Proposition \ref{prop:netsbrprodembeddings}) can be decomposed as representations with respect to the center of the braided product. Note that by Proposition \ref{prop:relcomm=center} and \ref{prop:covarbrprod} the centers of the local algebras of the braided product agree, namely we have
$$\Z((\mathcal{B}^{L}\times^\pm_{\eps}\mathcal{B}^{R})(I))=\Z((\mathcal{B}^{L}\times^\pm_{\eps}\mathcal{B}^{R})(J))=\Z(\mathcal{B}^{L}\times^\pm_{\eps}\mathcal{B}^{R})$$
for every $I,J\in\mathcal{K}$.

\begin{proposition}\label{prop:intdecomp}
Let
$${\jmath^{L}}_{\restriction \mathfrak{B}_{L}}\cong \int_{X}^{\oplus}\jmath^{L}_{\lambda}d\mu(\lambda)$$
$${\jmath^{R}}_{\restriction \mathfrak{B}_{R}}\cong \int_{X}^{\oplus}\jmath^{R}_{\lambda}d\mu(\lambda)$$

be the disintegration of the restrictions of the embeddings $\jmath^{L}$, $\jmath^{R}$ to the separable \Cstar-subalgebras $\mathfrak{B}_{L}$ and $\mathfrak{B}_{R}$ with respect to the center of the braided product $\Z(\mathcal{B}^{L}\times^\pm_{\eps}\mathcal{B}^{R})\cong L^{\infty}(X,d\mu)$. Then, for $d\mu$-almost every $\lambda\in X$, the $\jmath^{L}_{\lambda}$ and $\jmath^{R}_{\lambda}$ lift to locally normal representations of the quasilocal \Cstar-algebras $\mathcal{B}^{L}$ and $\mathcal{B}^{R}$ respectively.
\begin{proof}
To prove the assertion, by the above proposition, it is enough to show that there is a $d\mu$-null set $E$ such that ${\jmath^{L}_{\lambda}}_{\restriction K_{L}(I,\tilde{I})}$ (\resp ${\jmath^{R}_{\lambda}}_{\restriction K_{R}(I,\tilde{I})}$) is non-degenerate for every $\lambda\notin E$ and $I,\tilde{I}\in\mathcal{I}_{\mathbb{Q}}$, $I\subset\subset\tilde{I}$. This is easily checked, because for fixed $I,\tilde{I}$, $\jmath^{L}_{\restriction K_{L}(I,\tilde{I})}$(\resp $\jmath^{R}_{\restriction K_{R}(I,\tilde{I})}$) is non-degenerate by Proposition \ref{prop:KLM} and consequently ${\jmath^{L}_{\lambda}}_{\restriction K_{L}(I,\tilde{I})}$ (\resp ${\jmath^{L}_{\lambda}}_{\restriction K_{L}(I,\tilde{I})}$) is also non-degenerate for $d\mu$-almost every $\lambda\in X$. Since $I,\tilde{I}\in\mathcal{I}_{\mathbb{Q}}$, $I\subset\subset\tilde{I}$ are countable, the statement follows.
\end{proof}
\end{proposition}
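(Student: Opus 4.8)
The plan is to read the statement entirely through the KLM reconstruction criterion (Proposition~\ref{prop:KLM}), applied to the split nets $\{\mathcal{B}^{L}\}$ and $\{\mathcal{B}^{R}\}$: a representation of the separable \Cstar-algebra $\mathfrak{B}_{L}$ is the restriction of a (necessarily unique) locally normal representation of $\{\mathcal{B}^{L}\}$ if and only if it is non-degenerate on each $K_{L}(I,\tilde{I})$ with $I\subset\subset\tilde{I}$, $I,\tilde{I}\in\mathcal{I}_{\mathbb{Q}}$, and symmetrically for $\mathfrak{B}_{R}$. Thus the whole statement reduces to checking, for $d\mu$-almost every $\lambda$, that the fibre $\jmath^{L}_{\lambda}$ (\resp $\jmath^{R}_{\lambda}$) is non-degenerate on every such $K_{L}(I,\tilde{I})$ (\resp $K_{R}(I,\tilde{I})$).

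First I would fix a pair $I\subset\subset\tilde{I}$ with rational endpoints. The total embedding $\jmath^{L}$ is itself a locally normal representation of $\{\mathcal{B}^{L}\}$ (the inclusion of the extension into the braided product, Proposition~\ref{prop:netsbrprodembeddings}), so the forward implication of Proposition~\ref{prop:KLM} gives that $\jmath^{L}_{\restriction K_{L}(I,\tilde{I})}$ is non-degenerate. The disintegration is legitimate because $\Z(\mathcal{B}^{L}\times^\pm_{\eps}\mathcal{B}^{R})$ lies in the commutant of $\jmath^{L}(\mathcal{B}^{L})$ and of $\jmath^{R}(\mathcal{B}^{R})$ by the very definition of the center; the underlying Hilbert space is separable, so the direct-integral machinery applies.

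The key step is to transfer non-degeneracy from the total representation to almost every fibre. Since $K_{L}(I,\tilde{I})$ is a separable \Cstar-algebra it carries an increasing countable approximate unit $(u_{n})$, and the essential projection of $\jmath^{L}$ on $K_{L}(I,\tilde{I})$ is $p^{L} := \slim_{n}\jmath^{L}(u_{n})$. Under the disintegration this is a measurable field of projections, $p^{L} = \int_{X}^{\oplus} p^{L}_{\lambda}\, d\mu(\lambda)$ with $p^{L}_{\lambda} = \slim_{n}\jmath^{L}_{\lambda}(u_{n})$ the essential projection of the fibre. Non-degeneracy of $\jmath^{L}$ on $K_{L}(I,\tilde{I})$ means $p^{L}=\oneop$, whence $p^{L}_{\lambda}=\oneop$ for $d\mu$-almost every $\lambda$, \ie, $\jmath^{L}_{\lambda}$ is non-degenerate on $K_{L}(I,\tilde{I})$ off a $d\mu$-null set $E^{L}_{I,\tilde{I}}$.

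Finally, the pairs $I\subset\subset\tilde{I}$ with $I,\tilde{I}\in\mathcal{I}_{\mathbb{Q}}$ are countable, so $E^{L}:=\bigcup_{I,\tilde{I}}E^{L}_{I,\tilde{I}}$ is again $d\mu$-null; outside $E^{L}$ the fibre $\jmath^{L}_{\lambda}$ is non-degenerate on all the relevant compacts and hence lifts, by the converse part of Proposition~\ref{prop:KLM}, to a locally normal representation of $\mathcal{B}^{L}$. Repeating the argument for $R$ produces a null set $E^{R}$, and $E:=E^{L}\cup E^{R}$ does the job. The only genuinely non-routine point is the transfer step: it rests on the standard fact that the integral of a measurable field of projections equals $\oneop$ precisely when the field equals $\oneop$ almost everywhere, together with the measurability of $\lambda\mapsto p^{L}_{\lambda}$, which is where a little care (but no real difficulty) is needed.
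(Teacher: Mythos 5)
Your proposal is correct and follows essentially the same route as the paper: reduce via the KLM criterion to non-degeneracy of the fibres on each $K_{L/R}(I,\tilde I)$, use local normality of the total embeddings to get non-degeneracy of $\jmath^{L/R}$ on each such algebra, pass to almost every fibre, and conclude by countability of the rational pairs. The only difference is that you spell out the transfer step (via an approximate unit and the essential projection of a measurable field of projections), which the paper leaves implicit.
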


\begin{proposition}\label{prop:defects}
Let $\lambda\in X\smallsetminus E$ as above. Then
\begin{itemize}
\item[$(1)$] $\jmath^{R}_{\lambda}\circ\iota^{R}=\jmath^{L}_{\lambda}\circ\iota^{L}$.
\item[$(2)$] $\jmath^{R}_{\lambda}(m_j^R)\jmath^{L}_{\lambda}(m_i^L)=\jmath^{L}_{\lambda}(\eps^{\pm}_{\theta^L,\theta^R})\jmath^{L}_{\lambda}(m_i^L)\jmath^{R}_{\lambda}(m_j^R)$.
\item[$(3)$] If $\hat U(g)=\int_{X}\hat U_{\lambda}(g)d\mu$ is the disintegration of the representation of the universal covering of the M\"obius group (given by Proposition \ref{prop:covarbrprod}) with respect to the center of the braided product, then
$$\hat U_{\lambda}(g)\jmath_{\lambda}^{L}(\mathcal{B}^{L}(J)) \hat U_{\lambda}(g)^{*}=\jmath_{\lambda}^{L}(\mathcal{B}^{L}(gJ))$$
$$\hat U_{\lambda}(g)\jmath_{\lambda}^{R}(\mathcal{B}^{R}(J))\hat U_{\lambda}(g)^{*}=\jmath_{\lambda}^{R}(\mathcal{B}^{R}(gJ)).$$
\item[$(4)$] Let $\Omega=\int_{X}\Omega_{\lambda}d\mu$, then $\hat U_{\lambda}(g)\Omega_{\lambda}=\Omega_{\lambda}$ and $\Omega_{\lambda}$ is cyclic for 
$$\bigvee_{J}\jmath^{L}_{\lambda}(\mathcal{B}^{L}(J))\vee \jmath^{R}_{\lambda}(\mathcal{B}^{R}(J)).$$
\item[$(5)$] $\jmath^{L}_{\lambda}(\mathcal{B}^{L}(I))\vee \jmath^{R}_{\lambda}(\mathcal{B}^{R}(I))$ is a factor.
\item[$(6)$] The inclusion $\jmath^{L}_{\lambda}(\iota^{L}(\mathcal{A}(I)))\subset \jmath^{L}_{\lambda}(\mathcal{B}^{L}(I))\vee \jmath^{R}_{\lambda}(\mathcal{B}^{R}(I))$ is irreducible.
\end{itemize}

\begin{proof}
Most of these assertions are trivial and follow from standard techniques in disintegration theory. Covariance, \ie, point $(3)$, follows by Proposition \ref{prop:covarbrprod}, Example \ref{ex:moebcovar}  and by the fact that $\hat U(g)\in \Z(\mathcal{B}^{L}\times^\pm_{\eps}\mathcal{B}^{R})^{\prime}$ by the expansion in Lemma \ref{lem:exprelcomm}.
\end{proof}
\end{proposition}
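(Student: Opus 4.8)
The plan is to read every assertion as a statement about the fibres of the \emph{central} disintegration of Proposition \ref{prop:intdecomp} and to obtain it by disintegrating a corresponding global identity that already holds in the braided product net $\{\mathcal{B}^{L}\times^\pm_\eps\mathcal{B}^{R}\}$. Throughout I write $B(I):=(\mathcal{B}^{L}\times^\pm_\eps\mathcal{B}^{R})(I)$, I realize $\mathcal{H}=\int_{X}^{\oplus}\mathcal{H}_\lambda\,d\mu(\lambda)$ with $\Z(\mathcal{B}^{L}\times^\pm_\eps\mathcal{B}^{R})\cong L^{\infty}(X,d\mu)$ acting as the diagonal (diagonalizable) algebra, and I use the remark preceding Proposition \ref{prop:intdecomp} together with the net analogue of \eqref{eq:genbrprod}: the local algebra $B(I)=\jmath^{L}(\mathcal{B}^{L}(I))\vee\jmath^{R}(\mathcal{B}^{R}(I))$ has centre exactly $\Z$ for every $I\in\K$. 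This is precisely why disintegrating over $\Z$ is a central disintegration, so that almost every fibre $B(I)_\lambda$ is a factor. The only operators I need to know are decomposable (i.e.\ lie in $\Z'$) are those of $B(I)$, which is automatic since $B(I)\subset\Z'$, and the covariance unitaries $\hat U(g)$, treated below.

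Items $(1)$ and $(2)$ are operator identities already valid at the global level: $(1)$ is \eqref{eq:embABonN} and $(2)$ is the braided commutation relation of Proposition \ref{prop:netsbrprodembeddings} (cf.\ \eqref{eq:commrelbrprod}), recalling that $\eps^{\pm}_{\theta^{L},\theta^{R}}\in\A$ and $\jmath^{L}_{\restriction\A}=\jmath^{R}_{\restriction\A}=\iota$. Since both sides of each relation are decomposable and the index sets $I,J$ are countable by discreteness of the extensions, disintegrating each identity gives its fibrewise version off a $\mu$-null set; the countable union of these null sets is null, and enlarging $E$ accordingly yields $(1)$ and $(2)$ for all $\lambda\notin E$.

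For $(3)$ the first, and genuinely non-formal, step is to justify that $\hat U(g)$ is decomposable, i.e.\ $\hat U(g)\in\Z'$. I would show that $\hat\alpha_g^{LR}=\Ad_{\hat U(g)}$ fixes $\Z$ pointwise. By the proof of Proposition \ref{prop:relcomm=center} the centre is generated by the elements $r_{\rho_i,\sigma_j}^{*}\,\psi^{L}_{\rho_i}\psi^{R}_{\sigma_j}$ with $r_{\rho_i,\sigma_j}\in\Hom(\id,\rho_i\sigma_j)$; using the covariance of the charged fields from Proposition \ref{prop:covarbrprod} (so $\hat\alpha_g^{LR}(\psi^{L}_{\rho_i})=z(g,\rho_i)\psi^{L}_{\rho_i}$, likewise for $\psi^{R}_{\sigma_j}$), the charged-field relation $\psi^{L}_{\rho_i}\,z(g,\sigma_j)=\rho_i(z(g,\sigma_j))\,\psi^{L}_{\rho_i}$, tensoriality $z(g,\rho_i\sigma_j)=z(g,\rho_i)\rho_i(z(g,\sigma_j))$, and naturality $\alpha_g(r_{\rho_i,\sigma_j})=z(g,\rho_i\sigma_j)\,r_{\rho_i,\sigma_j}$ (since $z(g,\id)=\oneop$), one computes $\hat\alpha_g^{LR}(r_{\rho_i,\sigma_j}^{*}\psi^{L}_{\rho_i}\psi^{R}_{\sigma_j})=r_{\rho_i,\sigma_j}^{*}\psi^{L}_{\rho_i}\psi^{R}_{\sigma_j}$. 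Hence $\hat U(g)\in\Z'$, so $\hat U(g)=\int_{X}^{\oplus}\hat U_\lambda(g)\,d\mu$, and disintegrating the global covariance relations of Proposition \ref{prop:covarbrprod} for each fixed $g$ gives the fibrewise intertwining relations off a null set. Running this over a countable dense subgroup of $\cP$ and invoking strong continuity of $g\mapsto\hat U(g)$ to select strongly continuous representatives $g\mapsto\hat U_\lambda(g)$ in almost every fibre yields $(3)$.

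The remaining items are standard central-disintegration theory. For $(4)$, $\hat U(g)\Omega=\Omega$ (Theorem \ref{thm:discretecovariance}, Proposition \ref{prop:covarbrprod}) disintegrates to $\hat U_\lambda(g)\Omega_\lambda=\Omega_\lambda$ a.e., and cyclicity of $\Omega$ for $\bigvee_{J}B(J)$ passes to cyclicity of $\Omega_\lambda$ for $\bigvee_{J}\jmath^{L}_\lambda(\mathcal{B}^{L}(J))\vee\jmath^{R}_\lambda(\mathcal{B}^{R}(J))$ a.e., by the usual behaviour of cyclic vectors under central decomposition. For $(5)$, the central disintegration of $B(I)$ produces factors a.e., and since $B(I)=\jmath^{L}(\mathcal{B}^{L}(I))\vee\jmath^{R}(\mathcal{B}^{R}(I))$ the fibre factor is exactly $\jmath^{L}_\lambda(\mathcal{B}^{L}(I))\vee\jmath^{R}_\lambda(\mathcal{B}^{R}(I))$. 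For $(6)$, Proposition \ref{prop:relcomm=center} gives $B(I)\cap\A(I)'=\Z$ globally; as relative commutants commute with central disintegration for countably generated algebras and $\Z$ is the diagonal algebra (so $\Z_\lambda=\CC$), the fibre relative commutant of $\jmath^{L}_\lambda(\iota^{L}(\A(I)))$ in $\jmath^{L}_\lambda(\mathcal{B}^{L}(I))\vee\jmath^{R}_\lambda(\mathcal{B}^{R}(I))$ is trivial a.e., which is irreducibility. I expect the main obstacle to be purely the bookkeeping: arranging that these countably many almost-everywhere statements hold off a single null set $E$, and verifying that each von Neumann-algebraic operation used (generation by $\jmath^{L}\vee\jmath^{R}$, passage to the centre and to relative commutants, factoriality, cyclicity) commutes with the direct integral. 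These are routine in the countably generated setting guaranteed here by the split property and the reduction to the separable algebras $\mathfrak{B}_{L/R}$ via Proposition \ref{prop:KLM}, but they require care.
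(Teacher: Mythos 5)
Your proposal is correct and follows essentially the same route as the paper's (very terse) proof: all items are obtained by disintegrating the corresponding global identities over the center, with the one non-formal ingredient being that $\hat U(g)\in\Z(\mathcal{B}^{L}\times^\pm_{\eps}\mathcal{B}^{R})'$, which the paper also extracts from the expansion of central elements in Lemma \ref{lem:exprelcomm}. Your explicit computation that $\hat\alpha_g^{LR}$ fixes the generators $r_{\rho_i,\sigma_j}^{*}\psi^{L}_{\rho_i}\psi^{R}_{\sigma_j}$ via naturality and tensoriality of the cocycles is exactly the justification the paper leaves implicit, and the remaining bookkeeping (countable null sets, continuity via a dense subgroup, disintegration of commutants and cyclic vectors) is the "standard disintegration theory" the paper invokes.
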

\begin{remark}
Proposition \ref{prop:defects} shows that the braided product construction of two net extensions $\{\mathcal{A}\subset\mathcal{B}^{L}\}$, $\{\mathcal{A}\subset\mathcal{B}^{R}\}$ with the required properties induces, via central decomposition, a family of \emph{irreducible} phase boundaries $(\jmath_\lambda^L,\jmath_\lambda^R)$ indexed by the spectrum $X$ (up to a measure zero set) of the center of $\mathcal{B}^{L}\times^\pm_{\eps}\mathcal{B}^{R}$ and living on the Hilbert space $\mathcal{H}_{\lambda}$, $\lambda\in X$.

Of course, depending on whether $\{\mathcal{B}^{L}\}$ and $\{\mathcal{B}^{R}\}$ are interpreted to be theories respectively on the left and on the right of the boundary, or vice versa, one has to take the braided product with the correct sign, namely with $\eps^+$ or $\eps^-$.
\end{remark}

\section{An example with the $U(1)$-current}\label{sec:exampleU(1)}

In this section we work out concretely the braided product between local extensions of the \emph{$U(1)$-current net}. We will see examples where the center of the braided product net is a continuous algebra and therefore the direct integral representation as in Proposition \ref{prop:intdecomp} does not reduce to a direct sum. This shows in particular that the braided product is not a universal object in the sense of \cite[\Prop 5.1]{BKLR16}. This behaviour is expected, since, as in the finite index case, phase boundary conditions for orbifold theories should be determined by their gauge group, see \cite[\Sec 6.2]{BKLR16}. We will show the manifestation of this fact in at least one example. 

For the definition of the $U(1)$-current $\{\A_{U(1)}\}$ we refer to \cite{BMT88}, \cite{GLW98}, \cite{Lon2}, and to \cite[\Ch 12]{DelPhd} for more detailed calculations.
Let $I$ be a proper interval of $\Sone\smallsetminus \{1\}$ and let $f\in C^{\infty}(\Sone,\mathbb{R})$ with support contained in $I$. Define the net representation $\{\rho_{f, J}\}_{J}$ first on Weyl operators $W(g)$ in the following way
$$\rho_{f, J}(W(g)):=e^{i\int{f(\theta)g(\theta)\frac{d\theta}{2\pi}}}W(g)$$
for $g\in C^{\infty}(\Sone,\mathbb{R})$ with support in a proper interval $J$ of $\Sone\smallsetminus\{1\}$.
These above defined maps are locally unitarily implemented: let $I_{0}$ be a proper interval of $\Sone\smallsetminus\{1\}$ disjoint from $I$ and $J$, and let $f_{0}\in C^{\infty}(\Sone,\mathbb{R})$ with support in $I_{0}$ and such that $\int_{\Sone}{f}=\int_{\Sone}{f_{0}}$. Define $L_{I\rightarrow I_{0}}$ as a primitive of $f_{0}-f$, namely $L_{I\rightarrow I_{0}}^{\prime}=f_{0}-f$. It is an easy calculation to show that
$$W(L_{I\rightarrow I_{0}})W(g)W(L_{I\rightarrow I_{0}})^{*}=\rho_{f,J}(W(g)).$$
Thus the maps $\{\rho_{f, J}\}_{J}$ can be extended in a unique way to the local von Neumann algebras and they determine a locally normal representation of $\{\A_{U(1)}\}$, which is clearly DHR. Moreover these representations are classified up to unitary equivalence by the value $\int_{\Sone}{f}$ which is usually referred to as the \emph{charge}, thus yielding a continuous family of irreducible DHR sectors.

We now compute explicitly the braiding operator for the irreducible DHR representations described above. Let $\rho_{f}$ be localized in the interval $I$. If $\hat{I}$ is an interval disjoint from $I$ and $I<\hat{I}$, take $\hat{f}\in C^{\infty}({\Sone,\mathbb{R}})$ with support in $\hat{I}$ and with same charge as $f$, \ie, $\int_{\Sone}{f}=\int_{\Sone}{\hat{f}}$. If we denote by $u_{\hat{I}}:=W(L_{I\rightarrow \hat{I}})\in \Hom_{\DHR\{\A\}}(\rho_{f}, \rho_{\hat{f}})$ the charge transporter between $\rho_{f}$ and $\rho_{\hat{f}}$, by definition the braiding operator $\eps^{+}_{\rho_{f},\rho_{f}}$ is obtained by
$$\eps^{+}_{\rho_{f},\rho_{f}}=u_{\hat{I}}^{*}\rho_{f}(u_{\hat{I}}).$$
Performing the computation we get 
$$\eps^{+}_{\rho_{f},\rho_{f}}=e^{i\int{fL_{I\rightarrow \hat{I}}}}u_{\hat{I}}^{*}(u_{\hat{I}})=e^{-i\pi Q^{2}}$$
where Q is the charge of the DHR sector $\rho_{f}$. In particular $\eps^{+}_{\rho_{f},\rho_{f}}=\oneop$ if and only if $Q=\sqrt{2\pi N}$ with $N\in\mathbb{N}$.

\subsection{Buchholz-Mack-Todorov extensions}

We here quickly review the \emph{local} extensions of the $U(1)$-current net constructed in \cite{BMT88}. Let $\rho_{f}$ be a DHR automorphism of the $U(1)$-current net localized in the interval $I$ as above, such that $\eps^{+}_{\rho_{f},\rho_{f}}=\oneop$. To shorten notation denote $\rho=\rho_{f}$.
Any such automorphism gives a local extension of the net by a crossed product with the group $\mathbb{Z}$ which acts on the net as powers of $\rho$. Let 
$$\hat{\mathcal{H}}:=\bigoplus_{k\in\mathbb{Z}}\mathcal{H}_{k}$$
with $\mathcal{H}_{k}=\mathcal{H}$ (= vacuum Hilbert space of the $U(1)$-current net) and let $\pi$ be a representation of the quasilocal \Cstar-algebra $\mathcal{A}_{U(1)}$ of the net restricted to $\RR \cong \Sone\smallsetminus\{1\}$, defined as

$$\pi:\mathcal{A}_{U(1)}\rightarrow B(\hat{\mathcal{H}})$$
$$\pi(a):=\bigoplus_{k\in\mathbb{Z}}\rho^{k}(a)$$

Denote by $U$ the shift operator on $\hat{\mathcal{H}}$, \ie, $U\{\xi_{k}\}_{k\in\mathbb{Z}}=\{\xi_{k+1}\}_{k\in\mathbb{Z}}$ for $\xi\in\hat{\mathcal{H}}$.
It is clear that the shift operator $U$ implements the localized automorphism $\rho$ in this representation
$$U\pi(a)U^{*}=\pi(\rho(a)).$$
In other words $U$ is a charged field for $\rho$.

\begin{definition}
The BMT (Buchholz-Mack-Todorov) extension $\{\mathcal{B}_{\rho}\}=\{\mathcal{A}_{U(1)}\rtimes_{\rho}\mathbb{Z}\}$ is the net given by
$$\mathcal{B}_{\rho}(I):=\left\langle \pi(\mathcal{A}(I)), U\right\rangle $$
$$\mathcal{B}_{\rho}(J):=\left\langle \pi(\mathcal{A}(J)), \pi(u_{J})U\right\rangle .$$
\end{definition}

It is an easy matter to check that this definition is well posed and the net is isotonous (it follows directly from Haag duality of the $U(1)$-current net on $\RR$, \ie, strong additivity). Locality of BMT extensions $\{\B_\rho\}$ follows from $\eps^{+}_{\rho,\rho}=\eps^{-}_{\rho,\rho}=\oneop$, \cf Theorem \ref{thm:discreteextnets}. The inclusion $\{\A\subset\B_\rho\}$ is clearly discrete and irreducible.

The DHR automorphisms of the $U(1)$-current extend to representations of the net $\{\mathcal{B}_{\rho}\}$, and the DHR sectors of BMT extensions were already classified in \cite{BMT88}. We recall these facts to establish the notation.

\begin{proposition}\emph{\cite{BMT88}}.
For every DHR automorphism $\sigma$ of $\mathcal{A}_{U(1)}$ there are two locally normal representations $\tilde{\sigma}^{\pm}$ of $\mathcal{B}_\rho$ such that $\tilde{\sigma}^{\pm}(\pi(a)) = \pi(\sigma(a))$, $a\in\mathcal{A}_{U(1)}$. Moreover, $\tilde{\sigma}^{+}=\tilde{\sigma}^{-}$ if and only if $\tilde{\sigma}^{+}$ (or equivalently $\tilde{\sigma}^{-}$) is a DHR representation of the net $\{\mathcal{B}_\rho\}$, if and only if $\eps^{+}_{\rho,\sigma}=\eps^{-}_{\rho,\sigma}$. Otherwise $\tilde{\sigma}^{\pm}$ have solitonic localization (they are localizable in half-lines). In particular, there are $2N$ inequivalent DHR automorphisms of the net $\{\mathcal{B}_\rho\}$, where $Q=\sqrt{2\pi N}$ is the charge of $\rho$.
\end{proposition}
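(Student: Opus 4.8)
The plan is to produce the two extensions $\tilde\sigma^{\pm}$ by hand from the charged field $U$, to identify the coincidence $\tilde\sigma^{+}=\tilde\sigma^{-}$ with triviality of the monodromy and with DHR (as opposed to solitonic) localization, and finally to count the genuine DHR classes by a quotient computation using the braiding phase computed above. Since the DHR sectors of $\{\mathcal{A}_{U(1)}\}$ are charge automorphisms and charges add, the sector group is abelian, so $\sigma\rho=\rho\sigma$; consequently the shift $U$, which implements $\rho$ in the representation $\pi$, also implements $\rho$ in $\pi\circ\sigma$. I would therefore set $\tilde\sigma^{\pm}(\pi(a)):=\pi(\sigma(a))$ for $a\in\mathcal{A}_{U(1)}$, and define $\tilde\sigma^{\pm}$ on the local charged fields $\pi(u_{J})U$ by transporting the charge of $\sigma$ past that of $\rho$ with the two opposite braidings $\eps^{\pm}_{\rho,\sigma}$, concretely $\tilde\sigma^{\pm}(\pi(u_{J})U):=\pi(\sigma(u_{J}))\,\eps^{\pm}_{\rho,\sigma}\,U$ up to the placement of the transporter relative to the localization of $\sigma$. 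Each prescription respects the crossed-product relation $\Psi\,\pi(a)\,\Psi^{*}=\pi(\rho_{J}(a))$ and yields a $*$-representation of the quasilocal algebra generated by $\pi(\mathcal{A}_{U(1)})$ and the $\{\pi(u_{J})U\}$; that these are \emph{locally normal} representations of $\{\mathcal{B}_{\rho}\}$ follows from local normality of $\sigma$ together with Proposition \ref{prop:KLM}. By construction $\tilde\sigma^{\pm}\circ\pi=\pi\circ\sigma$, which is the first assertion.

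Next I would settle the equivalences. The two representations agree on $\pi(\mathcal{A}_{U(1)})$ and, on the charged fields, differ only by the scalar monodromy $\eps^{+}_{\rho,\sigma}(\eps^{-}_{\rho,\sigma})^{*}$, whence $\tilde\sigma^{+}=\tilde\sigma^{-}$ precisely when $\eps^{+}_{\rho,\sigma}=\eps^{-}_{\rho,\sigma}$. To link this to the localization type I would track the phase that $\sigma(u_{J})=\rho_{h}(W(L_{I\to J}))$ acquires, writing $\sigma=\rho_{h}$, as the interval $J$ sweeps across $\Sone\smallsetminus\{1\}$: when the monodromy is trivial the assignment $J\mapsto\tilde\sigma(\pi(u_{J})U)$ closes up coherently across the removed point, so $\tilde\sigma$ is transportable within bounded intervals and is a DHR representation; when the monodromy is nontrivial the assignment fails to close, the obstruction being exactly $\eps^{+}_{\rho,\sigma}(\eps^{-}_{\rho,\sigma})^{*}$, and $\tilde\sigma^{\pm}$ can be localized only in half-lines, i.e. are solitonic. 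Here one uses that $\{\mathcal{B}_{\rho}\}$ is local, i.e. $\eps^{+}_{\rho,\rho}=\eps^{-}_{\rho,\rho}=\oneop$ by Theorem \ref{thm:discreteextnets}, which is what makes $U$ itself a DHR charged field and guarantees the construction is consistent to begin with.

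Finally the count. A DHR automorphism of $\{\mathcal{B}_{\rho}\}$ restricts on $\mathcal{A}_{U(1)}$ to a $\sigma$ with $\eps^{+}_{\rho,\sigma}=\eps^{-}_{\rho,\sigma}$, and two such $\sigma,\sigma'$ induce unitarily equivalent sectors of $\mathcal{B}_{\rho}$ iff they differ by a power of $\rho$, since $\rho$ is implemented by $U$ and hence becomes inner in $\mathcal{B}_{\rho}$. Using the explicit phase $\eps^{+}_{\rho,\sigma}=e^{-i\pi Q_{\rho}Q_{\sigma}}$ from the computation above, the admissible charges form the lattice $\tfrac{1}{Q_{\rho}}\mathbb{Z}$ and the equivalence identifies $Q_{\sigma}\sim Q_{\sigma}+Q_{\rho}$, so the DHR classes are labelled by $\tfrac{1}{Q_{\rho}}\mathbb{Z}/Q_{\rho}\mathbb{Z}\cong\mathbb{Z}/Q_{\rho}^{2}\mathbb{Z}$; the constraint $\eps^{+}_{\rho,\rho}=\oneop$ fixes $Q_{\rho}^{2}$ to be the integer $2N$ appearing in the statement, giving exactly $2N$ classes and recovering the classification of \cite{BMT88}.

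The step I expect to be the crux is the DHR-versus-soliton dichotomy: making precise that a nontrivial monodromy $\eps^{+}_{\rho,\sigma}(\eps^{-}_{\rho,\sigma})^{*}\neq\oneop$ genuinely obstructs localization in any bounded interval while still permitting half-line localization, and that the two a priori distinct prescriptions $\tilde\sigma^{\pm}$ exhaust the solitonic extensions. This requires careful bookkeeping of the charge-transporter phases around $\Sone\smallsetminus\{1\}$; the remaining steps are either immediate from the definitions or reduce to the abelian braiding already in hand.
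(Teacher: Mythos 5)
Your construction is the same as the paper's: define $\tilde\sigma^{\pm}$ to be $\pi\circ\sigma$ on $\pi(\A_{U(1)})$ and to act on the charged fields by inserting $\eps^{\pm}_{\rho,\sigma}$, verify compatibility with the crossed-product relation $U\pi(a)U^*=\pi(\rho(a))$ via naturality of the braiding, and read off both the coincidence criterion $\tilde\sigma^{+}=\tilde\sigma^{-}\Leftrightarrow\eps^{+}_{\rho,\sigma}=\eps^{-}_{\rho,\sigma}$ and the half-line localization from the triviality or non-triviality of the monodromy. The paper's localization argument is the concrete version of your ``sweeping $J$ across the removed point'': one computes directly that $\tilde\sigma^{+}(\pi(u_J)U)=\pi(u_J)U$ for $J<I$ (and symmetrically for $\tilde\sigma^{-}$ when $I<J$), so each of the two prescriptions acts trivially on one half-line.

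The one place where your justification is off target is the passage from a $*$-algebra endomorphism to a \emph{locally normal} representation of the von Neumann algebras $\B_\rho(J)$. Proposition \ref{prop:KLM} is a statement about extending non-degenerate representations of the separable \Cstar-algebra of local compacts to locally normal representations of the quasilocal algebra; it does not by itself show that your abstractly defined endomorphism of the dense $*$-subalgebra $\B_\rho(J)\cap\mathfrak{B}$ extends normally to $\B_\rho(J)$. The paper's mechanism is more elementary and is the step you should supply: for each bounded $J$, the restriction of $\tilde\sigma^{\pm}$ to $\B_\rho(J)\cap\mathfrak{B}$ coincides with $\Ad_{\pi(u_{\hat I})}$ for a charge transporter $u_{\hat I}=W(L_{I\to\hat I})$ with $\hat I<J$ (for $\tilde\sigma^{+}$) or $J<\hat I$ (for $\tilde\sigma^{-}$); being unitarily implemented, it extends uniquely and normally to $\B_\rho(J)$, and consistency over $J$ is automatic. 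Your final counting argument (trivial monodromy forces $Q_\sigma$ into the dual lattice of $Q_\rho\ZZ$, and $\rho$ becoming inner in $\B_\rho$ identifies $Q_\sigma\sim Q_\sigma+Q_\rho$, giving $Q_\rho^2=2N$ classes) is a genuine addition: the paper states the count but does not derive it in the proof, deferring to \cite{BMT88}.
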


\begin{proof}
The automorphisms $\tilde{\sigma}^{\pm}$ can be defined by $\alpha$-induction of $\sigma$ for the extension $\{\mathcal{A}_{U(1)}\subset\mathcal{B}_\rho\}$, \cite[\Prop 3.9]{LoRe95}, but we here describe them explicitly since we will need them in the following. We first define the action of $\tilde{\sigma}^{\pm}$ on the *-algebra ${\mathfrak{B}}$ generated by $\pi(\mathcal{A}_{U(1)})$ and the shift $U$. Define
$$\tilde{\sigma}^{\pm}(\pi(a)):=\pi(\sigma(a))$$
$$\tilde{\sigma}^{\pm}(U^{n}):=\pi(\eps^{\pm}_{\rho^{n},\sigma})U^{n}$$
To check that this is a well defined endomorphism of the *-algebra it is enough to check that 
$$\tilde{\sigma}^\pm(U^{*})=\tilde{\sigma}^\pm(U)^{*},\quad\tilde{\sigma}^\pm(U)\tilde{\sigma}^\pm(\pi(a))\tilde{\sigma}^\pm(U)^{*}=\tilde{\sigma}^\pm(\pi(\rho(a))).$$
The first relation is an immediate consequence of naturality of the braiding, for the second we have
$$\tilde{\sigma}^\pm(U)\tilde{\sigma}^\pm(\pi(a))\tilde{\sigma}^\pm(U)^{*}=
\pi(\eps^{\pm}_{\rho,\sigma})U\pi(\sigma(a))U^{*}\pi(\eps^{\pm}_{\rho,\sigma})^{*}$$
$$=\pi(\eps^{\pm}_{\rho,\sigma})\pi(\rho(\sigma(a)))\pi(\eps^{\pm}_{\rho,\sigma})^{*}
=\pi(\sigma(\rho(a))).$$

Now, observe that for a fixed proper bounded interval $J$ of $\mathbb{R}$, the endomorphism $(\tilde\sigma)^{\pm}$ restricted to $\mathcal{B}_\rho(J)\cap {\mathfrak{B}}$ is locally implemented by the unitary $\pi(u_{\hat I}) := \pi(W(L_{I\rightarrow \hat{I}}))$ where $\hat{I}$ is a proper bounded interval where $\hat{I}<J$ if we consider $\tilde{\sigma}^{+}$ and $J<\hat{I}$ if we consider $\tilde{\sigma}^{-}$, \ie, $\tilde{\sigma}^\pm(b)=\Ad_{\pi(u_{\hat{I}})}(b)$ for every $b\in\mathcal{B}_\rho(J)\cap {\mathfrak{B}}$. Since $\mathcal{B}_\rho(J)\cap {\mathfrak{B}}$ is ultraweakly dense in $\mathcal{B}_\rho(J)$, the endomorphism can be extended in a unique way consistently on every local algebra.

Regarding the localization of $\tilde{\sigma}^\pm$, if $J<I$
$$\tilde{\sigma}^{+}(\pi(u_{J})U)=\pi(\sigma(u_{J}))\pi(\eps^{+}_{\rho,\sigma})U$$
$$=\pi(\sigma(u_{J}))\pi(\sigma(u_{J}))^{*}\pi(u_{J})U=\pi(u_{J})U$$
Similarly for $I<J$ we have the same result for $\tilde{\sigma}^{-}$. Thus they are localizable in half-lines, a priori, and also DHR if and only if $\eps^{+}_{\rho,\sigma}=\eps^{-}_{\rho,\sigma}$.
\end{proof}

\subsection{Braided product of BMT extensions}

Let $\{\mathcal{B}_{\rho_{L}}\}, \{\mathcal{B}_{\rho_{R}}\}$ be two local BMT extensions of the $U(1)$-current net given by two DHR automorphisms $\rho_{L}$ and $\rho_{R}$ as in the previous section. We would like to construct the braided product of two such nets in a concrete fashion. Let
$$\bigoplus_{l\in\mathbb{Z}}\hat{\mathcal{H}}=\bigoplus_{(l,h)\in\mathbb{Z}^{2}}\mathcal{H}$$
where $\mathcal{H}$ is the vacuum Hilbert space of the $U(1)$-current net, and $\Omega_{\A_{U(1)}}$ is the vacuum vector. We denote $\hat\Omega=\{\hat\Omega_{l,h}\}_{(l,h)\in\mathbb{Z}^{2}}$, with  $\hat\Omega_{l,h}:=\delta_{l,0}\delta_{h,0}\Omega_{\A_{U(1)}}$.

Let $\iota_{L}$ be the solitonic representation of $\mathcal{B}_{\rho_{L}}$ defined on the above Hilbert space as follows
$$\iota_{L}: \mathcal{B}_{\rho_{L}}\rightarrow \bigoplus_{l\in\mathbb{Z}}\B(\hat{\mathcal{H}})\subset \B(\bigoplus_{(l,k)\in\mathbb{Z}^{2}}\mathcal{H})$$
$$\iota_{L}:=\bigoplus_{l\in\mathbb{Z}}\tilde{\rho}_{R}$$
and similarly for $\hat{\iota}_{R}$
$$\hat{\iota}_{R}: \mathcal{B}_{\rho_{R}}\rightarrow \bigoplus_{h\in\mathbb{Z}}\B(\hat{\mathcal{H}})\subset \B(\bigoplus_{(l,k)\in\mathbb{Z}^{2}}\mathcal{H})$$
$$\hat{\iota}_{R}:=\bigoplus_{h\in\mathbb{Z}}\tilde{\rho}_{L}$$
Define 
$$\hat{\eps}:\bigoplus_{(l,h)\in\mathbb{Z}^{2}}\mathcal{H}\rightarrow \bigoplus_{(l,h)\in\mathbb{Z}^{2}}\mathcal{H}$$
$$\hat{\eps}\{\xi_{l,h}\}_{(l,h)\in\mathbb{Z}^{2}}:=\{\eps^{\pm}_{\rho_{L}^{l},\rho_{R}^{h}}\xi_{l,h}\}_{(l,h)\in\mathbb{Z}^{2}}$$
and twist the representation $\hat{\iota}_{R}$ by $\hat{\eps}$
$$\iota_{R}:=\Ad_{\hat{\eps}}(\hat{\iota}_{R}(\cdot)).$$
Observe that
$$\iota_{L}\circ\pi_{L}=\iota_{R}\circ \pi_{R}$$
where $\pi_{L}$ and $\pi_{R}$ are the inclusion maps of $\A_{U(1)}$ into $\mathcal{B}_{\rho_L}$ and $\mathcal{B}_{\rho_R}$ respectively, explicitly
$$(\iota_{L}\circ\pi_{L})(a)=\bigoplus_{k,l\in\mathbb{Z}}\rho_{R}^{h}(\rho_{L}^{l}(a))=\Ad_{\hat{\eps}}(\bigoplus_{k,l\in\mathbb{Z}}\rho_{L}^{l}(\rho_{R}^{h}(a)))=(\iota_{R}\circ\pi_{R})(a)$$
for every $a\in\A_{U(1)}$. Let $U\in \mathcal{B}_{\rho_{L}}(I)$ and $V\in\mathcal{B}_{\rho_{R}}(I)$ be the charged fields for the DHR automorphisms $\rho_L$ and $\rho_R$ respectively. Then

\begin{proposition}\label{prop:u1rel}
$$\iota_{R}(V)\iota_{L}(U)=\iota_{L}(\pi_{L}(\eps^{\pm}_{\rho_{L},\rho_{R}}))\,\iota_{L}(U)\iota_{R}(V)$$
\end{proposition}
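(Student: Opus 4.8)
The plan is to recognise the stated identity as the concrete incarnation, for the two BMT extensions, of the general braided--product commutation relation for charged fields, equation (\ref{eq:commrelbrprod}) of Proposition \ref{prop:brprodembeddings} (\cf Lemma \ref{lem:freealg}): here $U$ plays the role of the charged field for $\rho_L\prec\theta^L$ and $V$ that of the charged field for $\rho_R\prec\theta^R$, so the asserted relation is exactly $\iota_R(V)\iota_L(U)=\eps^\pm_{\rho_L,\rho_R}\,\iota_L(U)\iota_R(V)$ once one reads $\iota_L(\pi_L(\eps^\pm_{\rho_L,\rho_R}))$ as the (scalar) image of the braiding. In principle one could deduce it by matching the concrete representations $\iota_L,\iota_R$ with the abstract embeddings $\jmath^L,\jmath^R$ and invoking (\ref{eq:commrelbrprod}); but since this section is built on fully explicit operators, I would instead verify it by direct computation on $\bigoplus_{(l,h)\in\ZZ^2}\Hilb$, which is shorter and self-contained.

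First I would record the explicit form of the two charged fields. Using the BMT formula $\tilde\sigma^\pm(U^n)=\pi(\eps^\pm_{\rho^n,\sigma})U^n$ for the induced representations, $\iota_L(U)$ acts as the shift along the $l$--tower dressed by the braiding operator coming from $\tilde\rho_R$, and $\hat\iota_R(V)$ as the shift along the $h$--tower dressed by the braiding operator coming from $\tilde\rho_L$. The simplification special to the $U(1)$--current is that every DHR sector is an automorphism, so all braidings are \emph{scalars}; by tensoriality of the braiding (which for scalars reduces to multiplicativity) one gets $\eps^\pm_{\rho_L^l,\rho_R^h}=(\eps^\pm_{\rho_L,\rho_R})^{\,lh}$, confirmed by the explicit value $\eps^+_{\rho_f,\rho_g}=e^{-\ima\pi Q_fQ_g}$. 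Hence $\hat\eps=\diag((\eps^\pm_{\rho_L,\rho_R})^{lh})$ and, in particular, $\iota_L(\pi_L(\eps^\pm_{\rho_L,\rho_R}))=\eps^\pm_{\rho_L,\rho_R}\,\oneop$ is a scalar, so the identity collapses to the scalar commutation relation displayed above.

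Next I would carry out the twist, computing $\iota_R(V)=\Ad_{\hat\eps}(\hat\iota_R(V))$ by conjugating the $h$--shift through the diagonal phase operator $\hat\eps$. Conjugating a shift by $\hat\eps$ multiplies it by a ratio of the phases $(\eps^\pm_{\rho_L,\rho_R})^{lh}$ at neighbouring lattice points, which again uses tensoriality of the braiding and absorbs the phase dressing of $\hat\iota_R(V)$, leaving a shift whose residual phase is fixed. Applying both sides to a generic vector $\{\xi_{l,h}\}$, the shifts of $\iota_L(U)$ and $\iota_R(V)$ move different tower indices and the accumulated scalar phases can be read off and compared directly; the surplus phase on the left-hand side is exactly $\eps^\pm_{\rho_L,\rho_R}$, matching the right-hand side.

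I expect the only real work to be the phase bookkeeping in this last step: tracking how the $h$--dependent dressing of $\iota_L(U)$, the $\hat\eps$--conjugation of $\iota_R(V)$, and the two shifts combine. Two points must be handled with care, namely the tensoriality identity that lets one factor $\eps^\pm_{\rho_L^l,\rho_R^h}$ through the shifts, and the symmetry $\eps^\pm_{\rho_R,\rho_L}=\eps^\pm_{\rho_L,\rho_R}$ of the $U(1)$ braiding, needed to identify the phase produced by $V$'s dressing. Once these are in place the equality is immediate, and the same computation, read off without collapsing the braidings to scalars, is what reproduces the general relation (\ref{eq:commrelbrprod}).
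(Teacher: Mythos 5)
Your proposal is correct and takes essentially the same route as the paper, whose entire proof of this proposition is the phrase ``by direct computation'': writing $\lambda:=\eps^{\pm}_{\rho_L,\rho_R}$ (a scalar, symmetric in $\rho_L,\rho_R$), one finds $(\iota_L(U)\xi)_{l,h}=\lambda^{h}\xi_{l+1,h}$ while the $\hat\eps$--conjugation cancels the dressing of $\hat\iota_R(V)$ so that $\iota_R(V)$ is the bare $h$--shift, and comparing $\lambda^{h+1}$ with $\lambda\cdot\lambda^{h}$ gives exactly the stated relation. The tensoriality identity $\eps^{\pm}_{\rho_L^l,\rho_R^h}=\lambda^{lh}$ and the symmetry of the $\U(1)$ braiding that you flag as the delicate points are indeed the only inputs, and both hold.
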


\begin{proof}
By direct computation.
\end{proof}

\begin{proposition}
Let $\{\mathcal{B}_{\rho_{L}}\}$ and $\{\mathcal{B}_{\rho_{R}}\}$ two local BMT extensions as above. The net of von Neumann algebras defined by
$$\hat{\mathcal{B}}^{\pm}(I):=\left\langle \iota_{L}\circ\pi_{L}(\mathcal{A}_{U(1)}(I)),\iota_{L}(U),\iota_{R}(V)\right\rangle ,$$
$$\hat{\mathcal{B}}^{\pm}(J):=\left\langle \iota_{L}\circ\pi_{L}(\mathcal{A}_{U(1)}(J)),\iota_{L}\circ\pi_{L}(u_{J})\iota_{L}(U),\iota_{R}\circ\pi_{R}(v_{J})\iota_{R}(V)\right\rangle,$$
where $u_{J}$ and $v_{J}$ are unitary charge transporters respectively for $\rho_{L}$ and $\rho_{R}$ between intervals $I$ and $J$ (\ie the endomorphisms $\Ad_{u_{J}}\rho_{L}$ and $\Ad_{v_{J}}\rho_{R}$ are localized in $J$),
is unitarily equivalent to the braided product net, \ie 
$$\{\hat{\mathcal{B}}^{\pm}\}\cong\{\mathcal{B}_{\rho_{L}}\times^{\pm}_{\eps}\mathcal{B}_{\rho_{R}}\}.$$
\end{proposition}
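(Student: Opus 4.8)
The plan is to exhibit an explicit unitary intertwining the concretely defined net $\{\hat{\mathcal{B}}^{\pm}\}$ with the braided product net $\{\mathcal{B}_{\rho_L}\times^{\pm}_\eps\mathcal{B}_{\rho_R}\}$, and to do so it suffices (by Proposition \ref{prop:netqsys} and the uniqueness of the vacuum representation via the GNS theorem) to check that the map sending the generators of one net to those of the other is an isomorphism of nets which preserves the vacuum state, equivalently the conditional expectation onto $\{\A_{U(1)}\}$. First I would identify the relevant generalized net Q-systems of intertwiners: each BMT extension $\{\mathcal{B}_{\rho_L}\}$, $\{\mathcal{B}_{\rho_R}\}$ is discrete and irreducible, with dual canonical endomorphism $\theta^L=\oplus_{l\in\mathbb{Z}}\rho_L^l$, $\theta^R=\oplus_{h\in\mathbb{Z}}\rho_R^h$ (the powers of a single DHR automorphism), and the charged fields are the shift operators $U$, $V$. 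The braided product dual canonical endomorphism is $\theta^L\theta^R=\oplus_{(l,h)\in\mathbb{Z}^2}\rho_L^l\rho_R^h$, which matches exactly the Hilbert space decomposition $\bigoplus_{(l,h)\in\mathbb{Z}^2}\mathcal{H}$ used to define $\{\hat{\mathcal{B}}^{\pm}\}$.

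The key steps, in order, would be the following. I would first verify that $\iota_L$ and $\iota_R$ are genuine (solitonic) representations of $\mathcal{B}_{\rho_L}$ and $\mathcal{B}_{\rho_R}$ on the common Hilbert space $\bigoplus_{(l,h)\in\mathbb{Z}^2}\mathcal{H}$, which is immediate from the construction of the $\tilde\rho^{\pm}$ in the previous subsection applied diagonally. Second, I would record that the two embeddings agree on the common subnet, $\iota_L\circ\pi_L=\iota_R\circ\pi_R$, which is precisely the displayed identity just above Proposition \ref{prop:u1rel} and uses that $\hat\eps$ commutes with the diagonal action twisted appropriately. Third, the commutation relation in Proposition \ref{prop:u1rel}, $\iota_R(V)\iota_L(U)=\iota_L(\pi_L(\eps^{\pm}_{\rho_L,\rho_R}))\iota_L(U)\iota_R(V)$, is exactly the braided product relation \eqref{eq:commrelbrprod} of Proposition \ref{prop:brprodembeddings} (and the net-theoretic Lemma \ref{lem:freealg}), identifying $\iota_L(U)$, $\iota_R(V)$ with the images $\jmath^L(M^L)$, $\jmath^R(M^R)$ of the charged fields under the braided product embeddings. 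Fourth, I would invoke Lemma \ref{lem:freealg}: since both $\{\A\subset\mathcal{B}_{\rho_L}\}$ and $\{\A\subset\mathcal{B}_{\rho_R}\}$ are irreducible, the $*$-algebra generated by $\iota_L\circ\pi_L(\A_{U(1)})$, $\iota_L(U)$ and $\iota_R(V)$ is isomorphic to $\mathfrak{B}^{L\times R}$, the free $*$-algebra generated by the two extensions modulo the relation $\iota^L(a)=\iota^R(a)$ and the braiding relation. This gives a $*$-algebra isomorphism between a weakly dense subalgebra of $\{\hat{\mathcal{B}}^{\pm}\}$ and one of $\{\mathcal{B}_{\rho_L}\times^{\pm}_\eps\mathcal{B}_{\rho_R}\}$.

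Finally, to upgrade this $*$-algebra isomorphism to a unitary equivalence of nets I would check that $\hat\Omega$ induces the vacuum state, i.e.\ that the state $(\hat\Omega\,|\,\cdot\,\hat\Omega)$ composed with the isomorphism equals $\omega_0\circ E^{LR}$, the distinguished vacuum state of the braided product. This reduces, via the unique Pimsner-Popa expansion (Proposition \ref{prop:uniquenessbrexp}) in the charged fields, to computing that $E^{LR}$ evaluated on products of $U$'s and $V$'s reproduces the factorized expectation values, which follows from Proposition \ref{prop:brexpectation} together with the fact that $(\hat\Omega_{l,h})$ is supported on the $(0,0)$-summand, so that only the vacuum-to-vacuum matrix element survives and $E^L$, $E^R$ act as the respective BMT conditional expectations (projection onto the $U^0$, resp.\ $V^0$, Fourier mode). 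Having matched both the algebraic structure and the vacuum state, the GNS theorem yields a unitary $W$ with $W\hat\Omega=\Omega_\varphi$ implementing $\{\hat{\mathcal{B}}^{\pm}\}\cong\{\mathcal{B}_{\rho_L}\times^{\pm}_\eps\mathcal{B}_{\rho_R}\}$ and intertwining the local algebras region by region. The main obstacle I anticipate is the bookkeeping of the twisting cocycle $\hat\eps$: one must confirm that conjugation by $\hat\eps$ correctly reproduces the braiding $\eps^{\pm}_{\rho_L^l,\rho_R^h}$ on every summand and is compatible with the local charge transporters $u_J$, $v_J$ appearing in the definition of $\hat{\mathcal{B}}^{\pm}(J)$, so that the isomorphism is genuinely net-preserving (isotonous and local across the identification) and not merely an isomorphism of the global quasilocal algebras.
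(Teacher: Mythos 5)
Your proposal is correct and follows essentially the same route as the paper: both proofs combine the identity $\iota_L\circ\pi_L=\iota_R\circ\pi_R$, the commutation relation of Proposition \ref{prop:u1rel}, and Lemma \ref{lem:freealg} to get a $*$-algebra isomorphism onto the dense subalgebra generated by $\iota_L\circ\pi_L(\A_{U(1)})$, $\iota_L(U)$, $\iota_R(V)$, and then upgrade it to a unitary equivalence via the GNS theorem by checking that $(\hat\Omega\,|\,\phi(\cdot)\,\hat\Omega)=\omega_0\circ E^{LR}$, which holds because only the $(0,0)$-component survives in the vacuum expectation. The paper's version is just terser on the final state computation and on the region-by-region bookkeeping you flag at the end.
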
 

\begin{proof}
By Lemma \ref{lem:freealg}, Proposition \ref{prop:u1rel} and the relation $\iota_{L}\circ\pi_{L}=\iota_{R}\circ\pi_{R}$, we know that there exists a surjective homomorphism of *-algebras 
$$\phi:\mathfrak{B}^{L\times R}\rightarrow \tilde{\mathfrak{B}}$$
where $\mathfrak{B}^{L\times R}\subset \mathcal{B}_{\rho_{L}}\times^{\pm}_{\eps}\mathcal{B}_{\rho_{R}}$ is defined as in Lemma \ref{lem:freealg} and $\tilde{\mathfrak{B}}\subset\hat{\B}^{\pm}$ is the *-algebra generated by $\iota_{L}\circ\pi_{L}(\mathcal{A}_{U(1)})$ and $\iota_{L}(U)$, $\iota_{R}(V)$. By the GNS theorem for *-algebras, see \eg \cite[\Sec 1.3]{KhMo15}, in order to show that $\phi$ is implemented by a unitary it is enough to check that $\omega_0\circ E^{LR}=(\hat{\Omega}|\phi(\cdot)\hat{\Omega})$, where $\hat{\Omega}$ is the vacuum vector of $\{\hat{\B}^{\pm}\}$.
This is clear since, for $x=\sum_{i,j}\jmath^{L/R}(x_{i,j})\jmath^{L}(U^{i})\jmath^{R}(V^{j})\in\mathfrak{B}^{L\times R}$, we have $\omega_0\circ E^{LR}(x)=(\Omega_{\A_{U(1)}},x_{0,0}\Omega_{\A_{U(1)}})=(\hat{\Omega},\phi(\cdot)\hat{\Omega})$.
\end{proof}

By considering the braided product of a local BMT extension with itself (as concretely constructed in the previous proposition by taking $\rho_L = \rho_R = \rho$) we give examples where the center of the braided product is a continuous algebra, more specifically $L^{\infty}(\Sone,d\mu)$.

\begin{proposition}
Let $\{\B_{\rho}\}$ be the BMT extension obtained from a DHR automorphism $\rho$ and let $\{\B_{\rho}\times^{\pm}_{\eps}\B_{\rho}\}$ be the braided product extension with itself. Then $\Z(\B_{\rho}\times^{\pm}_{\eps}\B_{\rho})\cong L^{\infty}(\Sone, d\mu)$ with $d\mu$ the Lebesgue measure on the circle.
\end{proposition}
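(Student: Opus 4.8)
The plan is to combine the general structural result of Proposition~\ref{prop:relcomm=center} with the explicit sector structure of the $U(1)$-current, and then to read off the spectral measure of a single central unitary from its vacuum moments.

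First I would identify the center as the von Neumann algebra generated by one unitary. Since $\{\A_{U(1)}\subset\B_\rho\}$ is a \emph{local} extension, Proposition~\ref{prop:relcomm=center} applies and gives
$$\Z(\B_\rho\times^\pm_\eps\B_\rho)=(\B_\rho\times^\pm_\eps\B_\rho)(I)\cap\A_{U(1)}(I)',$$
so it suffices to compute the relative commutant on one interval $I$. The dual canonical endomorphism of $\{\A_{U(1)}\subset\B_\rho\}$ is $\theta=\bigoplus_{n\in\ZZ}\rho^n$, with Pimsner-Popa bases of charged fields given by the powers $U^n$ (resp.\ $V^n$) of the shift on the left (resp.\ right) factor. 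Applying Lemma~\ref{lem:exprelcomm}, every element of the relative commutant expands as $x=\sum_{l,h}\iota_R(V^{*h})\iota_L(U^{*l})\,r_{l,h}$ with $r_{l,h}\in\Hom_{\DHR\{\A_{U(1)}\}}(\id,\rho^l\rho^h)$. Now $\rho^l\rho^h=\rho^{l+h}$ carries charge $(l+h)Q$ with $Q=\sqrt{2\pi N}\neq0$, so $\rho^{l+h}$ is an irreducible DHR automorphism inequivalent to the identity unless $l+h=0$; since $\A_{U(1)}(I)$ is a factor this forces $r_{l,h}=0$ for $l+h\neq0$ and $r_{l,h}\in\CC\oneop$ for $l+h=0$.

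Next I would package the surviving terms into a single generator. By Proposition~\ref{prop:u1rel} and the locality of the BMT extension, $\eps^\pm_{\rho,\rho}=\oneop$, whence $\iota_L(U)$ and $\iota_R(V)$ commute; setting $Z:=\iota_L(U)\iota_R(V)^*$ we get, for $l=-h$, the identity $\iota_R(V^{*h})\iota_L(U^{*l})=\iota_R(V^{*h})\iota_L(U^{h})=Z^h$. Thus the relative commutant, and hence the center, is exactly the abelian von Neumann algebra $\{Z\}''$ generated by the single unitary $Z$. It remains to determine the distribution of $Z$ in the vacuum state $\omega=\omega_0\circ E^{LR}$, which is faithful on the local algebra because $\Omega$ is separating by Theorem~\ref{thm:discreteextnets}.

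The key computation is that of the moments $\omega(Z^n)$. Since $Z^n=\iota_L(U^n)\iota_R(V^{*n})$ is a product of a left and a right charged field, Proposition~\ref{prop:brexpectation} yields $E^{LR}(Z^n)=E^{L}(U^n)\,E^{R}(V^{*n})$; for the crossed-product conditional expectation one has $E^{L}(U^n)=\delta_{n,0}\oneop$ (and likewise on the right, using that $U^n$, $V^n$ are charged fields for the nontrivial sectors $\rho^n$, $n\neq0$), so $\omega(Z^n)=\delta_{n,0}$ for every $n\in\ZZ$. These are precisely the Fourier coefficients of the normalized Lebesgue (Haar) measure on $\Sone$, so by uniqueness of Fourier coefficients the spectral distribution of $Z$ in $\omega$ is $d\mu$, and faithfulness of $\omega$ on $\{Z\}''$ forces this measure to have full support, giving $\mathrm{sp}(Z)=\Sone$ and $\{Z\}''\cong L^\infty(\Sone,d\mu)$. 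I expect the only genuine subtlety to be this last step: one must invoke the separating property of $\Omega$ to ensure that the vanishing of all nonzero moments reflects a genuinely diffuse, full-support measure rather than a degenerate one — this is exactly the mechanism that makes the center continuous, in sharp contrast to the atomic centers appearing in the finite-index theory.
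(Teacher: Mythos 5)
Your proof is correct and follows essentially the same route as the paper: identify the center with the relative commutant via Proposition \ref{prop:relcomm=center}, expand its elements by Lemma \ref{lem:exprelcomm} so that only the charge-zero combinations $U^{i}V^{-i}$ survive, and then recognize that the vacuum moments of the single generating unitary are exactly the Fourier coefficients of Haar measure on $\Sone$, which together with faithfulness of the vacuum state gives $\{Z\}''\cong L^{\infty}(\Sone,d\mu)$. The paper phrases the last step as a unitary equivalence of GNS representations between the $*$-algebra generated by $\{U^{i}V^{-i}\}$ and the character algebra of the circle, but this is the same computation as your moment argument.
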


\begin{proof}
Recall that the center of the braided product is the same as the relative commutant 
$$\Z(\B_{\rho}\times^{\pm}_{\eps}\B_{\rho})=\Z((B_{\rho}\times^{\pm}_{\eps}\B_{\rho})(J))=(B_{\rho}\times^{\pm}_{\eps}B_{\rho})(J)^{\prime}\cap\iota(\mathcal{A}_{U(1)}(J))$$
for any proper bounded interval $J$ of $\RR$. Thus Lemma \ref{lem:exprelcomm} provides an expansion for elements $x\in \Z(\B_{\rho}\times^{\pm}_{\eps}\B_{\rho})$ 
$$x=\sum_{i\in\ZZ}V^{-i}U^{i}x_{i}$$
with $x_{i}\in \Hom_{\DHR\{\A_{U(1)}\}}(\id,\rho^{i} \rho^{-i} = \id)\cong \mathbb{C}$.

It is easy to see that there is an isomorphism between the *-algebra generated by the $\{U^{i}V^{-i}\}_{i}$ and the *-algebra generated by the characters of the circle. This same map is also an isomorphisms of pre-Hilbert spaces with inner product on one side induced by the vacuum state $\omega=\omega_{0}\circ E^{LR}$, where $E^{LR}$ is the standard expectation of the braided product net (Proposition \ref{prop:brexpectation}) and $\omega_{0}$ the vacuum state for $\{\mathcal{A}_{U(1)}\}$, and on the other side the usual $L^{2}(\Sone,d\mu)$ inner product.

Thus let $\mathfrak{B}$ denote the *-algebra generated by the $\{U^{i}V^{-i}\}_{i}$, $\bar{\mathfrak{B}}^{\left\|\cdot\right\|_{\omega}}$ its Hilbert completion and let $Char(\Sone)$ be the *-algebra generated by characters of the circle. $\bar{\mathfrak{B}}^{\left\|\cdot\right\|_{\omega}}\cong L^{2}(\Sone,d\mu)$ as Hilbert spaces and let $W$ be the unitary which implements the isomorphism. If $\pi_{\omega}$ is the GNS representation of $\mathfrak{B}$ induced by the state $\omega=\omega_{0}\circ E$ (on the Hilbert space $\bar{\mathfrak{B}}^{\left\|\cdot\right\|_{\omega}}$), and if $\pi_{d\mu}$ is the GNS representation of $Char(\Sone)$, we have $\Ad_W \pi_{\omega}=\pi_{d\mu}$. Hence the isomorphism extends to the ultraweak closure, and
$$\Z(B_{\rho}\times^{\pm}_{\eps}B_{\rho})(I)\cong \pi_{\omega}(\Z(B_{\rho}\times^{\pm}_{\eps}B_{\rho}))=\pi_{\omega}(\mathfrak{B})''\cong \pi_{d\mu}(Char(\Sone))''\cong L^{\infty}(\Sone,d\mu)$$
concluding the proof.
\end{proof}

We thus have an example of an uncountable family of (one-dimensional) irreducible phase boundaries, parametrized by $\Sone$, obtained from the braided product construction. This is obviously in contrast with the finite index case, where the relative commutant is necessarily finite-dimensional. But the difference from the finite index case is actually greater than this: we have an example where the relative commutant is not a discrete algebra. This means that the disintegration in Proposition \ref{prop:defects} that yields irreducible phase boundaries is not a direct sum. Moreover it is not true, in contrast with the finite index case, that every irreducible phase boundary condition comes from a representation of the braided product extension, see \cite[\Prop 5.1, \Cor 5.3]{BKLR16}, due to the absence of non-trivial minimal central projections.

Similarly, one can construct examples where the braided product is itself an irreducible extension and thus it yields a unique irreducible phase boundary. It is not hard to see that this is the case for the braided product of two local BMT extensions of the $U(1)$-current whose generating DHR automorphisms $\rho_{f_1}$, $\rho_{f_2}$ have charges $\int_{\Sone}f_1$ and $\int_{\Sone}f_2$ with irrational quotient. The claim simply follows from the expansion given in Lemma \ref{lem:exprelcomm} and observing that, in this case, the dual canonical endomorphisms of the BMT extensions $\theta_{1}$ and $\theta_{2}$ have only one irreducible subendomorphism in common: the identity.

\section{Conclusions}

Index theory provides an elegant and effective machinery to classify and construct extensions of von Neumann algebras and local nets. When this framework is not fully applicable (infinite index case), we have seen that under some physically meaningful structural hypotheses (semidiscreteness, discreteness) some of these results can be suitably generalized. The price to pay is abandoning the purely categorical setting of finite index Q-systems by the emergence of analytical conditions. At the same time, these analytical conditions (convergence of projections, faithfulness of expectations) provide a way to control infinite objects (gauge groups, representation categories, sets of generating fields) exploiting techniques of Operator Algebras in their application to QFT.  

In particular, we have introduced the notion of generalized Q-system of intertwiners (in the category of localizable superselection sectors $\DHR\{\A\}$) for a local net $\{\mathcal{A}\}$, and we have shown that from this data a net extension of $\{\mathcal{A}\}$, in the spirit of \cite{LoRe95}, can be constructed. At the level of properly infinite inclusions, we have seen that the existence of generalized Q-systems of intertwiners is equivalent to the inclusion to be of discrete type. When passing from subfactors to inclusions of local nets as in \cite{LoRe95} this matter is more subtle, and we provided sufficient conditions to guarantee the existence of generalized Q-systems of intertwiners for nets, which cover most interesting examples in low and higher spacetime dimensions. We leave open the question on whether these conditions are always verified by discrete QFT extensions. 

The notion of generalized Q-system of intertwiners lends itself to generalize the definition of braided product between ordinary Q-systems. After proving that the analytic properties of generalized Q-systems of intertwiners turn out to be compatible with the purely algebraic definition of the braided product, we explore some properties of the resulting net extension, showing that it retains some features of its finite index counterpart. In particular, in the case of chiral CFTs, we have seen that its central decomposition can yield uncountable families of irreducible phase boundaries with infinite index. An important issue left open is the classification of all phase boundary conditions among two CFTs. In particular, one would like to understand if, in analogy with \cite{BKLR16}, all the boundary conditions arise in the disintegration of the center of the braided product.

Although the discrete case covers many physical examples, \eg, every orbifold construction by a compact group, the setting of greatest generality for irreducible inclusions of local CFTs (at least assuming the existence of a vacuum vector) is semidiscreteness. Generalized Q-systems do always exist for semidiscrete extensions of properly infinite von Neumann algebras \cite{FiIs99}. An issue that would be worth analyzing further is if methods similar to those explored in this paper can be generalized to treat extensions of local nets which are semidiscrete but not discrete \cite{Car04}, \cite{Xu05}. It would also be interesting to extend the analysis of discrete inclusions to the case of non-separable Hilbert spaces, given that good candidates for such extensions in QFT already appear in \cite{Cio09}, \cite{MTW16}. Lastly, we mention that one can easily construct discrete non-finite local extensions which are not compact group orbifolds by taking tensor products of local nets, \footnote{We thank Y. Tanimoto for pointing out this interesting fact.}. It would also be worth investigating which kind of extensions can arise from braided products of compact group orbifolds, given that, by the arguments of our last section, one can construct extensions whose generating fields have the commutation relations of non-commutative tori.

\bigskip

{\bf Acknowledgements.}
Supported by the European Research Council (ERC) through the Advanced Grant QUEST \lq\lq Quantum Algebraic Structures and Models", and by PRIN-MIUR. We are indebted to R. Longo for proposing us the problem investigated in this work, and to M. Bischoff and K.-H. Rehren for many discussions and suggestions, and for their motivating interest. We also thank I. Khavkine and Y. Tanimoto for useful comments and criticism.

L.G. wishes to thank K.-H. Rehren for an invitation to G\"ottingen (Institut f\"ur Theoretische Physik, Georg-August-Universit\"at) and W. Yuan for an invitation to Beijing (Academy of Mathematics and Systems Science, Chinese Academy of Sciences), where this work has been presented, and thanks them for hospitality and for useful conversations in both these occasions.

\small

\begin{thebibliography}{CCG{\etalchar{+}}04}

\bibitem[AMP15]{AMP15}
N.~Afzaly, S.~Morrison, and D.~Penneys.
\newblock The classification of subfactors with index at most $5\frac{1}{4}$.
\newblock {\em preprint arXiv:1509.00038}, 2015.

\bibitem[BDH88]{BDH88}
M.~Baillet, Y.~Denizeau, and J.-F. Havet.
\newblock Indice d'une esp{\'e}rance conditionnelle.
\newblock {\em Compositio Math.}, 66:199--236, 1988.

\bibitem[BGL93]{BGL93}
R.~Brunetti, D.~Guido, and R.~Longo.
\newblock {Modular structure and duality in conformal quantum field theory}.
\newblock {\em Comm. Math. Phys.}, 156:201--219, 1993.

\bibitem[Bis17]{Bis17}
M.~Bischoff.
\newblock Generalized orbifold construction for conformal nets.
\newblock {\em Rev. Math. Phys.}, 29:1750002 1--53, 2017.

\bibitem[BKL15]{BKL15}
M.~Bischoff, Y.~Kawahigashi, and R.~Longo.
\newblock Characterization of 2{D} rational local conformal nets and its
  boundary conditions: the maximal case.
\newblock {\em Doc. Math.}, 20:1137--1184, 2015.

\bibitem[BKLR15]{BKLR15}
M.~Bischoff, Y.~Kawahigashi, R.~Longo, and K.-H. Rehren.
\newblock {\em Tensor categories and endomorphisms of von {N}eumann algebras.
  With applications to quantum field theory}, {\em Springer Briefs
  in Mathematical Physics}, Vol.\ 3.
\newblock Springer, Cham, 2015.

\bibitem[BKLR16]{BKLR16}
M.~Bischoff, Y.~Kawahigashi, R.~Longo, and K.-H. Rehren.
\newblock Phase {B}oundaries in {A}lgebraic {C}onformal {QFT}.
\newblock {\em Comm. Math. Phys.}, 342:1--45, 2016.

\bibitem[BMT88]{BMT88}
D.~Buchholz, G.~Mack, and I.~Todorov.
\newblock {The current algebra on the circle as a germ of local field
  theories}.
\newblock {\em Nucl. Phys., B, Proc. Suppl.}, 5:20--56, 1988.

\bibitem[BP01]{BePl01}
V.~Berestovskii and C.~Plaut.
\newblock Covering group theory for topological groups.
\newblock {\em Topology Appl.}, 114:141--186, 2001.

\bibitem[BR16]{BiRe16}
M.~Bischoff and K.-H. Rehren.
\newblock The hypergroupoid of boundary conditions for local quantum
  observables.
\newblock {\em preprint arXiv:1612.02972}, 2016.

\bibitem[BW75]{BiWi75}
J.~J. Bisognano and E.~H. Wichmann.
\newblock On the duality condition for a {H}ermitian scalar field.
\newblock {\em J. Mathematical Phys.}, 16:985--1007, 1975.

\bibitem[Car04]{Car04}
S.~Carpi.
\newblock {On the representation theory of {V}irasoro nets}.
\newblock {\em Comm. Math. Phys.}, 244:261--284, 2004.

\bibitem[CCG{\etalchar{+}}04]{NCGlec04}
A.~Connes, J.~Cuntz, E.~Guentner, N.~Higson, J.~Kaminker, and J.~E. Roberts.
\newblock {\em Noncommutative geometry}, {\em Lecture Notes in
  Mathematics}, Vol.\ 1831.
\newblock Springer-Verlag, Berlin; Centro Internazionale Matematico Estivo
  (C.I.M.E.), Florence, 2004.
\newblock Lectures given at the C.I.M.E. Summer School held in Martina Franca,
  September 3--9, 2000, Edited by S. Doplicher and R. Longo.

\bibitem[CDR01]{CDR01}
R.~Conti, S.~Doplicher, and J.~E. Roberts.
\newblock Superselection theory for subsystems.
\newblock {\em Comm. Math. Phys.}, 218:263--281, 2001.

\bibitem[Cio09]{Cio09}
F.~Ciolli.
\newblock Massless scalar free field in {$1+1$} dimensions. {I}. {W}eyl
  algebras products and superselection sectors.
\newblock {\em Rev. Math. Phys.}, 21:735--780, 2009.

\bibitem[CKL08]{CKL08}
S.~Carpi, Y.~Kawahigashi, and R.~Longo.
\newblock {Structure and classification of superconformal nets}.
\newblock {\em Ann. Henri Poincar{\'e}}, 9:1069--1121, 2008.

\bibitem[Con73]{Con73}
A.~Connes.
\newblock {Une classification des facteurs de type {${\rm} III$}}.
\newblock {\em Ann. Sci. {\'E}cole Norm. Sup.}, 6:133--252, 1973.

\bibitem[DHR69a]{DHR69I}
S.~Doplicher, R.~Haag, and J.~E. Roberts.
\newblock {Fields, observables and gauge transformations. I}.
\newblock {\em Comm. Math. Phys.}, 13:1--23, 1969.

\bibitem[DHR69b]{DHR69II}
S.~Doplicher, R.~Haag, and J.~E. Roberts.
\newblock {Fields, observables and gauge transformations. II}.
\newblock {\em Comm. Math. Phys.}, 15:173--200, 1969.

\bibitem[DHR71]{DHR71}
S.~Doplicher, R.~Haag, and J.~E. Roberts.
\newblock Local observables and particle statistics. {I}.
\newblock {\em Comm. Math. Phys.}, 23:199--230, 1971.

\bibitem[DHR74]{DHR74}
S.~Doplicher, R.~Haag, and J.~E. Roberts.
\newblock Local observables and particle statistics. {II}.
\newblock {\em Comm. Math. Phys.}, 35:49--85, 1974.

\bibitem[DL84]{DoLo84}
S.~Doplicher and R.~Longo.
\newblock Standard and split inclusions of von {N}eumann algebras.
\newblock {\em Invent. Math.}, 75:493--536, 1984.

\bibitem[DMV04]{DMV04}
C.~D'Antoni, G.~Morsella, and R.~Verch.
\newblock Scaling algebras for charged fields and short-distance analysis for
  localizable and topological charges.
\newblock {\em Ann. Henri Poincar{\'e}}, 5:809--870, 2004.

\bibitem[DR72]{DoRo72}
S.~Doplicher and J.~E. Roberts.
\newblock Fields, statistics and non-abelian gauge groups.
\newblock {\em Comm. Math. Phys.}, 28:331--348, 1972.

\bibitem[DR89]{DoRo89-1}
S.~Doplicher and J.~E. Roberts.
\newblock Endomorphisms of {$C^*$}-algebras, cross products and duality for
  compact groups.
\newblock {\em Ann. of Math.}, 130:75--119, 1989.

\bibitem[DR90]{DoRo90}
S.~Doplicher and J.~E. Roberts.
\newblock Why there is a field algebra with a compact gauge group describing
  the superselection structure in particle physics.
\newblock {\em Comm. Math. Phys.}, 131:51--107, 1990.

\bibitem[DV17]{DelPhd}
S.~Del~Vecchio.
\newblock {\em {Extensions in Quantum Field Theory: Q-systems and defects for
  infinite index inclusions}}.
\newblock PhD thesis, Universit\`a degli Studi di Roma Tor Vergata, Facolt\`a
  di Scienze Matematiche Fisiche e Naturali, 2017.

\bibitem[EP03]{EvPi03}
D.~E. Evans and P.~R. Pinto.
\newblock {Subfactor realisation of modular invariants}.
\newblock {\em Comm. Math. Phys.}, 237:309--363, 2003.

\bibitem[FI95]{FiIs95}
F.~Fidaleo and T.~Isola.
\newblock On the conjugate endomorphism in the infinite index case.
\newblock {\em Math. Scand.}, 77:289--300, 1995.

\bibitem[FI99]{FiIs99}
F.~Fidaleo and T.~Isola.
\newblock The canonical endomorphism for infinite index inclusions.
\newblock {\em Z. Anal. Anwendungen}, 18:47--66, 1999.

\bibitem[FJ96]{FrJoe96}
K.~Fredenhagen and M.~J{\"o}r{\ss}.
\newblock {Conformal Haag-Kastler nets, pointlike localized fields and the
  existence of operator product expansions}.
\newblock {\em Comm. Math. Phys.}, 176:541--554, 1996.

\bibitem[FRS92]{FRS92}
K.~Fredenhagen, K.-H. Rehren, and B.~Schroer.
\newblock Superselection sectors with braid group statistics and exchange
  algebras. {II}.\ {G}eometric aspects and conformal covariance.
\newblock {\em Rev. Math. Phys.}, SI1 (Special Issue):113--157, 1992.

\bibitem[GL92]{GuLo92}
D.~Guido and R.~Longo.
\newblock Relativistic invariance and charge conjugation in quantum field
  theory.
\newblock {\em Comm. Math. Phys.}, 148:521--551, 1992.

\bibitem[GL95]{GuLo95}
D.~Guido and R.~Longo.
\newblock {An algebraic spin and statistics theorem}.
\newblock {\em Comm. Math. Phys.}, 172:517--533, 1995.

\bibitem[GL96]{GuLo96}
D.~Guido and R.~Longo.
\newblock The conformal spin and statistics theorem.
\newblock {\em Comm. Math. Phys.}, 181:11--35, 1996.

\bibitem[GLW98]{GLW98}
D.~Guido, R.~Longo, and H.-W. Wiesbrock.
\newblock {Extensions of conformal nets and superselection structures}.
\newblock {\em Comm. Math. Phys.}, 192:217--244, 1998.

\bibitem[GR15]{GiRe15}
L.~Giorgetti and K.-H. Rehren.
\newblock Braided categories of endomorphisms as invariants for local quantum
  field theories.
\newblock {\em preprint arXiv:1512.01995 (to appear in Comm.\ Math.\ Phys.)},
  2015.

\bibitem[Haa87]{Haa87}
U.~Haagerup.
\newblock Connes' bicentralizer problem and uniqueness of the injective factor
  of type {${\rm III}_1$}.
\newblock {\em Acta Math.}, 158:95--148, 1987.

\bibitem[Haa96]{HaagBook}
R.~Haag.
\newblock {\em {Local quantum physics}}.
\newblock Springer Berlin, 1996.

\bibitem[Hia88]{Hia88}
F.~Hiai.
\newblock Minimizing indices of conditional expectations onto a subfactor.
\newblock {\em Publ. Res. Inst. Math. Sci.}, 24:673--678, 1988.

\bibitem[HKZ91]{HKZ91}
H.~Halpern, V.~Kaftal, and L.~Zsid{{\'o}}.
\newblock Finite weight projections in von {N}eumann algebras.
\newblock {\em Pacific J. Math.}, 147:81--121, 1991.

\bibitem[HM06]{HaMue06}
H.~Halvorson and M.~M{\"u}ger.
\newblock Algebraic quantum field theory.
\newblock {\em preprint arXiv math-ph/0602036}, 2006.

\bibitem[HO89]{HeOc89}
R.~H. Herman and A.~Ocneanu.
\newblock Index theory and {G}alois theory for infinite index inclusions of
  factors.
\newblock {\em C. R. Acad. Sci. Paris S{\'e}r. I Math.}, 309:923--927,
  1989.

\bibitem[ILP98]{ILP98}
M.~Izumi, R.~Longo, and S.~Popa.
\newblock A {G}alois correspondence for compact groups of automorphisms of von
  {N}eumann algebras with a generalization to {K}ac algebras.
\newblock {\em J. Funct. Anal.}, 155:25--63, 1998.

\bibitem[JMS14]{JMS14}
V.~F.~R. Jones, S.~Morrison, and N.~Snyder.
\newblock The classification of subfactors of index at most 5.
\newblock {\em Bull. Amer. Math. Soc.}, 51:277--327, 2014.

\bibitem[Jon83]{Jon83}
V.~F.~R. Jones.
\newblock {Index for subfactors}.
\newblock {\em Invent. Math.}, 72:1--25, 1983.

\bibitem[Jon99]{Jon99}
V.~F.~R. Jones.
\newblock {Planar Algebras. I}.
\newblock {\em preprint arXiv math.QA/9909027}, 1999.

\bibitem[Kad63]{Kad63}
R.~V. Kadison.
\newblock Remarks on the type of von {N}eumann algebras of local observables in
  quantum field theory.
\newblock {\em J. Mathematical Phys.}, 4:1511--1516, 1963.

\bibitem[KL04]{KaLo04}
Y.~Kawahigashi and R.~Longo.
\newblock {Classification of local conformal nets. Case {$c < 1$}.}
\newblock {\em Ann. Math.}, 160:493--522, 2004.

\bibitem[KLM01]{KLM01}
Y.~Kawahigashi, R.~Longo, and M.~M{\"u}ger.
\newblock {Multi-interval subfactors and modularity of representations in
  conformal field theory}.
\newblock {\em Comm. Math. Phys.}, 219:631--669, 2001.

\bibitem[KM15]{KhMo15}
I.~Khavkine and V.~Moretti.
\newblock Algebraic {QFT} in curved spacetime and quasifree {H}adamard states:
  an introduction.
\newblock In {\em Advances in algebraic quantum field theory}, Math. Phys.
  Stud., 191--251. Springer, Cham, 2015.

\bibitem[Kos86]{Kos86}
H.~Kosaki.
\newblock {Extension of {J}ones' theory on index to arbitrary factors}.
\newblock {\em J. Funct. Anal.}, 66:123--140, 1986.

\bibitem[Kos89]{Kos89}
H.~Kosaki.
\newblock Characterization of crossed product (properly infinite case).
\newblock {\em Pacific J. Math.}, 137:159--167, 1989.

\bibitem[Lon79]{Lon79}
R.~Longo.
\newblock {Notes on algebraic invariants for noncommutative dynamical systems}.
\newblock {\em Comm. Math. Phys.}, 69:195--207, 1979.

\bibitem[Lon87]{Lon87}
R.~Longo.
\newblock {Simple injective subfactors}.
\newblock {\em Adv. Math.}, 63:152--171, 1987.

\bibitem[Lon89]{Lon89}
R.~Longo.
\newblock {Index of subfactors and statistics of quantum fields. I}.
\newblock {\em Comm. Math. Phys.}, 126:217--247, 1989.

\bibitem[Lon90]{Lon90}
R.~Longo.
\newblock {Index of subfactors and statistics of quantum fields. II.
  Correspondences, braid group statistics and Jones polynomial}.
\newblock {\em Comm. Math. Phys.}, 130:285--309, 1990.

\bibitem[Lon94]{Lon94}
R.~Longo.
\newblock {A duality for {H}opf algebras and for subfactors. {I}}.
\newblock {\em Comm. Math. Phys.}, 159:133--150, 1994.

\bibitem[Lon97]{Lon97}
R.~Longo.
\newblock An analogue of the {K}ac-{W}akimoto formula and black hole
  conditional entropy.
\newblock {\em Comm. Math. Phys.}, 186:451--479, 1997.

\bibitem[Lon03]{Lon03}
R.~Longo.
\newblock {Conformal subnets and intermediate subfactors}.
\newblock {\em Comm. Math. Phys.}, 237:7--30, 2003.

\bibitem[Lon08]{Lon2}
R.~Longo.
\newblock {Lecture notes on conformal nets. Part II. Nets of von Neumann
  algebras}.
\newblock Lecture notes, available at
  \url{http://www.mat.uniroma2.it/longo/Home.html}, 2008.

\bibitem[LR95]{LoRe95}
R.~Longo and K.-H. Rehren.
\newblock {Nets of subfactors}.
\newblock {\em Rev. Math. Phys.}, 7:567--597, 1995.

\bibitem[LR97]{LoRo97}
R.~Longo and J.~E. Roberts.
\newblock {A theory of dimension}.
\newblock {\em $K$-Theory}, 11:103--159, 1997.

\bibitem[LR04]{LoRe04}
R.~Longo and K.-H. Rehren.
\newblock {Local fields in boundary conformal QFT}.
\newblock {\em Rev. Math. Phys.}, 16:909--960, 2004.

\bibitem[MTW16]{MTW16}
V.~Morinelli, Y.~Tanimoto, and M.~Weiner.
\newblock Conformal covariance and the split property.
\newblock {\em preprint arXiv:1609.02196 (to appear in Comm.\ Math.\
  Phys.)}, 2016.

\bibitem[M{\"u}g05]{Mue05}
M.~M{\"u}ger.
\newblock {Conformal Orbifold Theories and Braided Crossed G-Categories}.
\newblock {\em Comm. Math. Phys.}, 260:727--762, 2005.

\bibitem[Pop95a]{Pop95-1}
S.~Popa.
\newblock An axiomatization of the lattice of higher relative commutants of a
  subfactor.
\newblock {\em Invent. Math.}, 120:427--445, 1995.

\bibitem[Pop95b]{Pop95}
S.~Popa.
\newblock {\em Classification of subfactors and their endomorphisms}, 
  {\em CBMS Regional Conference Series in Mathematics}, Vol.\ 86.
\newblock Amer. Math. Soc., Providence, RI, 1995.

\bibitem[PP86]{PiPo86}
M.~Pimsner and S.~Popa.
\newblock {Entropy and index for subfactors}.
\newblock {\em Ann. Sci. Ecole Norm. Sup}, 19:57--106, 1986.

\bibitem[Reh94]{Reh94vir}
K.-H. Rehren.
\newblock A new view of the {V}irasoro algebra.
\newblock {\em Lett. Math. Phys.}, 30:125--130, 1994.

\bibitem[Reh15]{Reh15}
K.-H. Rehren.
\newblock {Algebraic conformal quantum field theory in perspective}.
\newblock In: {\em {Advances in Algebraic
  Quantum Field Theory}}, {R.\ Brunetti {\em et al.}}, eds., Mathematical Physics Studies, 331--364.
  {Springer International Publishing}, 2015.

\bibitem[Rob74]{Rob74}
J.~E. Roberts.
\newblock Some applications of dilatation invariance to structural questions in
  the theory of local observables.
\newblock {\em Comm. Math. Phys.}, 37:273--286, 1974.

\bibitem[Tur10]{Tur10mueger}
V.~Turaev.
\newblock {\em Homotopy quantum field theory}, {\em EMS Tracts in
  Mathematics}, Vol.\ 10.
\newblock European Mathematical Society (EMS), Z{\"u}rich, 2010.
\newblock Appendix 5 by Michael M{{\"u}}ger and Appendices 6 and 7 by Alexis
  Virelizier.

\bibitem[Wei08]{Wei08}
M.~Weiner.
\newblock Restricting positive energy representations of {${\rm Diff}^+(S^1)$}
  to the stabilizer of {$n$} points.
\newblock {\em Comm. Math. Phys.}, 277:555--571, 2008.

\bibitem[Xu00]{Xu00orb}
F.~Xu.
\newblock {Algebraic orbifold conformal field theories}.
\newblock {\em Proc. Nat. Acad. Sci. U.S.A.}, 97:14069, 2000.

\bibitem[Xu05]{Xu05}
F.~Xu.
\newblock {Strong additivity and conformal nets}.
\newblock {\em Pacific J. Math.}, 221:167--199, 2005.

\bibitem[Xu14]{Xu14}
F.~Xu.
\newblock On intermediate conformal nets.
\newblock {\em J. Reine Angew. Math.}, 692:125--151, 2014.

\end{thebibliography}

\newcommand{\etalchar}[1]{$^{#1}$}
\def\cprime{$'$}

\end{document}